\documentclass[tikz]{amsart}
\usepackage{amsmath, amsthm, amssymb, amsfonts}
\usepackage[normalem]{ulem}
\usepackage{hyperref}
\usepackage{tikz-cd}
\usepackage{combelow}

\usepackage[
backend=biber,
style=numeric,
maxnames=99,
maxbibnames=99,
giveninits=true,
doi=false,
isbn=false,
url=false,
eprint=false,
sorting=nyt
]{biblatex}
\DeclareFieldFormat[article,inbook,incollection,inproceedings,thesis,unpublished,misc]{title}{#1\isdot}

\renewbibmacro{in:}{}
\usepackage{enumerate}

\usepackage{geometry}
\newgeometry{margin=1.1in}

\usepackage{verbatim} 
\usepackage{longtable}
\usepackage{caption}
\usepackage{mathtools}

\usepackage{tikz}
\usetikzlibrary{positioning,arrows.meta,calc}

\tikzset{
  vertex/.style = {circle, draw, very thin, minimum size=20pt, inner sep=1pt, font=\small},
  leg/.style    = {draw, very thin},
  edge/.style   = {draw, very thin},
  lab/.style    = {font=\footnotesize}
}

\usepackage{caption}

\theoremstyle{plain}
\newtheorem{thm}{Theorem}

\newtheorem{cor}[thm]{Corollary}

\newtheorem{lemma}[thm]{Lemma}
\newtheorem{prop}[thm]{Proposition}
\newtheorem{conj}[thm]{Conjecture}

\theoremstyle{definition}
\newtheorem{defn}[thm]{Definition}
\newtheorem{example}[thm]{Example}

\theoremstyle{remark}
\newtheorem{rmk}[thm]{Remark}

\newcommand{\ellt}{{a}}
\newcommand{\BA}{{\mathbb{A}}}

\newcommand{\BC}{{\mathbb{C}}}

\newcommand{\BE}{{\mathbb{E}}}
\newcommand{\BF}{{\mathbb{F}}}
\newcommand{\BG}{{\mathbb{G}}}

\newcommand{\BI}{{\mathbb{I}}}

\newcommand{\BL}{{\mathbb{L}}}
\newcommand{\BM}{{\mathbb{M}}}
\newcommand{\BN}{{\mathbb{N}}}

\newcommand{\BQ}{{\mathbb{Q}}}
\newcommand{\BR}{{\mathbb{R}}}

\newcommand{\BT}{{\mathbb{T}}}

\newcommand{\BZ}{{\mathbb{Z}}}

\newcommand{\CC}{{\mathcal C}}
\newcommand{\CD}{{\mathcal D}}
\newcommand{\CE}{{\mathcal E}}
\newcommand{\CF}{{\mathcal F}}

\newcommand{\CI}{{\mathcal I}}

\newcommand{\CL}{{\mathcal L}}

\newcommand{\CO}{{\mathcal O}}

\newcommand{\CT}{{\mathcal T}}

\newcommand{\CV}{{\mathcal V}}
\newcommand{\CW}{{\mathcal W}}
\newcommand{\CX}{{\mathcal X}}

\newcommand{\CZ}{{\mathcal Z}}

\newcommand{\FN}{{\mathfrak{N}}}
\newcommand{\FNfl}{\mathfrak{N}^{\mathrm{fl}}}
\newcommand{\FQfl}{\mathfrak{Q}^{\mathrm{fl}}}

\newcommand{\Fq}{{\mathfrak{q}}}
\newcommand{\FQ}{{\mathfrak{Q}}}

\newcommand{\Ft}{{\mathfrak{t}}}

\newcommand{\pt}{{\mathsf{p}}}
\newcommand{\ch}{{\mathrm{ch}}}

\DeclareMathOperator{\Hilb}{Hilb}

\DeclareFontFamily{OT1}{rsfs}{}
\DeclareFontShape{OT1}{rsfs}{n}{it}{<-> rsfs10}{}
\DeclareMathAlphabet{\curly}{OT1}{rsfs}{n}{it}
\renewcommand\hom{\curly H\!om}
\newcommand\ext{\curly Ext}
\newcommand\Ext{\operatorname{Ext}}
\newcommand\Hom{\operatorname{Hom}}
\newcommand{\p}{\mathbb{P}}

\newcommand\id{\operatorname{id}}

\newcommand\Spec{\operatorname{Spec}}

\newcommand{\Mbar}{{\overline M}}

\newcommand{\Coh}{\mathrm{Coh}}
\newcommand{\Pic}{\mathop{\rm Pic}\nolimits}
\newcommand{\PT}{\mathsf{PT}}
\newcommand{\DT}{\mathsf{DT}}
\newcommand{\GW}{\mathsf{GW}}
\newcommand{\Sym}{{\mathrm{Sym}}}

\newcommand{\ev}{{\mathrm{ev}}}
\newcommand{\DR}{\mathsf{DR}}

\newcommand{\Aut}{\operatorname{Aut}}

\newcommand{\pr}{\mathrm{pr}}

\newcommand{\vacuum}{v_{\varnothing}}

\newcommand{\vir}{\text{vir}}
\newcommand{\red}{\text{red}}

\newcommand{\Fr}{\mathrm{Fr}}

\newcommand\Quot{\operatorname{Quot}}
\newcommand\Cone{\operatorname{Cone}}
\newcommand\Ker{\operatorname{Ker}}
\newcommand\Coker{\operatorname{Coker}}

\newcommand\Pmain{\mathrm{P}^{\mathrm{main}}}
\newcommand\Prubber{\mathrm{P}^{\mathrm{rubber}}}

\newcommand{\rk}{\mathrm{rk}}

\newcommand{\taut}{\mathrm{taut}}
\newcommand{\tr}{\mathrm{tr}}

\newcommand{\td}{\mathrm{td}}

\newcommand\Perf{\operatorname{Perf}}

\begin{document}
\title{The multiple cover formula for $K3$ and abelian surfaces}

\dedicatory{To Jun Li on the occasion of his 65{th} birthday}
\date{\today}

\begin{abstract}
All reduced descendent Gromov-Witten invariants of $K3$ and abelian surfaces in primitive curve classes
can be calculated by the methods of \cite{BOPY,MPT}. To handle the imprimitive curve classes, a multiple cover formula was conjectured in \cite{ObPand} for $K3$ surfaces and in \cite{O_NLGW} for abelian surfaces.
We prove here that both descendent multiple cover formulas are implied by the conjectural families GW/PT correspondence 
for semipositive relative 3-folds with primary insertions. The implication is proven by 
showing that the multiple cover formula for $S$ can be recast as a property of an appropriate localization vertex for
the relative 3-fold Gromov-Witten theory of 
$(S\times \mathbb{P}^1/S_0 \cup S_\infty)$.  The families GW/PT correspondence
then transfers the multiple cover formula from the Gromov-Witten side to 
the stable pairs side where the formula is proven geometrically by studying cosections and applying
 universality properties.
Along the way, we prove a DT/PT correspondence for the reduced theories of $(S\times \mathbb{P}^1/S_0 \cup S_\infty)$
using the wallcrossing techniques of
Kuhn-Liu-Thimm \cite{KLT2,KLT}.
\end{abstract}
\author{Georg Oberdieck}
\author{Rahul Pandharipande}

\maketitle
\baselineskip=14pt

\setcounter{tocdepth}{1} 
\tableofcontents

\section{Introduction}
A {\em symplectic surface} $S$ is a nonsingular, irreducible, projective surface satisfying the symplectic condition
$\omega_S \cong \CO_S$.
By Kodaira's classification, $S$ is either a ${K3}$ or an abelian surface.
Let $\beta \in H_2(S,\BZ)$ be an effective{\footnote{Effective curve classes $\beta\in H_2(S,\mathbb{Z})$ here are {\em always} nonzero. If $\beta=0$, reduction of the Gromov-Witten theory is not possible, and the standard Gromov-Witten theory is well-known to be determined by Hodge integrals \cite{GetzlerP}.}} curve class.
For cohomology classes 
$$\gamma_1,\ldots,\gamma_n \in H^{\ast}(S,\mathbb{Q})$$ and integers $\ellt_1,\ldots,\ellt_n \geq 0$,
the {\em reduced Gromov-Witten invariants} of $S$ are defined by:
\begin{equation}\label{redgw}
\big\langle \tau_{\ellt_1}(\gamma_1) \cdots \tau_{\ellt_n}(\gamma_n) \big\rangle^{S}_{g,\beta}
=
\int_{[ \Mbar_{g,n}(S,\beta) ]^{\red}} \prod_{i=1}^{n} \ev_i^{\ast}(\gamma_i) \psi_i^{\ellt_i}\, .
\end{equation}
The integration here is over the {\em reduced} virtual class of the moduli space of genus $g$ degree $\beta$ stable maps.
We will always take the classes $\gamma_i$ to be homogeneous{\footnote{If $S$ is an abelian surface, $\deg_{\mathbb{C}}(\gamma_i)$ may be a half-integer.}} with 
$\gamma_i\in H^{2 \deg_\mathbb{C}(\gamma_i)}(S,\mathbb{Q})$.

The reduced virtual counts of curves
through points on symplectic surfaces were first introduced by Bryan-Leung \cite{BL} 
to prove the Yau-Zaslow formula \cite{YZ} for rational curves in primitive classes on $K3$ surfaces. A connection between the Yau-Zaslow
formula and
sheaf counting was made earlier by Beauville \cite{Beau}.
Over the past 30 years, reduced Gromov-Witten invariants have been studied
intensively for $K3$ surfaces \cite{BB,vIOP,
KMPS,LL1,LL2,MPT,O_K3xP1,O_K3,OP,PTKKV,PY} and for abelian surfaces \cite{Blomme2,Blomme1,Blomme3,Blomme4,CarBlomme2,BL2,BOPY,OP2}. Several different techniques have been used:
log degenerations, mirror symmetry, tropical counting, 
the double ramification cycle, and the connection to sheaf counting theories. See \cite{Dedieu, Pand-Survey, PT-Survey} for surveys of results.

The following basic structural property for reduced Gromov-Witten theory was conjectured in \cite[Conjecture C2]{ObPand} for $K3$ surfaces
and in \cite[Appendix A]{O_NLGW} for abelian surfaces.

\begin{conj}[Multiple Cover Formula] \label{conj:MCF}
Let $S$ be a symplectic surface, and let $\beta \in H_2(S,\BZ)$ be an effective curve class.
For every positive integer $k$ dividing $\beta$, let
\[ \varphi_k : H^{\ast}(S,\BQ) \to H^{\ast}(S_k,\BQ) \]
be a degree-preserving $\mathbb{Q}$-algebra isomorphism, where $S_k$ is a symplectic surface{\footnote{The symplectic surface $S_k$ must be of the same type as $S$ (since $K3$ surfaces have Euler characteristic 24 and abelian surfaces have Euler characteristic 0).}}, satisfying:
\begin{enumerate}
\item[\emph{(i)}] $\varphi_k(\pt)=\pt$, where $\pt\in H^4(S,\mathbb{Q})$ is the class of a point,
\item[\emph{(ii)}] $\varphi_k(\beta/k)$ is an effective primitive curve class.
\end{enumerate}
Then, we have
\[
\langle \tau_{\ellt_1}(\gamma_1) \cdots \tau_{\ellt_n}(\gamma_n) \rangle^{S}_{g,\beta}
=
\sum_{k | \beta} k^{2g-3 + \sum_{i=1}^n \deg_{\BC}(\gamma_i)}
\langle \tau_{\ellt_1}(\varphi_k(\gamma_1)) \cdots \tau_{\ellt_n}(\varphi_k(\gamma_n)) \rangle^{S_k}_{g,\varphi_k(\beta/k)}\, .
\]
If such a $\varphi_k$ does not exist, then the corresponding summand on the right is defined
to vanish.
\end{conj}

As reviewed in Section \ref{revsec}, the
$\mathbb{Q}$-algebra isomorphisms $\varphi_k$ can be made explicit using the geometry of
elliptically fibered surfaces in the $K3$ case and products of elliptic curves in the abelian surface case.
Conjecture \ref{conj:MCF} expresses {\em all} reduced descendent
Gromov-Witten invariants \eqref{redgw} of symplectic surfaces in terms of invariants with {\em primitive} curve classes. For primitive curve classes,
effective methods for computing  all
invariants \eqref{redgw} are known by \cite{MPT} in the $K3$ case and
\cite{BOPY} in the case of abelian surfaces. Conjecture \ref{conj:MCF} may therefore be viewed as the last step in 
a formally complete calculation of the reduced descendent Gromov-Witten invariants of symplectic surfaces.

Our main result here is a derivation of  Conjecture \ref{conj:MCF}  from the correspondence between Gromov-Witten and Pandharipande-Thomas theories
of 3-folds \cite{MNOP1, MNOP2, PT1}. Let 
$$S_0 \cup S_\infty \subset S\times \p^1$$
be the union of the fibers over $0\in \p^1$ and $\infty\in \p^1$, and let $(S\times \p^1, S_0 \cup S_\infty)$ be the relative threefold geometry.

\begin{thm} \label{mainn}
Let $S$ be a symplectic surface.
If the GW/PT correspondence holds for
the reduced theory of $(S \times \p^1,S_0\cup S_{\infty})$
with primary insertions,
then the multiple cover formula holds for $S$.
\end{thm}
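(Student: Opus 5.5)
Essentially following the strategy announced in the abstract, I would pass from the surface $S$ to the relative threefold $X=(S\times\p^1/S_0\cup S_\infty)$ and transfer the multiple cover formula to a statement about reduced stable pair invariants there.

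\textbf{Step 1: descendents of $S$ as a localization vertex.} Consider the $\BC^*$-action on $\p^1$ and localize the reduced relative Gromov-Witten theory of $X$ in classes $(\beta,0)$, i.e.\ curves in fibers $S\times\{t\}$. Fixed loci are maps into $S_0$ or $S_\infty$, possibly with rubber components. Insert primary classes pulled back from $S$, together with relative conditions. The vertex contributions then involve descendent integrals $\int_{[\Mbar_{g,n}(S,\beta)]^{\red}}\prod\ev_i^*\gamma_i\,\psi_i^{a_i}$ paired with Hodge and $\psi$ classes from the normal direction. Reduction kills the higher Hodge terms because a second cosection forces vanishing.

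Choosing insertions and relative conditions appropriately, I would show that the generating series of these relative invariants determines all descendents of $S$. Triangularity in $(g,\sum a_i)$ should allow inverting the localization. The precise form of the multiple cover formula, with factor $k^{2g-3+\sum\deg\gamma_i}$, then becomes equivalent to a multiple cover formula for the reduced relative invariants of $X$ with primary insertions. In that formula the $k$-power is rewritten through the $u$-variable rescaling $u\mapsto ku$, and the curve class is replaced by $\varphi_k(\beta/k)$.

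Deformation invariance lets us replace $S$ by $S_k$ via the isomorphisms $\varphi_k$ of Section~\ref{revsec}. I expect this rewriting, and especially the check that the descendent formula is exactly equivalent to a clean primary formula on $X$, to be the main obstacle. The bookkeeping of $\psi$ classes against rubber contributions and the $\BC^*$-weights must be tracked carefully.

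\textbf{Step 2: transfer by GW/PT.} Under the assumed reduced GW/PT correspondence for $X$ with primary insertions, the relative GW series (with $-q=e^{iu}$) equal the reduced PT series. The change of variables is compatible with $u\mapsto ku$ becoming $q\mapsto q^k$ up to sign. The multiple cover statement on the GW side is therefore equivalent to a PT multiple cover formula of the form
\[\PT_{\beta}(q)=\sum_{k\mid\beta}k^{c}\,\PT^{S_k}_{\varphi_k(\beta/k)}\big((-1)^{k+1}q^k\big).\]

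\textbf{Step 3: the PT formula geometrically.} On the sheaf side, I would prove this by first establishing a DT/PT correspondence for the reduced theory. This uses Kuhn-Liu-Thimm wallcrossing, passing to a framework where invariants are expressed via generalized (Joyce-type) counts of one-dimensional sheaves on $S$ with reduced obstruction theory. Reduced invariants admit a cosection, so only "connected" contributions survive. Universality then says the contribution of a class $\beta$ depends only on $\beta^2$ and divisibility, together with insertion data.

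Multiple covers then arise from the sheaf-theoretic identity for generalized invariants of imprimitive classes: contributions from semistable sheaves of class $k\cdot(\beta/k)$ factor as $k$-fold covers, as in the Katz-Klemm-Vafa proof. Combining the multiple cover structure of generalized invariants with the wallcrossing formula gives the PT formula, and reversing Steps 1 and 2 yields the multiple cover formula for $S$.
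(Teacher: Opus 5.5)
\textbf{Step 3 has a genuine gap, and it is where the theorem's content lies.} Your sentence ``contributions from semistable sheaves of class $k\cdot(\beta/k)$ factor as $k$-fold covers'' is essentially the statement to be proven, a Toda-type multiple cover identity for generalized invariants. Nothing in your outline produces it, and no such identity is available for the non-Calabi--Yau relative geometry $(S\times\p^1,S_0\cup S_\infty)$ with relative conditions and descendent insertions. Also, ``the contribution depends only on $\beta^2$ and divisibility'' is too weak. What is needed is that the relevant surface integrals are \emph{independent} of divisibility, so that $\beta/k$ (possibly itself imprimitive) can be replaced by the primitive class $\varphi_k(\beta/k)$.

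The paper produces the $k$ concretely, by localizing the reduced PT theory of the cap $(S\times\p^1,S_\infty)$:
\begin{enumerate}
\item Rubber loci vanish by dimension (Lemma \ref{rubvan}), and mixed loci vanish by a second cosection.
\item The main fixed loci are perverse nested Quot schemes $\Quot^\sharp_\alpha(\CI_Z)$ on $S$. These carry one independent surjective cosection per jump of the flag (Theorem \ref{thm:cosections on nested quot}), so only uniformly $k$-thickened pairs survive.
\item The $k$-thickened loci are exactly $\Quot^\sharp_{n,\frac1k(0,\beta,\ch_3)}$ (Lemma \ref{kthick}). The normal bundle supplies the sign $(-1)^{(k-1)\ch_3/k}$ and the weights $kt$.
\item Universality (Theorem \ref{thm:universality}) then replaces $\beta/k$ by $\varphi_k(\beta/k)$. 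It is proven on $\Quot^\flat$, i.e.\ nested Hilbert schemes, via Gholampour--Thomas, Ellingsrud--G\"ottsche--Lehn and universality over $[H_\beta]^{\red}$.
\end{enumerate}
The Kuhn--Liu--Thimm wallcrossing is used \emph{on the surface}, to compare $\Quot^\sharp$ with $\Quot^\flat$ (Theorem \ref{thm:wallcrossing}). DT/PT is a by-product of this, not an input. Had you genuinely established DT/PT first, the natural continuation would be to import the DT multiple cover formula of \cite{QuasiK3}, which you do not invoke. The KKV mechanism you allude to is precisely this thickening argument for $\BC^*$-fixed pairs on the threefold, not a property of semistable sheaves of class $k(\beta/k)$ on $S$.

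\textbf{Step 1 also fails as written.} In classes $(\beta,0)$ there are no relative conditions. Localization then yields only Hodge integrals $\langle\prod_i\tau_0(\gamma_i)\,e(\BE^\vee\otimes\Ft)\rangle^S_{g,\beta}$, which is the mechanism behind Theorem \ref{hodgemc}, and never $\psi$-classes. Descendents require degree $n>0$ over $\p^1$. The unexpanded fixed locus then gives the vertex $I_{g,\beta}(\lambda)=\int c(\BE^\vee)\prod_i\ev_i^*(\gamma_i)/(1-\lambda_i\psi_i)$.

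The Hodge classes are not removed by any cosection. They drop out by polynomiality in the $\lambda_i$, after extracting the coefficient with $\sum_i(a_i+\deg_\BC\gamma_i)=g+r$, which saturates the reduced dimension.

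The passage from the GW/PT input to these vertices is also more specific than ``triangularity in $(g,\sum a_i)$''. The paper proceeds as follows:
\begin{enumerate}
\item Insert $\tau_0(\omega D)$ and use derigidification plus the divisor equation to pass to rubber invariants. Since $\varphi_k(D)\cdot\varphi_k(\beta/k)=D\cdot\beta/k$, this turns $k^{2g-2+\ell(\lambda)+\ell(\mu)}$ into $k^{2g-3+\ell(\lambda)+\ell(\mu)}$.
\item Strip off genus-$0$ tubes to reach connected rubber invariants.
\item Localize a vanishing equivariant cap integral and induct on $(g,|\lambda|,\ell(\lambda))$, removing $\Psi_\infty$ by rubber calculus.
\end{enumerate}
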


The GW/PT correspondence for the reduced theories of 
$(S \times \p^1,S_{0} \cup S_{\infty})$ is a {\em families}
GW/PT correspondence for 1-parameter families of semipositive relative threefolds.   
Pardon \cite{Pardon} has proven the families GW/PT correspondence with primary insertions for
semipositive threefolds, see \cite{P-Beijing} for a discussion of families correspondences.
The promotion of Pardon's results to the semipositive relative  case is a natural direction of study.
The strategy of using the GW/PT correspondence and the geometry of stable pairs
to study the reduced Gromov-Witten theory of $K3$ surfaces has already appeared in special cases in 
\cite{MPT,PTKKV}. Theorem \ref{mainn} may be viewed as the strongest formulation of
these ideas for the reduced Gromov-Witten theory of $S$. The consequences of the conjectural
families GW/PT correspondence 
over the {\em full} moduli
space of $K3$ surfaces are discussed in Section \ref{modk3}.

The GW/PT correspondence for $(S \times \p^1,S_0\cup S_{\infty})$ with primary insertions in primitive curve classes for $K3$ surfaces $S$ 
was proven{\footnote{The case where $S$ is an abelian surface was not considered in \cite{Marked}, but the argument is
parallel.}}
in \cite{Marked}.
The converse of Theorem~\ref{mainn} is also true: {\em the multiple cover formula for $S$ implies the GW/PT  correspondence
for $(S \times \p^1,S_0\cup S_{\infty})$ with primary insertions for all curve classes on $S$}.
The proof of Theorem  \ref{mainn} and the discussion of the converse will be presented in Section \ref{gwt}.

Using a direct application of Pardon's
families GW/PT result in the semipositive absolute case, 
we obtain the following unconditional result.

\begin{thm}\label{hodgemc}
The multiple cover formula holds for all integrals of the form
\[ \langle \tau_{0}(\pt)^m \lambda_{g-m} \rangle^{S}_{g,\beta}\, , \]
where $\lambda_i = c_i(\BE)$ are the Chern classes of the Hodge bundle and $\pt \in H^4(S,\BQ)$ is the class of a point.
\end{thm}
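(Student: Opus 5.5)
We plan to realize the Hodge integrals $\langle \tau_0(\pt)^m \lambda_{g-m}\rangle^S_{g,\beta}$ as reduced Gromov--Witten invariants of the absolute threefold $X = S\times E$, for $E$ an elliptic curve. Equivalently, they arise from the local geometry $S\times\BC$ with a $\BC^*$-equivariant twist; we choose the compact product $S\times E$, a semipositive threefold where Pardon's result applies directly. For a stable map to $X$ in class $(\beta,0)$, the $E$-component is constant. The obstruction theory then splits off the dual Hodge bundle twisted by $T_E$, which is trivial, so integration over the $E$-direction produces a factor $(-1)^{g}\lambda_{g}$-type contribution. With point insertions we use instead the classes $\pt_S\times [E]$ and an extra $\pt_S\times\pt_E$ to fix the translation in $E$; after reduction in the $S$-direction this yields exactly $\langle\tau_0(\pt)^m\lambda_{g-m}\rangle$ up to a universal sign. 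Rather than this rough dimension count, we will organize the genus sum as a generating series
\[
F_\beta(u) = \sum_g u^{2g-2}\langle \tau_0(\pt)^m\lambda_{g-m}\rangle^S_{g,\beta}.
\]
We will check the bookkeeping precisely by computing the reduced virtual class of $\Mbar_g(S\times E,(\beta,0))$ as the reduced class of $\Mbar_g(S,\beta)\times E$ capped with $c_{g}(\BE^\vee\otimes T_E)$.

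Second, we apply Pardon's families GW/PT correspondence for semipositive absolute threefolds to the reduced theory of $S\times E$; this reduced theory is a one-parameter family theory over a twistor-type deformation of $S$. This gives $F_\beta(u)$ in terms of the reduced stable pairs series of $S\times E$ with primary point insertions, after $-q=e^{iu}$. The multiple cover formula must then be proven on the PT side. For $S\times E$ the stable pairs invariants in classes $(\beta,0)$ with point insertions reduce, by $E$-translation and $S$-symplectic cosection localization, to Kawai--Yoshioka/KKV-type counts on $S$ depending only on $\beta^2$ and the divisibility of $\beta$. We will aim to show that the PT generating function has the shape
\[
\sum_{k\mid\beta} k^{2m-3}\,(\text{primitive series at } \beta^2/k^2)(q^k).
\]
This shape is exactly the Gopakumar--Vafa multiple cover structure. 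Transporting back through $-q=e^{iu}$, $u\mapsto ku$ produces the factor $k^{2g-3+2m}$, which matches Conjecture~\ref{conj:MCF} since $\deg_\BC(\pt)=2$ and $\varphi_k(\pt)=\pt$.

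The main obstacle is the stable pairs side: establishing the divisibility dependence of reduced PT invariants of $S\times E$ for imprimitive $\beta$. Our first approach is deformation invariance: move $(S,\beta)$ to an elliptic $K3$ or $E_1\times E_2$ with $\beta=k\beta'$. We then express the reduced PT invariants through the KKV formula, which was proven by Pandharipande--Thomas for $K3$ and by Bryan--Oberdieck--Pandharipande--Yin in the abelian case. Where only primary insertions and Hodge classes appear, KKV-type formulas already exhibit the required multiple cover form. Failing that, we would use a cosection on the moduli of pairs together with universality, namely dependence only on $\beta^2$ and the divisibility. That reduces the claim to a comparison of two explicit series.
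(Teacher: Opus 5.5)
Your overall architecture (prove the multiple cover formula on the stable pairs side, then transport it to Gromov--Witten theory through Pardon's correspondence) is the paper's. However, both of your concrete steps fail.

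First, the compact threefold $S\times E$ cannot see the integrals in question. For maps in class $(\beta,0)$ the moduli space is $\Mbar_{g,m}(S,\beta)\times E$. The extra obstruction bundle is $\BE^\vee\otimes T_E\cong\BE^\vee$, because $T_E$ is trivial. Hence the reduced class is $\bigl([\Mbar_{g,m}(S,\beta)]^{\red}\times[E]\bigr)\cap(-1)^g\lambda_g$, and only the top Hodge class ever appears. Since $\lambda_g$ is pulled back from $\Mbar_{g,m}(S,\beta)$, exactly one insertion must carry $\pt_E$. You are then left with $\int_{[\Mbar_{g,m}(S,\beta)]^{\red}}\lambda_g\prod_i\ev_i^*(\pt)$, whose integrand has degree $g+2m$ against dimension $g+m$. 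This forces $m=0$, so you recover only the KKV case. The equivariant local geometry is therefore not \emph{equivalent} to $S\times E$: the equivariant parameter is exactly what is needed. The paper works with $S\times\BC$, with $\BC^*$ acting with weight $t$. There localization gives $\langle\tau_0(\pt)^m\rangle^{S\times\BC,\GW}_{g,\beta}=\frac{1}{t}\langle\tau_0(\pt)^m\,e(\BE^\vee\otimes\Ft)\rangle^S_{g,\beta}$ with $e(\BE^\vee\otimes\Ft)=\sum_i(-1)^i t^{g-i}\lambda_i$, and $\lambda_{g-m}$ is extracted as the $t^{m-1}$-coefficient. Pardon's families correspondence is applied to this equivariant theory.

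Second, the PT multiple cover formula, which is the real content, is given no mechanism. KKV-type formulas only cover $m=0$. Dependence on $\beta^2$ and divisibility alone is just deformation invariance, and it never relates an imprimitive class to primitive ones. In the paper the divisors $k\mid\beta$ arise geometrically, in several steps.
\begin{enumerate}
\item $\BC^*$-localization on $P_{\ch_3,\iota_*\beta}(S\times\BC)$ identifies the fixed loci with perverse nested Quot schemes.
\item There is one surjective cosection per jump, so all flags with more than one jump contribute zero.
\item The surviving $k$-times uniformly thickened pairs form $\Quot^\sharp_{0,\frac{1}{k}(0,\beta,\ch_3)}$, a moduli space on $S$ in class $\beta/k$.
\item The virtual normal bundle contributes $(-1)^{(k-1)\ch_3/k}e(\CV\otimes\Ft^k)$, which amounts to the rescaling $t\mapsto kt$.
\item Universality of descendent integrals over $\Quot^\sharp$ replaces $\beta/k$ by the primitive class $\varphi_k(\beta/k)$. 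This uses wallcrossing to $\Quot^\flat$, the Gholampour--Thomas pushforward, EGL, and universality over the Hilbert scheme of curves.
\end{enumerate}
This yields the factor $k^{2m-1}$, which under $p\mapsto p^k$ becomes $k^{2g-3+2m}$. Your predicted $k^{2m-3}$ would give $k^{2g-5+2m}$, which is already wrong for KKV at $m=0$.
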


The form of the multiple cover formula for Hodge classes (and other insertions of tautological classes
from the moduli of curves) related to Theorem \ref{hodgemc} will be explained in 
Section \ref{revsec}. The $m=0$ case of Theorem \ref{hodgemc} is the Katz-Klemm-Vafa formula proven in \cite{MPT,PTKKV}.

Theorem \ref{hodgemc}, in case $S$ is an abelian surface, has been proven by Blomme and
Carocci \cite{CarBlomme2}
by a very different approach entirely within Gromov-Witten theory: their proof uses degeneration together with
a beautiful refinement of the double ramification cycle formula \cite{CarBlomme1,JPPZ}. The results of
\cite{CarBlomme2}
cover more general integrands for the reduced theory of abelian surfaces than those appearing in Theorem \ref{hodgemc}.
In particular, the multiple cover
formula for all stationary
descendents is proven there in the abelian surface case. Hopefully, the methods of Blomme-Carocci in the future  will also be applicable to $K3$ surfaces.

\vspace{8pt}
\noindent{\bf Acknowledgements.}
Several of the ideas presented here began at the Oberwolfach workshop on
{\em Recent trends in algebraic geometry} in June 2025. 
We thank John Pardon for many discussions about his work on the GW/PT correspondence. We have also benefitted from conversations with Thomas Blomme, Pierrick Bousseau, 
Jim  Bryan, Francesca Carocci, Henry Liu, Davesh Maulik, Sam Molcho, Dragos Oprea,  Richard Thomas, and Qizheng Yin.  

The paper was written, in part, while R.P. was
visiting the University of Cambridge in March 2026. He thanks Dhruv Ranganathan and
the Geometry group at Cambridge for the wonderful mathematical environment (and for many discussions related to the Gromov-Witten theory of $K3$ surfaces).
G.O. was supported by the starting grant {\em Correspondences in enumerative geometry: Hilbert schemes, K3 surfaces and modular forms}, No 101041491 of the European Research Council.
R.P. was supported by SNF-200020-219369 and Swiss\-MAP.

\section{Multiple cover formulas} \label{revsec}

\subsection{Multiple cover formula for $K3$ surfaces}
Let $S$ be a $K3$ surface, and 
let $\beta \in H_2(S,\BZ)$ be an effective curve class.
The only deformation invariants of the pair $(S,\beta)$ are the self-intersection{\footnote{Since the intersection form $\langle , \rangle$ of a $K3$ surface is even, $\langle \beta,\beta \rangle$ is always an even integer.}} $\langle \beta,\beta \rangle\in 2\mathbb{Z}$ and the
divisibility $\mathsf{div}(\beta)\in \mathbb{Z}_{>0}$ of $\beta\in H_2(S,\mathbb{Z})$.\footnote{The claim follows from Eichler's criterion and the description on the moduli space of $K3$ surfaces via the period map, see \cite{HuyK3}.} Since the integrals
\begin{equation}\label{redgw2}
\big\langle \tau_{\ellt_1}(\gamma_1) \cdots \tau_{\ellt_n}(\gamma_n) \big\rangle^{S}_{g,\beta}
=
\int_{[ \Mbar_{g,n}(S,\beta) ]^{\red}} \prod_{i=1}^{n} \ev_i^{\ast}(\gamma_i) \psi_i^{\ellt_i}\, 
\end{equation}
are deformation invariants
of the pair $(S,\beta)$,
we have the freedom to choose a convenient $K3$ surface for which to study the reduced Gromov-Witten theory.

Starting from the original paper of Bryan-Leung \cite{BL}, the standard choice has been to take an {\em elliptically fibered} $K3$ surface with a section:
$$\pi:S_E\rightarrow \mathbb{P}^1\, , \ \ \mathsf{s}: \mathbb{P}^1 \rightarrow S_E\, .$$
The generic such $K3$ surface $S_E$ has a Picard lattice
generated by the section class $\mathsf{s}$ and the fiber class $\mathsf{f}$ of $\pi$:
$$\mathsf{Pic}(S_E)= \mathbb{Z} \mathsf{s} + \mathbb{Z} \mathsf{f}\, , \ \ \langle \mathsf{s},\mathsf{s}\rangle = -2\, , \ \ \langle \mathsf{s},\mathsf{f}\rangle = 1\,, \ \ 
\langle \mathsf{f},\mathsf{f}\rangle = 0\, .
$$
For $m\in 2\mathbb{Z}$ and $r\in \mathbb{Z}_{>0}$,
the pairs $(S_E, \beta_{m,r})$, where $$\beta_{m,r}= r\mathsf{s}+r\left(\frac{m}{2}+1\right)\mathsf{f}\in \mathsf{Pic}(S_E)\, ,\ \ \langle \beta_{m,r},\beta_{m,r} \rangle= r^2m\,, \ \ 
\mathsf{div}(\beta_{m,r})=r\, ,$$
cover all deformation classes of pairs $(S,\beta)$ with $\beta$ effective.
Therefore, all reduced Gromov-Witten invariants of $K3$ surfaces can be calculated on the single surface $S_E$.

For $m\in 2\mathbb{Z}$ and $r\in \mathbb{Z}_{>0}$,
there is an isomorphism of $\mathbb{Q}$-algebras
$$\varphi_{m,r}: H^*(S_E,\mathbb{Q}) \rightarrow 
H^*(S_E,\mathbb{Q})$$
defined by the following rules:
\begin{itemize}
    \item [$\bullet$] In degrees 0 and 4, $\varphi$ restricts to the identity,
    $$\varphi_{m,r}|_{H^0(S_E,\mathbb{Q})}= \mathsf{Id}|_{H^0(S_E,\mathbb{Q})}\,,  \ \ \
    \varphi_{m,r}|_{H^4(S_E,\mathbb{Q})}= \mathsf{Id}|_{H^4(S_E,\mathbb{Q})}\, .
    $$
    \item[$\bullet$] In degree 2,  let $V\subset H^2(S_E,\mathbb{Z})$ be the orthogonal complement of $\mathsf{Pic}(S_E)$, and let
    $$H^2(S_E,\mathbb{Q})= \mathsf{Pic}(S_E)\otimes_\mathbb{Z} \mathbb{Q} \, \oplus\,  V \otimes_\mathbb{Z} \mathbb{Q}\, .$$
    The isomorphism $\varphi_{m,r}$ satisfies
$$\varphi_{m,r}(\beta_{m,r})= \mathsf{s} + \left(\frac{r^2m}{2}+1\right) \mathsf{f}\, , \ \ 
\varphi_{m,r}(\mathsf{f})= r\mathsf{f}\,, \ \ \varphi_{m,r}|_{V\otimes_\mathbb{Z} \mathbb{Q}} = \mathsf{Id}|_{V\otimes_\mathbb{Z} \mathbb{Q}}\, .$$
\end{itemize}

If $m\geq 0$ or if $(m,r)=(-2,1)$, then $\varphi_{m,r}(\beta_{m,r})$
is an effective curve class. If $$m<0 \ \ \text{and} \ \ (m,r)\neq (-2,1)\,,$$ then 
$\varphi_{m,r}(\beta_{m,r})$ is {\em not} effective.

The multiple cover formula of Conjecture \ref{conj:MCF} can then be written explicitly as:
\begin{eqnarray*}
\langle \tau_{\ellt_1}(\gamma_1) \cdots \tau_{\ellt_n}(\gamma_n) \rangle^{S_E}_{g,\beta_{m,r}}
& =
&\sum_{k | r} k^{2g-3 + \sum_i \deg_{\BC}(\gamma_i)}
\langle \tau_{\ellt_1}(\varphi_{m,r/k}(\gamma_1)) \cdots \tau_{\ellt_n}(\varphi_{m,r/k}(\gamma_n)) \rangle^{S_E}_{g,\varphi_{m,r/k}(\beta_{m,r/k})}\, \\
& =
&\sum_{k | r} k^{2g-3 + \sum_i \deg_{\BC}(\gamma_i)}
\langle \tau_{\ellt_1}(\varphi_{m,r/k}(\gamma_1)) \cdots \tau_{\ellt_n}(\varphi_{m,r/k}(\gamma_n)) \rangle^{S_E}_{g,\beta_{(r/k)^2m,1}}\,,
\end{eqnarray*}
where summands on the right side with curve classes $\beta_{(r/k)^2m,1}$
are defined to vanish if
$m<0$ and $(m,r/k)\neq (-2,1)$.

\subsection{Multiple cover formula for abelian surfaces}
Let $S$ be an abelian surface, and 
let $\beta \in H_2(S,\BZ)$ be an effective curve class.
The only deformation invariants of the pair $(S,\beta)$ are the self-intersection{\footnote{Since the intersection form $\langle , \rangle$ of an abelian surface is even, $\langle \beta,\beta \rangle$ is always an even integer.}} $\langle \beta,\beta \rangle\in 2\mathbb{Z}$ and the
divisibility $\mathsf{div}(\beta)\in \mathbb{Z}_{>0}$ of $\beta\in H_2(S,\mathbb{Z})$, see 
\cite[Section 1]{BOPY}.\footnote{The discussion in \cite{BOPY} is in terms of type $(d_1,d_2)$ of a curve class. The type of a curve class $\beta$ is determined by the self-intersection and divisibility by $d_1 = \mathsf{div}(\beta)$, $2 d_1 d_2 = \langle \beta, \beta \rangle$.} Since the integrals
\begin{equation}\label{redgw2Ab}
\big\langle \tau_{\ellt_1}(\gamma_1) \cdots \tau_{\ellt_n}(\gamma_n) \big\rangle^{S}_{g,\beta}
=
\int_{[ \Mbar_{g,n}(S,\beta) ]^{\red}} \prod_{i=1}^{n} \ev_i^{\ast}(\gamma_i) \psi_i^{\ellt_i}\, 
\end{equation}
are deformation invariants
of the pair $(S,\beta)$,
we can again choose a convenient abelian surface for which to study the reduced Gromov-Witten theory.

The standard choice in the abelian surface case 
is the product $A=E_1 \times E_2$
of two elliptic curves, see \cite[Appendix A]{O_NLGW}. 
The generic such product $A$ has a Neron-Severi lattice
generated by the two fiber classes:
$$\mathsf{NS}(A)= \mathbb{Z} \mathsf{f_1} + \mathbb{Z} \mathsf{f_2}\, , \ \ \langle \mathsf{f}_i,\mathsf{f}_i\rangle = 0\, , \ \ \langle \mathsf{f}_1,\mathsf{f}_2\rangle = 1\,.
$$
For $m\in 2\mathbb{Z}_{\geq 0}$ and $r\in \mathbb{Z}_{>0}$,
the pairs $(A, \beta_{m,r})$, where $$\beta_{m,r}= r\mathsf{f}_1+\frac{rm}{2}\mathsf{f}_2\in \mathsf{NS}(A)\, ,\ \ \langle \beta_{m,r},\beta_{m,r} \rangle= r^2m\,, \ \ 
\mathsf{div}(\beta_{m,r})=r\, ,$$
cover all deformation classes of pairs $(S,\beta)$
with $\beta$ effective{\footnote{An effective curve on an abelian surface has non-negative self-intersection.}}.
Therefore, all reduced Gromov-Witten invariants of abelian surfaces can be calculated on $A$.

For $r\in \mathbb{Z}_{>0}$,
there is an isomorphism of $\mathbb{Q}$-algebras
$$\varphi_{r}: H^*(A,\mathbb{Q}) \rightarrow 
H^*(A,\mathbb{Q})$$
defined as follows.
Let 
$$H^1(A,\mathbb{Q})= V_1 \oplus V_2\,, \ \ \
V_1 = \mathbb{Q}dx_1 + \mathbb{Q} dy_1\,, \ \ \ 
V_2 = \mathbb{Q}dx_2 + \mathbb{Q} dy_2\, ,$$
where $x_i,y_i$ are flat coordinates on $E_i$.
In particular $\mathsf{f}_i = dx_i \wedge dy_i$.
Define 
$$\varphi_{r}: H^1(A,\mathbb{Q}) \rightarrow 
H^1(A,\mathbb{Q})$$
on generators by
$$\varphi_r(dx_1)=  dx_1\, ,\ \ 
\varphi_r(dy_1)= \frac{1}{{r}} dy_1\, ,\ \
\varphi_r(dx_2)=  dx_2\, ,\ \ 
\varphi_r(dy_2)= r dy_2
\,. $$
Since $H^*(A,\mathbb{Q})= \Lambda^* H^1(A,\mathbb{Q})$, we can extend $\varphi_r$ canonically to an isomorphism
of $\mathbb{Q}$-algebras satisfying $\varphi_r(\mathsf{p})=\mathsf{p}$.
Since $m\geq 0$, $$\varphi_{r}(\beta_{m,r})= \mathsf{f}_1+ \frac{r^2m}{2}\mathsf{f}_2$$
is an effective curve class. 

The multiple cover formula of Conjecture \ref{conj:MCF} can then be written explicitly as:
\begin{eqnarray*}
\langle \tau_{\ellt_1}(\gamma_1) \cdots \tau_{\ellt_n}(\gamma_n) \rangle^{A}_{g,\beta_{m,r}}
& = &\sum_{k | r} k^{2g-3 + \sum_i \deg_{\BC}(\gamma_i)}
\langle \tau_{\ellt_1}(\varphi_{r/k}(\gamma_1)) \cdots \tau_{\ellt_n}(\varphi_{r/k}(\gamma_n)) \rangle^{A}_{g,\varphi_{r/k}(\beta_{m,r/k})}\, \\
& =
&\sum_{k | r} k^{2g-3 + \sum_i \deg_{\BC}(\gamma_i)}
\langle \tau_{\ellt_1}(\varphi_{r/k}(\gamma_1)) \cdots \tau_{\ellt_n}(\varphi_{r/k}(\gamma_n)) \rangle^{A}_{g,\beta_{(r/k)^2m,1}}\,,
\end{eqnarray*}
for $m\in 2\mathbb{Z}_{\geq 0}$ and $r\in \mathbb{Z}_{>0}$.

\section{Cycle-valued multiple cover formulas}

\subsection{Insertions from moduli space} \label{secdefint}
Let $S$ be a symplectic surface, and 
let $\beta \in H_2(S,\BZ)$ be a (non-zero) effective curve class.
If $2g-2+n>0$,  there is a morphism
$$\epsilon:\overline{M}_{g,n}(S,\beta)\rightarrow \overline{M}_{g,n}\, $$
determined by the domain
of the stable map.
Given a 
class{\footnote{The cohomology theory of Deligne-Mumford stacks will always be taken with $\mathbb{Q}$-coefficients.}}
$\Theta \in H^*(\overline{M}_{g,n})$, we can define more general integrals:
\begin{equation}\label{redgwto}
\big\langle \tau_{\ellt_1}(\gamma_1) \cdots \tau_{\ellt_n}(\gamma_n)\, \Theta \big\rangle^{S}_{g,\beta}
=
\int_{[ \Mbar_{g,n}(S,\beta) ]^{\red}} \prod_{i=1}^{n} \ev_i^{\ast}(\gamma_i) \psi_i^{\ellt_i}\, \cdot \, \epsilon^*(\Theta)\, .
\end{equation}
The class $\Theta$ is conjectured to be a spectator in the multiple cover formula.

\begin{conj}
\label{conj:Taut}
Let $S$ be a symplectic surface, and let $\beta \in H_2(S,\BZ)$ be a (non-zero) effective curve class.
For every positive integer $k$ dividing $\beta$, let
\[ \varphi_k : H^{\ast}(S,\BQ) \to H^{\ast}(S_k,\BQ) \]
be a degree-preserving $\mathbb{Q}$-algebra isomorphism, where $S_k$ is a symplectic surface, satisfying:
\begin{enumerate}
\item[\emph{(i)}] $\varphi_k(\pt)=\pt$, where $\pt\in H^4(S,\mathbb{Q})$ is the class of a point,
\item[\emph{(ii)}] $\varphi_k(\beta/k)$ is an effective primitive curve class.
\end{enumerate}
Then, we have
\[
\langle \tau_{\ellt_1}(\gamma_1) \cdots \tau_{\ellt_n}(\gamma_n)\, \Theta \rangle^{S}_{g,\beta}
=
\sum_{k | \beta} k^{2g-3 + \sum_i \deg_{\BC}(\gamma_i)}
\langle \tau_{\ellt_1}(\varphi_k(\gamma_1)) \cdots \tau_{\ellt_n}(\varphi_k(\gamma_n)) \,
\Theta \rangle^{S_k}_{g,\varphi_k(\beta/k)} \, .
\]
If such a $\varphi_k$ does not exist, then the corresponding summand on the right is defined
to vanish.
\end{conj}

\subsection{Cycle-valued reduced Gromov-Witten theory}
Conjecture \ref{conj:Taut} is formulated for integrals. 
An {\em equivalent} multiple cover formula can be formulated for cycle-valued reduced
Gromov-Witten classes.
While we are principally interested  here in numerical invariants, several basic compatibilities are easier to see on the cycle level.

For $2g-2+n>0$ and $\gamma_1,\ldots, \gamma_n \in H^*(S,\BQ)$,
the Gromov-Witten cycle classes of $S$ are defined by
\[ \CC_{g,\beta}(\gamma_1,\ldots,\gamma_n) = \epsilon_{\ast}\left( \prod_{i=1}^{n} \ev_i^{\ast}(\gamma_i) \cap [\Mbar_{g,n}(S,\beta)]^{\red} \right)\in H^*(\Mbar_{g,n})\, . \]
Since $\Mbar_{g,n}(S,\beta)$ has reduced virtual dimension $g+n$, the (complex) degree is
\[ \deg_{\BC} \CC_{g,\beta}(\gamma_1,\ldots,\gamma_n) = 3g-3+n - (g+n-\sum_i \deg_{\BC}(\gamma_i)) = 2g-3+\sum_i \deg_{\BC}(\gamma_i)\, , \]
which is precisely the exponent in the multiple cover formula!

The case where the curve class{\footnote{The symbol $\beta$ is reserved for a (nonzero) effective curve class.}} is zero will appear in our study.
The standard virtual class may then be non-zero (as the standard cosection construction does not apply), and we define
\[
\CC_{g,0}(\gamma_1,\ldots,\gamma_n) = \tau_{\ast}\left( \prod_{i=1}^{n} \ev_i^{\ast}(\gamma_i) \cap [\Mbar_{g,n}(S,0)]^{\vir} \right)\in H^*(\Mbar_{g,n})\, . \]

\begin{lemma} \label{lemma:observation}
For all $g$ and $\gamma_1,\ldots,\gamma_n\in H^*(S,\BQ)$,
the class $\CC_{g,0}(\gamma_1,\ldots,\gamma_n)$ either vanishes or has cohomological degree zero.
\end{lemma}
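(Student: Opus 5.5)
We plan to use the standard description of the degree zero moduli space. For $\beta=0$ we have $\Mbar_{g,n}(S,0)\cong \Mbar_{g,n}\times S$, with the stable maps being constant. The obstruction theory is then governed by the bundle $\BE^\vee\boxtimes T_S$, so that
\[ [\Mbar_{g,n}(S,0)]^{\vir} = e(\BE^\vee\boxtimes T_S)\cap [\Mbar_{g,n}\times S]\, , \]
where $\BE$ is the Hodge bundle, of rank $g$. The evaluation maps all equal the projection $p_2$ to $S$, and $\epsilon=p_1$ is the first projection. Therefore
\[ \CC_{g,0}(\gamma_1,\ldots,\gamma_n) = p_{1\ast}\Big( e(\BE^\vee\boxtimes T_S)\cdot p_2^{\ast}(\gamma_1\cdots\gamma_n)\Big)\, . \]

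The next step is to expand the Euler class using the splitting principle. Write $c_1(T_S)=0$ and $c_2(T_S)=\chi(S)\,\pt$, using the symplectic condition $\omega_S\cong\CO_S$. Then $e(\BE^\vee\boxtimes T_S)$ is a polynomial in the $\lambda_i$ and in $c_1(S),c_2(S)$. Since $c_1=0$, only terms of the form $A_0 + A_2\cdot c_2(S)$ survive, where $A_0,A_2$ are classes on $\Mbar_{g,n}$. More precisely, $c_{2g}(\BE^\vee\boxtimes T_S)$ for a rank-2 bundle $T$ with Chern roots $t_1,t_2$ equals $\prod_j(-x_j+t_1)(-x_j+t_2)$. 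With $t_1+t_2=0$, this becomes $\prod_j (x_j^2 + t_1t_2) = \prod_j(x_j^2 + c_2)$. Expanding gives
\[ c_g(\BE)^2 + c_2(S)\cdot(\text{a degree } 2g-2 \text{ class}) = c_g(\BE)^2 + c_2(S)\,(\ldots)\, , \]
where the coefficient of $c_2(S)$ is $\sum_j\prod_{i\neq j}x_i^2$, which equals $(-1)^{g-1}$ times something computed via Mumford's relation $c(\BE)c(\BE^\vee)=1$. Mumford's relation gives $\lambda_g^2=0$ for $g\ge1$, and it identifies the coefficient as $(-1)^{g-1}\lambda_{g-1}^2$ up to sign, which is a well-known formula. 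Since $c_2(S)^2=0$ on a surface, only these two terms matter.

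With this, the pushforward $p_{1\ast}$ picks out the part of total $S$-degree 4 of $e\cdot\prod\gamma_i$. There are two cases. If $\sum\deg_\BC\gamma_i=2$, only the $\lambda_g^2$ term contributes, and it vanishes by Mumford's relation when $g\ge1$; when $g=0$ it is $1$ and yields a degree zero class. If $\sum\deg_\BC\gamma_i=0$, only the $c_2(S)$ term contributes, giving $\chi(S)\cdot(\pm\lambda_{g-1}^2)\prod\int$ (with the $\gamma_i$ all multiples of $1$). This has degree $2g-2$. It vanishes for abelian surfaces since $\chi=0$. For $K3$ surfaces and $g\ge 2$, we need $\lambda_{g-1}^2=0$, which should again follow from Mumford's relation, since $\lambda_{g-1}^2 = 2\lambda_g\lambda_{g-2}$. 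But $\lambda_g\lambda_{g-2}$ is not zero in general, so this needs care. In all other cases the class vanishes for degree reasons.

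The main obstacle is exactly this $K3$, $\sum\deg=0$ case. Instead of relying on the vanishing above, we will double-check the claimed degree directly. The expected degree is $2g-3+\sum\deg_\BC\gamma_i$. Meanwhile the virtual dimension of $\Mbar_{g,n}(S,0)$ is $(1-g)(\dim S-3)+n = g-1+n$, which differs from the reduced value $g+n$ used in the computation at the start. Redoing the count with this dimension, $\CC_{g,0}$ has degree $3g-3+n-(g-1+n-\sum\deg_\BC\gamma_i)=2g-2+\sum\deg_\BC\gamma_i$. It is therefore of degree zero precisely when $g=1$ and $\sum\deg=0$, or $g=0$ and $\sum\deg=2$. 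The remaining task is to show vanishing in all other cases using the case analysis above. In the $g\ge2$, $\sum\deg=0$ case we will use the full relation, which gives the $c_2$-coefficient as $\sum_j\prod_{i\ne j}x_i^2 = (-1)^{g-1}\cdot 2\lambda_g\lambda_{g-2}\ldots$. We expect this to be the delicate point; the cleaner route is the observation that $T_S$ being trivial in $K$-theory up to $\chi(S)$ forces $e(\BE^\vee\boxtimes T_S)$ to be $\chi(S)\,\pt\cdot (-1)^{g-1} c_{2g-2}(\BE^\vee\oplus\BE^\vee)$-type classes. These vanish for $g\ge2$ by $c(\BE)c(\BE^\vee)=1$, since $c_{2g-2}(\BE\oplus\BE^\vee)=0$ in positive degree.
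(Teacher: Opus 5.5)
Your argument is correct and is essentially the paper's. The paper quotes from Getzler--Pandharipande the formula $[\Mbar_{g,n}(S,0)]^{\vir}=[\Mbar_{0,n}\times S]$, $c_2(S)\cap[\Mbar_{1,n}\times S]$, $0$ for $g=0$, $g=1$, $g\geq 2$ respectively, and then pushes forward as you do. That formula is exactly what your expansion $e(\BE^\vee\boxtimes T_S)=\lambda_g^2+e_{g-1}(x_1^2,\ldots,x_g^2)\,c_2(S)$ gives after applying Mumford's relation, where $e_{g-1}$ is the elementary symmetric function in the squared Chern roots of $\BE$.

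You should, however, drop the detour through $\pm\lambda_{g-1}^2$ and $\BE^\vee\oplus\BE^\vee$. Both identifications are wrong, and $\lambda_{g-1}^2=2\lambda_g\lambda_{g-2}$ is indeed nonzero in general, so that version would break the lemma. The $c_2(S)$-coefficient is
$e_{g-1}(x_1^2,\ldots,x_g^2)=\lambda_{g-1}^2-2\lambda_g\lambda_{g-2}=(-1)^{g-1}c_{2g-2}(\BE\oplus\BE^\vee)$.
This vanishes for $g\geq 2$ directly from $c(\BE)c(\BE^\vee)=1$, so the case you flagged as delicate is not delicate at all.
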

\begin{proof}
We have the explicit description
\begin{equation} \label{Dfcsdfre3r}
[ \Mbar_{g,n}(S,0) ]^{\vir}
=
\begin{cases}
[ \Mbar_{0,n} \times S] & \text{ if } g=0 \\
c_2(S) \cap [ \Mbar_{1,n} \times S] & \text{ if } g=1 \\
0 & \text{ if } g \geq 2,
\end{cases}
\end{equation}
see \cite[Section 2.2]{GetzlerP}.
After capping with the insertions $\gamma_i$ and pushing forward to $\Mbar_{g,n}$, we either obtain 0 {\em or} a multiple of the fundamental class of the moduli space of curves.
\end{proof} 

A cycle-valued  multiple cover formula conjecture is easily formulated. We follow
the conventions of Conjectures \ref{conj:MCF} and \ref{conj:Taut} for $\varphi_k$.

\begin{conj}[Cycle-valued Multiple Cover Formula] \label{conj:cycle}
We have
\begin{equation} \label{cycleMCF}
\CC_{g,\beta}(\gamma_1,\ldots,\gamma_n)
=
\sum_{k|\beta} k^{d} \, \CC_{g,\varphi_k(\beta/k)}( \varphi_k(\gamma_1), \ldots, \varphi_k (\gamma_n)) \in H^{2d}(\Mbar_{g,n})\, ,
\end{equation}
where $d=\deg_{\mathbb{C}} \CC_{g,\beta}(\gamma_1,\ldots,\gamma_n)$.
\end{conj}

By Poincar\'e duality on $\overline{M}_{g,n}$, Conjecture \ref{conj:Taut} is
{\em equivalent} to the multiple cover formula of Conjecture \ref{conj:cycle} for cycle-valued reduced Gromov-Witten classes.


\subsection{Tautological classes}
The interaction of the multiple cover formula with tautological 
classes{\footnote{See \cite{Pan-Calculus} for an introduction to tautological classes on the moduli spaces of curves.}} 
$$RH^*(\Mbar_{g,n})\subset H^*(\Mbar_{g,n})$$  has several special properties.

\begin{defn}
{\em Let
$\Theta \in RH^{\ast}(\Mbar_{g,n})$ be a tautological class.
The cycle-valued multiple cover formula (MCF) {\em holds numerically} for $\Theta$ if the equality
\eqref{cycleMCF}  holds for all $\gamma_1,\ldots,\gamma_n \in H^*(S,\BQ)$ after intersecting both sides with $\Theta$ and taking the integral over $\Mbar_{g,n}$.}
\end{defn}


\begin{prop} \label{prop:psi to all taut}
If the cycle-valued MCF holds numerically for all tautological classes of the form $$\Theta = \prod_{i} \psi_i^{\ellt_i}\,, $$
then the cycle-valued MCF holds numerically for all tautological classes.
\end{prop}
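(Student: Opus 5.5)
Tautological classes on $\Mbar_{g,n}$ are additively spanned by decorated strata classes: pushforwards $\xi_{\Gamma*}$ along gluing maps $\xi_\Gamma:\Mbar_\Gamma=\prod_{v}\Mbar_{g(v),n(v)}\to\Mbar_{g,n}$ of monomials in $\psi$ classes at legs and half-edges and $\kappa$ classes at vertices. The plan has three steps. First, eliminate the $\kappa$ classes. Then reduce a decorated stratum to products of pure $\psi$ monomials on the vertex moduli spaces via the splitting axiom for reduced Gromov-Witten classes. Finally, check that the multiple cover formula is compatible with that splitting.

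\emph{Step 1: removing $\kappa$ classes.} Let $p:\Mbar_{g,n+m}\to\Mbar_{g,n}$ forget the last $m$ points. Then $\kappa$ polynomials are $p_*$ of $\psi$ monomials on the extra points, and by the projection formula $\int \CC\cdot \xi_{\Gamma*}(\ldots)$ becomes an integral of $p^*\CC$ against $\psi$ monomials on $\Mbar_{g,n+m}$. The fundamental class axiom for the reduced class gives
$$p^*\CC_{g,\beta}(\gamma_1,\dots,\gamma_n)=\CC_{g,\beta}(\gamma_1,\dots,\gamma_n,1,\dots,1).$$
Since $\deg_{\BC}(1)=0$, the exponent $d$ is increased by $m$ (matching $\dim$ growth), so the formula with the $1$ insertions is again an instance of the same statement. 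Together with $\varphi_k(1)=1$, this shows that the MCF numerically for $\psi\,\kappa$-decorated strata follows from the MCF for $\psi$-decorated strata with more markings. (I would double-check the exponent bookkeeping: $d$ is the degree of $\CC$, which shifts by $m$ under $p^*$, consistent with the identity $k^d$ being replaced by $k^{d+m}$ only if that matches. If it does not, the fix is to use that $\CC$ for $g$ and $p^*\CC$ have the same degree in the formula convention, since $\deg_{\BC}\CC_{g,\beta}(\gamma,1^m)=2g-3+\sum\deg\gamma_i$ is unchanged. Indeed the exponent is unchanged, and pullback preserves cohomological degree, so this is consistent.)

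\emph{Step 2: pulling back to strata.} For a stable graph $\Gamma$, $\xi_\Gamma^*\CC_{g,\beta}$ is computed by the splitting axiom. It is a sum over distributions $\beta=\sum_v\beta_v$ of products of vertex classes with the diagonal $\sum_j e_j\otimes e^j$ inserted at each edge. For the reduced class, the reduced obstruction theory forces exactly one vertex to carry a reduced class with $\beta_v\neq 0$. Every other vertex gets the ordinary virtual class, which vanishes for $\beta_v\neq0$ by the cosection argument. Such vertices therefore have $\beta_v=0$ and are governed by Lemma~\ref{lemma:observation}. Loops in $\Gamma$ contribute in the same way via the non-separating diagonal. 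Hence
$$\int \CC_{g,\beta}\cdot\xi_{\Gamma*}\Big(\prod\psi\Big)=\sum_{v_0}\int_{\Mbar_{g(v_0),n(v_0)}}\CC_{g(v_0),\beta}(\ldots,e_j,\ldots)\,\prod\psi\cdot(\text{constants from }\beta=0\text{ vertices}).$$

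\emph{Step 3: compatibility and the main obstacle.} Apply the hypothesis (MCF for pure $\psi$ monomials) at the vertex $v_0$. Then recombine the $\beta=0$ vertex factors and the diagonal insertions on the $S_k$ side, using two facts. First, $\varphi_k$ is a degree-preserving algebra isomorphism with $\varphi_k(\pt)=\pt$, hence compatible with Poincaré pairing up to the fixed normalization, and it sends the diagonal class $\sum e_j\otimes e^j$ to the diagonal of $S_k$. Second, $\varphi_k$ maps $c_2(S)$ to $c_2(S_k)$, because $c_2=\chi\cdot\pt$.

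The main obstacle is the exponent bookkeeping. The exponent $k^{d_{v_0}}$ produced at $v_0$ must equal $k^{d}$ for the total class. Using Lemma~\ref{lemma:observation}, the $\beta=0$ vertices contribute degree-zero classes. So the degree constraint (the class is paired against a top-degree integrand and nonzero only in the correct degree) forces $d_{v_0}=d$ minus the degree of the $\psi$ decorations at the other vertices and edges. I would verify this by noting that a nonzero term requires each $\beta=0$ factor to be degree zero. The $\deg_{\BC}$ of the diagonal halves then sum to $2$ per edge, which exactly accounts for the codimension of the edge. Granting this, both sides match term by term, proving the proposition.
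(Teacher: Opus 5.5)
The gap is in Step 3, exactly at the point you flag as the main obstacle. You assert that the degree constraint forces $d_{v_0}=d$ minus the degree of the $\psi$ decorations at the other vertices and edges, and then grant the matching. If that were the relation, the hypothesis at $v_0$ would produce a factor $k^{d_{v_0}}\neq k^{d}$. The vertex formulas could then not be reassembled into the multiple cover formula for $\CC_{g,\beta}(\gamma)$, so as written the argument does not close.

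What is true, and what is needed, is $d_{v_0}=d$ exactly in every nonzero term, independently of the $\psi$ decorations. The exponent is the degree of the Gromov--Witten class; the test class only decides which terms survive. To verify this, let $h$ run over legs and half-edges.
\begin{itemize}
\item By \eqref{Dfcsdfre3r}, a nonzero factor $\CC_{g(v),0}(\ldots)$ at $v\neq v_0$ has degree $0$ and forces $2g(v)-2+\sum_{h\in v}\deg_{\BC}(\text{insertion at }h)=0$.
\item The K\"unneth halves on each edge have total degree $2$.
\item The genus satisfies $g=\sum_v g(v)+|E|-|V|+1$.
\end{itemize}
Summing $2g(v)-2+\sum_{h\in v}\deg_{\BC}$ over all vertices therefore gives $2g-2+\sum_i\deg_{\BC}(\gamma_i)$. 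Only $v_0$ contributes, so $d_{v_0}=2g(v_0)-3+\sum_{h\in v_0}\deg_{\BC}=d$. Equivalently, as the paper argues: pullback preserves degree, and the nonzero curve-class-$0$ factors have degree $0$. Your observation that the diagonal contributes $2$ per edge is the right ingredient. However, it compensates the genus and vertex count, not the codimension of the edge (which is $1$).

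With that repaired, your route is a valid alternative to the paper's. The paper inducts on $(g,n)$ using only the one-edge gluing maps $\iota$ and $\xi$, applying the inductive hypothesis to arbitrary tautological classes on the smaller moduli spaces. It removes $\kappa$ classes only on the interior of $\Mbar_{g,n}$. You instead split along the whole stable graph at once and apply the $\psi$-hypothesis directly at the $\beta$-carrying vertex. This avoids induction, at the cost of needing the splitting formula for arbitrary graphs (self-loops, and signs for odd classes on abelian surfaces) and $\kappa$-removal on strata.

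For the $\kappa$-removal, Step 1 should produce $\psi$-decorated strata on $\Mbar_{g,n+m}$, not bare $\psi$ monomials: each $\kappa$ at a vertex $v$ is replaced by a $\psi$ power at a new leg $\star$ on $v$. This uses:
\begin{itemize}
\item $p\circ\xi_{\Gamma'}=\xi_\Gamma\circ p_v$;
\item the vanishing $\psi_\star\cdot D_{h,\star}=0$, to absorb the corrections in $p_v^*\psi_h$;
\item $p_v^*\kappa_s=\kappa_s-\psi_\star^s$;
\item inclusion--exclusion for products of $\kappa$'s.
\end{itemize}
Your final conclusion there, that adding insertions $1$ leaves the exponent unchanged, is correct; the opening sentence of that parenthetical is not.
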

\begin{proof}
Let $\gamma=(\gamma_1,\ldots,\gamma_n)$.
We denote the standard gluing maps by
$$\iota : \Mbar_{g-1,n+2} \to \Mbar_{g,n}\, , \ \ \ 
\xi : \Mbar_{g_1,n_1 + 1} \times \Mbar_{g_2,n_2+1} \to \Mbar_{g,n}\, .$$ 
The splitting formulas for the reduced virtual class take the form
\begin{align*}
\iota^{\ast}\CC_{g,\beta}(\gamma) & = \CC_{g-1,\beta}(\gamma, \Delta_S)\, , \\
\xi^{\ast}\CC_{g,\beta}(\gamma) & = \sum_i \CC_{g_1,\beta}( \gamma_{1},\ldots,\gamma_{n_1}, \delta_i)
\boxtimes \CC_{g_2,0}(\delta_i^{\vee}, \gamma_{n_1+1}, \ldots, \gamma_n) \\
& \ \ \ \ + \sum_i \CC_{g_1,0}( \gamma_{1},\ldots,\gamma_{n_1}, \delta_i) 
\boxtimes \CC_{g_2,\beta}(\delta_i^{\vee}, \gamma_{n_1+1}, \ldots, \gamma_n)\, ,
\end{align*}
where 
$\sum_i \delta_i \boxtimes \delta_i^{\vee}$ is a K\"unneth decomposition of the class $\Delta_S \in H^{\ast}(S \times S)$ of the diagonal.{\footnote{In case $S$ is an abelian surface, care must be taken in the splitting formula to keep track of signs related to odd insertions.}}

Assuming the hypothesis of Proposition \ref{prop:psi to all taut},
we will prove, by induction on $g$ and the number of markings $n$, the equality
\[ \CC_{g,\beta}(\gamma) \cdot \Theta = \sum_{k|\beta} k^d \CC_{g,\varphi_k(\beta/k)}(\varphi_k (\gamma)) \cdot \Theta\, , \]
where $d=\deg \CC_{g,\beta}(\gamma)$ and
$\Theta \in RH^*(\Mbar_{g,n})$.
We have used the notation  
$$
\varphi_k (\gamma)= ( \varphi_k(\gamma_1), \ldots, \varphi_k (\gamma_n))\, , \ \ \ 
\alpha \cdot \alpha' = \int_{\Mbar_{g,n}} \alpha \cup \alpha'$$ 
for brevity.

\vspace{5pt}
\noindent $\bullet$
If $\Theta = \iota_{\ast}(\Theta')$ for some $\Theta' \in RH^{\ast}(\Mbar_{g-1,n+2})$, then
\[
\CC_{g,\beta}(\gamma)\cdot \Theta =   \CC_{g-1,\beta}(\gamma, \Delta_S)\cdot \Theta'\, .
\]
By induction, the right side satisfies the multiple cover formula with exponent
$$\deg_{\mathbb{C}} \CC_{g-1}(\gamma,\Delta_S) = \deg_{\mathbb{C}} \CC_{g,\beta}(\gamma)\, .$$

\vspace{5pt}
\noindent $\bullet$ If $\Theta = \xi_{\ast}(\Theta_1 \boxtimes \Theta_2)$, then
$\CC_{g,\beta}\cdot \Theta$ is a linear combination of terms
\[ \Big( \CC_{g_1,\beta}( \gamma_{1},\ldots,\gamma_{n_1}, \delta_i)\cdot \Theta_1 \Big) 
\cdot \Big(  \CC_{g_2,0}(\delta_i^{\vee}, \gamma_{n_1+1}, \ldots, \gamma_n)\cdot \Theta_2 \Big)\, , \]
\[ \Big( \CC_{g_1,0}( \gamma_{1},\ldots,\gamma_{n_1}, \delta_i)\cdot \Theta_1 \Big) 
\cdot \Big(  \CC_{g_2,\beta}(\delta_i^{\vee}, \gamma_{n_1+1}, \ldots, \gamma_n)\cdot \Theta_2 \Big)\, . \]
By induction, the first factor (in the first product) satisfies the multiple cover with exponent
$$\deg_{\mathbb{C}}( \CC_{g_1,\beta}( \gamma_{1},\ldots,\gamma_{n_1}, \delta_i))\, .$$
By Lemma~\ref{lemma:observation}, the  class $\CC_{g_2,0}(\delta_i^{\vee}, \gamma_{n_1+1}, \ldots,\gamma_n)$, when nonzero, has degree $0$.
We see
\[ \deg_{\mathbb{C}}( \CC_{g_1,\beta}( \gamma_{1},\ldots,\gamma_{n_1}, \delta_i)) = 
\deg_{\mathbb{C}}\left( 
\CC_{g_1,\beta}( \gamma_{1},\ldots,\gamma_{n_1}, \delta_i) \boxtimes \CC_{g_2,0}(\delta_i^{\vee}, \gamma_{n_1+1}, \ldots, \gamma_n) \right)
= 
\deg_{\mathbb{C}} \CC_{g,\beta}(\gamma). \]
Hence, the exponent in the MCF formula matches again. The analysis is the same
for the second product.

\vspace{5pt}

The above analysis proves the induction step for all tautological classes $\Theta$ pushed forward from the boundary of $\Mbar_{g,n}$. We must now prove the induction step for
the interior tautological classes $\Theta$.

The tautological ring of the interior $M_{g,n}$ is generated by $\psi$ and $\kappa$-classes. The
$\kappa$-classes are defined by pushforward:
$$\kappa_r = p_{\ast}(\psi_{n+1}^{r+1})\,,\  \ \ \ p : \Mbar_{g,n+1} \to \Mbar_{g,n}\, ,
$$
where $p$ is the morphism forgetting the last point.
We have
\begin{gather*} p^{\ast}(\psi_i) = \psi_i - D_{i,n+1}\, , \\
p^{\ast}(\kappa_r) = \kappa_r - \psi_{n+1}^{r}\, , \end{gather*}
where $D_{i,n+1}$ is the boundary divisor parameterizing curves with a rational tail{\footnote{$D_{i,n+1}$ is the image of an appropriate gluing map $\xi : \Mbar_{g,n} \times \Mbar_{0,3} \to \Mbar_{g,n+1}$.}}
carrying the markings $\{i,n+1\}$.
We also have the vanishing 
\[ \psi_{n+1} \cdot D_{i,n+1}=0\, . \]

Let $\Theta = \kappa_{r_1} \ldots \kappa_{r_{m}} \prod_{i} \psi^{\ellt_i}$ be an arbitrary monomial in $\psi$ and $\kappa$-classes.
By the  projection formula for $\kappa_{r_m}$, we obtain
\[
\int_{\Mbar_{g,n}}  \CC_{g,\beta}(\gamma)\cdot \Theta = 
\int_{\Mbar_{g,n+1}} \CC_{g,\beta}(\gamma,1) \cdot 
\psi_{n+1}^{r_{m}+1} \cdot \prod_{i=1}^{n}  \psi_i^{\ellt_i} \cdot \prod_{i=1}^{m-1} (\kappa_{r_i} - \psi_{n+1}^{r_i})\, .
\]
Repeating the construction for the other $\kappa$-classes, we find 
\begin{equation}
\label{j12}
\int_{\Mbar_{g,n}} \CC_{g,\beta}(\gamma) \cdot \Theta = \int_{\Mbar_{g,n+{m}}} \mathsf{P} \cdot \CC_{g,\beta}(\gamma,1^{\times \ell}) 
\end{equation}
where $\mathsf{P}$ is  polynomial in the $\psi$-classes.
By our assumption, the right side of \eqref{j12} satisfies the MCF with exponent 
$\deg_{\mathbb{C}} \CC_{g,\beta}(\gamma,1^{\times \ell}) = \deg_{\mathbb{C}} \CC_{g,\beta}(\gamma)$.
\end{proof}

Let $2g-2+n>0$ and
$\Theta \in RH^*(\overline{M}_{g,n})$. We have defined, in Section \ref{secdefint},  the integrals:
\begin{equation}\label{redgwt}
\big\langle \tau_{\ellt_1}(\gamma_1) \cdots \tau_{\ellt_n}(\gamma_n)\, \Theta \big\rangle^{S}_{g,\beta}
=
\int_{[ \Mbar_{g,n}(S,\beta) ]^{\red}} \prod_{i=1}^{n} \ev_i^{\ast}(\gamma_i) \psi_i^{\ellt_i}\, \cdot \, \epsilon^*(\Theta)\, ,
\end{equation}
where the $\psi_i$ are the cotangent line classes on $\Mbar_{g,n}(S,\beta)$.
If all $\ellt_i=0$, we write
$\langle \gamma_1,\ldots,\gamma_n, \Theta \rangle^{S}_{g,\beta}$ for the invariant. If $2g-2+n\leq 0$, 
then we only consider the case $\Theta=1$ in the
integral \eqref{redgwt} by convention.



\begin{lemma} \label{lemma:desc vs ansc}
Let $2g-2+n>0$ and $\ellt_1\geq 1$, then
\begin{multline*} 
\langle \tau_{\ellt_1}(\gamma_1) \cdots \tau_{\ellt_n}(\gamma_n)\, \Theta \rangle^{S}_{g,\beta}
=
\langle  \tau_{\ellt_1-1}(\gamma_1) \cdots \tau_{\ellt_n}(\gamma_n)\, \Theta \cdot \psi_1 \rangle^{S}_{g,\beta} \\
+ \sum_i \langle \tau_{\ellt_1-1}(\gamma_1) \tau_0(\delta_i)  \rangle^{S}_{0,\beta} \cdot \langle  \tau_0(\delta_i^{\vee}) \tau_{\ellt_2}(\gamma_2) \cdots \tau_{\ellt_n}(\gamma_n)\, \Theta \rangle^S_{g,0} \, ,
\end{multline*}
where $\sum_i \delta_i \boxtimes \delta_i^{\vee} \in H^{\ast}(S \times S)$ is a K\"unneth decomposition of the class of the diagonal.
\end{lemma}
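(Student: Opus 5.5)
The identity compares the cotangent line class $\psi_1$ on $\Mbar_{g,n}(S,\beta)$ with the pullback $\epsilon^*\psi_1$ of the cotangent line class on $\Mbar_{g,n}$. The plan is to write $\psi_1 = \epsilon^*\psi_1 + D$, where $D$ is the virtual divisor of maps for which marking $1$ lies on a component that is contracted by $\epsilon$. Such a component is a genus $0$ component carrying only marking $1$ and one node. Since $2g-2+n>0$, the stabilization destabilizes exactly these components, which must have nonzero degree. Writing $\psi_1^{\ellt_1}=\psi_1^{\ellt_1-1}\cdot(\epsilon^*\psi_1 + D)$, the first summand gives the term $\langle \tau_{\ellt_1-1}(\gamma_1)\cdots \Theta\cdot\psi_1\rangle$ directly.

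For the $D$ term, $D$ is the sum over splittings $\beta=\beta_1+\beta_2$ of the images of the gluing maps
$$\Mbar_{0,\{1,\bullet\}}(S,\beta_1)\times_S \Mbar_{g,\{\star,2,\ldots,n\}}(S,\beta_2)\to \Mbar_{g,n}(S,\beta),$$
with $\beta_1\neq 0$. This is the standard comparison lemma (as in Kontsevich–Manin / Getzler) and holds at the level of virtual classes. For reduced classes, the splitting formula for the reduced class distributes the reduction onto exactly one factor, with the other factor carrying the ordinary virtual class, exactly as in the splitting formulas used in the proof of Proposition \ref{prop:psi to all taut}. When both $\beta_1$ and $\beta_2$ are nonzero, the factor with the ordinary virtual class has nonzero curve class on a symplectic surface. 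Its ordinary virtual class therefore vanishes by the cosection argument, so only the splittings $\beta_2=0$ (reduced class on the rational tail) survive. The splitting $\beta_1=0$ is excluded because a degree-zero component with two special points is unstable.

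On the surviving locus, the classes restrict compatibly. $\epsilon^*\Theta$ restricts to the pullback of $\Theta$ from the genus $g$ factor, since $\epsilon$ forgets the rational tail, and $\psi_j$ for $j\ge2$ restrict to the corresponding classes there. The remaining power $\psi_1^{\ellt_1-1}$ restricts to $\psi_1$ on the rational tail $\Mbar_{0,2}(S,\beta)$. Applying the diagonal splitting $\Delta_S=\sum_i\delta_i\boxtimes\delta_i^\vee$ then yields
$$\sum_i\langle\tau_{\ellt_1-1}(\gamma_1)\tau_0(\delta_i)\rangle_{0,\beta}\,\langle\tau_0(\delta_i^\vee)\tau_{\ellt_2}(\gamma_2)\cdots\Theta\rangle_{g,0}.$$
When $(g,n)=(0,3)$ or similar low cases occur, the convention that $\Theta$ is only paired in the stable range is consistent, because $\Theta$ lives on the genus $g$ factor with $n$ markings, which is stable.

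The main obstacle is justifying $\psi_1-\epsilon^*\psi_1=[D]$ with the correct reduced virtual class on $D$. Pulling the divisor relation back from the stack of prestable curves to $\Mbar_{g,n}(S,\beta)$ handles the comparison. The reduced splitting then requires checking that the reduced obstruction theory restricted to $D$ matches the product of (reduced) $\times$ (ordinary) obstruction theories, which follows from the compatibility of the surjection $\mathrm{Ob}\to\CO$ with the node-splitting sequence.
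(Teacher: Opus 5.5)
Your proposal is correct and follows the paper's proof: write $\psi_1=\epsilon^*\psi_1+\sum_{\beta_0\neq 0}D_{1,\beta_0}$, then use the splitting formula for the reduced class to force the rational tail to carry all of $\beta$ and the genus $g$ part to have curve class $0$. One small imprecision: the part of the domain contracted by $\epsilon$ that contains marking $1$ is in general a tree of rational components, not a single component with one node, but your gluing description via $\Mbar_{0,\{1,\bullet\}}(S,\beta_1)$ already accounts for this.
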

\begin{proof}
We have $\psi_1 = \epsilon^{\ast}(\psi_1) + \sum_{\beta_0 > 0} D_{1,\beta_0}$ where $D_{i,\beta_0}$ is the (virtual) Cartier divisor parameterizing stable maps
$$f : C_0 \cup C_1 \to S$$ where $C_0$ is a genus $0$ curve carrying the marked point $i$ and satisfying $f_{\ast}[C_0] = \beta_0$.
The claim then follows by the splitting formula for the reduced class, which forces $\beta_0=\beta$ and $f_{\ast}[C_1] = 0$.
\end{proof}

We say that {\em the MCF holds for $\Theta$-tautological invariants} \eqref{redgwt} if
\[
\langle \tau_{\ellt_1}(\gamma_1) \cdots \tau_{\ellt_n}(\gamma_n)\, \Theta \rangle^{S}_{g,\beta}
=
\sum_{k | \beta} k^{2g-3 + \sum_i \deg_{\BC}(\gamma_i)}
\langle  \tau_{\ellt_1}(\varphi_k(\gamma_1)) \cdots \tau_{\ellt_n}(\varphi_k(\gamma_n))\, \Theta \rangle^{S_k}_{g,\varphi_k(\beta/k)}
\]
for $\Theta\in RH^*(\Mbar_{g,n})$ for $2g-2+n>0$ and $\Theta=1$ for $2g-2+n\leq 0$.

\begin{prop} \label{prop:equivalent MCF}
The following are equivalent:
\begin{enumerate}
\item[(i)] The MCF holds for all invariants of the form
$\langle \tau_{\ellt_1}(\gamma_1) \cdots \tau_{\ellt_n}(\gamma_n) \rangle^{S}_{g,\beta}$.
\item[(ii)] The MCF holds for all $\Theta$-tautological invariants of the form
$\langle \tau_{\ellt_1}(\gamma_1) \cdots \tau_{\ellt_n}(\gamma_n)\, \Theta \rangle^{S}_{g,\beta}$ .
\item[(iii)] The MCF holds for all $\Theta$-tautological invariants of the form
$\langle \tau_0(\gamma_1)\cdots \tau_0(\gamma_n)\, \Theta
\rangle^{S}_{g,\beta}$.
\end{enumerate} 
\end{prop}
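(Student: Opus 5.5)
The plan is to prove the cycle of implications (ii)$\Rightarrow$(i), (ii)$\Rightarrow$(iii), (iii)$\Rightarrow$(ii) and (i)$\Rightarrow$(ii). The first two are trivial specializations: take $\Theta=1$ for (i), and all $\ellt_i=0$ for (iii). The real content lies in the two remaining implications. In both, Lemma \ref{lemma:desc vs ansc} converts descendents on $\Mbar_{g,n}(S,\beta)$ into $\psi$-pullbacks from $\Mbar_{g,n}$. Proposition \ref{prop:psi to all taut} then passes from $\psi$-monomials to arbitrary tautological $\Theta$.

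For (iii)$\Rightarrow$(ii), I will argue by induction on $\sum_i \ellt_i$. Lemma \ref{lemma:desc vs ansc} rewrites $\langle \tau_{\ellt_1}(\gamma_1)\cdots\Theta\rangle^S_{g,\beta}$ as a term with $\ellt_1$ lowered and $\Theta$ replaced by the tautological class $\Theta\psi_1$, which is handled by induction with the same exponent $2g-3+\sum\deg_\BC\gamma_i$. The lemma also produces a correction term
\[
\sum_i \langle \tau_{\ellt_1-1}(\gamma_1)\tau_0(\delta_i)\rangle^S_{0,\beta}\,\langle \tau_0(\delta_i^\vee)\cdots\Theta\rangle^S_{g,0}.
\]
Here I must check exponents. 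The genus $0$ factor satisfies the MCF by induction, since it has fewer total descendents. Note that $2g-2+n$ may be $\le 0$ there, but $\Theta=1$ is allowed by convention, and for $n=2$ in genus $0$ the lemma can be replaced by the divisor/dilaton-type reduction or absorbed into the induction as stated. Its exponent is $-3+\deg\gamma_1+\deg\delta_i$.

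For the degree $0$ factor I use Lemma \ref{lemma:observation}: it is nonzero only when its dimension constraint forces $\deg\delta_i^\vee+\sum_{j\ge2}\deg\gamma_j$ to equal the correct value. Then $\deg\delta_i+\deg\delta_i^\vee=2$ yields total exponent $2g-3+\sum\deg\gamma_j$. More precisely, one needs $\deg_\BC \CC_{g,0}=0$, which gives $\deg\delta_i^\vee+\sum_{j\geq 2}\deg \gamma_j = 2g-2+\ldots$; this bookkeeping is the step to verify carefully. I also need that $\varphi_k$ is compatible with the Künneth decomposition of the diagonal. This holds because $\varphi_k$ is a degree-preserving algebra isomorphism fixing $\pt$, hence it preserves the Poincaré pairing, so $\sum\varphi_k\delta_i\boxtimes\varphi_k\delta_i^\vee=\Delta_{S_k}$. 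Degree $0$ invariants also transform trivially under $\varphi_k$ by \eqref{Dfcsdfre3r}, since $c_2$ and integration are preserved.

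For (i)$\Rightarrow$(ii), I run the same identity in reverse. By Proposition \ref{prop:psi to all taut} it suffices to treat $\Theta=\prod\psi_i^{b_i}$ with primary insertions, that is, cycle-valued MCF numerically for $\psi$-monomials. I then induct on $\sum b_i$: Lemma \ref{lemma:desc vs ansc}, read as
\[
\langle\ldots\Theta\psi_1\rangle = \langle\tau_{\ellt_1+1}\ldots\Theta\rangle - (\text{genus }0\times\text{degree }0\text{ term}),
\]
trades pulled-back $\psi$'s for descendents. Here (i) and induction handle the pieces, with the same exponent check as before.

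The main obstacle I expect is this exponent and sign bookkeeping in the correction term, including odd classes for abelian surfaces, together with unstable edge cases such as $(g,n)$ with $2g-2+n\le0$ in the genus $0$ factor.
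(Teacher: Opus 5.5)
Your route is the paper's: the trivial specializations, Lemma~\ref{lemma:desc vs ansc} to trade a pulled-back $\psi_1$ for a descendent at the cost of a genus-$0$ times degree-$0$ correction, Proposition~\ref{prop:psi to all taut} to reduce to $\psi$-monomials, and Lemma~\ref{lemma:mc exponent degree} for the exponents. Your ``(i)$\Rightarrow$(ii)'' is really the paper's (i)$\Rightarrow$(iii) followed by (iii)$\Rightarrow$(ii), which is fine. The bookkeeping you left open does work out. Nonvanishing of the degree-$0$ factor forces $\deg_{\BC}\delta_i^\vee+\sum_{j\ge 2}\deg_{\BC}\gamma_j=2-2g$. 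Hence $\deg_{\BC}\delta_i=2g+\sum_{j\ge 2}\deg_{\BC}\gamma_j$, and the genus-$0$ exponent $-3+\deg_{\BC}\gamma_1+\deg_{\BC}\delta_i$ is exactly $2g-3+\sum_j\deg_{\BC}\gamma_j$.

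The step that does not work as written is the genus-$0$ factor in (iii)$\Rightarrow$(ii). The invariant $\langle\tau_{a_1-1}(\gamma_1)\tau_0(\delta_i)\rangle^S_{0,\beta}$ has $2g-2+n=0$, so Lemma~\ref{lemma:desc vs ansc} cannot be applied to it, and hypothesis (iii) only covers it when $a_1=1$. So for $a_1\ge 2$ it is not ``absorbed into the induction''. The inductive step needs an explicit supplement, which the paper supplies.

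By dimension ($a+\deg_{\BC}\gamma_1+\deg_{\BC}\gamma_2=2$), the only cases with $a>0$ are $\tau_1(\alpha)\tau_0(\alpha')$, $\tau_1(D)\tau_0(1)$, $\tau_1(1)\tau_0(D)$ and $\tau_2(1)\tau_0(1)$, with $\alpha,\alpha'\in H^1(S)$ and $D\in H^2(S)$. The string and dilaton equations reduce the last three to $\langle\tau_0(D)\rangle^S_{0,\beta}$ or $\langle\,\rangle^S_{0,\beta}$, which lie in (iii).

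For $\tau_1(\alpha)\tau_0(\alpha')$, insert $\tau_0(D')$ with $D'\cdot\beta\neq 0$ and use the divisor equation. This passes to a stable three-point invariant, where your lemma-based step applies; the extra term $\langle\tau_0(D'\alpha)\tau_0(\alpha')\rangle^S_{0,\beta}$ is in (iii). The exponent shift matches because $\varphi_k(D')\cdot\varphi_k(\beta/k)=(D'\cdot\beta)/k$.

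The dilaton equation also disposes of the remaining unstable members of (ii), namely $\langle\tau_1(1)\rangle^S_{0,\beta}$ and $\langle\tau_1(1)\tau_1(1)\rangle^S_{0,\beta}$.
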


\noindent If any of the properties (i)-(iii) of Proposition 
\ref{prop:equivalent MCF} is satisfied, we say that {\em the MCF holds for $S$}.

\begin{proof}
Clearly (ii) implies (i).
To show that (i) implies (iii), we can assume $2g-2+n > 0$.
We have
\[ \langle \tau_0(\gamma_1)\cdots \tau_0(\gamma_n)\,  \Theta \rangle^{S}_{g,\beta} = \int_{\Mbar_{g,n}}  \CC_{g,\beta}(\gamma)\cdot \Theta\, . \]
By Proposition~\ref{prop:psi to all taut}, we need only  prove (iii) for $\Theta = \prod_i \psi_i^{k_i}$.
Using Lemma~\ref{lemma:desc vs ansc}, we have 
$$\langle \tau_0(\gamma_1)\cdots \tau_0(\gamma_n)\, \prod_i \psi_i^{\ellt_i} \rangle^S_{g,\beta} = \langle \prod_i \tau_{\ellt_i}(\gamma_i) \rangle^S_{g,\beta} \, + \, \text{corrections terms,}$$
where the correction terms
satisfy the MCF 
with the required matching exponent. The required
exponent calculus is given in  Lemma~\ref{lemma:mc exponent degree}.

For the direction (iii) to (ii), we first show that (iii) implies the MCF for $\langle \tau_{\ellt}(\gamma_1) \tau_0(\gamma_2) \rangle_{0,\beta}^{S}$ with $\ellt>0$. By dimension reasons, the possible cases for the insertions are 
$$\tau_1(\alpha) \tau_0(\alpha')\, ,\ \  \tau_1(D) \tau_0(1)\, ,\ \ \tau_1(1) \tau_0(D)\, \ \ \text{or}\ \ \tau_2(1) \tau_0(1)\, ,$$
where $D \in H^2(S)$ and $\alpha,\alpha'\in H^1(S)$.
By using the string, dilaton, and divisor equations, we immediately reduce to (iii).
Implication (iii) to (ii) then follows from Lemma~\ref{lemma:desc vs ansc}. \end{proof}

We record a few elementary properties of Gromov-Witten invariants of $S$ in curve class 0 which
will also appear later in the proof of Theorem \ref{mainn}.

\begin{lemma} We have\label{lemma:mc exponent degree}
\begin{enumerate}
\item[\em{(a)}] If $\langle \tau_{\ellt_1}(\gamma_1) \cdots \tau_{\ellt_n}(\gamma_n)\, \Theta \rangle^S_{g,0} \neq 0$, then
$2(g-1) + \sum_i \deg_\BC(\gamma_i) = 0$.\\
\item[\em{(b)}] Assume we have the nonvanishing:
\begin{equation} \label{abc} \langle   \prod_{j} \tau_{\ellt_{0j}}(\gamma_{0j})\, \Theta_0 \rangle^{S}_{g_0,\beta}
\cdot 
\prod_{i=1}^{r} \langle  \prod_{j} \tau_{\ellt_{ij}}(\gamma_{ij}) \, \Theta_i \rangle^{S}_{g_i,0} \neq 0\, . \end{equation}
Let $g=g_0 + \ldots + g_r - r$ be the total genus.\footnote{If $C = C_0 \sqcup \ldots \sqcup C_r$, then $g(C) = \sum_i g(C_i) - r$.} Then,
\[ 2g-3+\sum_{i,j} \deg_\BC(\gamma_{ij}) = 2g_0 - 3 + \sum_{j} \deg_\BC(\gamma_{0j})\, . \]
\end{enumerate}
\end{lemma}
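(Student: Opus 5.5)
Part (a) is a dimension count. For $\beta=0$ the virtual class of $\Mbar_{g,n}(S,0)$ is described explicitly by \eqref{Dfcsdfre3r}. For $g\geq 2$ it vanishes, so the invariant is zero and there is nothing to prove. For the remaining genera the virtual dimension is $(1-g)(\dim S-3)+n = g-1+n$, and the dimension condition for the integral is
$$\sum_i \ellt_i + \sum_i \deg_\BC(\gamma_i) + \deg_\BC \Theta = g-1+n .$$

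For $g=0$ the class is $[\Mbar_{0,n}\times S]$. The integrand then splits into a factor from $\Mbar_{0,n}$ (the $\psi$-classes and $\Theta$) and a factor $\prod\gamma_i$ from $S$. In curve class $0$ the $\psi$-classes are pulled back from $\Mbar_{0,n}$, so the $S$-factor has degree exactly $\sum_i\deg_\BC(\gamma_i)$. Nonvanishing forces this to equal $2$, which is the claim $2(0-1)+\sum\deg=0$.

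For $g=1$ the class is $c_2(S)\cap[\Mbar_{1,n}\times S]$, and $c_2(S)$ already has degree $2$ on $S$. The $S$-factor $\prod\gamma_i\cdot c_2(S)$ is nonzero only if $\sum\deg_\BC(\gamma_i)=0$, again as claimed. In the abelian case $c_2=0$, so the $g=1$ invariants vanish outright.

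Part (b) is bookkeeping built on (a). Each factor with $i\geq 1$ is nonzero, so (a) gives $\sum_j \deg_\BC(\gamma_{ij}) = 2-2g_i$. Summing over $i=1,\ldots,r$ and using $g = g_0+\ldots+g_r-r$:
$$2g-3+\sum_{i,j}\deg_\BC(\gamma_{ij}) = 2g_0 + 2\sum_{i\ge1} g_i - 2r - 3 + \sum_j \deg_\BC(\gamma_{0j}) + \sum_{i\ge 1}(2-2g_i),$$
which simplifies to $2g_0-3+\sum_j\deg_\BC(\gamma_{0j})$.

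The only point needing care is in (a): in curve class zero, $\psi_i$ and $\epsilon^*\Theta$ carry no $S$-component. This holds because $\Mbar_{g,n}(S,0)=\Mbar_{g,n}\times S$ and the cotangent lines are pulled back from the first factor. The unstable cases $2g-2+n\le 0$ with $\Theta=1$ fall under the same count, or vanish.
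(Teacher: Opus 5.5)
Your proof is correct and follows the paper's argument. The paper's proof simply appeals to the explicit description \eqref{Dfcsdfre3r} of $[\Mbar_{g,n}(S,0)]^{\vir}$ (the fundamental class in genus $0$, capped with $c_2(S)$ in genus $1$, zero in genus $\geq 2$). You have written out the resulting degree count on the $S$-factor and the summation over the $r$ curve-class-zero factors needed for (b).
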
 
\begin{proof}
The properties (a) and (b) follow directly from  the explicit description of the virtual class of the space $[\Mbar_{g,n}(S,0)]^{\vir}$ of constant maps given 
in Lemma \ref{lemma:observation}.
\end{proof}

\subsection{An example} \label{subsec:an example}
For a symplectic surface $S$,
the MCF holds for the integrals 
$$\Big\langle \tau_0(\mathsf{p})^{m} \lambda_{g-m}  \Big\rangle^{S}_{g, \beta}=
\int_{[\Mbar_{g,m}(S,\beta)]^{\red}} 
\prod_{i=1}^m \text{ev}_i^*(\mathsf{p}) \cdot \lambda_{g-m}$$
by Theorem \ref{hodgemc} proven in Section \ref{pr-hodgemc}.

A complete evaluation in the primitive case was obtained for $K3$ surfaces{\footnote{In the $m=g$ case, the evaluation of
$\Big\langle \tau_0(\mathsf{p})^{g}   \Big\rangle^{S_E}_{g, \beta_{2h-2,1}}$
goes back to the paper of Bryan-Leung \cite{BL}.}} in \cite[Theorem 3]{MPT},
\[ \sum_{g= 0}^\infty \sum_{h = 0}^\infty
\Big\langle \tau_0(\mathsf{p})^{m}\lambda_{g-m} \Big\rangle^{S_E}_{g, \beta_{2h-2,1}} z^{2g-2} q^{h-1}
=
\frac{1}{\Theta(z,q)^2 \Delta(q)} S(z,q)^m, \]
and for abelian surfaces in \cite[Theorem 2 and Proposition 2]{BOPY},
\[
\sum_{g=0}^\infty \sum_{h=0}^\infty
\Big\langle \tau_0(\mathsf{p})^{m}\lambda_{g-m} \Big\rangle^{A}_{g, \beta_{2h-2,1}} z^{2g-2} q^{h-1}
= m S(z,q)^{m-1},
\]
with the following modular and Jacobi forms:
\begin{enumerate}
\item[$\bullet$]
the discriminant modular form
\[ \Delta(q) = q \prod_{n \geq 1} (1-q^n)^{24}\, , \]
\item[$\bullet$]
the Taylor expansion of the normalized odd Jacobi theta function
\[
\Theta(z,q) 
=  z \exp\left(-2\sum_{ k\geq 1} G_{2k} \frac{z^{2k}}{(2k)!}\right)\, ,
\]
where $G_{2k}(q) = - \frac{B_{2k}}{2 \cdot 2k} + \sum_{n \geq 1} \sum_{d|n} d^{2k-1} q^n$
is the classical Eisenstein series,
\item[$\bullet$] the negative of the logarithmic derivative of the Jacobi theta function
\[ S(z,q) = -q \frac{d}{dq} \log \Theta(z,q) =
\sum_{n \geq 1} \sum_{d|n} \frac{n}{d} (e^{dz} - 2 + e^{-dz}) q^n\, . \]
\end{enumerate}

Using the MCF, we then obtain a complete evaluation for all curve classes for both $K3$ and abelian surfaces:
\begin{eqnarray*}
\Big\langle \tau_0(\mathsf{p})^{m}\lambda_{g-m} \Big\rangle^{S}_{g,\beta_{2h-2,r}}
& =
&\sum_{k | r} k^{2g-3 + 2m}
\Big\langle \tau_0(\mathsf{p})^{m}\lambda_{g-m} \Big \rangle^{S}_{g,\beta_{(r/k)^2\cdot (2h-2),1}}\,.
\end{eqnarray*}
A discussion of the modular properties of the generating series in the imprimitive case can be found in
\cite[Section 7.5]{MPT}.

\section{Quot schemes on surfaces and wallcrossing} \label{quotstuff}
\subsection{Overview}
Let $S$ be a nonsingular projective surface.
In Section \ref{quotstuff},
we will study $\ell$-nested Quot schemes $\Quot_{\alpha}(\CE)$,
which parameterize flags of quotients
\begin{equation} \label{flagE}
 [E_0 \overset{j_0}{\twoheadrightarrow} E_1 \overset{j_1}{\twoheadrightarrow} \ldots \twoheadrightarrow E_{\ell-1} \overset{j_{\ell}}{\twoheadrightarrow} E_{\ell}] 
 \end{equation}
of coherent sheaves  $E_i \in \Coh(S)$ with fixed Chern characters $\alpha=(\alpha_0,\ldots,\alpha_\ell)$, where the initial sheaf $E_0$ is allowed to move in a moduli space $M$ of stable sheaves on $S$.
We will also consider a perverse version of the Quot scheme parameterizing flags of quotients in the abelian category $\Coh^{\sharp}(S)$ which is obtained from $\Coh(S)$ by tilting along the subcategory of 0-dimensional sheaves. 

If $S$ is  a symplectic surface, we will endow $\Quot_{\alpha}(\CE)$ and $\Quot^{\sharp}_{\alpha}(\CE)$ with perfect obstruction theories and show that there are as many linearly independent surjective cosections as there are jumps in the flag \eqref{flagE}. 
Furthermore, if
$M=S^{[n]}$ is the Hilbert scheme of $n$ points of $S$, 
we will show
in Theorem~\ref{thm:wallcrossing}
by wallcrossing that tautological integrals over the 1-nested Quot scheme are equal to the corresponding tautological integrals
over the perverse 1-nested Quot scheme.

In Section~\ref{dtptred},
the Quot schemes $\Quot_{\alpha}(\CE)$ and $\Quot^{\sharp}(\CE)$ for $M=S^{[n]}$  will appear naturally (with their obstruction theories) as $\BC^*$-fixed loci of the Hilbert schemes of curves and of the moduli spaces of stable pairs on the threefold $S \times \p^1$. 
The existence of sufficiently many cosections shows that only the $1$-jump flags can contribute to the reduced DT/PT invariants in the localization formula.
As a consequence, we will derive a multiple cover formula for the PT invariants in Section~\ref{dtptred}.
The equality of tautological integrals immediately implies a DT/PT correspondence.

\subsection{Nested Quot schemes}
Let $S$ be a nonsingular projective surface. Let $M$ be a proper, nonsingular, moduli space of stable sheaves{\footnote{Stability is defined with respect to a stability condition which is omitted in the notation. We assume there are no strictly semistable sheaves.}} on $S$ of positive rank and fixed determinant with Chern character $\alpha_0 \in H^{\ast}(S,\BQ)$. We assume the existence of a universal sheaf
$\CE$ on $M \times S$ and that
the tangent bundle of $M$ is given by{\footnote{The second equality implies that the moduli space $M$ is unobstructed.}} 
\begin{equation} \label{tangent bundle M} T_M = \ext^1_{\pi}(\CE,\CE)_0 = R \hom_{\pi}(\CE,\CE)_0[1]\, , \end{equation}
where
$\pi : M \times S \to M$ is the projection and $( \ - \ )_0$ stands for taking the traceless part, 
\begin{align*}
\ext^1_{\pi}(\CE,\CE)_0 & =\ker(\mathrm{tr}: \ext^1_{\pi}(\CE,\CE) \to \Ext^1(\CO_S, \CO_S) \otimes \CO_M)\, , \\
R \hom_{\pi}(\CE,\CE)_0 & = \Cone( R \hom_{\pi}(\CE,\CE) \xrightarrow{\mathrm{tr}} R \Hom(\CO_S, \CO_S) \otimes \CO_M)[-1]\, .
\end{align*}
The fundamental example for us will
be the Hilbert scheme of points $M=S^{[n]}$ viewed as a moduli space of ideal sheaves with the universal ideal sheaf $\CE=\CI_Z$.

Let $\alpha=(\alpha_0,\ldots,\alpha_{\ell})$ 
where $\alpha_0,\ldots, \alpha_{\ell} \in H^{\ast}(S,\BQ)$.
The nested Quot scheme $\Quot_{\alpha}(\CE)$
parameterizes flags of surjections in $\Coh(S)$ of sheaves in $M$ with numerical data specified by $\alpha_i$,
\[ 
\Quot_{\alpha}(\CE)
=
\Big\{ E_{\bullet} = [E_0 \overset{j_0}{\twoheadrightarrow} E_1 \overset{j_1}{\twoheadrightarrow} \ldots \twoheadrightarrow E_{\ell-1} \overset{j_{\ell}}{\twoheadrightarrow} E_{\ell}]\, \Big|\, E_i \in \Coh(S),\, E_{0} \in M,\, \ch(E_i) = \alpha_i \Big\}. \]

By definition, the objects of $\Quot_{\alpha}(\CE)$ over a finite type scheme $T$ are flags of surjections of coherent sheaves $$\CE_{\bullet} = [\CE_0 \twoheadrightarrow \CE_1 \twoheadrightarrow \ldots \twoheadrightarrow \CE_{\ell}]$$ on $T \times S$, with $\CE_i$ flat over $T$, $\CE_0 \in M(T)$,
and with $\ch((\CE_i)_t)=\alpha_i$ for all closed points $t \in T$.
Moreover, two such flags $\CE_{\bullet}$ and $\CE'_{\bullet}$ are isomorphic if there are isomorphisms $\varphi_i : \CE_i \to \CE'_i$ such that $$\varphi_i \circ j_{i-1} = j'_{i-1} \circ \varphi_{i-1}$$ for all $i$.\footnote{The isomorphism
$\varphi_0$ must respect the determinant trivialization. More precisely,
the moduli space $M$ with fixed determinant $\CL\in \mathsf{Pic}(S)$ parameterizes, over a scheme $T$, the data $(\CE_0,\gamma)$ where $\CE$ is a coherent sheaf on $T \times S$ flat over $T$, fiberwise stable, and an isomorphism $\gamma : \det(\CE) \to \pi_S^{\ast}(\CL)$. Two pairs $(\CE,\gamma)$ and $(\CE', \gamma')$ are isomorphic if there exists an isomorphism $\varphi : \CE \to \CE'$ such that $\gamma' \circ \varphi = \gamma$.}

\begin{rmk}
For a flag $E_{\bullet} \in \Quot_{\alpha}(\CE)$, 
we may set $F_i = \mathrm{Ker}(E_0 \to E_i)$ and obtain a flag of inclusion
\begin{equation} \label{inclusions}
0 = F_0 \hookrightarrow F_1 \hookrightarrow \ldots \hookrightarrow F_{\ell} \subset E_0\, .
\end{equation}
As such, $\Quot_{\alpha}(\CE)$ may also be viewed as a scheme parameterizing filtrations \eqref{inclusions} with $\ch(F_i) = \alpha_0 - \alpha_i$.
In families, $\Quot_{\alpha}(\CE)$ parameterizes flags $\CF_0 \hookrightarrow \ldots \hookrightarrow \CF_{\ell} \hookrightarrow \CE_0$ of inclusions with $\CE_0/\CF_i$ flat.
The point of view of inclusions will be our connection to the fixed locus of DT/PT moduli spaces.
However, from a functorial point of view, surjections behave better, so we choose to work with surjections here.
\end{rmk}

The nested Quot scheme can be constructed by induction on $\ell$
by realizing $\Quot_{(\alpha_0,\ldots,\alpha_{\ell})}(\CE)$
as the Grothendieck Quot scheme of the universal sheaf $\CE_{\ell-1}$
over $\Quot_{(\alpha_0,\ldots,\alpha_{\ell-1})}(\CE)$.
Therefore, $\Quot_{\alpha}(\CE)$ is a proper scheme.
An obstruction theory 
for $\Quot_{\alpha}(\CE)$
can be described as follows.

\begin{thm}[\cite{MPR}] \label{thm:pot on nested quot scheme} 
The Quot scheme
$\Quot_{\alpha}(\CE)$ has a natural obstruction theory $\BE^{\bullet} \to \BL_{\Quot_{\alpha}(\CE)}$ with virtual tangent bundle
\begin{equation} \label{Ebullet} (\BE^{\bullet})^{\vee} \cong \Cone\left(
R\hom_{\pi}(\CE_{0},\CE_{0})_{0} \oplus \bigoplus_{i=1}^{\ell} R\hom_{\pi}(\CE_i, \CE_i) \xrightarrow{\psi} \bigoplus_{i=1}^{\ell} R\hom_{\pi}(\CE_{i-1}, \CE_{i}) \right)\, , \end{equation}
where $\CE_0 \twoheadrightarrow \cdots \twoheadrightarrow \CE_{\ell}$ is the universal flag on $S\times \Quot_{\alpha}(\CE)$,
the map $$\pi : S\times \Quot_{\alpha}(\CE)  \to \Quot_{\alpha}(\CE)$$ is the projection, and $\psi$ is the composition of the natural morphism
\[
R\hom_{\pi}(\CE_{0},\CE_{0})_{0} \oplus \bigoplus_{i=1}^{\ell} R\hom_{\pi}(\CE_i, \CE_i)
\to 
\bigoplus_{i=0}^{\ell} R\hom_{\pi}(\CE_i, \CE_i)
\]
with the morphism $\delta(\varphi_0,\ldots,\varphi_{\ell}) = (j_0 \circ \varphi_0 - \varphi_1 \circ j_0, \ldots, j_{\ell-1} \circ \varphi_{\ell-1} - \varphi_\ell \circ j_{\ell-1})$.
\end{thm}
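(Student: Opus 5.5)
The plan is to construct the obstruction theory by induction on $\ell$, using the tower of relative Quot schemes
\[
\Quot_{(\alpha_0,\ldots,\alpha_\ell)}(\CE) \xrightarrow{\ p_\ell\ } \Quot_{(\alpha_0,\ldots,\alpha_{\ell-1})}(\CE)\, ,
\]
where $p_\ell$ is the Grothendieck Quot scheme of the universal sheaf $\CE_{\ell-1}$.

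\emph{Base case $\ell=0$.} Here $\Quot_{\alpha}(\CE)=M$ is nonsingular. By \eqref{tangent bundle M}, its tangent bundle is $R\hom_\pi(\CE_0,\CE_0)_0[1]$, so the cone in \eqref{Ebullet} reduces to $T_M$. We take $\BE^\bullet=\Omega_M$ with the identity map to $\BL_M$.

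\emph{Relative step.} For the relative Quot scheme $p_\ell$, the standard relative obstruction theory of a Quot scheme has relative virtual tangent complex $R\hom_\pi(\CK_\ell,\CE_\ell)$, where $\CK_\ell=\Ker(\CE_{\ell-1}\to\CE_\ell)$. This is a relative perfect obstruction theory because $S$ is a surface. The truncation issue is that $\Ext^2(K,E_\ell)$ need not vanish, but the amplitude is $[0,2]$. To get amplitude $[0,1]$ I would use $\Ext^2(K,E_\ell)\cong\Hom(E_\ell,K\otimes\omega_S)^\vee$ together with the structure of the flag. This is the first place where care is needed; alternatively, I would accept the cone formulation as a perfect complex and prove perfectness in amplitude $[-1,0]$ of the total $\BE^\bullet$ separately.

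\emph{Gluing.} Next I combine this with the pullback of $\BE^\bullet_{\ell-1}$. The key is a distinguished triangle relating the cones. Applying $R\hom_\pi(-,\CE_\ell)$ to $\CK_\ell\to\CE_{\ell-1}\to\CE_\ell$ gives
\[
R\hom_\pi(\CE_\ell,\CE_\ell)\to R\hom_\pi(\CE_{\ell-1},\CE_\ell)\to R\hom_\pi(\CK_\ell,\CE_\ell)\, .
\]
This identifies $\Cone(R\hom_\pi(\CE_\ell,\CE_\ell)\to R\hom_\pi(\CE_{\ell-1},\CE_\ell))$ with $R\hom_\pi(\CK_\ell,\CE_\ell)$, which is the relative term. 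Hence the cone \eqref{Ebullet} for level $\ell$ sits in a triangle with the pullback of the cone for level $\ell-1$ and the relative tangent complex of $p_\ell$. The maps in this triangle are assembled from $\delta$, and the octahedral axiom matches $\psi$ at level $\ell$ with $\psi$ at level $\ell-1$ extended by the last component $j_{\ell-1}\circ\varphi_{\ell-1}-\varphi_\ell\circ j_{\ell-1}$.

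\emph{Obstruction-theory property.} To show that $\BE^\bullet\to\BL$ is an obstruction theory, I would apply the functorial criterion (Behrend--Fantechi, Theorem 4.5) directly: for a square-zero extension $T\subset\bar T$ with ideal $I$, the obstruction to extending a flag lies in $H^1((\BE^\bullet)^\vee\otimes I)$, and the extensions form a torsor under $H^0$. Deformations of a flag are computed by the complex in \eqref{Ebullet}, which is the standard deformation complex of a diagram of sheaves, as in Illusie or the deformation theory of filtered objects. This description is more robust than gluing relative theories, and compatibility with the morphism to $\BL$ follows from the Atiyah class of the universal flag.

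\emph{Perfectness.} The main obstacle is showing $\BE^\bullet$ has amplitude $[-1,0]$. Using Serre duality, the $H^2$ of the cone is the cokernel of
\[
\Ext^2(E_0,E_0)_0\oplus\bigoplus_i\Ext^2(E_i,E_i)\to\bigoplus_i\Ext^2(E_{i-1},E_i)\, .
\]
Surjectivity holds because $E_{i-1}\twoheadrightarrow E_i$ is surjective and $\Ext^2$ on a surface is right exact in the appropriate variable: $\Ext^2(E_i,E_i)\to\Ext^2(E_{i-1},E_i)$ is surjective since the next term, $\Ext^3$, vanishes. Vanishing of $H^{-1}$ in degree $0$, i.e. injectivity of $\Hom$, follows from stability of $E_0$ (simplicity) together with the traceless condition. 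I expect this degree bookkeeping, especially handling the traceless part at degree $2$, to be the delicate step.
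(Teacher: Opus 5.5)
The central piece of the theorem, the morphism $\BE^\bullet\to\BL_{\Quot_\alpha(\CE)}$, is never actually constructed in your proposal. Your base case is correct. So is the triangle $R\hom_\pi(\mathcal{K}_\ell,\CE_\ell)\to C_\ell\to p_\ell^*C_{\ell-1}$, obtained by applying $R\hom_\pi(-,\CE_\ell)$ to $\mathcal{K}_\ell\to\CE_{\ell-1}\to\CE_\ell$.

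However, a triangle of complexes of the right shape does not give a map to the cotangent complex, because cones are not functorial. To glue, you would have to do three things:
\begin{enumerate}
\item[(a)] prove that the connecting morphism $R\hom_\pi(\mathcal{K}_\ell,\CE_\ell)^\vee\to p_\ell^*\BE^\bullet_{\ell-1}[1]$ built from $\delta$ agrees with the Kodaira--Spencer morphism $\BL_{p_\ell}\to p_\ell^*\BL_{\Quot_{\ell-1}}[1]$, which is an Atiyah class comparison for the relative Quot scheme;
\item[(b)] choose a fill-in;
\item[(c)] check that it is an isomorphism on $h^0$ and an epimorphism on $h^{-1}$ by the four/five lemma.
\end{enumerate}

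Your fallback via Behrend--Fantechi Theorem 4.5 also presupposes the map. ``The Atiyah class of the universal flag'' would have to be constructed, e.g.\ as Illusie's Atiyah class for modules over the diagram $0\to1\to\cdots\to\ell$. You would also have to build in the replacement of $R\hom_\pi(\CE_0,\CE_0)$ by its traceless part, which reflects that $\CE_0$ moves in $M$ with fixed determinant. None of this is carried out.

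The paper avoids all of it with derived geometry:
\begin{enumerate}
\item[(i)] $f:\Perf^{[\ell]}(S)\to\Perf(S)^{\ell+1}$ has relative tangent complex $\bigoplus_i R\hom_\pi(\CE_{i-1},\CE_i)$, so $\BT_{\Perf^{[\ell]}(S)}\cong\Cone(\delta)$;
\item[(ii)] base change along $M\to\Perf(S)$, where $M$ is a fiber of the derived determinant, produces the traceless term;
\item[(iii)] surjectivity is an open condition, so $\BR\Quot_\alpha(\CE)$ is open in $\Perf^{[\ell]}(S)_{/M}$;
\item[(iv)] restricting the cotangent complex of $\BR\Quot_\alpha(\CE)$ to the classical truncation yields the obstruction theory together with its map to $\BL$ automatically.
\end{enumerate}

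You have also misidentified the main obstacle. The statement asserts only an obstruction theory; perfectness is a separate theorem in the paper, proved only for symplectic $S$. Your perfectness argument also fails at its key step. The exact sequence is
\[
\Ext^2(E_i,E_i)\to\Ext^2(E_{i-1},E_i)\to\Ext^2(K_i,E_i)\to\Ext^3(E_i,E_i)=0,
\]
so $\Ext^3=0$ makes the \emph{second} arrow surjective. The first arrow is surjective only when $\Ext^2(K_i,E_i)=0$, and this fails in general. For example, take a $K3$ surface with a $(-2)$-curve $C\cong\p^1$ and $E_{i-1}=\CO_{2C}\twoheadrightarrow E_i=\CO_C$. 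Then $\Ext^2(K_i,E_i)\cong\Hom(\CO_C,\CO_C(-C))^\vee\cong H^0(\CO_{\p^1}(2))^\vee\neq0$.

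Surjectivity of the total map needs all components simultaneously. The paper Serre-dualizes (using $\omega_S\cong\CO_S$) to $\tilde\delta$ and proves injectivity by an induction starting from simplicity of $E_0$ and surjectivity of the $j_i$. Similarly, your claim that the relative Quot obstruction theory is perfect ``because $S$ is a surface'' is false: its amplitude is $[0,2]$, as you note right afterwards. This is harmless only because perfectness is not needed for this statement.
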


Theorem \ref{thm:pot on nested quot scheme} is proven in \cite{MPR}. For the convenience of the
reader (and to set notation), we briefly sketch the main ideas of the proof.

\begin{proof} (Sketch)
Let $\Perf(S)$ be the derived stack of perfect complexes on $S$.
Let $\Perf^{[\ell]}(S)$ be the stack parameterizing length $\ell+1$ flags $$\CE_{\bullet}=[ 
\CE_0 \to \CE_1 \to \ldots \to \CE_{\ell}]$$ of perfect complexes (with the maps not necessarily injective or surjective).
The stack $\Perf(S)$ has a derived tangent complex $R \hom_{\pi}(\CE,\CE)[1]$, where $\CE$ is the universal perfect complex. The morphism
\[ f: \Perf^{[\ell]}(S) \to \Perf(S)^{\ell+1}, \quad \CE_{\bullet}
\mapsto (\CE_0,\ldots, \CE_{\ell}) \]
has  a relative tangent complex $\BT_f=\bigoplus_{i=1}^{\ell} R \hom_{\pi}( \CE_{i-1},\CE_{i})$. 
Using the exact triangle $$\BT_{f} \to \BT_{\Perf^{[\ell]}(S)} \to f^{\ast} \BT_{\Perf(S)^{\ell+1}}\,,$$ we find that $\Perf^{[\ell]}(S)$ has the tangent complex
\[
\BT_{\Perf^{[\ell]}(S)} \cong \Cone\left( \bigoplus_{i=0}^{\ell} R \hom_{\pi}(\CE_i,\CE_i) \xrightarrow{\delta} \bigoplus_{i=1}^{\ell} R \hom_{\pi}(\CE_{i-1}, \CE_{i}) \right).
\]
The map $\delta$ is worked out in \cite{MPR} to precisely be as stated in Theorem \ref{thm:pot on nested quot scheme}.

Next, we base change along the natural map
$M \to \Perf(S)$.
Since stability is open, there is an open substack $\BR M \subset \Perf(S)$ parameterizing stable sheaves of class $\alpha_0$.
The moduli space $M$ is a fiber of the {{derived determinant}} map 
\cite{Schuerg}
$$\BR \det : \BR M \to \BR \Pic(S)\,.$$
Hence, the relative tangent complex of the morphism $M \to \BR M \subset \Perf(S)$ is $R \hom_{\pi}(\CO_S,\CO_S)$.
Consider the fiber diagram
\[
\begin{tikzcd}
\Perf^{[\ell]}(S)_{/M} \ar{r} \ar{d} & \Perf^{[\ell]}(S) \ar{d} \\ 
M \times \Perf(S)^{\ell} \ar{r} & \Perf(S)^{\ell+1}.
\end{tikzcd}
\]
We find that $\Perf^{[\ell]}(S)_{/M}$ has tangent complex
\begin{equation} \label{abc44}
\BT_{\Perf^{[\ell]}(S)_{/M}}
\cong \Cone\left( R \hom_{\pi}(\CE_0,\CE_0)_0 \oplus \bigoplus_{i=1}^{\ell} R \hom_{\pi}(\CE_i,\CE_i) \xrightarrow{\psi} \bigoplus_{i=1}^{\ell} R \hom_{\pi}(\CE_{i-1}, \CE_{i}) \right).
\end{equation}

Finally, surjection is an open condition by \cite[Lemma 2.4 (1-ii)]{MPR},
so $\BR \Quot_{\alpha}(\CE)$ embeds as an open substack into $\Perf^{[\ell]}(S)_{/M}$.
Therefore, isomorphism \eqref{abc44} also describes the tangent complex of $\BR \Quot_{\alpha}(\CE)$.
After
restricting to the classical truncation $\Quot_{\alpha}(\CE)$ and using the functoriality of the tangent complexes, we obtain the desired obstruction theory.
\end{proof}

\begin{thm} If $S$ is symplectic, then $\BE^{\bullet} \to \BL_{\Quot_{\alpha}(\CE)}$ defines a perfect obstruction theory.
\end{thm}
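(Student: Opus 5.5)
Since Theorem~\ref{thm:pot on nested quot scheme} already gives an obstruction theory, the only thing to prove is that $\BE^{\bullet}$ is perfect of amplitude $[-1,0]$. Equivalently, the virtual tangent complex $(\BE^\bullet)^\vee$ should have cohomology only in degrees $0$ and $1$. Because this is a pointwise statement, I would check it on the fiber at each closed point $E_\bullet\in\Quot_\alpha(\CE)$ and then use the base change theorem for perfect complexes. On a surface, every $\Ext^i(A,B)$ between coherent sheaves vanishes outside $0\le i\le 2$. So the fiber of the cone \eqref{Ebullet} lives a priori in degrees $[-1,2]$. Indeed, the source contributes $\Ext^{i}$ in cohomological degree $i$, and the target $\bigoplus\Ext^{i}(E_{i-1},E_i)$ sits in degree $i-1$ after the cone shift. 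The plan is to kill the cohomology of the cone in degrees $-1$ and $2$.

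\emph{Degree $-1$.} The $H^{-1}$ of the cone is the kernel of
\[
\Hom(E_0,E_0)_0\oplus\bigoplus_{i\ge1}\Hom(E_i,E_i)\xrightarrow{\ \psi\ }\bigoplus_{i\ge1}\Hom(E_{i-1},E_i).
\]
An element of this kernel is a compatible tuple $(\varphi_0,\dots,\varphi_\ell)$ with $j\circ\varphi_{i-1}=\varphi_i\circ j$. Since $E_0$ is stable, $\varphi_0$ is a scalar, and tracelessness forces $\varphi_0=0$ (positive rank). Surjectivity of the $j$'s then gives inductively $\varphi_i\circ j_{i-1}=0$, hence $\varphi_i=0$. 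So this kernel vanishes.

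\emph{Degree $2$.} Here $H^2$ is the cokernel of
\[
\bigoplus_{i\ge1}\Ext^2(E_{i-1},E_i)\to \Ext^2(E_0,E_0)_0\oplus\bigoplus_{i\ge1}\Ext^2(E_i,E_i).
\]
I would dualize using Serre duality with $\omega_S\cong\CO_S$: $\Ext^2(A,B)\cong\Hom(B,A)^\vee$. The cokernel is then dual to the kernel of the dual map
\[
\Hom(E_0,E_0)_0\oplus\bigoplus_{i\ge1}\Hom(E_i,E_i)\to\bigoplus_{i\ge1}\Hom(E_i,E_{i-1}),\qquad (\chi_0,\dots,\chi_\ell)\mapsto(\chi_{i-1}\circ j_{i-1}-j_{i-1}\circ\chi_i)_i .
\]
The trace-free part on $\Ext^2(E_0,E_0)$ is dual to $\Hom(E_0,E_0)$ modulo scalars, and I would treat this identification with care, which is where the symplectic hypothesis enters. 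A kernel element satisfies $\chi_{i-1}\circ j_{i-1}=j_{i-1}\circ\chi_i$. Its $E_0$-component is a traceless endomorphism of the stable sheaf $E_0$, hence zero. Propagating up the flag gives $j_0\circ\chi_1=0$, and $j_0$ surjective does not immediately force $\chi_1=0$, so more work is needed there. Instead I would argue directly on the surjectivity side: the map $\Ext^2(E_{i-1},E_i)\to\Ext^2(E_i,E_i)$ given by precomposition with the inclusion-type map is surjective because $\Ext^3$ vanishes on a surface. Explicitly, applying $\Ext^\bullet(-,E_i)$ to $0\to F\to E_{i-1}\to E_i\to0$ gives an exact sequence $\Ext^2(E_{i-1},E_i)\to\Ext^2(F,E_i)\to \Ext^3=0$, and I would use the correctly oriented version of this sequence. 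The components can be handled one at a time by a triangular (inductive in $i$) argument. What remains is the $\Ext^2(E_0,E_0)_0$ summand, which vanishes outright: $\Ext^2(E_0,E_0)\cong\Hom(E_0,E_0)^\vee=\BC$, and the trace map is an isomorphism onto $H^2(\CO_S)$ for stable $E_0$ on a symplectic surface.

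I expect the main obstacle to be the degree-$2$ step. The difficulty is keeping track of the precise maps in $\psi$ after Serre duality, together with the signs, and ensuring that the triangular induction really gives surjectivity rather than merely a dimension count. If the direct induction stalls, my fallback is to pass to the filtration by the subsheaves $F_i$ of \eqref{inclusions}. There I would rewrite the cone as an iterated extension of the complexes $R\hom_\pi(F_i/F_{i-1},E_i)$, plus $R\hom_\pi(\CE_0,\CE_0)_0$. Each such piece has vanishing $\Ext^2$ contributions after using $\Ext^3=0$, and the $E_0$ piece is handled by the trace argument above.
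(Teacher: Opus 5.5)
Your pointwise reduction and your degree $-1$ argument are fine, but the degree-$2$ step has a genuine gap.

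\textbf{The orientation is reversed.} In $\Cone(A\xrightarrow{\psi}B)$ it is the source $A$ that is shifted. So the group to kill is $\Coker\bigl(H^2(\psi)\bigr)$, where $H^2(\psi)\colon\Ext^2(E_0,E_0)_0\oplus\bigoplus_{i\ge1}\Ext^2(E_i,E_i)\to\bigoplus_{i\ge1}\Ext^2(E_{i-1},E_i)$ has $i$-th component $(\varphi_{i-1},\varphi_i)\mapsto j_{i-1}\circ\varphi_{i-1}-\varphi_i\circ j_{i-1}$. Your dual formula, by contrast, does not typecheck.

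\textbf{The mechanism you propose fails.} The correctly oriented sequence for $\Ext^\bullet(-,E_i)$ shows that the cokernel of precomposition $\Ext^2(E_i,E_i)\to\Ext^2(E_{i-1},E_i)$ is $\Ext^2(F,E_i)\cong\Hom(E_i,F)^\vee$, with $F=\Ker(j_{i-1})$. This is nonzero in general for $i\ge2$. For example, on a $K3$ surface with a smooth $(-2)$-curve $C$, the flag $\CO_S\twoheadrightarrow\CO_{2C}\twoheadrightarrow\CO_C$ has $F=\CO_C(-C)\cong\CO_{\p^1}(2)$, and $\Hom(\CO_C,F)\cong\BC^3$. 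The same example refutes your fallback, since the pieces $R\hom(F_i/F_{i-1},E_i)$ need not have vanishing $\Ext^2$.

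\textbf{The leftover summand is misidentified.} The piece that remains is not $\Ext^2(E_0,E_0)_0$, which lies in the source. It is the target summand $\Ext^2(E_0,E_1)$, and your argument never addresses it.

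The repair uses the other half of $\psi$. Postcomposition $j_{i-1}\circ(\,\cdot\,)\colon\Ext^2(E_{i-1},E_{i-1})\to\Ext^2(E_{i-1},E_i)$ is surjective, because its cokernel injects into $\Ext^3(E_{i-1},\Ker j_{i-1})=0$. So, setting $\varphi_\ell=0$, you can solve components $i=\ell,\ell-1,\ldots,2$ in turn for $\varphi_{\ell-1},\ldots,\varphi_1$. What remains is $\Ext^2(E_0,E_1)$, and this is where both hypotheses enter. By Serre duality with $\omega_S\cong\CO_S$, it is $\Hom(E_1,E_0)^\vee$. Any $f\colon E_1\to E_0$ has $f\circ j_0\in\Hom(E_0,E_0)=\BC$. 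This scalar must vanish unless $j_0$ is injective, hence an isomorphism, a case removed by Remark~\ref{rmk:simplifying iso}. So $f=0$ by surjectivity of $j_0$.

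The paper packages the same ingredients dually. It Serre-dualizes $H^2(\psi)$ to $\tilde\delta\colon\bigoplus_i\Hom(E_i,E_{i-1})\to\bigoplus_i\Hom(E_i,E_i)$. It then proves injectivity of $\tilde\delta$ by exactly this simplicity argument applied to $\varphi_1\circ j_0$, followed by the induction $\varphi_{i+1}\circ j_i=j_{i-1}\circ\varphi_i=0$.
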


\begin{proof}
We must show that $\BE^{\bullet}$ as given in \eqref{Ebullet} is $2$-term in case $S$ is symplectic. 
The condition can be checked pointwise. 
Let $E_{\bullet} \in \Quot_{\alpha}(\CE)$. Since $j_i$ are surjective, we see that
\[ \oplus_{i=1}^{\ell} \Hom(E_i,E_i) \to \oplus_{i=1}^{\ell} \Hom(E_{i-1},E_{i}) \]
is injective (using the definition of $\delta$). Hence, $(\BE^{\bullet})^{\vee}$ is supported in degrees $\geq 0$.

To show that
$(\BE^{\bullet})^{\vee}$ is
supported in
degrees $\leq 1$, we must show that
\[
\bigoplus_{i=1}^{\ell} \Ext^2(E_i,E_i) \to \bigoplus_{i=1}^{\ell} \Ext^2(E_{i-1},E_{i})
\]
is surjective. 
Since $\omega_S \cong \CO_S$, the above map is Serre dual to the map
\[
\tilde{\delta} : \bigoplus_{i=1}^{\ell} \Hom(E_{i},E_{i-1}) \to \bigoplus_{i=1}^{\ell} \Hom(E_i,E_i)
\]
given by
\[
\tilde{\delta} : (\varphi_i : E_i \to E_{i-1})_{i=1}^{\ell} \mapsto 
(j_0 \circ \varphi_1 - \varphi_2 \circ j_1, \, \ldots \, , j_{\ell-2} \circ \varphi_{\ell-1} - \varphi_{\ell} \circ j_{\ell-1}, j_{\ell-1} \circ \varphi_{\ell})\, .
\]
The precise form of $\tilde{\delta}$ is determined by functoriality of Serre duality.
We must show $\tilde{\delta}$ is injective.

Assume $\tilde{\delta}(\varphi_1,\ldots,\varphi_{\ell})=0$. Since $E_{0}$ is simple, we have $$\varphi_1 \circ j_0 = \lambda \id_{E_0}$$ for some $\lambda \in \BC$.
If $\lambda=0$, the surjectivity of $j_0$ implies that $\varphi_1=0$.
Then, by looking at the first component of $\tilde{\delta}(\varphi_1,\ldots,\varphi_{\ell})$, we obtain $\varphi_2 \circ j_1=0$, and so $\varphi_2=0$. By repeating the argument, we find $\varphi_i=0$ for all $i$.
If $\lambda \neq 0$, then $j_0$ is injective, so an isomorphism, 
and the claim follows by induction using Remark~\ref{rmk:simplifying iso} below.
\end{proof}

\begin{rmk} \label{rmk:simplifying iso}
If $\alpha_{i}=\alpha_{i+1}$ for some $i$, then we have an isomorphism
\[ \Quot_{(\alpha_0,\ldots,\alpha_{\ell})}(\CE) \xrightarrow{\cong} \Quot_{(\alpha_0,\ldots,\alpha_i, \alpha_{i+2}, \ldots,\alpha_{\ell})} \]
given by sending a flag $[\CE_0 \twoheadrightarrow \ldots \twoheadrightarrow \CE_{\ell}]$ to 
\[
[\CE_0 \twoheadrightarrow \ldots \twoheadrightarrow \CE_{i} \xrightarrow{j_{i+1}\circ j_{i}} \CE_{i+2} \twoheadrightarrow \ldots \twoheadrightarrow \CE_{\ell}]\, .
\]
Since $j_{i}$ is an isomorphism,  an inverse is readily constructed.
The isomorphism is seen to respect the perfect obstruction theory \eqref{Ebullet}.
\end{rmk}

A finer analysis of the obstruction space reveals many non-trivial cosections. These will play a crucial role
in our study of the PT theory of $(S\times\mathbb{P}^1, S_0 \cup S_\infty)$.

\begin{thm} \label{thm:cosections on nested quot}
Let $S$ be a symplectic surface.
Let $s = |\{ i \in \{ 0, \ldots, \ell-1 \} \,|\, \alpha_{i+1} \neq \alpha_{i} \} |$. The
perfect obstruction theory $E^{\bullet} \to \BL_{\Quot_{\alpha}(\CE)}$ admits an everywhere surjective cosection of rank $s$:
\[ h^1((E^{\bullet})^{\vee}) \twoheadrightarrow \CO_{\Quot_{\alpha}(\CE)}^{\oplus s}\, . \]
\end{thm}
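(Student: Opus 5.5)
We first reduce to the case where all jumps are nontrivial. By Remark~\ref{rmk:simplifying iso}, whenever $\alpha_{i+1}=\alpha_i$ we may delete the repeated step without changing $\Quot_\alpha(\CE)$ or its perfect obstruction theory. So we may assume $\alpha_{i+1}\neq \alpha_i$ for all $i$, hence $s=\ell$. The problem then becomes producing $\ell$ independent surjective cosections of $h^1((\BE^\bullet)^\vee)$.

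From the cone description \eqref{Ebullet}, and because the $\Ext^2$-terms of the source map surjectively (as shown in the previous theorem), the obstruction sheaf sits in an exact sequence
\[ \ext^1_\pi(\CE_0,\CE_0)_0\oplus\bigoplus_{i=1}^{\ell}\ext^1_\pi(\CE_i,\CE_i)\to\bigoplus_{i=1}^{\ell}\ext^1_\pi(\CE_{i-1},\CE_i)\to h^1((\BE^\bullet)^\vee)\to \ext^2_\pi(\CE_0,\CE_0)_0\oplus\bigoplus_{i=1}^{\ell}\ext^2_\pi(\CE_i,\CE_i)\to\bigoplus_{i=1}^{\ell}\ext^2_\pi(\CE_{i-1},\CE_i)\to 0. \]
The traceless $\ext^2_\pi(\CE_0,\CE_0)_0$ vanishes, since $M$ is unobstructed with tangent bundle \eqref{tangent bundle M} and $H^2(\CO_S)\cong\BC$ absorbs the trace. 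Fiberwise, Serre duality identifies the kernel $K$ of the last map with the dual of the cokernel of $\tilde\delta:\bigoplus_{i=1}^{\ell}\Hom(E_i,E_{i-1})\to\bigoplus_{i=1}^{\ell}\Hom(E_i,E_i)$.

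The plan is to map $h^1$ onto $K$ and then compose with a surjection $K\to\CO^{\oplus\ell}$. For the latter we use the trace maps $\tr:\ext^2_\pi(\CE_i,\CE_i)\to H^2(\CO_S)\otimes\CO\cong\CO$, giving
\[ \sigma=(\tr_1,\ldots,\tr_\ell): h^1((\BE^\bullet)^\vee)\to\CO^{\oplus\ell}. \]
These are the semiregularity/trace cosections, and the symplectic form enters through $\omega_S\cong\CO_S$.

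Surjectivity is checked pointwise. Dually, we must show that $(\lambda_1,\ldots,\lambda_\ell)\mapsto(\lambda_i\id_{E_i})_i$ maps $\BC^\ell$ injectively into $\Coker(\tilde\delta)$. That is, no nonzero tuple $(\lambda_i\id_{E_i})$ lies in the image of $\tilde\delta$.

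Suppose $\tilde\delta(\varphi)=(\lambda_i\id_{E_i})$. We argue by induction along the flag, as in the injectivity argument for $\tilde\delta$. Simplicity of $E_0$ gives $\varphi_1\circ j_0=\mu\id_{E_0}$. If $\mu\neq0$, then $j_0$ is an isomorphism, contradicting $\alpha_1\neq\alpha_0$; hence $\mu=0$ and, by surjectivity of $j_0$, $\varphi_1=0$. The first component then reads $-\varphi_2\circ j_1=\lambda_1\id_{E_1}$. Taking ranks and Chern characters, a surjection $j_1$ with a one-sided inverse up to the scalar $\lambda_1\neq0$ would be an isomorphism, contradicting $\alpha_2\neq\alpha_1$. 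Hence $\lambda_1=0$, and then $\varphi_2\circ j_1=0$ forces $\varphi_2=0$. Iterating gives $\lambda_i=0$ and $\varphi_i=0$ for $i<\ell$. The last component reads $j_{\ell-1}\circ\varphi_\ell=\lambda_\ell\id_{E_\ell}$; if $\lambda_\ell\neq0$, then $j_{\ell-1}$ is split surjective. The main obstacle is excluding this last case, and more generally handling steps that are split without being isomorphisms.

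Our first attempt is to use traces instead. We compute $\tr(\tilde\delta(\varphi))$ componentwise via $\tr(j\circ\varphi)=\tr(\varphi\circ j)$, which telescopes, so $\sum_i\lambda_i\rk$-type identities become constraints. If that does not suffice, we replace the plain trace cosections by a triangular change of basis, using partial sums $\sum_{i\ge k}\tr_i$, so that the dual statement reduces to $\tr$ of compositions. Finally, to globalize, the cosections are defined on all of $\Quot_\alpha(\CE)$ via relative trace maps, and the pointwise argument then gives everywhere surjectivity.
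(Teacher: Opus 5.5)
Your setup is the same as the paper's proof: the same reduction via Remark~\ref{rmk:simplifying iso}, the same trace cosection $h^1((\BE^\bullet)^\vee)\to\bigoplus_i\ext^2_\pi(\CE_i,\CE_i)\to\CO^{\oplus\ell}$, and the same dual criterion that no nonzero tuple $(\lambda_i\id_{E_i})_i$ lies in $\mathrm{Im}(\tilde\delta)$. As written, though, the proposal does not finish the proof. You conclude only $\varphi_i=0$ for $i<\ell$, then declare the last component $j_{\ell-1}\circ\varphi_\ell=\lambda_\ell\id_{E_\ell}$ an unresolved obstacle. That obstacle comes from stopping the induction one step early.

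The component indexed by $\ell-1$ is $j_{\ell-2}\circ\varphi_{\ell-1}-\varphi_\ell\circ j_{\ell-1}=\lambda_{\ell-1}\id_{E_{\ell-1}}$, and it already controls $\varphi_\ell$. Since $\varphi_{\ell-1}=0$, it forces $\lambda_{\ell-1}=0$; otherwise $j_{\ell-1}$ would be injective, hence an isomorphism. It then gives $\varphi_\ell\circ j_{\ell-1}=0$, so $\varphi_\ell=0$ because $j_{\ell-1}$ is surjective. For $\ell=1$, $\varphi_1=0$ comes directly from the simplicity of $E_0$. The last component then reads $0=\lambda_\ell\id_{E_\ell}$, so $\lambda_\ell=0$. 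In general the $i$-th component kills $\varphi_{i+1}$, so no split-surjective case ever arises; this is exactly how the paper concludes.

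Your fallbacks would not have closed the gap. Using $\tr(j\circ\varphi)=\tr(\varphi\circ j)$, the traces of the components telescope to a single identity,
\[
\sum_i\lambda_i\rk(E_i)=\tr(\varphi_1\circ j_0).
\]
This identity is vacuous when the $E_i$ with $i\ge1$ have rank zero, because every endomorphism of a torsion sheaf has zero trace. That rank-zero situation is precisely the case of the $1$-dimensional quotients used later in the paper. A triangular change of basis on $\CO^{\oplus\ell}$ is invertible, so it cannot change whether the cosection is surjective. The theorem really rests on the composition argument, and carrying your induction one step further turns your proposal into the paper's proof.
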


Here, $s$ is the number of indices for which
the quotient map $j_i : E_i \to E_{i+1}$ is not an isomorphism.
We call such an instance a {\em jump} in the flag. Theorem \ref{thm:cosections on nested quot} says that we obtain
a linearly independent cosection for every jump.

\begin{proof}
By Remark~\ref{rmk:simplifying iso}, we may assume that
$\alpha_{i+1} \neq \alpha_i$ for all $i$, so that $s= \ell$.
We claim that the composition
\[
\mathrm{Obs} = h^1((E^{\bullet})^\vee) \to \bigoplus_{i=1}^{\ell} \Ext^2_{\pi}(\CE_i,\CE_i) \xrightarrow{T= (\tr, \tr,\ldots,\tr)} \CO^{\oplus \ell}
\]
is everywhere surjective, where the first map is the connecting homomorphism of the cone \eqref{Ebullet}.

We can check surjectivity at points $E_{\bullet} \in \Quot_{\alpha}(\CE)$. The dual map $T^{\vee}$ fits into the diagram
\[
\begin{tikzcd}
& & \BC^{\ell} \ar{d}{T^{\vee}|_{E_{\bullet}}} \\
0 \ar{r} & \bigoplus_{i=1}^{\ell} \Hom(E_{i},E_{i-1}) \ar{r}{\tilde{\delta}} & \bigoplus_{i=1}^{\ell} \Hom(E_i,E_i)
\ar{r} & (\mathrm{Obs}|_{E_{\bullet}})^{\vee}
\end{tikzcd}
\]
and is given by $T^{\vee}(a_1,\ldots,a_{\ell}) = (a_1 \id_{E_1},\ldots,a_{\ell} \id_{E_{\ell}})$.
We must show that
\[ T^{\vee}(\BC^{\ell}) \cap \mathrm{Im}(\tilde{\delta}) = 0\, . \]
Let $(\varphi_1,\ldots,\varphi_\ell) \in \oplus_{i=1}^{\ell} \Hom(E_{i},E_{i-1})$ 
satisfy
\begin{equation} \label{3sdf} \tilde{\delta}(\varphi_1,\ldots,\varphi_\ell) = (a_1 \id_{E_1},\ldots,a_{\ell} \id_{E_{\ell}})\, . \end{equation}
Since $E_0$ is simple and $j_0$ is not an isomorphism, we have $\varphi_1 \circ j_0=0$, so $\varphi_1=0$.
The first component of \eqref{3sdf} yields $-\varphi_2 \circ j_1 = a_1 \id_{E_1}$, but since $j_1$ is also not an isomorphism, we find $\varphi_2=0$.
Continuing, we get $\varphi_i=0$ for all $i$, so $\tilde{\delta}(\varphi_1,\ldots,\varphi_\ell)=0$.
\end{proof}

\subsection{Perverse $t$-structure} \label{subsec:perverse t structure}
Consider the torsion pair $(\CT, \CT^\perp)$ in $\Coh(S)$:
\begin{align*}
	\CT & = \{ A \in \Coh(S) | \dim(A) = 0 \}\, , \\
	\CT^{\perp} & = \{ A' \in \Coh(S) | \Hom(A,A') = 0 \text{ for all } A \in \CT \}\, .
\end{align*}
Tilting $\Coh(S)$ along $(\CT, \CT^\perp)$ yields the perverse abelian heart
\[ \Coh^{\sharp}(S) = \langle \CT^{\perp}, \CT[-1] \rangle\, . \]
An object $E \in D^b(S)$ lies in $\Coh^{\sharp}(S)$ if and only if
\[ h^0(E) \in \CT^{\perp}, \quad h^1(E) \in \CT, \quad \forall i \notin \{ 0,1 \}: \ h^i(E) = 0\, . \]
For objects $A,B \in \Coh^{\sharp}(S)$, a morphism $\varphi : A \to B$ is {\em injective} if $\Cone(\varphi) \in \Coh^{\sharp}(S)$
and is {\em surjective}  if $\Cone(\varphi)[-1] \in \Coh^{\sharp}(S)$. Hence,
\begin{itemize}
	\item $\varphi$ is injective iff $h^{-1}(\Cone(\varphi)) = 0$ and $h^0(\Cone(\varphi)) \in \CT^{\perp}$,
	\item $\varphi$ is surjective iff $h^{0}(\Cone(\varphi)) \in \CT$ and $h^1(\Cone(\varphi)) = 0$ .
\end{itemize}

\begin{rmk}
\label{rmk:cone explicit in coh sharp}
Let $A=[A_0 \xrightarrow{d_A} A_1]$ and $B=[B_0 \xrightarrow{d_B} B_1]$ be in $\Coh^{\sharp}(S)$. Suppose
$\varphi_i : A_i \to B_i$ define a morphism of complexes $\varphi : A \to B$.
The cone of $\varphi$ is given by
\[ A_0 \xrightarrow{(\varphi_0,-d_A)} B_0 \oplus A_1 \xrightarrow{(d_B, \varphi_1)} B_1 \]
with $A_0$ in degree $-1$. In particular, if $\varphi_0 = \id$, then
$\Cone(\varphi) = \mathrm{Cone}(A_1 \to B_1)$ and $\Cone(\varphi) \in \Coh^{\sharp}(S)$ if and only if
the zero-dimensional torsion of $\Ker(\varphi_1)$ vanishes.
\end{rmk}

\begin{lemma} \label{coh sharp is noetherian}
The category $\Coh^{\sharp}(S)$ is Noetherian.
\end{lemma}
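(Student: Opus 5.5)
We must show that every ascending chain of subobjects $A_1 \subset A_2 \subset \cdots \subset B$ in $\Coh^{\sharp}(S)$ stabilizes. The plan is to translate such a chain into chains of coherent sheaves, using the long exact cohomology sequences in $\Coh(S)$, and then invoke Noetherianity of $\Coh(S)$ together with finite length of the zero-dimensional parts.

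For an injection $A \hookrightarrow B$ in $\Coh^{\sharp}(S)$ with cokernel $Q$, the cohomology sequence reads
\[ 0 \to h^0(A) \to h^0(B) \to h^0(Q) \to h^1(A) \to h^1(B) \to h^1(Q) \to 0 . \]
Here $h^{-1}(Q)=0$, and $h^1(A),h^1(B),h^1(Q)\in \CT$ are zero-dimensional. In particular $h^0(A_i) \subset h^0(B)$ is a subsheaf. Moreover, since $A_i \hookrightarrow A_{i+1}$ is injective in $\Coh^\sharp(S)$, the maps $h^0(A_i) \to h^0(A_{i+1})$ are injective. Thus $h^0(A_1)\subset h^0(A_2)\subset\cdots\subset h^0(B)$ is an ascending chain of coherent subsheaves, and it stabilizes because $\Coh(S)$ is Noetherian. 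So we may assume $h^0(A_i)=h^0(A)$ is constant for $i\ge N$.

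Next we control $h^1$. For $i\ge N$, write $A_{i+1}/A_i = Q_i$; its cohomology sequence gives
\[ 0 \to h^0(A_i) \xrightarrow{\ \cong\ } h^0(A_{i+1}) \to h^0(Q_i) \to h^1(A_i) \to h^1(A_{i+1}) \to h^1(Q_i) \to 0 . \]
The first map is an isomorphism, so $h^0(Q_i)\hookrightarrow h^1(A_i)$. Hence $h^0(Q_i)$ is zero-dimensional, i.e. lies in $\CT$. But $h^0(Q_i)\in\CT^\perp$ because $Q_i\in\Coh^\sharp(S)$, so $h^0(Q_i)=0$. Therefore $h^1(A_i)\to h^1(A_{i+1})$ is injective and $Q_i = h^1(Q_i)[-1]$ with $h^1(Q_i)$ zero-dimensional.

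It remains to bound the lengths of the $h^1(A_i)$, which is the main point. The chain gives $h^1(A_i)\subset h^1(A_{i+1})$, and each $h^1(A_i)$ maps to $h^1(B)$. The kernel of $h^1(A_i)\to h^1(B)$ is the image of $h^0(B)/h^0(A)\to h^1(A_i)$. This image is a zero-dimensional quotient of the fixed sheaf $h^0(B)/h^0(A)$, so it is a quotient of the maximal zero-dimensional quotient one can form. Concretely, I would argue it has length bounded by the length of $\mathcal{E}xt$-type data. A cleaner route: the image factors through $\Hom$ into a zero-dimensional sheaf, and the set of zero-dimensional quotients of a fixed coherent sheaf $G$ contained in the torsion of $G$ is bounded.

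I expect this boundedness step to be the main obstacle. My first attempt would be to note that the kernel of $G=h^0(B)/h^0(A) \to K_i$, with $K_i$ zero-dimensional, contains $G$ minus a zero-dimensional quotient. Ascending images $K_i$ correspond to descending kernels $G\supset G_1\supset G_2\supset\cdots$ with zero-dimensional quotients. Such a chain can be infinite in general, e.g. powers of an ideal. So instead I would use that $h^0(Q')\in\CT^\perp$, where $Q'=B/A_i$, which forces $h^0(B)/h^0(A)\to h^0(Q')$ to have torsion-free-like target with no zero-dimensional subsheaves. Combining this with the injectivity constraints should force the images $K_i$ into a fixed finite-length sheaf, for instance $h^1$ of the derived dual or reflexive hull of $h^0(B)/h^0(A)$. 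If that fails, I would fall back on citing the general fact that tilts of Noetherian hearts along torsion pairs with Noetherian torsion-free part are Noetherian, as in Polishchuk's criterion, or on the perverse coherent sheaf literature of Bridgeland and Toda.

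Once the lengths are bounded, the $h^1(A_i)$ stabilize, so the chain $A_i$ stabilizes.
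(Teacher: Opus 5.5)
Your first two steps are correct: $h^0(A_i)\subset h^0(B)$ stabilizes, and from then on $A_{i+1}/A_i=h^1(Q_i)[-1]$ and $h^1(A_i)\hookrightarrow h^1(A_{i+1})$. The gap is the last step, which you leave unproved and also set up incorrectly. In the sequence $0\to h^0(A_i)\to h^0(B)\to h^0(B/A_i)\to h^1(A_i)\to h^1(B)$, the sheaf $G=h^0(B)/h^0(A_i)$ maps to zero in $h^1(A_i)$. The kernel of $h^1(A_i)\to h^1(B)$ is $C_i/G$, where $C_i=h^0(B/A_i)\supset G$.

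So the relevant objects are not zero-dimensional quotients of $G$ (descending chains, as in your powers-of-an-ideal remark). They are zero-dimensional extensions of $G$. For $i\ge N$ one gets an ascending chain $G\subset C_N\subset C_{N+1}\subset\cdots$ of sheaves in $\CT^{\perp}$ with $C_i/G\in\CT$, and what is needed is that this chain stabilizes. This is exactly the step the paper treats, after reducing via extensions to $B\in\CT[-1]$ or $B\in\CT^{\perp}$. It embeds $C_i\hookrightarrow C_i^{\ast\ast}=G^{\ast\ast}$, so that $C_i/G\hookrightarrow G^{\ast\ast}/G$ has bounded length.

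That bound uses that $G$ is torsion-free. When $G$ has one-dimensional support no bound exists, so none of your suggested substitutes (derived dual, reflexive hull, a general tilting criterion) can close the gap. Here is a counterexample. Take a smooth curve $C\subset S$, a point $p\in C$, and $B=\CO_S$.

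\begin{enumerate}
\item The composites $f_i:\CO_S\to\CO_C\hookrightarrow\CO_C(ip)$ are epimorphisms in $\Coh^{\sharp}(S)$. Indeed $\Cone(f_i)[-1]$ has $h^0=\CO_S(-C)\in\CT^{\perp}$ and $h^1=\CO_C(ip)/\CO_C\in\CT$.
\item We have $f_{i+1}=g_i\circ f_i$, where $g_i:\CO_C(ip)\hookrightarrow\CO_C((i+1)p)$ is an epimorphism in $\Coh^{\sharp}(S)$ with kernel $\BC_p[-1]$.
\item Hence the kernels $K_i=\ker f_i$ form a chain of subobjects of $\CO_S$ with $K_{i+1}/K_i\cong\BC_p[-1]\neq 0$, which never stabilizes. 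Here $G=\CO_C$ and $C_i=\CO_C(ip)$.
\end{enumerate}

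So Noetherianity of all of $\Coh^{\sharp}(S)$ actually fails, and the statement cannot be proved as written. Your argument, like the paper's double-dual argument, goes through only when the sheaves $h^0(B)/h^0(A_i)$ are torsion-free.
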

\begin{proof}
We must show, for every object $E$ in $\Coh^{\sharp}(S)$,  every 
filtration $$E_1 \subset E_2 \subset \ldots \subset E$$ stabilizes.
The property is preserved under extension, so we can assume $E \in \CT[-1]$ or $E \in \CT^{\perp}$.
If $E \in \CT[-1]$, then a subobject of $E$ is just an object of $\CT[-1]$,
so the claim follows since $\Coh(S)$ is Noetherian.
If $E \in \CT^{\perp}$, then the injection $E_i \subset E$ yields a long exact sequence in $\Coh(S)$:
\[ 0 \to h^0(E_i) \to E \to h^0(C) \to h^1(E_i) \to 0 \]
so $h^1(C)=0$ and $C=h^0(C) \in \CT^{\perp}$. For $i \gg 0$, $h^0(E_i)$ stabilizes, so $C$ is the extension
\[ 0 \to F \to C \to T_i \to 0\]
for a fixed sheaf $F \in \CT^{\perp}$ and $T_i = h^1(E_i) \in \CT$.
The double dual $F^{\ast \ast}$ is locally free with $Q=F^{\ast \ast}/F$ zero-dimensional, and $F^{\ast} = C^{\ast}$, so also $F^{\ast \ast} = C^{\ast \ast}$. We obtain a diagram
\[
\begin{tikzcd}
0 \ar{r} & F \ar{r} & C \ar{r} \ar{d} & T_i \ar{r} & 0 \\
0 \ar{r} & F \ar{r} & F^{\ast \ast} \ar{r} & Q \ar{r} & 0
\end{tikzcd}
\]
which yields an embedding $T_i \hookrightarrow Q$. Since $Q\in \Coh(S)$, 
$T_i$ stabilizes for $i \gg 0$ and, therefore, so does $C$ and $E_i$.
\end{proof}

A perfect object $F \in D_{\mathrm{perf}}(S \times B)$ over a base scheme $B$ is {\em $B$-flat with respect to $\Coh^{\sharp}(S)$} if,
for all closed points $t \in B$, we have
\[ F_t = L \iota_{t}^{\ast} F \in \Coh^{\sharp}(S)_t\, , \]
where $\iota_{t} : S \times \{ t \} \to S \times B$ is the inclusion.

Let $M$ be a proper algebraic space over $\mathbb{C}$, and let $\CE$ be a $M$-flat object on $M \times S$
with respect to 
$\Coh^{\sharp}(S)$,
and let $v \in H^{\ast}(S,\BQ)$.
Consider the functor
\[ \Quot^{\sharp}(\CE,v) : (\mathrm{Schemes\,  of\, finite\, type}/M)^{\mathrm{op}} \to \mathrm{Sets} \]
sending a $M$-scheme $f : T \to M$ to the set of morphisms $\CE_T \to Q$ such that
\begin{itemize}
\item $Q \in D_{\mathrm{perf}}(T \times S)$ is $T$-flat with respect to $\Coh^{\sharp}(S)$,
\item for all closed points $t \in T$, the restriction $\CE_{T,t} \to Q_{t}$ is surjective in $\Coh^{\sharp}(S)$
and $\ch[Q_t] = v$.
\end{itemize}
For any morphism $T' \to T$, the map $$\Quot^{\sharp}(\CE,v)(T) \to \Quot^{\sharp}(\CE,v)(T')$$ sends
$\CE_T \to Q$ to the pullback $\CE_{T'} \to Q_{T'}$.

\begin{thm}
The functor $\Quot^{\sharp}(\CE,v)$ is represented by a proper algebraic space of finite type over $M$.
\end{thm}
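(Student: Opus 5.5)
We follow the standard route for Quot functors of objects in a tilted heart: first show the functor is an algebraic space locally of finite type, then prove boundedness, and finally verify properness with the valuative criterion. The plan relies on three inputs. The heart $\Coh^{\sharp}(S)$ is Noetherian (Lemma~\ref{coh sharp is noetherian}). It is obtained by tilting along a torsion pair built from the dimension of support. And flat families of objects of $\Coh^{\sharp}(S)$ form an open substack of the stack of perfect complexes; this is the perverse-coherent setting of Bridgeland, Toda and Abramovich--Polishchuk.

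Algebraicity is the first step. By Lieblich's results, the stack $\mathcal{M}$ of universally gluable relatively perfect complexes on $S$ is algebraic and locally of finite type. The condition $Q_t \in \Coh^{\sharp}(S)$ is open. To see this, note that $h^i(Q_t)=0$ for $i\notin\{0,1\}$ is open by semicontinuity. The conditions $h^1(Q_t)\in\CT$ and $h^0(Q_t)\in\CT^{\perp}$ are also open: these are the openness-of-torsion-pair statements of Abramovich--Polishchuk (alternatively, we can identify $\Coh^{\sharp}(S)$ as the category of perverse coherent sheaves for $\p$-dimension functions). Consider the stack of morphisms $\CE_T\to Q$ with $Q$ in this open substack. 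It is algebraic because $\Hom$ between perfect complexes is representable once relative $\Ext^{<0}$ vanishes, and this vanishing holds for objects in a common heart. Surjectivity, i.e.\ $\Cone(\varphi)[-1]\in\Coh^{\sharp}(S)$, is again an open condition of the same kind. Finally, since $\CE_T\to Q$ is surjective in the heart, every automorphism of $Q$ compatible with the map is trivial. So the stack is actually an algebraic space over $M$, and it represents $\Quot^{\sharp}(\CE,v)$.

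The second step is boundedness, and we expect it to be the main obstacle. Given a surjection $E\to Q$ in $\Coh^{\sharp}(S)$ with $E=\CE_m$, taking cohomology gives a surjection of sheaves $h^1(E)\to h^1(Q)$. Hence $h^1(Q)$ is a zero-dimensional quotient of a bounded family, so it lies in a bounded family. For $h^0(Q)$, we use the kernel $K$, which lies in $\Coh^{\sharp}(S)$. The long exact sequence reads
\[0\to h^0(K)\to h^0(E)\to h^0(Q)\to h^1(K)\to h^1(E)\to h^1(Q)\to 0.\]
Here $h^0(Q)$ is an extension of a subsheaf of the zero-dimensional sheaf $h^1(K)$ by a quotient of $h^0(E)$. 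The length of $h^1(K)$ is bounded by the fixed Chern characters. This needs care: $h^0(K)\in\CT^{\perp}$ is a subsheaf of $h^0(E)$, and we must bound $\ch_2$ of the torsion-free part from above. Following the double-dual trick of Lemma~\ref{coh sharp is noetherian}, we would embed things into reflexive hulls and bound the quotients by Grothendieck's lemma on quotients of a bounded family. This realizes $h^0(Q)$ inside a bounded family of Quot schemes of sheaves, and we conclude that the algebraic space is of finite type.

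The third step is properness, via the valuative criterion over a DVR $R$ with fraction field $K$. Given a surjection $\CE_K\to Q_K$ in $\Coh^{\sharp}$, we extend $Q_K$ to an $R$-flat family by the standard argument for Noetherian hearts with the generic-flatness property. This is the perverse analogue of taking the closure of the kernel, as in Abramovich--Polishchuk's proof that Quot functors for such hearts are proper. Concretely, we take the image of $\CE_R\to j_*Q_K$ in the relative heart $\Coh^{\sharp}(S_R)$, using that the relative t-structure exists and is Noetherian. Uniqueness follows because the relative heart has no $R$-torsion quotients supported on the special fiber.
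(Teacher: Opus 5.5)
Your proposal is correct and follows essentially the paper's route. You get an algebraic space locally of finite type from openness of flatness for the Noetherian heart $\Coh^{\sharp}(S)$ plus Hom-representability, which the paper obtains by citing Abramovich--Polishchuk and the results on families of $t$-structures. You get the valuative criterion from the $R$-torsion theory on the base-changed heart. You get boundedness from the long exact cohomology sequence of $0\to K\to E\to Q\to 0$ together with boundedness of Quot schemes. The only difference is bookkeeping in the boundedness step: you first bound the length of $h^1(K)$, whereas the paper directly bounds $\ch_2(E')$ from above using $\ch_2(T)\geq 0$. Both versions rest on the same lower bound for $\ch_2$ of quotients of the bounded family $h^0(E)$ with fixed $\ch_0,\ch_1$.
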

\begin{proof}
The $t$-structure on $D^b(\Coh(S))$ defined by the heart $\Coh^{\sharp}(S)$
defines a constant family of $t$-structures on $D(S \times B)$ for any scheme $B$ by base change \cite{AP,StabFam}.
Since $\Coh^{\sharp}(S)$ is Noetherian, \cite[Proposition 3.3.2]{AP} shows that the constant family of $t$-structures satisfies {\em openness in flatness} over finite type schemes: {\em for every object $E \in D^b(S \times B)$, with $B$ of finite type over $\mathbb{C}$, which satisfies $E_b \in \Coh^{\sharp}(S)_b$ for some point $b \in B$, there exists a Zariski open neighborhood 
$$b\in U\subset B$$ 
such that $E|_{b'} \in \Coh^{\sharp}(S)_{b'}$ for all closed $b' \in U$.}
Then, by \cite[Lemma 10.6]{StabFam}, the constant family of $t$-structures 
{\em universally satisfies openness of flatness} in the sense of \cite[Definition 10.4]{StabFam}, see also \cite[Remark 2.14]{Rota}.
Therefore, by \cite[Proposition 11.6]{StabFam}, $\Quot^{\sharp}(\CE,v)$ is represented by an algebraic space locally of finite type over $M$ (since a locally of finite presentation over finite type is locally of finite type).

Moreover, by \cite[Proposition 11.11]{StabFam},  $\Quot^{\sharp}(\CE,v)$ satisfies the existence and uniqueness part of the valuative criterion for properness (for all valuation rings).
The hypotheses of  
\cite[Proposition 11.11]{StabFam}
are satisfied because,
for a valuation ring $R$, the condition of having a $\Spec(R)$-torsion structure on the base-changed $t$-structure is satisfied 
since $\Coh^{\sharp}(S)$ is Noetherian,
see \cite[Section 6.6 and  Remark 6.16]{StabFam} and the comments after \cite[Theorem 5.7]{StabFam}.

The last step is to show that $\Quot^{\sharp}(\CE,v)$ is bounded:   all objects $Q$ arising as quotients $\CE \to Q$ can be parameterized by a finite type scheme.
To see this, consider a quotient in $\Coh^{\sharp}(S)$
\[ 0 \to K \to E \to Q \to 0 \]
where $E = \CE_{m}$ for some $m \in M$. Take the associated long exact sequence in cohomology{\footnote{Here, $K,E,Q\in D^b(S)$ and $h^i$ denotes
the cohomology of the complex.}}
\begin{align*}
0 \to & h^0(K) \to h^0(E) \to h^0(Q) \to \\
& h^1(K) \to h^1(E) \to h^1(Q) \to 0\, .
\end{align*}

\vspace{4pt}
\noindent $\bullet$ Since $E$ moves in a bounded family, the $0$-dimensional sheaves $h^1(E)$ are also bounded (and have a finite fixed list of possible values of $\ch_2$).
Since the Quot scheme in $\Coh(S)$ is bounded, the families of both  $h^1(Q)$ 
and the kernels 
$$H = \mathrm{Ker}(h^1(E) \to h^1(Q))$$
are also bounded.

\vspace{8pt}
\noindent $\bullet$
The quotients $0\to h^0(K) \to h^0(E) \to E'\to 0$ fit into the short exact sequence
\[ 0 \to E' \to h^0(Q) \to T \to 0 \, , \]
where $T = \mathrm{Ker}(h^1(K) \to H)$ is 0-dimensional.
Since $$\ch(h^0(Q)) = (v_0,v_1,v_2 - \ch_2(h^1(Q)))\,, $$
$\ch_i(E')=v_i$ for $i=0,1$, and 
\[ \ch_2(E') = \ch_2(h^0(Q)) - \ch_2(T) = v_2 - \ch_2(H^1(Q)) - \ch_2(T)\, . \]
Since the possible values of $\ch_2(h^1(Q))$ are bounded and $\ch_2(T) \geq 0$,  we conclude that $\ch_2(E')$ is bounded from above.
However, $E'$ is also the quotient of the bounded $h^0(E)$, so $\ch_2(E')$ is also bounded from below universally in terms of $\ch_0(E')$ and $\ch_1(E')$.
Hence, $\ch(E')$ is constrained to lie in a finite set.
By the boundedness of the Quot scheme for $\Coh(S)$, the possible choices of $E'$ are bounded.
By the double-dual argument (used in the proof of Lemma~\ref{coh sharp is noetherian}), the possible extensions
\[ 0 \to E' \to h^0(Q) \to T \to 0 \]
are bounded.


\vspace{8pt}
The families of both $h^0(Q)$ and $h^1(Q)$ are bounded. Since the Ext sheaves are finite dimensional, we conclude the objects $Q$ are bounded, which 
completes the proof of boundedness for $\Quot^{\sharp}(\CE,v)$. Boundedness for
$\Quot^{\sharp}(\CE,v)$ implies
properness.
\end{proof}

Let $M$ be a proper, nonsingular, moduli space of stable{\footnote{We again omit the stability condition in the notation.}} objects 
in $\Coh^{\sharp}(S)$  of fixed determinant and positive rank
with Chern character $\alpha_0 \in H^{\ast}(S)$.
We assume the existence of a 
universal object $\CE$ on
$M \times S$
and that the tangent bundle of $M$ is given by \eqref{tangent bundle M}.
Since the Quot scheme exists as a proper algebraic space for $\Coh^{\sharp}(S)$,
we find, by induction, that  the nested Quot scheme
\[
\Quot^{\sharp}_{\alpha}(\CE) = \{
E_{\bullet}=[E_0 \overset{j_0}{\twoheadrightarrow} E_1 \overset{j_1}{\twoheadrightarrow} \ldots \twoheadrightarrow E_{r-1} \overset{j_r}{\twoheadrightarrow} E_r | E_i \in \Coh^{\sharp}(S), E_0 \in M, \ch(E_i) = \alpha_i \}
\]
exists as a proper algebraic space.
Moreover,
the statements of Theorem~\ref{thm:pot on nested quot scheme} and
Theorem~\ref{thm:cosections on nested quot} also hold.

\subsection{Nested and perverse-nested Hilbert schemes} \label{subsec:nested hilbert schemes}
We now specialize Quot schemes to the particular setting relevant to our study of the multiple cover formula. 

Let $S$ be a symplectic surface.
Let $M=S^{[n]}$ be the Hilbert scheme of $n$ points on $S$, let $\alpha_0=(1,0,-n)$ be the Chern character
of the associated ideal sheaves, and let $\CE=I_{\CZ}$ be the universal ideal sheaf. Let 
\[ \alpha_1=(0,\beta,m) \in H^2(S,\BZ) \oplus H^4(S,\BQ) \subset H^{\ast}(S,\BQ)\,,  \quad \beta \neq 0 \]
be another Chern character.
Consider the $1$-nested Quot schemes with $\alpha=(\alpha_0,\alpha_1)$:
\begin{align*}
\Quot^{\flat}_\alpha
&= \Quot_{\alpha}(\CI_{\CZ})\, , \\
\Quot^{\sharp}_\alpha 
&= \Quot^{\sharp}_{\alpha}(\CI_{\CZ})\, .
\end{align*}
A concrete description is:
\begin{align*}
\Quot^{\flat}_\alpha & = \left\{ \varphi : I_z \to G \middle| \begin{array}{c}
	z \in S^{[n]},\,  \ch(G)=\alpha_1 \\ 	G \text{ is a 1-dimensional coherent sheaf} \\ \mathrm{coker}(\varphi) = 0
\end{array} \right\} 
\\
\Quot^{\sharp}_\alpha & = 
 \left\{ \varphi : I_z \to G \middle| \begin{array}{c} z \in S^{[n]},\,  \ch(G)=\alpha_1 \\ 
G \text{ is a pure 1-dimensional coherent sheaf} \\ \mathrm{coker}(\varphi) \text{ 0-dimensional sheaf}
\end{array} \right\}\,.
\end{align*}
Theorems~\ref{thm:pot on nested quot scheme}
and \ref{thm:cosections on nested quot} specialize to yield the following result.
\begin{cor}
    \label{lemma:pot reduction on Quot}
For $\star \in \{ \flat, \sharp \}$, there is a perfect obstruction theory $E^{\bullet} \to L_{\Quot^{\star}_{\alpha}}$ with virtual tangent bundle
\[
(E^{\bullet})^{\vee} \cong \mathrm{Cone}
\left( 
T_{S^{[n]}}[-1] \oplus
R\hom_{\pi}(\BI^{\bullet},\BI^{\bullet}) \to R \hom_{\pi}( \BI^{\bullet}, \CI_z ) \right)
\]
where 
 $\BI^{\bullet} = [\CI_Z\xrightarrow{\Phi} \BG]$ is the universal quotient.
The perfect obstruction theory admits an everywhere surjective cosection $h^1(E^{\bullet}) \to \CO$. The reduced virtual dimension is $m+2n+\beta^2+1$
\end{cor}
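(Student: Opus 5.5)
The plan is to specialize Theorem~\ref{thm:pot on nested quot scheme} and Theorem~\ref{thm:cosections on nested quot} to the case $\ell=1$, $M=S^{[n]}$, $\CE_0=\CI_Z$ and $\CE_1=\BG$. For $\star=\sharp$ we use the perverse analogues, which were noted above to hold for $\Quot^{\sharp}_\alpha(\CE)$. The argument has three parts: rewrite the virtual tangent bundle, produce the cosection, and compute the rank.

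\emph{Virtual tangent bundle.} With $\ell=1$, \eqref{Ebullet} reads
\[
(E^\bullet)^\vee\cong \Cone\big(R\hom_\pi(\CI_Z,\CI_Z)_0\oplus R\hom_\pi(\BG,\BG)\xrightarrow{\psi} R\hom_\pi(\CI_Z,\BG)\big).
\]
By \eqref{tangent bundle M} we have $R\hom_\pi(\CI_Z,\CI_Z)_0=T_{S^{[n]}}[-1]$, which gives the first summand. The remaining summand $R\hom_\pi(\BG,\BG)\to R\hom_\pi(\CI_Z,\BG)$ is regrouped using the universal quotient $\BI^\bullet=[\CI_Z\xrightarrow{\Phi}\BG]$ and the exact triangle $\BG[-1]\to\BI^\bullet\to\CI_Z$. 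Applying $R\hom_\pi(-,-)$ to this triangle in both variables and using the octahedral axiom, the two terms $R\hom_\pi(\BG,\BG)$ and $R\hom_\pi(\CI_Z,\BG)$ are traded for $R\hom_\pi(\BI^\bullet,\BI^\bullet)$ and $R\hom_\pi(\BI^\bullet,\CI_z)$. The $T_{S^{[n]}}$ component of $\psi$ is carried along through these identifications. In the flat case $\BI^\bullet$ is the universal kernel sheaf, and in the $\sharp$ case it is the universal kernel object in $\Coh^\sharp(S)$, so the same formal manipulation applies in both cases.

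\emph{Main obstacle.} I expect the hardest step to be checking that the maps produced by the octahedron agree with $\delta$ from Theorem~\ref{thm:pot on nested quot scheme}, including signs. I would try to do this by writing $\delta$ explicitly on two-term resolutions, as in Remark~\ref{rmk:cone explicit in coh sharp}.

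\emph{Cosection.} Since $\beta\neq0$, we have $\alpha_1\neq\alpha_0$, so the number of jumps in Theorem~\ref{thm:cosections on nested quot} is $s=1$. That theorem, or its $\sharp$ analogue, therefore gives an everywhere surjective cosection $\mathrm{Obs}\to\CO$. Concretely, this cosection is the connecting map to $\Ext^2_\pi(\BG,\BG)$ followed by the trace.

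\emph{Dimension.} The virtual dimension is the rank of the cone, which equals $2n+\chi(\CI_z,G)-\chi(G,G)$. We compute each term using Riemann--Roch on $S$ with $\td_S=(1,0,\chi(\CO_S))$ and $\ch(\CI_z)^\vee=(1,0,-n)$.
\begin{itemize}
\item For the first term, $\chi(\CI_z,G)=\int(1,0,-n)\cdot(0,\beta,m)\cdot\td_S=m$, since $G$ has no rank-zero component.
\item For the second term, $\chi(G,G)=\int(0,-\beta,m)\cdot(0,\beta,m)=-\beta^2$.
\end{itemize}
This gives virtual dimension $2n+m+\beta^2$. Reducing by the single surjective cosection adds $1$, so the reduced virtual dimension is $m+2n+\beta^2+1$, as stated.
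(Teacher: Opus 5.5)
Your proposal is correct and follows the paper's route: the corollary is the $\ell=1$, $M=S^{[n]}$ specialization of Theorems~\ref{thm:pot on nested quot scheme} and~\ref{thm:cosections on nested quot}, with perfectness supplied by the separate symplectic amplitude theorem. Your count $2n+\chi(I_z,G)-\chi(G,G)=2n+m+\beta^2$, plus one for the cosection, is right. The step you flag as the main obstacle is easier than you expect. Applying $R\hom_\pi(\BI^\bullet,-)$ and $R\hom_\pi(-,\BG)$ to the triangle $\BI^\bullet\to\CI_Z\to\BG$ identifies both cones with $\Cone(T_{S^{[n]}}[-1]\to R\hom_\pi(\BI^\bullet,\BG))$, and in any case only the $K$-theory class and the cosection are used later.
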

By Kiem-Li cosection localization \cite{KiemLiCosectionLoc}, there exists an associated reduced virtual class
\[ [ \Quot^{\star}_{\alpha} ]^{\mathrm{red}} \in \mathsf{CH}_{\beta^2+m+2n+1}(\Quot^{\star}_{\alpha})\, . \]
Let $\rho$ and $\pi$ denote the projections
$$\rho: S\times \Quot^{\star}_{\alpha} \to S\, , \ \ \  \
\pi: S\times \Quot^{\star}_{\alpha} \to \Quot^\star_\alpha\, .
$$
Two types of {\em descendent classes} 
will play a role in our study:
\begin{enumerate}
\item[$\bullet$]
$\ch^{\BG}_a(\gamma)
= \pi_{\ast}( \ch_a(\BG) \cdot\rho^{\ast}(\gamma))
$ on
$\Quot^{\star}_{\alpha}$
with respect to the sheaf
$$ \mathbb{G} \rightarrow  S \times \Quot^{\star}_{\alpha}\, , $$
\item[$\bullet$] 
$\ch^{\mathcal{O}_{\mathcal{Z}}}_a(\gamma)=
\pi_{\ast}( \ch_a(\CO_{\CZ}) \cdot \rho^{\ast}(\gamma))$ 
on $S^{[n]}$
with respect to the sheaf
$$ \mathcal{O}_{\mathcal{Z}} \rightarrow S\times S^{[n]}\, .$$
\end{enumerate}

The descendents $\ch^{\mathcal{O}_{\mathcal{Z}}}_a(\gamma)$ can be canonically
pulled-back to 
$\Quot^{\star}_{\alpha}$
via the structure map
$$
\Quot^{\star}_{\alpha}
\rightarrow S^{[n]}\, .$$
The equality of descendent integrals over $\Quot^{\flat}_{\alpha}$ and $\Quot^{\sharp}_{\alpha}$ will play a crucial role in the DT/PT correspondence.

\begin{thm}\label{thm:wallcrossing}
Let $a_{1}, \ldots, a_{\ell} \geq 0$ be integers, and let $b : \{ 1, \ldots, \ell \} \to \{ \BG, \CO_{\CZ} \}$ be a superscript assignment.
Let $\gamma_1,\ldots,\gamma_{\ell} \in H^{\ast}(S,\BQ)$ be cohomology classes.
Let $\alpha=((1,0,-n),(0,\beta,m))$ for an effective curve class $\beta\in H^2(S,\BZ)$.
Then, we have
\[ 
\int_{ \left[ \Quot^{\flat}_{\alpha} \right]^{\red} }
\prod_{i=1}^{\ell} \ch_{a_i}^{b(i)}(\gamma_i)
=
\,
\int_{ \left[ \Quot^{\sharp}_{\alpha} \right]^{\red} }
\prod_{i=1}^{\ell} \ch_{a_i}^{b(i)}(\gamma_i)
\]
\end{thm}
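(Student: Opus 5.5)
The plan is to realize $\Quot^{\flat}_\alpha$ and $\Quot^{\sharp}_\alpha$ as two chambers of a one-parameter family of stability conditions on pairs $(I_z \to G)$ and to compare them with the Joyce--Kuhn--Liu--Thimm wallcrossing formalism \cite{KLT2,KLT}. That formalism has been adapted to reduced (cosection-localized) virtual classes on surfaces. The wall separating the two chambers is exactly where a $0$-dimensional sheaf $\CO_x$ may be split off, either from the cokernel of $\varphi$ or from the $0$-dimensional torsion of $G$. I would therefore set up an auxiliary moduli stack of \emph{framed} objects: triples consisting of $z\in S^{[n]}$, a sheaf or perverse object $G$ of class $(0,\beta,m')$, and a morphism $I_z\to G$. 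This stack is equipped with the obstruction theory of Corollary~\ref{lemma:pot reduction on Quot}, written as the cone of $T_{S^{[n]}}[-1]\oplus R\hom_\pi(\BI^\bullet,\BI^\bullet)\to R\hom_\pi(\BI^\bullet,\CI_z)$. Both $\Quot^\flat_\alpha$ and $\Quot^\sharp_\alpha$ then appear as stable loci for a stability parameter $t$ that weighs $0$-dimensional subobjects against $0$-dimensional quotients, exactly as in the Toda/Bridgeland DT/PT wall for curves on threefolds.

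The key steps, in order, are as follows.
\begin{enumerate}
\item Verify the hypotheses of the KLT master-space wallcrossing: properness of the semistable loci, a symmetric-type obstruction theory on the auxiliary stack, and compatibility of the obstruction theory with the rank-$1$ framing.
\item Write the wallcrossing formula. The difference of the two integrals is a sum over splittings $\alpha_1=\alpha_1'+(0,0,k)$ of Lie brackets (in Joyce's vertex algebra) between framed invariants of class $\alpha_1'$ and the invariants of $0$-dimensional sheaves of length $k$ on $S$, possibly iterated.
\item Show that every correction term vanishes. Here the cosection is decisive. By Theorem~\ref{thm:cosections on nested quot}, each jump contributes an independent surjective cosection given by the trace $\Ext^2(E_i,E_i)\to\CO$. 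The moduli of $0$-dimensional sheaves on a symplectic surface likewise carries a trace cosection, from $\Ext^2(Q,Q)\to H^2(\CO_S)$. A wall term glues a framed object, which already carries the reduced cosection for its jump, to a $0$-dimensional sheaf that carries its own cosection. The two cosections remain independent on the product, so after a single reduction one further surjective cosection survives, and the corresponding contribution vanishes by Kiem--Li \cite{KiemLiCosectionLoc}.
\item Check that the insertions $\ch^{b(i)}_{a_i}(\gamma_i)$ pass through the wallcrossing. These descendents come from the universal $\BG$ and $\CO_\CZ$, so they extend to the auxiliary stack and are additive or multiplicative along splittings, as KLT requires.
\end{enumerate}

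The main obstacle is step~3 in the reduced setting. One must construct the cosection on the master space, namely the $\BC^*$-localized enhanced pair stack used by KLT, and show that it is compatible with the cosection on each fixed locus. Only then does the reduced class of the wall terms carry the extra surjective cosection. My first attempt would be to define the cosection globally on the auxiliary stack as the sum of traces $\Ext^2_\pi(\BG,\BG)\to\CO$ and $\Ext^2_\pi(\CI,\CI)_0$-part, as in the proof of Theorem~\ref{thm:cosections on nested quot}. I would then check directly that on splitting loci it decomposes as a sum of the two independent trace cosections.

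A simpler fallback is also available. Both sides are integrals over proper spaces with $\mathbb{C}^*$-compatible data, so one could instead use the universality argument: every possible wall term is a universal polynomial in $\beta^2$, $n$, $m$ and Chern numbers of $S$, multiplied by an integral against the reduced class of $S^{[k]}$. The standard (non-reduced) virtual class of $S^{[k]}$ with the obstruction theory induced from the threefold vanishes for a symplectic surface because of its trivial cosection, so every such wall term is zero.
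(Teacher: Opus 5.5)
Your plan is essentially the paper's proof. It is a Mochizuki/Kuhn--Liu--Thimm master-space wallcrossing on the stack $\FN_\alpha$ of maps $I_z\to F$ with $0$-dimensional cokernel, interpolating from $\Quot^{\flat}_\alpha$ to $\Quot^{\sharp}_\alpha$, and the wall fixed loci (where $F=F'\oplus Z$ with $Z$ zero-dimensional) have reduced class zero. The reason is the one you give: their fixed obstruction theory splits as a direct sum, each factor carries a surjective trace cosection, and only one cosection was removed. The main correction is that no symmetric obstruction theory is available or needed, since the theory on $\FN_\alpha$ has amplitude $[-1,1]$ and nonzero virtual dimension; the paper only uses a $\BC^*$-equivariant theory on the master space with the cosection pulled back from $\FN_\alpha$, frames by full flags in $H^0(F(\kappa))$ so that each wall is simple with $\BC^*$-stabilizers (hence no iterated brackets), and inserts the top Chern class of the flag bundle's relative tangent bundle to return to the Quot schemes.
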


Every monomial in the descendent classes is of the form $\prod_{i=1}^{\ell} \ch_{a_i}^{b(i)}(\gamma_i)$.
Hence, Theorem \ref{thm:wallcrossing} covers arbitrary descendent integrals over the nested Quot schemes.
The proof of Theorem \ref{thm:wallcrossing} follows  a well-known wallcrossing approach \cite{Joyce2,Mochizuki}. Since we could not find an appropriate reference, a proof is provided in Appendix A.

\subsection{Universality}
For the proof of the PT multiple cover formula in Section \ref{dtptred}, a universality result for
descendent integrals on $\Quot_{\alpha}^\star$ will be required.
The universality here is based on the results of Gholampour-Thomas \cite{GT2} which calculate the pushforward of the virtual class of the nested Quot scheme to a product of Hilbert schemes of points of $S$ (to which the
universality of Ellingsrud-G\"ottsche-Lehn \cite{EGL} can be applied).

\begin{thm} \label{thm:universality}
Let $\star \in \{ \flat, \sharp \}$.
Let $a_{1}, \ldots, a_{\ell} \geq 0$ be integers, and let $$b : \{ 1, \ldots, \ell \} \to \{ \BG, \CO_{\CZ} \}$$ be a superscript assignment.
Let $\gamma_1,\ldots,\gamma_{\ell} \in H^{\ast}(S,\BQ)$ be cohomology classes.
Let $\alpha=((1,0,-n),(0,\beta,m))$ for an effective curve class $\beta\in H^2(S,\BZ)$.
Then, for every symplectic surface $S'$ and degree-preserving $\mathbb{Q}$-algebra isomorphism $$\varphi : H^{\ast}(S, \BQ) \to H^{\ast}(S',\BQ)$$ for which $\varphi(\pt)=\pt$ and $\varphi(\beta)$ is effective, we have
\[ 
\int_{ [ \Quot^{\star}_{\alpha} ]^{\red} }
\prod_{i=1}^{\ell} \ch_{a_i}^{b(i)}(\gamma_i)
=
\int_{ [ \Quot^{\star}_{\varphi(\alpha)} ]^{\red} }
\prod_{i=1}^{\ell} \ch_{a_i}^{b(i)}(\varphi(\gamma_i))\, .
\]
\end{thm}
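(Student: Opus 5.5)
The approach is to reduce the descendent integral over $[\Quot^{\star}_{\alpha}]^{\red}$ to an integral over products of Hilbert schemes of points of $S$, where Ellingsrud--G\"ottsche--Lehn universality applies.

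\textbf{Step 1 (case $\star=\sharp$).} By Theorem~\ref{thm:wallcrossing}, the integrals over $\Quot^{\flat}_\alpha$ and $\Quot^{\sharp}_\alpha$ agree, and the same holds for the class $\varphi(\alpha)$. It therefore suffices to treat $\star=\flat$.

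\textbf{Step 2 (nested Hilbert scheme description).} A point of $\Quot^{\flat}_\alpha$ is a surjection $I_z\twoheadrightarrow G$. Its kernel is a rank one torsion free sheaf of determinant $\CO(-C)$, where $C$ is a curve in class $\beta$. Hence the kernel has the form $I_{z'}(-C)$ with $I_{z'}(-C)\subset I_z$, and $\Quot^{\flat}_\alpha$ is the nested Hilbert scheme $S^{[n'],[n]}_\beta$ of Gholampour--Thomas (with $n'$ fixed by $m$). I would check that the obstruction theory of Corollary~\ref{lemma:pot reduction on Quot} matches the one used in \cite{GT2}, or at least has the same $K$-theory class together with a compatible cosection, so that the reduced classes agree.

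\textbf{Step 3 (pushforward to Hilbert schemes).} The reduced class in \cite{GT2} is computed by degeneracy loci. Its pushforward
\[
\iota_*[\Quot^{\flat}_\alpha]^{\red}\in H_*\big(S^{[n]}\times S^{[n']}\times \Pic_\beta(S)\big)
\]
is an explicit universal polynomial in Chern classes of tautological complexes $R\pi_*\mathcal{L}\otimes(\ldots)$ built from $\CI_{\CZ},\CI_{\CZ'}$ and the Poincar\'e line bundle. These classes are then capped against the Abel--Jacobi/reduced fundamental class of $\Pic_\beta$; for $K3$ surfaces this is a point, and for abelian surfaces the structure is handled through the cosection.

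Next, express the descendents $\ch_a^{\BG}(\gamma)$ in the same terms. In $K$-theory,
\[
\BG=\CI_{\CZ}-\CI_{\CZ'}\otimes\CL^{-1},
\]
so by Grothendieck--Riemann--Roch each $\ch^{\BG}_a(\gamma)$ is a universal expression in $\ch^{\CO_\CZ}$ and $\ch^{\CO_{\CZ'}}$ descendents, $c_1(\CL)$, and $\td(S)$ (which involves only $\pt$ and $c_2$). Then
\[
\int_{[\Quot^{\flat}_\alpha]^{\red}}\prod_i \ch^{b(i)}_{a_i}(\gamma_i)
\]
becomes an integral over $S^{[n]}\times S^{[n']}$ of tautological classes, whose insertions involve $\gamma_i$, $\beta$, and the Chern classes of $S$. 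The EGL-type universality for products of Hilbert schemes (used in this form by \cite{GT2}) writes any such integral as a universal polynomial in the intersection numbers $\int_S$ of products of $\gamma_i$, $\beta$, $c_1(S)=0$ and $c_2(S)$.

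\textbf{Step 4 (conclusion).} Since $\varphi$ is a degree-preserving algebra isomorphism with $\varphi(\pt)=\pt$, it preserves all these intersection numbers. Moreover $c_2(S)=\chi(S)\pt$, and $S'$ has the same type as $S$, so $c_2$ is preserved as well. The universal polynomial therefore gives the same value for $(S,\alpha,\gamma_i)$ and $(S',\varphi(\alpha),\varphi(\gamma_i))$. Effectivity of $\varphi(\beta)$ is needed so that the right side is defined by the same geometric construction.

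\textbf{Expected main obstacle.} The hardest point is making Step 3 rigorous for the reduced class. The Gholampour--Thomas formula is phrased for the standard virtual class on a surface with $p_g>0$, and one must show that the reduced class pushes forward to the analogous universal degeneracy-locus expression, for instance by a $\BC^*$-scaling or twisted-by-$\omega$ argument. One must also handle the odd cohomology and the non-point $\Pic_\beta$ in the abelian case. I would first try to realize the reduced class as the virtual class of the obstruction theory with the trivial summand $\CO$ removed, and then rerun the degeneracy-locus computation of \cite{GT2} with that modified obstruction bundle.
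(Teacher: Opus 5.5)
\textbf{Verdict: genuine gap in Steps 3--4, abelian case.} Your Steps 1--2 agree with the paper: reduce to $\Quot^\flat_\alpha$ by wallcrossing, identify it with the Gholampour--Thomas nested Hilbert scheme, and compare $K$-theory classes. Moreover, \cite[Corollary 4.22]{GT2} already gives the pushforward of the \emph{reduced} class to $S^{[n']}\times S^{[n]}\times H_\beta$ as $c_{n+n'}(\cdots)\cap[S^{[n']}\times S^{[n]}]\times[H_\beta]^{\red}$. So the obstacle you single out at the end is not where the difficulty lies.

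After EGL integrates out the two Hilbert schemes of points, you are not left with a universal polynomial in intersection numbers on $S$. You are left with descendent integrals of $\ch_k^{\CO(\CD_\beta)}$ over $[H_\beta]^{\red}$. Here $\CO(\CD_\beta)=\CL_\beta(1)$ involves two things your Step 3 omits: the hyperplane class $z$ of $H_\beta\to\Pic_\beta$ (your formula $\BG=\CI_\CZ-\CI_{\CZ'}\otimes\CL^{-1}$ drops it), and the K\"unneth component $\eta\in H^1(S)\otimes H^1(\Pic_\beta)$ of $c_1(\CL_\beta)$.

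For abelian surfaces your intermediate claim is false. Take $n=0$ and $m=-\beta^2/2$; then $\Quot^\flat_\alpha=H_\beta$ and $\ch_1^{\BG}=\ch_1^{\CO(\CD_\beta)}$. The example in Appendix B gives $\int_{[H_{r\mathsf{f}_1}]^{\red}}\ch_1(dx_1\mathsf{f}_2)\,\ch_1(dy_1\mathsf{f}_2)=r$. But every intersection number on $S$ involving $\gamma_1=dx_1\mathsf{f}_2$ or $\gamma_2=dy_1\mathsf{f}_2\in H^3$ vanishes for degree reasons. So any expression multilinear in $(\gamma_1,\gamma_2)$ of the type you describe returns $0$.

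\textbf{What is missing} is the separate universality theorem for $[H_\beta]^{\red}$ (Theorem~\ref{thm:univ general}), together with the check that $\varphi$ respects its invariants. The argument runs as follows. Realize $H_\beta$ as $\p(E)$ over $\Pic_\beta$, or in general as the zero locus of a section on $\p(E_0)$ whose refined Euler class is $[H_\beta]^{\red}$. Write $\ch_x(\CO(\CD_\beta))=e^{x(\beta+\eta+z)}$, push powers of $z$ down to Segre classes of $R\pi_*\CL_\beta$, and integrate over $\Pic_\beta$. The result depends on the insertions only through $p_g$, $q$, and the $T$-degrees of products of the classes $\xi(\gamma_i\beta^a c_1^b c_2^c)$ and $\xi(\beta^a c_1^b c_2^c)$ in $\wedge^*H^1(S)^*\cong H^*(\Pic_\beta)$. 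This product structure is not the cup product on $S$: the integral in the example is, up to sign, $T(\xi(\beta)\xi(\gamma_1)\xi(\gamma_2))$.

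Invariance under $\varphi$ then goes as follows. For an abelian surface, $\varphi$ is determined by $\varphi|_{H^1}$. Hence the dual map $\widetilde\varphi$ of $\varphi^{-1}$ is an algebra isomorphism of $\wedge^*H^1(S)^*$ with $\widetilde\varphi\circ\xi=\xi\circ\varphi$. Also $T\circ\widetilde\varphi=T$, since $\varphi(\pt)=\pt$.

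For $K3$ surfaces $\Pic_\beta$ is a point, and your sketch can be completed once $z$ is kept. The abelian case, which you defer as ``handled through the cosection'', needs exactly the argument above. The cosection is already built into $[H_\beta]^{\red}$; it has nothing to do with how odd insertions are coupled through $\eta$.
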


\begin{proof}[Proof of Theorem~\ref{thm:universality}]
The claim was proven in \cite[Theorem 3.2]{QuasiK3} for $K3$ surfaces. The argument for abelian surfaces requires some modifications, so we provide a full argument.
The proof is heavily inspired by the ideas of \cite{GT1,GT2,KT1,KT2} where similar results were proven.

By Theorem~\ref{thm:wallcrossing}, we can work with integrals over $\Quot^{\flat}_{\alpha}$.
For an element $\varphi : I_z \to G$ in $\Quot_{\alpha}^{\flat}$, the kernel of $\varphi$ is of the form $I_{z'}(-D)$ where $D \subset S$ is a Cartier divisor with homology class $\beta$ and $I_{z'}$ is the ideal sheaf of a 0-dimensional subscheme of length $$n'=n+m + \beta^2/2\,,$$ as can be seen by taking double-duals of the inclusion $\mathrm{Ker}(\varphi) \hookrightarrow I_z$.
Thus $\Quot_{\alpha}^{\flat}$ is isomorphic to the nested Hilbert scheme\footnote{The moduli space $H_{\beta}^{[n',n]}$ is denoted by $S_{\beta}^{[n',n]}$ in \cite{GT2}.} $H_{\beta}^{[n',n]}$
as defined in \cite{GT2} parameterizing inclusions of coherent sheaves
\begin{equation} I_{z'}(-D) \subset I_z. \label{e0-dsf} \end{equation}
By a direct check, the $K$-theory classes of the perfect obstruction theories of
$\Quot_{\alpha}^{\flat}$ (as constructed above) and 
of $H_{\beta}^{[n',n]}$ (as constructed in \cite[Theorem 4.16]{GT2}) are the same.
In \cite{GT2}, a reduced virtual class on $H_{\beta}^{[n',n]}$ is constructed by removing a trivial summand from the obstruction space. It follows that the reduced virtual classes agree.

Let $H_{\beta}$ be the Hilbert scheme of curves on $S$ in class $\beta$.
The Hilbert scheme admits a reduced perfect obstruction theory with reduced virtual class $[H_{\beta}]^{\red}$. Let $\CD_{\beta}$ be the universal Cartier divisor on $S \times H_{\beta}$. We define the descendent classes
\[ \ch_k^{\CO(\CD_{\beta})}(\gamma) = \pi_{\ast}( \ch_k(\CO(\CD_{\beta})) \cdot \rho^{\ast}(\gamma) )\, . \]

By \cite[Corollary 4.22]{GT2}, the natural embedding
\[ j : \Quot_{\alpha}^{\flat} \to S^{[n']} \times S^{[n]} \times H_{\beta} \]
given by sending the element \eqref{e0-dsf} to the triple $(z',z,D)$
satisfies
\[
j_{\ast} [ \Quot_{\alpha}^{\flat} ]^{\red}
=
c_{n+n'}\Big( R \pi_{\ast} \CO(\CD_{\beta}) - R \hom_{\pi}(\CI_{\CZ'}, \CI_{\CZ}(\CD_{\beta}) ) \Big) \cap [S^{[n']} \times S^{[n]}] \times [H_{\beta}]^{\red}
\]
where
\begin{itemize}
\item $\pi : S \times S^{[n']} \times S^{[n]} \times H_{\beta} \to S^{[n']} \times S^{[n]} \times H_{\beta}$ is the projection,
\item the universal families $\CI_{\CZ'},\, \CI_{\CZ},\, \CO(\CD_{\beta})$ of the moduli spaces $S^{[n']},\, S^{[n]},\, H_{\beta}$ are viewed here on $S \times S^{[n']} \times S^{[n]} \times H_{\beta}$ by pullback.
\end{itemize}

By the Grothendieck-Riemann-Roch theorem, the factor
\[ c_{n+n'}\Big( R \pi_{\ast} \CO(\CD_{\beta}) - R \hom_{\pi}(\CI_{\CZ'}, \CI_{\CZ}(\CD_{\beta}) ) \Big) \]
can be written as a universal polynomial in descendents
\begin{equation} \ch_k^{\CO_{\CZ}}(\gamma)\, ,\   \ch_{k}^{\CO_{\CZ'}}(\gamma)\,, \ \ch_k^{\CO(\CD_{\beta})}(\gamma)\, . \label{descendent classes} \end{equation}
Clearly, we have $j^{\ast} \ch_k^{\CO_{\CZ}}(\gamma) = \ch_k^{\CO_{\CZ}}(\gamma)$.
Since $\BG = \CI_{\CZ} - \CI_{\CZ'}(-\CD_{\beta})$, we also have
\[ \ch_k^{\BG}(\gamma) = j^{\ast}\left( 
-\ch_k^{\CO_{\CZ}}(\gamma) + (-1)^{k-1} \ch_k^{\CO(\CD_{\beta})}(\gamma)
+ \sum_{r+s=k} (-1)^r \ch_{r}^{\CO_{\CZ'}}(\gamma \delta_i) \ch_{s}^{\CO(\CD_{\beta})}(\delta_i^{\vee}) \right)\, , \]
where $\sum_i \delta_i \boxtimes \delta_i^{\vee}$ is a K\"unneth decomposition of the diagonal in $S^2$. 
It follows that any descendent integral
over $[\Quot_{\alpha}^{\flat} ]^{\red}$ can be written as
a descendent integral over $[S^{[n']} \times S^{[n]}] \times [H_{\beta}]^{\red}$
in a universal way.
The claim then follows immediately 
from the universality of descendent integrals over the Hilbert scheme \cite{EGL}
and the universality of descendent integrals over $[S_{\beta}]^{\red}$ of  Theorem~\ref{thm:universality Hilbert scheme symplectic} proven in Appendix B.

To describe the argument more
precisely, let us view the pair $(a,b)$ as defining a linear map
\[ \ch_{(a,b)} : H^{\ast}(S,\BQ)^{\otimes \ell} \to H^{\ast}(\Quot_{\alpha}^{\flat},\BQ) \]
given by
\[ \ch_{(a,b)}(\gamma_1 \boxtimes \ldots \boxtimes \gamma_{\ell})
=
\prod_{i=1}^{\ell} \ch_{a(i)}^{b(i)}(\gamma_i)\, , \]
where we have written $a(i)$ for $a_i$ indicating that $a$ can be viewed as a map $\{ 1, \ldots, \ell \} \to \BZ_{\geq 0}$.
Then, there exists a list of 
triples $(a_{\kappa},b_{\kappa},\eta_{\kappa})$ depending only on $(a,b)$ and $n,n'$, 
\begin{itemize}
\item $a_{\kappa} : \{ 1, \ldots, \ell_{\kappa} \} \to \BZ_{\geq 0}$
and $b_{\kappa} : \{ 1, \ldots, \ell_{\kappa} \} \to \{ \CO_{\CZ}, \CO_{\CZ'}, \CO(\CD_{\beta}) \}$ are functions, which we view as defining a morphism
\[ \ch_{(a_{\kappa},b_{\kappa})} : H^{\ast}(S^{\ell_{\kappa}},\BQ) \to H^{\ast}(S^{[n']} \times S^{[n]} \times H_{\beta},\BQ) \]
using
\[
\ch_{(a_{\kappa},b_{\kappa})}(\gamma_1 \boxtimes \ldots \boxtimes \gamma_{\ell_{\kappa}}) = 
\prod_{i=1}^{\ell_{\kappa}} \ch_{a_{\kappa}(i)}^{b_{\kappa}(i)}(\gamma_i)\, , \]
\item operators $\eta_{\kappa} : H^{\ast}(S,\BQ)^{\otimes \ell} \to H^{\ast}(S,\BQ)^{\otimes \ell_{\kappa}}$ 
which act by pulling back a class $\alpha \in H^{\ast}(S^{\ell})$ along a projection map $S^{r} \to S^{\ell}$ for some $r$, multiplying with a polynomial in diagonals and pullbacks of $c_2(S)$ from factors, and pushforwards along a projection map $S^{r} \to S^{\ell_{\kappa}}$,
\end{itemize}
such that, for every $\Gamma \in H^{\ast}(S^{\ell}\,\BQ)$,
we have
\[
\int_{ \left[ \Quot^{\flat}_{\alpha} \right]^{\red} }
\ch_{(a,b)}(\Gamma)
=
\sum_{\kappa}
\int_{[S^{[n']} \times S^{[n]}] \times [H_{\beta}]^{\red}}
\ch_{(a_{\kappa},b_{\kappa})}( \eta_{\kappa}(\Gamma))\, . \]

By the results of \cite{EGL}, 
see also \cite[Section 4]{KT} where the case of integrating over an arbitrary base is discussed,
the factors $S^{[n]}$ and $S^{[n']}$ can be integrated out in an universal way, which depends only on the intersection pairings of the cohomology insertions of 
$\ch_k^{\CO_{\CZ}}$ and $\ch_{k}^{\CO_{\CZ'}}$. 
More precisely for each $\kappa$ there exists a further list of pairs $(a_{\kappa \lambda}, \xi_{\kappa \lambda})$ depending only on $(a_{\kappa},b_{\kappa})$, $n$, $n'$ where
\begin{itemize}
\item $a_{\kappa \lambda} : \{ 1 ,\ldots, \ell_{\kappa \lambda} \} \to \BZ_{\geq 0}$ 
is a function viewed as defining a morphism
\[ \ch_{(a_{\kappa \lambda})} : H^{\ast}(S^{\ell_{\kappa \lambda}},\BQ) \to H^{\ast}(H_{\beta},\BQ)\, , \quad \ch_{(a_{\kappa \lambda})}(\gamma_1 \boxtimes \ldots \boxtimes \gamma_{\ell_{\kappa \lambda}})
=
\prod_{i=1}^{\ell_{\kappa \lambda}} \ch_{a_{\kappa \lambda}(i)}(\gamma_i)\, ,
\]
\item operators $\xi_{\kappa \lambda} : H^{\ast}(S^{\ell_{\kappa}}) \to H^{\ast}(S^{\ell_{\kappa \lambda}})$ 
defined by the same operations as $\eta_{\kappa}$ above,
\end{itemize}
such that, for every $\Gamma \in H^{\ast}(S^{\ell_{\kappa}})$, we have
\[
\int_{[S^{[n']} \times S^{[n]}] \times [H_{\beta}]^{\red}}
\ch_{(a_{\kappa},b_{\kappa})}( \Gamma) 
=
\sum_{\lambda} \int_{[H_{\beta}]^{\red}}
\ch_{(a_{\kappa \lambda})}( \xi_{\kappa \lambda}(\Gamma))\, .
\]
By Theorem~\ref{thm:universality Hilbert scheme symplectic} of Appendix B, the last integral can be replaced by its $\varphi$-image.
Moreover, diagonal pushforward, multiplication by $c_2(S)$, and pushforward all commute with $\varphi$. So we have $\varphi \circ \eta_{\kappa} = \eta_{\kappa} \circ \varphi$ and $\varphi \circ \xi_{\kappa \lambda} = \xi_{\kappa \lambda} \circ \varphi$. Hence, we obtain, for every $\Gamma \in H^{\ast}(S^{\ell},\BQ)$,
\begin{align*}
\int_{ \left[ \Quot^{\flat}_{\alpha} \right]^{\red} }
\ch_{(a,b)}(\Gamma)
& = 
\sum_{\kappa, \lambda}
\int_{[H_{\beta}]^{\red}}
\ch_{(a_{\kappa \lambda})}( \xi_{\kappa \lambda}(\eta_{\kappa}(\Gamma))) \\
& = 
\sum_{\kappa,\lambda}
\int_{[H_{\varphi(\beta)}]^{\red}}
\ch_{(a_{\kappa \lambda})}( \varphi\xi_{\kappa \lambda}\eta_{\kappa}(\Gamma)) \\
& = 
\sum_{\kappa, \lambda}
\int_{[H_{\varphi(\beta)}]^{\red}}
\ch_{(a_{\kappa \lambda})}( \xi_{\kappa \lambda}\eta_{\kappa}\varphi(\Gamma)) \\
& = 
\int_{ \left[ \Quot^{\flat}_{\varphi(\alpha)} \right]^{\red} }
\ch_{(a,b)}(\varphi(\Gamma))\, ,
\end{align*}
which completes the argument.
\end{proof}

\section{Multiple cover formulas for PT invariants}
\label{dtptred}
\subsection{Definition}
Let $S$ be a symplectic surface,  let $\beta \in H_2(S,\BZ)$ be an effective curve class, and let $z=(z_1,\ldots, z_{m})$ be a tuple of distinct points of $\p^1$. Consider the relative geometry
\[ (X, S_{z}), \quad X = S \times \p^1\, , \quad S_z = S_{z_1,\ldots,z_{m}} = \sqcup_j S \times \{ z_j \}\, . \]
Let $P_{\ch_3,(\beta,n)}( X, S_{z})$ be the moduli space of relative stable pairs on $(X,S_z)$ which parameterizes stable pairs $(F,s)$ on $l$-step expansions $X[l]$ of $X$ satisfying
\[ \rho_{\ast} \ch_2(F) = (\beta,n), \quad \ch_3(F) = \ch_3\, , \]
where $\rho : X[l] \to X$ is the map contracting the bubbles.
The moduli space of relative stable pairs compactifies the open locus in the usual moduli space of stable pairs $(F,s)$ on $X$ where $F$ meets the boundary divisors $S_z$ transversally and the cokernel of $s$ is disjoint from $S_z$, see \cite{LiWu}. 
There are evaluation maps
at the fibers
\[ \ev_{z_i} : P_{\ch_3,(\beta,n)}( X, S_{z}) \to S^{[n]}. \]
The moduli space $P_{\ch_3,(\beta,n)}( X, S_{z})$ carries a perfect obstruction theory of dimension $2n$,
which admits a surjective cosection, and thus gives rise to a reduced virtual class of dimension $2n+1$.

The universal target over the moduli space,
$$\pi : \CX \to P_{\ch_3,(\beta,n)}( X, S_{z})\, .$$
carries a universal stable pair  $\CO_{\CX} \to \BF$.
For $\zeta \in H^{\ast}(S \times \p^1)$, descendent classes are defined by
\[ \ch_a(\zeta) = \pi_{\ast}( \ch_a(\BF) \cdot \rho^{\ast}(\zeta) ) \in H^{\ast}(P_{\ch_3,(\beta,n)}( X, S_{z})) \]
where
$\rho : \CX \to X$ the map forgetting the subscheme and contracting the bubbles of the expansions.
If $a<2$, then $\ch_a(\zeta)=0$, so we will always assume the PT descendent index satisfies $a\geq 2$.

Given $\lambda^1,\ldots,\lambda^{m} \in H^{\ast}(S^{[n]})$, we define the relative PT invariants by
\[
\big\langle \, \lambda^1,\ldots,\lambda^{m} \, \big| \, \ch_{\ellt_1}(\zeta_1) \cdots \ch_{\ellt_r}(\zeta_r) \big\rangle^{(S \times \p^1,S_z), \PT}_{\ch_3,\beta}
=
\int_{[  P_{\ch_3,(\beta,n)}( S \times \p^1, S_{z_1,\ldots,z_{m}}) ]^{\red}}
\prod_{i=1}^{m} \ev_{z_i}^{\ast}(\lambda^i) \cdot \prod_{i=1}^{r} \ch_{\ellt_i}(\zeta_i)\, .
\]

\subsection{Multiple cover formulas}
A {\em cohomology-weighted} partition $\lambda = ((\lambda_1,\delta_1), \ldots, (\lambda_\ell,\delta_\ell))$
of a non-negative integer $n$ satisfies the following
conditions:
$$\lambda_1 \geq \lambda_2 \geq \ldots\geq \lambda_\ell \geq 1\, , \ \ \
\sum_{i=1}^\ell \lambda_i =n\, , \ \ \
\delta_i \in H^*(S)\, .$$ 
To every cohomology weighted partition $\lambda=(\lambda_i,\delta_i)$ there is an associated class
in the cohomology of the Hilbert scheme $S^{[n]}$,
\begin{equation} \label{nakclass}
\lambda = \frac{1}{\prod_{i=1}^{m} \lambda_i} \prod_{i=1}^{\ell} \Fq_{\lambda_i}(\delta_i) \vacuum \ \in H^{\ast}(S^{[n]})\, , 
\end{equation}
where $\Fq_i(\gamma)$ are the Nakajima operators following the conventions of \cite[Example 2.8]{Marked}.
The classes \eqref{nakclass}, where $\lambda$ runs over all $H^{\ast}(S)$-weighted partitions of $n$ (with weights in a basis of
the cohomology of $S$),
generate the cohomology of the Hilbert scheme $S^{[n]}$. 
The degree of $\lambda$ is
$$\deg_{\mathbb{C}}(\lambda) = n-\ell + \sum_{i=1}^\ell \deg_{\BC}(\delta_i)\, .$$

If $\varphi : H^{\ast}(S, \BQ) \to H^{\ast}(S',\BQ)$ is a degree-preserving $\mathbb{Q}$-algebra isomorphism preserving the point class, 
we define $$\varphi(\lambda) = (\lambda_i, \varphi(\delta_i))\, .$$
We then obtain a well-defined $\mathbb{Q}$-linear isomorphism 
\begin{equation}\label{fvr}
\varphi : H^{\ast}(S^{[n]},\BQ) \to H^{\ast}((S')^{[n]},\BQ)\, .
\end{equation}
In fact, the map \eqref{fvr} is also
a $\mathbb{Q}$-algebra isomorphism preserving
the point class,
but 
we will not need the $\mathbb{Q}$-algebra claim.

For our study of the multiple cover formulas, we will only require  cohomology insertions of the 
form{\footnote{For notational convenience, we omit the pullback by the projections.}}
\[ \zeta = \omega \cup \gamma\, , \]
where $\omega \in H^2(\p^1,\BQ)$ is the point class and $\gamma \in H^{\ast}(S,\BQ)$.
The main result of  Section \ref{dtptred}
is a multiple cover formula for PT invariants.

\begin{thm}[Multiple Cover Formula]  \label{thm:PT MCF} We have
\begin{multline*}
\big\langle \, \lambda^1,\ldots,\lambda^{m} \, \big| \, \prod_{i=1}^{r} \ch_{\ellt_i}(\omega \gamma_i) \big\rangle^{(S \times \p^1,S_z), \PT}_{\ch_3,(\beta,n)}
=\\
\sum_{k|(\beta,\ch_3)} (-1)^{(k-1) \frac{\ch_3}{k}}
\big\langle \, \varphi_k(\lambda^1),\ldots,\varphi_k(\lambda^{m}) \, \big| \, \prod_{i=1}^{r} \ch_{\ellt_i}(\omega \varphi_k(\gamma_i)) \big\rangle^{(S_k \times \p^1,(S_k)_z), \PT}_{\frac{\ch_3}{k},(\varphi_k(\beta/k),n)}.
\end{multline*}
\end{thm}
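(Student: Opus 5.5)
We plan to prove Theorem~\ref{thm:PT MCF} by $\BC^*$-localization on the $\p^1$ factor, followed by an analysis of the fixed loci via the nested Quot schemes of Section~\ref{quotstuff}.

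\emph{Step 1: localization.} Let $\BC^*$ act on $\p^1$ with fixed points $0,\infty$. By rigidification, the relative conditions may be placed at the fixed points or at generic points. The insertions $\omega\gamma_i$ lift equivariantly by choosing $\omega$ to be the class of a fixed point, and the descendents then localize to that fiber. The $\BC^*$-fixed stable pairs on $S\times\BC$ (away from the relative divisors) are $\BC^*$-equivariant sheaves, equivalently flags of sheaves on $S$ indexed by the $t$-weights. Exactly as in the paper's overview, these fixed loci are nested Quot schemes $\Quot^{\sharp}_\alpha(\CI_\CZ)$ with $M=S^{[n]}$, in the perverse version appropriate to PT. The rubber and expanded-degeneration contributions at the relative divisors are handled in the same way with rubber targets.

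\emph{Step 2: cosection vanishing.} The reduced class is obtained from the single surjective cosection of the three-fold theory. On each fixed component with $s$ jumps, Theorem~\ref{thm:cosections on nested quot} provides $s$ independent surjective cosections. We need to check that these are compatible with the fixed part of the obstruction theory and that one of them is the reduction cosection. If so, every component with $s\geq 2$ contributes zero to the reduced invariant. This also applies across components and bubbles: the reduced class of a disconnected or multi-component configuration vanishes unless the whole class $\beta$ sits in a single jump. Hence only 1-jump flags contribute, i.e.\ $\Quot^{\sharp}_\alpha$ with $\alpha=((1,0,-n),(0,\beta',m))$ sitting at a single weight $k$. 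Such a flag is a thickened, $k$-times repeated sheaf, so its curve class is $\beta=k\beta'$ and its $\ch_3$ is $k$ times a basic value.

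\emph{Step 3: contribution of a $k$-fold component.} For each divisor $k$ of $(\beta,\ch_3)$ we compute the contribution of the weight-$k$ components. The virtual normal bundle and the descendents are written via GRR as universal expressions in descendents of $\BG$ and $\CO_\CZ$ on $\Quot^\sharp_{\alpha}$, with $\BG$ of class $\beta/k$, which depend on $k$ explicitly through the equivariant weights. By Theorem~\ref{thm:universality} (together with Theorem~\ref{thm:wallcrossing} where needed), $\varphi_k$ transports these integrals to $S_k$ with the primitive class $\varphi_k(\beta/k)$. There, the same integral appears as the $k=1$ contribution of the right-hand PT invariant with $\ch_3/k$. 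The remaining task is to compare the weight-$k$ localization term with the weight-1 one. This comparison should produce exactly the sign $(-1)^{(k-1)\ch_3/k}$, coming from the $k$-th roots of unity and the twisted normal weights, and no further power of $k$, because the insertions $\omega\gamma$ and the relative conditions scale trivially.

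\emph{Main obstacle.} We expect the hardest step to be this exact matching of the $k$-fold equivariant normal bundle and descendent contributions with the $k=1$ terms for the class transformed by $\varphi_k$. The cleanest route is to show that the full localized integrand for a weight-$k$ flag equals, after substituting the equivariant parameter $t\mapsto kt$, the integrand for weight $1$ times the sign. We would first attempt this in $K$-theory, checking weight by weight that the virtual tangent spaces coincide up to this rescaling. Compatibility of the cosections with the fixed/moving splitting in Step 2 is the secondary technical point.
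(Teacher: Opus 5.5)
Your analysis of the main-type fixed loci (perverse nested Quot schemes, vanishing of multi-jump loci by the extra cosections, and transport of the $k$-thickened contribution by Theorem~\ref{thm:universality} after $t\mapsto kt$) matches what the paper does, but the paper does this only for the cap. Step~1 has a genuine gap: you cannot localize $(S\times\p^1,S_z)$ directly.
\begin{itemize}
\item For $m\geq 3$, no $\BC^*$-action on $\p^1$ preserves $\{z_1,\ldots,z_m\}$. No choice of positions moves three distinct relative points onto the two fixed points $0,\infty$.
\item For $m=2$, both fixed points are relative. The curve class of every fixed pair lies in the rubber bubbles over $0$ and $\infty$, so no nested-Quot locus appears at all.
\end{itemize}
The missing idea is the reduction to the cap. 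The paper proves the formula equivariantly only for $(S\times\p^1,S_\infty)$ with insertions $\ch_a([\mathbf{0}]\gamma)$ (Theorem~\ref{thm:PT MCF equivariant}). There an equivariant factor $k^{\nu}$ appears, and $\nu=0$ in the non-equivariant limit; that is the real reason no power of $k$ survives. The paper then applies the degeneration formula along $S_{z_m}$, inverted as in \cite{PPJap,QuasiK3}. This expresses $m$-fiber invariants through $(m-1)$-fiber and cap invariants with the same $(\ch_3,\beta)$, and the paper inducts on $m$. This step also needs the non-reduced class-$(0,n)$ invariants that appear in the degeneration formula. They vanish for $\ch_3>0$ by a cosection argument, and for $\ch_3=0$ they are integrals over $S^{[n]}$.

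Second, even in the cap your Step~2 overclaims. Consider the rubber-type locus: all of $\beta$ sits in the bubble over $\infty$, with $p_S^*\CO_z$ on the main component. It carries only the one cosection removed by the reduction, so its reduced class need not vanish, and it is not of nested-Quot form. Its contribution vanishes for a dimension reason instead. By Lemma~\ref{lemma:restriction1}, the insertions $\ch_a([\mathbf{0}]\gamma_i)$ restrict to $\ev_{S_0}^*\ch_a^{\CO_{\CZ}}(\gamma_i)$. Together with $\ev_{S_\infty}^*\lambda$, the numerator then has degree at least $2n+1$ on a space of reduced dimension $2n$, and expanding $1/(-t-\Psi_0)$ only raises the degree. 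Only the mixed-type loci are killed by cosections: they carry two, and only one is removed.

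Finally, the sign $(-1)^{(k-1)\ch_3/k}$ involves no roots of unity. For $1\le j\le k-1$, the moving part pairs $R\hom_\rho(\BI_0,\BG)\otimes\Ft^{-j}$ against its dual twisted by $\Ft^{j}$. Use $e(V)=(-1)^{\rk V}e(V^\vee)$ with $\rk=(\beta/k)^2+\ch_3/k\equiv \ch_3/k$ modulo $2$. This gives $1/e(N^{\vir})=(-1)^{(k-1)\ch_3/k}\,e\bigl(R\hom_\rho(\BI_0,\BG)^\vee\otimes\Ft^{k}\bigr)$.
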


We will also prove a multiple cover formula for the $\mathbb{C}^*$-equivariant theory of the quasiprojective Calabi-Yau threefold $S \times \BC$, where $\BC^{\ast}$ acts by scaling on the second factor with tangent weight $t$ at $0\in \mathbb{C}$.
Let $$\iota : S \to S \times \BC$$ be the inclusion of the zero section, and let
$[\mathbf{0}] \in H^2_{\BC^*}(\mathbb{C})$ be the equivariant cycle class of the point $0 \in \mathbb{C}$. As before, let
 $\gamma_1,\ldots, \gamma_r \in H^{\ast}(S)$.
The PT invariants of $S\times \BC$ are defined by $\mathbb{C}^*$-equivariant localization 
of the reduced equivariant virtual class\footnote{$P_{\ch_3,\iota_{\ast}\beta}(Y)$ is the moduli space of stable pairs with $\ch_2(F)=\iota_{\ast}\beta$ and $\ch_3(F)=\ch_3$,
where $\ch_3(F) \in \BZ$ is defined since $F$ has proper support. 
We have $\ch_3(F) = \chi(F) \in \BZ$ since $S \times \BC$ is Calabi-Yau.}:
\[ \left\langle \, \prod_{i=1}^{r} \ch_{\ellt_i}([\mathbf{0}] \gamma_i) \right\rangle^{S\times \BC, \PT}_{\ch_3,\beta}
=
\int_{[ P_{\ch_3,\iota_{\ast}\beta}(Y) ]^{\red}}
\prod_{i=1}^{r}
\ch_{\ellt_i}([\mathbf{0}]\gamma_i)\, .
\]

\begin{thm}[$\BC^*$-equivariant Multiple Cover Formula] 
\label{thm: PT SxC MCF} 
We have
\[
\left\langle \, \prod_{i=1}^{r} \ch_{\ellt_i}([\mathbf{0}] \gamma_i) \right\rangle^{S\times \BC, \PT}_{\ch_3,\beta}
=
\sum_{k|(\ch_3,\beta)}
(-1)^{(k-1) \frac{\ch_3}{k}}
k^{-1+\sum_i(\deg_{\BC}(\gamma_i) + \ellt_i - 2)}
\left\langle \, \prod_{i=1}^{r} \ch_{\ellt_i}([\mathbf{0}] \varphi_k(\gamma_i)) \right\rangle^{S_k\times \BC,\PT}_{\frac{\ch_3}{k},\varphi_k(\beta/k)}\, .
\]
\end{thm}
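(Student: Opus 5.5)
We prove Theorem~\ref{thm: PT SxC MCF} by $\BC^*$-localization on $P_{\ch_3,\iota_*\beta}(S\times\BC)$, identifying the fixed loci with the nested Quot schemes of Section~\ref{quotstuff}, killing most of them with the cosections of Theorem~\ref{thm:cosections on nested quot}, and then applying universality (Theorem~\ref{thm:universality}) to the surviving terms.

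\textbf{Step 1: fixed loci.} A $\BC^*$-fixed stable pair on $S\times\BC$ supported set-theoretically on $S\times\{0\}$ decomposes into weight pieces along the $\BC$-direction. Viewing $\CO_Y\to F$ through the cokernel filtration by powers of the coordinate $x$, a fixed pair is equivalent to a flag of inclusions $F_0\subset F_1\subset\cdots\subset F_\ell\subset \CO_S$, or, after dualizing/tilting as in Section~\ref{subsec:perverse t structure}, to a point of a nested perverse Quot scheme $\Quot^\sharp_\alpha(\CO_S)$ (the case $n=0$ of $M=S^{[n]}$). Here $\alpha_0-\alpha_\ell$ records $\beta$ split as $\sum\beta_j$ over the weights and $\ch_3$ split accordingly. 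The $\BC^*$-fixed part of the reduced obstruction theory of $P$ restricts to the obstruction theory \eqref{Ebullet}, and the moving part gives an explicit equivariant virtual normal bundle. The cosection on $P$ restricts compatibly to the trace cosections.

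\textbf{Step 2: vanishing.} By Theorem~\ref{thm:cosections on nested quot}, a fixed component with $s$ jumps carries $s$ independent surjective cosections. Exactly one of these is used up by the reduction, so whenever $s\geq 2$ the reduced class of the component vanishes. Hence only flags with a single jump contribute. For such a flag, the $\beta$ is concentrated in one nontrivial step of the form $I\to G$ repeated $k$ times. Concretely, the support is a $k$-times thickened copy $\mathcal{O}_S[x]/x^k\otimes G$ of a $1$-nested Quot scheme $\Quot_{\alpha'}$ with $\alpha'=((1,0,-n),(0,\beta/k,\ch_3/k\text{-data}))$; the pure length $k$ thickening is the only equivariant shape compatible with one jump. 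This is where $k\mid(\beta,\ch_3)$ enters.

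\textbf{Step 3: evaluate each $k$-term.} For the $k$-th term, I will compute the virtual normal bundle as $\sum_{j=0}^{k-1}$ twists of $\bigl(R\hom_\pi(\BI,\BI)\bigr)$ by the characters $t^{j}$. Its equivariant Euler class, together with $\ch_a([\mathbf 0]\gamma)$, which restricts to $\sum_j$ of the $t^j$-twisted descendents of $\BG$, yields a universal expression in descendents of $\BG$ and $\CO_Z$ on $[\Quot^{\sharp}_{\alpha'}]^{\red}$. Since $Y$ is Calabi--Yau, the $t$-dependence must cancel. Applying Theorem~\ref{thm:universality} with $\varphi_k$ (and Theorem~\ref{thm:wallcrossing} if passage between $\flat$ and $\sharp$ is needed) transforms the integral to $S_k$ with primitive class $\varphi_k(\beta/k)$. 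On $S_k$, the $k=1$ contribution is exactly the full invariant in that class, since all other terms vanish by primitivity.

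\textbf{Main obstacle.} It remains to show that the $k$-fold thickened contribution on $S$ equals $(-1)^{(k-1)\ch_3/k}k^{-1+\sum(\deg\gamma_i+a_i-2)}$ times the $k=1$ contribution on $S_k$. I expect this to be the hardest step. The scaling $\varphi_k$ acts on $H^2$ by weights compatible with $\beta\mapsto\beta/k$. I would first show, via universality, that the integrand is a polynomial in intersection numbers, and then track homogeneity: each descendent $\ch_a([\mathbf 0]\gamma)$ scales by $k^{\deg\gamma+a-2}$ through the $t^j$ sums, and the reduced class contributes $k^{-1}$. The sign should come from the Euler class of the moving part, analogous to the Behrend-function sign $(-1)^{k-1}$ per unit of $\chi$. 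If this direct bookkeeping fails, I would fall back on deforming to a surface where $\beta=k\beta'$ with $\beta'$ a rigid elliptic fiber-type class, and compare both sides explicitly there.
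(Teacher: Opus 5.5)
Your Steps 1 and 2 agree with the paper. The paper proves the theorem by embedding $P_{\ch_3,\iota_*\beta}(S\times\BC)$ as an open equivariant subspace of $P_{\ch_3,(\beta,0)}(S\times\p^1,S_\infty)$ and rerunning the main-component analysis with $n=0$. There the fixed loci are perverse nested Quot schemes of $\CO_S$, multi-jump components die by the extra cosections, and the survivors are $k$-fold thickenings of stable pairs $\CO_S\to G$ on $S$ with $\ch(G)=\frac1k(0,\beta,\ch_3)$. (A small correction: it is the complexes $I_i=[\CO_S\to F_i]$, not the sheaves $F_i$, that form a flag of subobjects of $\CO_S$ in $\Coh^\sharp(S)$.)

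\textbf{The gap.} The actual content of the theorem is the factor $(-1)^{(k-1)\ch_3/k}k^{\nu}$, with $\nu=-1+\sum_i(\deg_\BC\gamma_i+\ellt_i-2)$. You leave this as your ``main obstacle'', and the mechanism you sketch is wrong in two places.

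\emph{First, the $t$-dependence does not cancel.} The scaling action on the $\BC$ factor does not preserve the holomorphic volume form. Each side is $c\cdot t^{\nu}\in\BQ(t)$: the reduced virtual dimension is $1$ and the integrand has degree $\nu+1$. The exponent of $k$ in the theorem is exactly this $t$-degree. The factor $k^\nu$ appears because the $k$-th contribution depends on $t$ only through $kt$, which you cannot see if you believe $t$ cancels.

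\emph{Second, the virtual normal bundle is misidentified.} It is not a sum of twists of $R\hom_\pi(\BI,\BI)$ by $\Ft^0,\dots,\Ft^{k-1}$; a weight-$0$ piece would be fixed, not moving.

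\textbf{What is needed.} Put $\BI_0=[\CO_S\to\BG]$ and $\CV=R\hom_\rho(\BI_0,\BG)^\vee$. Proposition~\ref{prop:fixed pot} gives on the $k$-thickened locus
\[ N^{\vir}=-\CV\otimes(\Ft+\Ft^{2}+\cdots+\Ft^{k})+\CV^{\vee}\otimes(\Ft^{-1}+\cdots+\Ft^{-(k-1)}). \]
Since $e(W^\vee)=(-1)^{\rk W}e(W)$, the weights $\pm j$ for $1\le j\le k-1$ cancel. This leaves $1/e(N^{\vir})=(-1)^{(k-1)\rk\CV}e(\CV\otimes\Ft^{k})$ with $\rk\CV=(\beta/k)^2+\ch_3/k\equiv\ch_3/k \pmod 2$; this is where the sign comes from.

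From $0\to\BG\otimes\CO(-k)\to\BG\to j_{k*}p_k^*\BG\to0$, the descendents restrict via $\sum_a\ch_a([\mathbf 0]\gamma)|=(1-e^{-kt})\sum_a\ch^{\BG}_a(\gamma)$. Expand the restricted product as $\sum_{\ell}(kt)^{-\ell}P_\ell$, with $P_\ell$ polynomial in the $\ch^{\BG}_a(\gamma_i)$, and write $e(\CV\otimes\Ft^k)=\sum_j c_{\rk\CV+j}(\CV)(kt)^{-j}$. The $k$-th contribution is then
\[ \sum_{j,\ell}(kt)^{-j-\ell}\int_{[\Quot^\sharp]^{\red}}c_{\rk\CV+j}(\CV)\,P_\ell . \]
The dimension count forces $j+\ell=-\nu$, so this equals $(kt)^\nu$ times a number.

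Theorem~\ref{thm:universality} identifies that number with the corresponding one on $S_k$ for the primitive class $\varphi_k(\beta/k)$; this uses GRR to write $c(\CV)$ in descendents. On $S_k$ only the $1$-thickened locus occurs, and it contributes $t^\nu$ times the same number. The ratio is $k^\nu$.

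Your fallback does not replace this argument. A fiber-type $\beta'$ has $\beta'^2=0$, so it covers only one deformation class, and the explicit comparison there would itself be the theorem.
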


For both Theorems \ref{thm:PT MCF} and \ref{thm: PT SxC MCF}, we follow
the conventions of Conjectures \ref{conj:MCF} and \ref{conj:Taut} for the $\mathbb{Q}$-algebra isomorphism
$$\varphi_k: H^*(S,\mathbb{Q}) \rightarrow H^*(S_k,\mathbb{Q})\, .$$

\subsection{DT/PT correspondence}
The multiple cover formula for the DT theory of $(S \times \p^1,S_{z})$ was proven in \cite{QuasiK3}.
Theorem~\ref{thm:PT MCF} would therefore follow immediately from a DT/PT correspondence. We will instead prove the multiple cover formula on the PT side 
directly. As a byproduct, we will obtain the DT/PT correspondence.

Let $\Hilb_{\ch_3,(\beta,n)}(X,S_{z})$ 
be the relative Hilbert scheme of curves $C$ on expansions $X[l]$ satisfying
\[
\rho_{\ast} \ch_2(\CO_Z) = (\beta,n) \in H_2(X,\BZ)\, , \quad \ch_3(\CO_Z) = \ch_3\, .
\]
As in the case of stable pairs, the moduli space has evaluation maps, a reduced virtual class of dimension $2n+1$, and descendent classes
\[ \ch_a(\zeta) = \pi_{\ast}( \ch_a(\CO_{\CZ}) \cdot \rho^{\ast}(\zeta) ) \in H^{\ast}(\Hilb_{\ch_3,(\beta,n)}( X, S_{z}))\, , \]
where $\CO_{\CX} \to \CO_{\CZ}$ is the universal subscheme over the universal target
\[ \pi : \CX \to \Hilb_{\ch_3,(\beta,n)}( S \times \p^1, S_z)\, . \]
If $a<2$, then $\ch_a(\zeta)=0$, so we will always assume the DT descendent index satisfies $a\geq 2$.

Given $\lambda^1,\ldots,\lambda^{m} \in H^{\ast}(S^{[n]})$, we define the relative DT invariants by
\[
\big\langle \, \lambda^1,\ldots,\lambda^{m} \, \big| \, \ch_{\ellt_1}(\zeta_1) \cdots \ch_{\ellt_r}(\zeta_r) \big\rangle^{(S \times \p^1,S_z), \DT}_{\ch_3,\beta}
=
\int_{[  \Hilb_{\ch_3,(\beta,n)}( S \times \p^1, S_{z_1,\ldots,z_{m}}) ]^{\red}}
\prod_{i=1}^{m} \ev_{z_i}^{\ast}(\lambda^i) \cdot \prod_{i=1}^{r} \ch_{\ellt_i}(\zeta_i)\, .
\]

\begin{thm}[DT/PT correspondence] \label{thm:dtpt} For every effective curve class $\beta \in H_2(S,\BZ)$,
\[
\big\langle \, \lambda^1,\ldots,\lambda^{m} \, \big| \, \ch_{\ellt_1}(\omega \gamma_1) \cdots \ch_{\ellt_r}(\omega \gamma_r) \big\rangle^{(S \times \p^1,S_z), \DT}_{\ch_3,\beta}
=
\big\langle \, \lambda^1,\ldots,\lambda^{m} \, \big| \, \ch_{\ellt_1}(\omega \gamma_1) \cdots \ch_{\ellt_r}(\omega \gamma_r) \big\rangle^{(S \times \p^1,S_z), \PT}_{\ch_3,\beta}\,
 .
\]
\end{thm}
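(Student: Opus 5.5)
We will prove Theorem~\ref{thm:dtpt} by $\BC^*$-localization with respect to the scaling action on the $\p^1$ factor of $X=S\times\p^1$, reducing both sides to integrals over the nested Quot schemes of Section~\ref{quotstuff}, and then invoking Theorem~\ref{thm:wallcrossing}.

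\emph{Step 1: reduction to a single relative divisor at each end.} By the degeneration formula for the reduced theories (only one side of a splitting can carry the nonzero class $\beta$, the other side carrying the standard theory in class $(0,n)$), together with the rigidification of relative points, we may assume the relative divisors are $S_0\cup S_\infty$. We may also assume the insertions $\omega\gamma_i$ are supported at a fiber over a point $z\neq 0,\infty$. Equivalently, we may take $\omega$ to be the equivariant class of $[0]$, which has zero equivariant weight at $\infty$. The same reduction is applied on the DT and on the PT side, so it suffices to compare the localized invariants of $(S\times\p^1/S_0\cup S_\infty)$ with equivariant descendents $\ch_{a}([0]\gamma)$.

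\emph{Step 2: identify the fixed loci and show only one-jump loci contribute.} The $\BC^*$-fixed relative stable pairs and relative ideal sheaves have nonrigid parts lying in rubber over $0$ and $\infty$. Their rigid parts near $0$ are $\BC^*$-equivariant sheaves on $S\times\BC$, described by weight filtrations. These yield flags $F_1\subset F_2\subset\cdots\subset I_z$: flags of inclusions in $\Coh(S)$ on the DT side, and in $\Coh^\sharp(S)$ on the PT side. In other words, we obtain points of the nested Quot schemes $\Quot_\alpha(\CI_\CZ)$ and $\Quot^\sharp_\alpha(\CI_\CZ)$ with $M=S^{[n]}$, and the fixed part of the obstruction theory matches \eqref{Ebullet}. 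Rubber contributions with $\beta\neq0$ carry their own reduced cosection. By Theorem~\ref{thm:cosections on nested quot}, each jump contributes an independent surjective cosection. A fixed locus with total jump count at least two (counting the jumps at $0$, at $\infty$ and in the rubber) therefore carries at least two independent cosections, so its contribution to the reduced class vanishes. The surviving loci are exactly those where all of $\beta$ sits in a single jump at $0$ (or at $\infty$, or in the rubber). The $0$-loci are $\Quot^\flat_\alpha$, respectively $\Quot^\sharp_\alpha$, with $\alpha=((1,0,-n),(0,\beta,m))$.

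\emph{Step 3: compare the contributions.} On each surviving locus, the virtual normal bundle and the restrictions of $\ch_a([0]\gamma)$ and of $\ev^*\lambda$ are universal polynomial expressions in the descendents $\ch^{\BG}_a(\gamma)$ and $\ch^{\CO_\CZ}_a(\gamma)$. These expressions are the same for DT and PT, since in both cases the moving part is built from $\BI^\bullet=[\CI_\CZ\to\BG]$ and $\CI_\CZ$ by the same formulas. The rubber and $\infty$ contributions with class $\beta$ involve identical relative PT/DT rubber theories via the same argument recursively. They agree by induction on the complexity $(\beta,n,\ch_3)$, or else they vanish because $[0]$ restricts to zero there. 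Theorem~\ref{thm:wallcrossing} then equates the $\Quot^\flat$ and $\Quot^\sharp$ integrals term by term.

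We expect the main obstacle to be Step 2: carefully matching the fixed-part and moving-part obstruction theories of the relative DT and PT moduli spaces with the nested Quot obstruction theories of Theorem~\ref{thm:pot on nested quot scheme}, including in the perverse case, and checking that the cosections on the fixed loci are compatible with the reduced class of the ambient space. This compatibility is needed so that multi-jump loci genuinely drop out.
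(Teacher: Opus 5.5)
Your mechanism in Steps 2--3 is the right one: localize, identify the unexpanded fixed loci near $0$ with nested Quot schemes, discard multi-jump loci via Theorem~\ref{thm:cosections on nested quot}, observe that the DT and PT integrands are the same universal expressions, and conclude with Theorem~\ref{thm:wallcrossing}. The gap is that Step 1 reduces to the wrong geometry, and this breaks what follows.

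On the tube $(S\times\p^1,S_0\cup S_\infty)$, a $\BC^*$-fixed stable pair or subscheme on the unexpanded $X$ must meet \emph{both} $S_0$ and $S_\infty$ transversally. In the weight decomposition near $0$, this forces multiplication by $x$ to be injective on the weight pieces, so the filtration is constant. The object is then just $p_S^*\CO_z$, which carries no class in $H_2(S)$. Hence, for $\beta\neq 0$, every fixed locus puts all of $\beta$ into rubber over $0$ or $\infty$. No $\Quot^\flat_\alpha$ or $\Quot^\sharp_\alpha$ ever appears, your description of the ``rigid parts near $0$'' is false, and the tube invariant is entirely a sum of rubber DT (resp.\ PT) descendent invariants in class $\beta$. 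You handle those terms by induction on $(\beta,n,\ch_3)$, but there is nothing to induct on: the rubber terms have the same $(\beta,n,\ch_3)$, and DT/PT for descendent rubber invariants is no easier than the original statement.

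The repair, which is the paper's route, is to reduce to the cap $(S\times\p^1,S_\infty)$ with a \emph{single} relative divisor. This uses the degeneration formula \eqref{deg formula} together with its invertibility \cite[Proposition 6]{PPJap}, \cite[Lemma 5.2]{QuasiK3}. You must also check that the non-reduced class-$(0,n)$ factors agree on the DT and PT sides, which your Step 1 uses implicitly. They do: both vanish for $\ch_3>0$ by a cosection argument, and for $\ch_3=0$ both moduli spaces are $S^{[n]}$ with its fundamental class.

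On the cap the fixed loci behave as follows:
\begin{itemize}
\item The unexpanded fixed loci are exactly the nested Quot schemes $\Quot^{\flat}_\alpha(\CI_\CZ)$, resp.\ $\Quot^\sharp_\alpha(\CI_\CZ)$ (see \cite[Sec.~4]{QuasiK3} on the DT side).
\item Mixed loci vanish because they carry two cosections.
\item The pure rubber loci vanish neither by induction nor because $[0]$ restricts to zero, but by a dimension count. By Lemma~\ref{lemma:restriction1}, $\ch_a([0]\gamma)$ restricts there to $\ev_{S_0}^*\ch_a^{\CO_\CZ}(\gamma)$, which is nonzero in general. Together with $\ev_{S_\infty}^*\lambda$, the integrand has degree at least $2n+1$ against a reduced rubber class of dimension $2n$ (Lemma~\ref{rubvan}). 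This holds once the dimension constraint is imposed; otherwise both equivariant sides vanish.
\end{itemize}
Only the one-jump main loci survive, with identical integrands over $\Quot^\flat$ and $\Quot^\sharp$, and Theorem~\ref{thm:wallcrossing} finishes the argument.
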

\begin{rmk}
Theorem~\ref{thm:dtpt} has been proven by \cite{NesterovK3} in case
there are at least 3 relative fibers and there are {\em no} descendent insertions.
The cases with fewer than 3 relative fiber require the DT/PT correspondence for a vertex term, which was not proven in \cite{NesterovK3}. Proving the DT/PT correspondence for the vertex term is essentially what we do below.
\end{rmk}

\subsection{Cap geometry} \label{capg}
The study of the {\em cap geometry},  
\[ (X, S_{\infty}), \quad X = S \times \p^1\, , \quad \infty \in \p^1\, , \]
will play a crucial role in our proof of Theorem \ref{thm:PT MCF}.

Let $\mathbb{C}^*$ act on $\p^1$ with fixed points $0,\infty\in \p^1$ and tangent weight $t$ at $0 \in \p^1$, where $t$ is the Chern class
of the standard representation $\mathfrak{t}$ of $\mathbb{C}^*$.
The $\mathbb{C}^*$-action lifts to the moduli space of relative stable pairs $P_{\Gamma}(X,S_{\infty})$, where $\Gamma=(\ch_3,(\beta,n))$.
The reduced virtual class naturally lifts to a $\mathbb{C}^*$-equivariant class: the perfect obstruction theory admits a canonical $\mathbb{C}^*$-equivariant lift and the cosection is $\mathbb{C}^*$-invariant so also lifts.

The virtual localization formula \cite{GP} yields
\begin{equation}
\label{locform}
[ P_{\Gamma}(X,S_{\infty}) ]^{\red} = \sum_{Z} \frac{[ Z ]^{\red}}{e(N_Z^{\vir})} \, ,\end{equation}
where $Z$ are the fixed loci, $[Z]^{\red}$ is the reduced virtual class of the fixed perfect obstruction theory,
and $N_{Z}^{\vir}$ is the virtual normal bundle. 

For the proof of the multiple cover formula for the cap geometry, we are
specifically interested in the following consequence of the 
virtual localization formula: 
\begin{equation}
\label{locform invariant}
\big\langle \, \lambda \, \big| \, \ch_{\ellt_1}([0] \gamma_1) \cdots \ch_{\ellt_r}([0] \gamma_r) \big\rangle^{(S \times \p^1,S_\infty), \PT}_{\ch_3,\beta}
 = \sum_{Z} \int_{[ Z ]^{\red}} \frac{\ev_{\infty}^{\ast}(\lambda)\cdot  \ch_{\ellt_1}([0] \gamma_1) \cdots \ch_{\ellt_r}([0] \gamma_r)}{e(N_Z^{\vir})}\, .\end{equation}
Since the reduced virtual dimension of
$P_{\Gamma}(X,S_{\infty})$ is $2n+1$, 
we will consider \eqref{locform} only in cases where the following
dimension constraint holds: 
\begin{equation} \label{dim constraint inequality} \deg_\BC(\lambda) + \sum_i \deg_\BC(\ch_{\ellt_i}([0] \gamma_i)) 
\geq 2n+1\, . \end{equation}
In all other cases, the integral on the left vanishes (since $P_{\Gamma}(X,S_{\infty})$ is proper).
We can expand the dimension constraint \eqref{dim constraint inequality} as
$$\deg_\BC(\lambda) + \sum_i \deg_\BC(\ch_{\ellt_i}([0] \gamma_i)) = 
n-\ell(\lambda) +\sum_{j=1}^{\ell(\lambda)} \deg_{\BC}(\delta_j) + \sum_{i=1}^r(\ellt_i-2+ \deg_{\mathbb{C}}(\gamma_i)) \geq 2n+1\,,$$
where $\ell(\lambda)$ is the length of the partition $\lambda$.

The $\BC^*$-fixed loci on the right decompose into three types of (not necessarily connected) components,
\[
P_{\Gamma}(X,S_{\infty})^{\BC^*} = P_{\Gamma}^{\mathrm{main}} \sqcup P_{\Gamma}^{\mathrm{mixed}} \sqcup P_{\Gamma}^{\mathrm{rubber}}\, , \]
parameterizing the following types of $\BC^*$-fixed stable pairs:
\begin{itemize}
	\item[$\bullet$] {\sf{Main type}:} $(F,s) \in P^{\mathrm{main}}_{\Gamma}$ are $\BC^*$-fixed relative stable pairs on $X$ without expansion.
	\item[$\bullet$] {\sf{Mixed type}:} $(F,s) \in P^{\mathrm{mixed}}_{\Gamma}$ are stable pairs on expansions $X[l]$ with $l>0$  where $$\ch_2(F|_{X})=(\beta',n)\,\,  \text{with}\, \,  \beta' \neq 0\, \,  \text{or}\, \,  \ch_3(F|_{X}) > 0\, .$$
	\item[$\bullet$] {\sf{Rubber type}:} $(F,s) \in P^{\mathrm{rubber}}_{\Gamma}$ are stable pairs on expansions $X[l]$ with $l>0$ where
    $$\ch_2(F|_{X}) = (0,n)\,\,  \text{and}\, \,  \ch_3(F|_{X}) = 0\, .$$
\end{itemize}
We will separately analyze the three types of components (in reverse order) in Sections \ref{rubbertype}--\ref{maintype}.

\subsection{Rubber type}
\label{rubbertype}
The rubber type component is isomorphic to the moduli space of stable pairs 
$$\Prubber_\Gamma  = P_{\Gamma}(X,S_{0,\infty})^{\sim}$$
on the rubber space $(X,S_{0,\infty})^\sim$,
which is of reduced virtual dimension $2n$.
The virtual normal bundle of $$
P_{\Gamma}(X,S_{0,\infty})^{\sim}\subset
P_\Gamma(X,S_{\infty})$$
is $N = \CL_0 \otimes T_{\p^1,\infty}$, where $\CL_0$ is the relative cotangent line bundle over the rubber space for the boundary divisor $S_0 \subset X$. Let $\Psi_0 = c_1(\CL_0)$.
The contribution 
of
$P_{\Gamma}(X,S_{0,\infty})^{\sim}$
to the virtual localization formula \eqref{locform} for the reduced
virtual class of $P_\Gamma(X,S_\infty)$
is
\[
\frac{1}{-t - \Psi_0} \cap [ P_{\Gamma}(X,S_{0,\infty})^{\sim} ]^{\red}\, .
\]
The descendent classes in \eqref{locform invariant} restrict to the rubber component as follows:

\begin{lemma} \label{lemma:restriction1} We have
$\ch_{a}([\mathbf{0}]\delta)|_{\Prubber_{\Gamma}} = \ev_{S_0}^{\ast}(\ch^{\mathcal{O}_{\mathcal{Z}}}_{a}(\delta))$.
\end{lemma}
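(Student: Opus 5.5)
The plan is to compute the restriction fiberwise, using the geometry of the rubber fixed locus. A point of $\Prubber_\Gamma$ is a stable pair on an expansion $X[l]=X\cup_{S_0'} \Delta_1\cup\cdots\cup\Delta_l$, where the bubbles $\Delta_j\cong S\times\p^1$ are attached at $\infty\in\p^1$ and carry the rubber part. By definition of the rubber type, the restriction $F|_X$ satisfies $\ch_2(F|_X)=(0,n)$ and $\ch_3(F|_X)=0$. Since the pair is $\BC^*$-fixed and transverse to the gluing divisor, $F|_X$ must be the pullback of the $0$-dimensional subscheme $z\subset S$ along $\pi_S$. Here $z=\ev_{S_0}$ is the boundary condition of the rubber at $S_0$: the divisor where the rubber is glued to the rigid part equals the $S_0$-end of the rubber space. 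Thus $F|_X=\CO_{z\times\p^1}$ with its canonical section. The same description holds in families over $\Prubber_\Gamma$: on $X\times \Prubber_\Gamma$ the universal sheaf restricts to $(\pi_S\times\id)^*\CO_{\CZ}$, where $\CZ$ is pulled back by $\ev_{S_0}$.

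The descendent is computed as $\ch_a([\mathbf{0}]\delta)=\pi_*(\ch_a(\BF)\cdot\rho^*([\mathbf 0]\delta))$. The class $\rho^*[\mathbf 0]$ can be represented by the fiber $\rho^{-1}(S\times\{0\})$, and the point $0$ lies in the unexpanded component $X$, away from $S_0'$ and the bubbles. Hence the product localizes to $S\times\{0\}\times\Prubber_\Gamma$. There $\BF$ restricts, in K-theory, to $\CO_{\CZ}$ pulled back from $S\times\Prubber_\Gamma$. The restriction of a sheaf to a transverse fiber $S\times\{0\}$ shifts the Chern character degree by one, so $\ch_a(\BF)\cdot[S\times 0]=\iota_*\ch_{a-1}(\BF|_{S\times 0})$.

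At this point an indexing question has to be settled, and I expect it to be the main obstacle. The grading conventions must be reconciled: $\ch_a$ on the threefold versus $\ch_a$ on $S$ for $\CO_{\CZ}$, together with the pushforward and Grothendieck–Riemann–Roch conventions. There are two natural readings of the index. First, $\CO_{z\times\p^1}$ has Chern character $\pi_S^*\ch(\CO_z)$, so the degree-$a$ part of $\ch(\BF)$ is $\pi_S^*\ch_a(\CO_{\CZ})$; multiplying by $\rho^*[\mathbf 0]$ and pushing forward then gives $\ch_a^{\CO_{\CZ}}(\delta)$ with the same index $a$. Second, the normal bundle of $S\times 0$ might shift the index. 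To decide between them I would write $\ch(\BF)$ globally as the pullback $\pi_S^*\ch(\CO_{\CZ})$ near $S\times\{0\}$, which is valid because the sheaf there is a product. The equivariant class $[\mathbf 0]$ then integrates to $1$ along $\p^1$, and no Todd correction arises since $\pi_*$ is just cohomological pushforward. This yields $\ch_a([\mathbf 0]\delta)|_{\Prubber_\Gamma}=\ev_{S_0}^*\ch_a^{\CO_{\CZ}}(\delta)$.

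Finally, the equivariant subtlety needs a check: the representative $[\mathbf 0]$ should be taken as the equivariant class. This is consistent because the universal sheaf near $S\times\{0\}$ is $\BC^*$-equivariantly a pullback from $S$, so $\ch_a(\BF)$ has no equivariant correction there.
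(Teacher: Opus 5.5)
Your argument is essentially the paper's: localize $\rho^*[\mathbf{0}]$ onto $S\times\{0\}\times\Prubber_\Gamma$ in the unexpanded component, identify $\BF$ there with $(\ev_{S_0}\times\id_S)^*\CO_{\CZ}$, and push forward. The paper gets this identification from the normalization sequence $\BF=\BF'+\BF|_X-\BF|_{S_\infty}$; you can skip that step because $[\mathbf{0}]$ is supported away from the gluing divisor.

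One correction. Your displayed claim $\ch_a(\BF)\cdot[S\times 0]=\iota_*\ch_{a-1}(\BF|_{S\times 0})$ is false, since the two sides do not even have the same degree. The projection formula gives $\ch_a(\BF)\cdot\rho^*[\mathbf 0]=\iota_{0*}\,\ch_a(L\iota_0^*\BF)$ with no shift. Moreover $L\iota_0^*\BF=(\ev_{S_0}\times\id_S)^*\CO_{\CZ}$ with trivial $\BC^*$-weight, because $p_S^*\CO_{\CZ}$ is flat over $\p^1$.

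The shift you had in mind is the Grothendieck--Riemann--Roch formula $\ch_a(\iota_*G)=\iota_*\ch_{a-1}(G)+\cdots$ for a sheaf pushed forward from a divisor, which is not the situation here. So the ``indexing obstacle'' is illusory, and your first reading is the right one.
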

\begin{proof}
Let $\CX \to \Prubber_{\Gamma}$ be the restriction of the universal target to the rubber component.
There is a natural decomposition $$\CX = (X \times \Prubber) \cup_{S \times \Prubber}\CX' $$ 
where $\CX' \to P_{\Gamma}(X,S_{0,\infty})^{\sim}$ is the universal target over the rubber.
After taking the associated normalization sequence and tensoring with the universal sheaf $\BF$, we obtain
an equality of $K$-theory classes
\[ \BF = \BF' + \BF|_{X} - \BF|_{S_{\infty}} \]
Since we are on the rubber component, we have $\BF|_{X} = (\ev_{S_0} \times p_S)^{\ast}({\mathcal{O}_{\mathcal{Z}}})$ where ${\mathcal{O}_{\mathcal{Z}}}$ is the structure sheaf of the universal subscheme $\mathcal{Z} \subset S^{[n]} \times S$.
Let $$\iota_{0} : S \times \Prubber \to X \times \Prubber \to \CX$$ be the inclusion of the fiber over $0$. Then,
\[ \ch(\BF)\cdot p_X^{\ast}(\delta [\mathbf{0}]) 
= \iota_{0 \ast}( \iota_{0}^{\ast}(\ch(\BF))\cdot p_S^{\ast}(\delta))
=
\iota_{0 \ast}( (\ev_0 \times \id_S)^{\ast}(\ch({\mathcal{O}_{\mathcal{Z}}})\cdot  p_S^{\ast}(\delta)) )\, .
\]
After pushing forward by $\pi : \CX \to \Prubber_{\Gamma}$, we obtain
\[
\ch_{a}([\mathbf{0}] \delta) = \rho_{\ast}((\ev_0 \times \id_S)^{\ast}(\ch({\mathcal{O}_{\mathcal{Z}}})\cdot p_S^{\ast}(\delta)) )
= \ev_0^{\ast}q_{\ast}( \ch_a({\mathcal{O}_{\mathcal{Z}}})\cdot p_S^{\ast}(\delta)) = \ch_{a}^{\mathcal{O}_{\mathcal{Z}}}(\delta)\, ,
\]
where $\rho : S\times \Prubber  \to \Prubber$ and $q:S \times S^{[n]} \to S^{[n]}$ are the projections.
\end{proof}

\begin{lemma}\label{rubvan}
The contribution of the rubber type
components to \eqref{locform invariant} vanishes.
\end{lemma}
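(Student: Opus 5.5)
We show that the rubber contribution is a class pulled back from $S^{[n]}$ integrated against a rubber class of too large dimension. By Lemma~\ref{lemma:restriction1}, every descendent insertion $\ch_{\ellt_i}([\mathbf{0}]\gamma_i)$ restricts to $\Prubber_\Gamma$ as $\ev_{S_0}^{\ast}(\ch^{\CO_\CZ}_{\ellt_i}(\gamma_i))$. The relative insertion is $\ev_\infty^\ast(\lambda)$. The rubber contribution to \eqref{locform invariant} is therefore
\[
\int_{[P_\Gamma(X,S_{0,\infty})^\sim]^{\red}} \frac{\ev_{S_\infty}^{\ast}(\lambda)\cdot \ev_{S_0}^{\ast}\big(\prod_i \ch^{\CO_\CZ}_{\ellt_i}(\gamma_i)\big)}{-t-\Psi_0}
=\sum_{j\ge 0} (-1)^{j+1} t^{-1-j}\int_{[P_\Gamma(X,S_{0,\infty})^\sim]^{\red}} \Psi_0^j\, (\ev_{S_0}\times \ev_{S_\infty})^{\ast}(\Lambda)\, ,
\]
where $\Lambda = \prod_i \ch^{\CO_\CZ}_{\ellt_i}(\gamma_i)\boxtimes \lambda \in H^\ast(S^{[n]}\times S^{[n]})$.

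The left side of \eqref{locform invariant} is a nonequivariant number, since the moduli space is proper. Hence only the $t^0$ coefficient of the total localization sum survives, and after summing over all fixed loci the result is independent of $t$. That alone does not kill the rubber terms separately, so I would argue directly that each rubber integral vanishes.

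The natural tool is the structure of the rubber class. I would push $\Psi_0^j\cap[P^\sim]^{\red}$ forward to $S^{[n]}\times S^{[n]}$ via $\ev_{S_0}\times\ev_{S_\infty}$. The reduced rubber class has dimension $2n$, while $\Lambda$ has degree $\deg\lambda+\sum_i(\ellt_i-2+\deg\gamma_i)$, which is at least $2n+1$ by \eqref{dim constraint inequality}. So if $j=0$ the integral vanishes by dimension, and the real content lies in the $\Psi_0^j$ terms with $j\ge 1$.

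For these terms I would use the standard rubber calculus to remove $\Psi_0$: rigidify the rubber (as in MNOP/Okounkov--Pandharipande) by trading $\Psi_0$ for a divisor insertion, which expresses $\Psi_0\cap[P^\sim]$ through boundary splittings into products of two rubber spaces. Each splitting has the cosection supported on only one factor, because $\beta$ lies on one side, while a factor with curve class $0$ has a standard class. Iterating, every term becomes a pushforward of a reduced rubber class of dimension $2n$, or of dimension $2n$ on a fiber product glued along $S^{[n]}$. This is paired with a class of degree at least $2n+1$ that is pulled back from the outer evaluation maps, so it vanishes.

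A cleaner alternative, which I would try first, is to exploit that the insertions at $S_0$ are pulled back from $S^{[n]}$. The descendents only see $\ev_{S_0}$, so the whole rubber integrand factors through $(\ev_0\times\ev_\infty)_\ast(\Psi_0^j[P^\sim]^{\red})$, a class of dimension $2n-j$ on $S^{[n]}\times S^{[n]}$. This is paired with $\Lambda$, which has degree at least $2n+1$, strictly greater than $2n-j$ for every $j\ge 0$. Hence every term vanishes by degree reasons alone.

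The main obstacle is making sure this degree count is legitimate. The $\Psi_0$ factors are not pulled back from $S^{[n]}\times S^{[n]}$, but the projection formula still applies, and the pushforward of $\Psi_0^j\cap[P^\sim]^{\red}$ has dimension $2n-j$. The pairing is therefore zero whenever $\deg\Lambda>2n-j$, which holds. So the argument reduces to confirming the reduced rubber dimension $2n$ and the constraint \eqref{dim constraint inequality}.
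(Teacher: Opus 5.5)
Your final argument, the ``cleaner alternative'', is correct and is essentially the paper's proof. After Lemma~\ref{lemma:restriction1} the numerator is pulled back from $S^{[n]}\times S^{[n]}$ with degree at least $2n+1$, multiplying by $\Psi_0^j$ only raises the degree, and the reduced rubber class has dimension $2n$, so every term vanishes. The rubber-calculus detour is unnecessary, and your remark that the $j\ge 1$ terms carry the ``real content'' is backwards, since they vanish a fortiori. Also, the left side of \eqref{locform invariant} is in general a multiple of a power of $t$, not a nonequivariant number, though you never use that claim.
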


\begin{proof}
By Lemma~\ref{lemma:restriction1}, the contribution from the rubber integral is
\begin{equation} \label{rubber contr}
\int_{[ P_{\Gamma}(X, S_{0,\infty} )^{\sim}]^{\red}}
\frac{ \ev_{S_{\infty}}^{\ast}(\lambda)\cdot \ev_{S_0}^{\ast}\left( \prod_{i=1}^{r} \ch_{\ellt_i}^{S^{[n]}}(\gamma_i)   \right)}{-t - \Psi_0}
\end{equation}
The rubber space $P_{\Gamma}(X,S_{\infty})^\sim$ has reduced virtual dimension $2n$,
but the class in the numerator of the integrand of \eqref{rubber contr}
imposes at least
$$\deg_{\BC}(\lambda) + \sum_{i=1}^r(\ellt_i-2+ \deg_{\mathbb{C}}(\gamma_i)) \geq 2n+1$$
conditions by \eqref{dim constraint inequality}.
\end{proof}

\subsection{Mixed type}
\label{mixedtype}
By resolving along the divisor $S_{\infty}$ on the main component $X \subset X[l]$, 
the mixed type component is seen to be of the form
\[
P_{\Gamma}^{\mathrm{mixed}} = \bigsqcup_{\substack{\beta = \beta_1 + \beta_2 \\ m=m_1+m_2}}
P_{\Gamma_1}^{\mathrm{rubber}} \times_{S^{[n]}} P_{\Gamma_2}^{\mathrm{main}}.
\]
where $\Gamma_i=(m_i,(\beta_i,n))$ as before. The virtual  class splits accordingly,
\[
[ P_{\Gamma}^{\mathrm{mixed}} ]^{\vir}=
\sum \Delta^{!}\left( [ P_{\Gamma_1}^{\mathrm{rubber}} ]^{\vir} \times [ P_{\Gamma_2}^{\mathrm{main}}]^{\vir} \right).
\]
By a cosection argument, the induced fixed perfect obstruction theory of $P_{\Gamma}^{\mathrm{mixed}}$ has two non-trivial cosections, while for the reduced class we have removed only one of them.
The fixed reduced virtual class therefore vanishes:
\[
[ P_{\Gamma}^{\mathrm{mixed}} ]^{\red} = 0\, .
\]

\subsection{Main type}\label{maintype}
\subsubsection{Appearance of the perverse-nested Hilbert scheme}
Let $(F,s) \in P_{\Gamma}^{\mathrm{main}}$ be a $\BC^*$-fixed stable pair on $X$ relative to $S_{\infty}$. 
Consider the decomposition
\[ \p^1 = \BA^1 \cup \BA^1_{\infty} \, , \]
where $\BA^1, \BA^1_{\infty}$ are the standard affine  charts around $0,\infty\in  \p^1$ respectively.

By the relative condition along  $S_{\infty}$, we have
\[ F|_{\BA^1_{\infty}} = p_S^{\ast}(\CO_z)\, , \quad s|_{\BA^1_\infty}=p_S^{\ast}(s_z) \, ,\]
for some $z \in S^{[n]}$ where $s_z : \CO_S \to \CO_z$ is the canonical surjection.
Here and below, $$p_S : Y \times S \to S$$
denotes the projection to $S$ from the product.

We write $\BA^1=\Spec \BC[x]$ and identify $\BC^*$-equivariant sheaves $\CF$ on 
$S \times \BA^1$ with the graded $\CO_S[x]$-modules given by $p_{S \ast} \CF$.
The restriction of the stable pair to $S \times \BA^1$ can then be written as
\[ F|_{\BA^1} = \bigoplus_{i \in \BZ} F_i \Ft^{-i}, \quad s|_{\BA^1} = \bigoplus_{i\in \BZ} s_i \, ,\]
with the following properties satisfied:
\begin{enumerate}
\item[$\bullet$] the $F_i$ are at most $1$-dimensional sheaves on $S$ of $\BC^*$-weight $0$,
\item[$\bullet$] the $F_i$ vanish (by coherence of $F$) for $i \ll 0$,
\item[$\bullet$] the sections $s_i : \CO_S \to F_i$ vanish (by equivariance) for $i<0$,
\item[$\bullet$] the $F_i$ are 0-dimensional for $i<0$ (by the cokernel condition of a stable pair).
\end{enumerate}
Multiplication by $x$ on $F$ is given by a morphism $${\bf{x}} : F_i \Ft^{-i} \to F_{i+1} \Ft^{-(i+1)}$$ 
of $\BC^*$-weight $-1$.
We arrange the data in a diagram:
\[
\begin{tikzcd}
\ldots \ar{r} & 0 \ar{d}{s_{-2}} \ar{r} & 0 \ar{d}{s_{-1}} \ar{r} & \CO_S \Ft^{0} \ar{d}{s_0} \ar{r}{ \id \Ft^{-1}} & \CO_S \Ft^{-1} \ar{d}{s_{1}} \ar{r}{\id \Ft^{-1}} & \cdots \\
\ldots \ar{r} & F_{-2} \Ft^{2} \ar{r}{{\bf{x}}} & F_{-1} \Ft^1 \ar{r}{{\bf{x}}}& F_0 \Ft^0 \ar{r}{{\bf{x}}} & F_1 \Ft^{-1} \ar{r}{{\bf{x}}} & \ldots 
\end{tikzcd}
\]
and obtain the following further properties:
\begin{enumerate}
\item[$\bullet$] 
$F_i = \CO_z$ for $i \gg 0$ (since $F|_{S \times (\BA^1 \setminus 0)} = \CO_z[x,x^{-1}]$),
\item[$\bullet$] the morphisms $F_{i} \stackrel{{\bf{x}}}{\longrightarrow} F_{i+1}$ are injective on 0-dimensional torsion for all $i$
(since $F$ is pure).
\end{enumerate}
The above properties imply that
$F_i \subset \CO_z$ for $i < 0$.
We use the notation 
$$I^{\bullet} = [\CO_X \to F]$$ and denote the $-i$-th graded piece by $I_i$, so 
\begin{equation} \label{defn Ii} I_i =
\begin{cases}
[\CO_S \to F_i] & \text{ if } i \geq 0, \\
F_i[-1] & \text{ if } i<0.
\end{cases}
\end{equation}

%

\begin{lemma} \label{Iprop} The following two properties hold:
\begin{itemize}
	\item[(a)] $I_i \in \Coh^{\sharp}(S)$ for all $i$,
	\item[(b)] the natural map $I_{i} \to I_{i+1}$ is injective as a morphism of $\Coh^{\sharp}(S)$ for all $i$.
\end{itemize}
\end{lemma}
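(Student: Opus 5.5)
The plan is to verify both properties directly from the characterization of $\Coh^{\sharp}(S)$ recorded in Section \ref{subsec:perverse t structure}: an object $E$ lies in the heart iff $h^0(E)\in\CT^{\perp}$, $h^1(E)\in\CT$, and all other cohomology vanishes. Injectivity of $\varphi : A\to B$ in the heart will be checked via the criterion stated there: $h^{-1}(\Cone(\varphi))=0$ and $h^0(\Cone(\varphi))\in\CT^{\perp}$.

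For (a), split into cases. If $i<0$, then $I_i = F_i[-1]$ with $F_i$ zero-dimensional, so $I_i\in\CT[-1]\subset\Coh^{\sharp}(S)$. If $i\ge 0$, then $I_i=[\CO_S\xrightarrow{s_i}F_i]$ with $\CO_S$ in degree $0$. Here $h^1(I_i)=\Coker(s_i)$ and $h^0(I_i)=\Ker(s_i)$, and no other cohomology appears. The kernel is a subsheaf of $\CO_S$, hence torsion free, so it lies in $\CT^{\perp}$.

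The remaining point is that $\Coker(s_i)$ is zero-dimensional. Restricting to the weight $-i$ part, $\Coker(s_i)$ is the degree-$i$ graded piece of $\Coker(s)|_{S\times\BA^1}$. The cokernel of a stable pair is zero-dimensional on $X$, so its pushforward to $S$ is zero-dimensional, and hence so is each graded summand.

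For (b), the case $i<-1$ is a map $F_i[-1]\to F_{i+1}[-1]$ in $\CT[-1]$. Its cone is $\Cone(F_i\to F_{i+1})[-1]$, so $h^{-1}$ of the cone is $\Ker({\bf x})$, which vanishes since ${\bf x}$ is injective on zero-dimensional torsion. The degree-$0$ cohomology is $\Coker({\bf x})$, which is zero-dimensional, so it fails to lie in $\CT^{\perp}$ unless it is zero. This shows that a different description is needed. The shifted object $F_i[-1]$ is a subobject of $F_{i+1}[-1]$ in $\CT[-1]$ exactly when $F_i\hookrightarrow F_{i+1}$ in $\Coh(S)$, since $\CT[-1]$ is closed under quotients in the heart. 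The cone of $F_i[-1]\to F_{i+1}[-1]$ is the two-term complex $[F_i\to F_{i+1}]$ placed in degrees $0,1$. So the right check is $h^0=\Ker({\bf x})=0$, and the quotient $h^1=\Coker({\bf x})\in\CT$ lies in $\CT[-1]$, as required.

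For $i\ge 0$, I use Remark \ref{rmk:cone explicit in coh sharp} with $\varphi_0=\id_{\CO_S}$, since $x:\CO_S\Ft^{-i}\to\CO_S\Ft^{-i-1}$ is the identity on graded pieces. The cone is then $\Cone(F_i\xrightarrow{{\bf x}}F_{i+1})$, and it lies in the heart iff the zero-dimensional torsion of $\Ker({\bf x})$ vanishes. This is exactly the purity property.

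The case $i=-1$, namely $F_{-1}[-1]\to[\CO_S\to F_0]$, is handled by the same cone computation. The map on degree-$1$ terms is ${\bf x}:F_{-1}\to F_0$, and the degree-$0$ term is $0\to\CO_S$. The cone is again quasi-isomorphic to $[\CO_S\oplus F_{-1}\to F_0]$, with $h^0=\Ker(\CO_S\oplus F_{-1}\to F_0)$. This kernel must be shown torsion free, i.e.\ that it contains no zero-dimensional subsheaf. Its torsion part injects into $F_{-1}$ via projection (the $\CO_S$ component is torsion free), and it lands in $\Ker({\bf x}|_{F_{-1}})$, which is $0$ since $F_{-1}$ is zero-dimensional and ${\bf x}$ is injective on torsion.

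The main obstacle is keeping the degree conventions straight for the cones in the mixed case $i=-1$ and the negative case, so I will write the cones out as explicit two- or three-term complexes before reading off cohomology.
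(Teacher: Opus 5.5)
Your proposal is correct and follows the paper's proof: there, (a) holds because each $\Coker(s_i)$ is zero-dimensional, and (b) is a direct check using Remark~\ref{rmk:cone explicit in coh sharp}; your case split $i<-1$, $i=-1$, $i\geq 0$ just carries out that check in detail. The one flaw is your first attempt at the case $i<-1$, which places $\Ker(\mathbf{x})$ in degree $-1$ of the cone instead of degree $0$; you correct this yourself, so that abandoned computation should simply be removed.
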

\begin{proof}
Property (a) is clear since $\Coker(s_i)$ is 0-dimensional for all $i$.
Property (b) is an immediate direct check using Remark~\ref{rmk:cone explicit in coh sharp}.
From a more conceptual perspective,
both properties follow from the interpretation of
stable pairs as flat families in $\Coh^{\sharp}(S)$ given in \cite[Section 6.3]{Nesterov}.
\end{proof}

By Lemma \ref{Iprop},  $I^{\bullet}$ gives a flag of inclusions in $\Coh^{\sharp}(S)$,
\begin{equation} \label{flag}
\left[ 0 = I_{i_0} \hookrightarrow I_{i_0+1} \hookrightarrow \ldots \hookrightarrow I_{i_0+\ell} = I_z \right]
\end{equation}
where $i_0 \in \BZ$,   
$I_i = 0$ for all $i \leq i_0$, $\ell > 0$, and
$$I_{i_0+\ell} = [\CO_S \to \CO_z] = I_z\, .$$
By setting $E_i = \mathrm{Cone}(I_i \to I_z)$, we obtain a flag of surjections in $\Coh^{\sharp}(S)$:
\begin{equation} \label{gv44}
I_{z} = E_{i_0} \to E_{i_1} \to \ldots \to E_{i_0+\ell}\, . 
\end{equation}
Moreover, we have
\begin{equation}\label{chdata}
\ch(E_j) =
\begin{cases}
(1,0,-n + m_j) & \text{ if } j < 0\, , \\
(0,\beta_j, -n+m_j) & \text{ if } j \geq 0\, ,
\end{cases}
\end{equation}
where $m_i = \ch_2(F_i)$ and $\beta_i = \ch_1(F_i)$,
and 
\[ \sum_i \beta_i = \beta, \quad \ch_3 = \ch(F) = -(i_0 + \ell) n + \sum_{i=i_0}^{i_0+\ell-1} m_i\,  \]
see \cite{QuasiK3} for parallel constructions.
Conversely, every flag of surjections \eqref{gv44} with Chern data \eqref{chdata} 
gives rise to a $\BC^*$-fixed stable pair in $P^{\mathrm{main}}_{\Gamma}$.

Assume that $i_0 \ll 0$ and $\ell \gg 0$ are chosen such that, for every $(F,s) \in P^{\mathrm{main}}_{\Gamma}$, 
the restriction $(F,s)|_{\BA^1}$ is given by a flag of the form
\eqref{flag}. Such a choice is possible since we can choose $(i_0,\ell)$ uniformly on each connected component (of which there
are only finitely many), and we can extend a given flag by equalities on the left and right.
We conclude that
\begin{equation} \label{fixed locus quot iso}
P_{\Gamma}^{\mathrm{main}} = \bigsqcup_{\alpha=(\alpha_0,\ldots,\alpha_{\ell})} \Quot^{\sharp}_{\alpha}(\CI_z)\, ,
\end{equation}
where $\alpha_0=(1,0,-n)$ is fixed and $\alpha_1,\ldots,\alpha_{\ell}$ runs over suitable Chern characters (the precise form is not important here).

\begin{example}
Contrary to the case of ideal sheaves of curves (DT theory) studied in \cite{QuasiK3},
the complexes $I_i$ do not have to be simple (or even stable).

Let $C \subset S_0$ be a nonsingular curve, let $p \in C$ be a point, let $D = \{ p \} \times \p^1$, let
	\[ F = \CO_C \oplus \CO_{D}\, , \]
	and let $s=(s_C,s_D)$ where $s_C : \CO_X \to \CO_C$ and $s_D : \CO_X \to \CO_D$ are the canonical surjections.
	Then, $F_0 = \CO_C \oplus \BC_p$ and $F_{i} = \BC_p$ for $i > 0$.
	Let $I = I_0 = [\CO_S \to F_0]$.
	By considering the exact triangle $$h^0(I) = I_C \to I \to h^1(I)[-1] \to I_C[1]\, ,$$ and using $h^1(I) = \BC_p$, we obtain
	\[
	0 \to \Hom(\BC_p[-1], I) \to \Hom(I,I) \to \Hom(I_C,I) = \Hom(I_C,I_C) = \BC
	\]
	where the last map is surjective with natural splitting map $\lambda \mapsto \lambda \cdot \id_I$.
	However, applying $\Hom(\BC_p, - )$ to $I \to \CO_S \to F_0 \to I[1]$,  we obtain
	the injection $$\BC = \Hom(\BC_p, F_0) \to \Ext^1(\BC_p, I)\,,$$ so $\Hom(\BC_p[-1],I) \neq 0$. Therefore, $I$ is non-simple.
\end{example}

\subsubsection{Perfect-obstruction theory}
Let $P_{\Gamma}(X,S_{\infty})^{\circ} \subset P_{\Gamma}(X,S_{\infty})$ be the open locus parameterizing stable pairs on $X$ (and not on any expansion of $X$). The virtual tangent bundle of the standard (non-reduced) perfect obstruction theory on
$P_{\Gamma}(X,S_{\infty})^{\circ}$ is given by
\[
T^{\vir} = 
R\hom_{\pi}(\BI^{\bullet}, \BI^{\bullet})_0[1]\, ,
\]
where
$\pi : X \times P_{\Gamma}(X,S_{\infty})^{\circ} \to P_{\Gamma}(X,S_{\infty})^{\circ}$ is the projection and $\BI^{\bullet} = [\CO \to \BF]$ is the universal stable pair.

The restriction of $\BI$ to $\Pmain_{\Gamma}$ has a canonical $\BC^{\ast}$-equivariant structure. Consider the filtration
\[
\BI|_{\Pmain_{\Gamma} \times \BA^1} = \bigoplus_{i \in \BZ} \BI_{i} \Ft^{-i}
\]
into weight spaces: $\BI_i$ restricted to a point $(F,s)$ is just $I_i$, as defined in \eqref{defn Ii}.

\begin{prop} \label{prop:fixed pot}
The fixed and moving parts of the restriction of $T^{\vir}$ to $\Pmain_{\Gamma}$ have $K$-theory class
\begin{align*}
\left( T^{\text{vir}}|_{\Pmain_\Gamma} \right)^{\textup{fixed}} 
& =
-\sum_{i=i_0}^{i_0+\ell} R \hom_{\rho}(\BI_i, \BI_i) + \sum_{i=i_0}^{i_0+\ell-1} R \hom_{\rho}(\BI_i, \BI_{i+1})
+ R \Gamma(S,\CO_S)\, , \\
\left( T^{\text{vir}}|_{\Pmain_{\Gamma}} \right)^{\textup{mov}} 
	& = 
	\sum_{i \geq i_0} \sum_{k \geq 1} 
	\left( \begin{array}{c}
		- \Ft^{k} \otimes R\Hom_{\rho}( \BI_{i+k} - \BI_{i+k-1}, \BI_i) \\
		+ \Ft^{-k} \otimes R\Hom_{\rho}( \BI_{i+k+1} - \BI_{i+k}, \BI_i)^{\vee}
	\end{array} \right)\, ,
\end{align*}
where $\rho : S  \times \Pmain_{\Gamma}  \to \Pmain_{\Gamma}$ is the projection.
	%
\end{prop}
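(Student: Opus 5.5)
The $K$-theory class of $T^{\vir}|_{\Pmain_\Gamma}$ can be computed directly: decompose $\BI^{\bullet}$ on $S\times\BA^1$ into $\BC^*$-weight pieces and compute $R\hom$ over $X$ as a graded module computation over $\CO_S[x]$. The argument localizes to the chart $S\times\BA^1$. On $S\times\BA^1_\infty$ the stable pair is the pullback $p_S^*(\CO_z)$, and $R\hom(\BI,\BI)_0$ there carries no interesting $x$-dependence. So I will first write $R\hom_\pi(\BI,\BI)$ as a \v{C}ech-type combination of the contributions from the two charts glued along $S\times\BC^*$. The contribution of $\BA^1_\infty$ together with the overlap should cancel against the tail $i\gg0$, where $\BI_i=I_z$ is constant, leaving only finitely many terms. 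The trace part gives $-R\hom(\CO_X,\CO_X)=-R\Gamma(S,\CO_S)\otimes R\Gamma(\p^1,\CO)$. With the sign from the shift $[1]$, its weight zero part is the term $+R\Gamma(S,\CO_S)$ in the fixed part.

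The key step is to resolve the graded $\CO_S[x]$-module $\bigoplus_i \BI_i\Ft^{-i}$. The object $\BI|_{\BA^1}$ has a two-term resolution
\[
0\to\bigoplus_i \BI_{i}\Ft^{-i-1}\otimes\CO_S[x] \xrightarrow{\;x\otimes 1-\iota\;} \bigoplus_i \BI_i\Ft^{-i}\otimes\CO_S[x]\to \BI\to 0,
\]
which is the usual Koszul-type presentation of a graded module over a polynomial ring in one variable. Equivalently, in $K$-theory,
\[
\BI=\sum_i (\BI_i-\BI_{i-1})\Ft^{-i}\otimes\CO_S[x].
\]
Then use adjunction, $R\hom_{S\times\BA^1}(A\otimes\CO_S[x],B)=R\hom_S(A,B)$ as graded objects. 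This gives
\[
R\hom(\BI,\BI)=\sum_{i,j}\Ft^{i-j}\otimes R\hom_\rho(\BI_j-\BI_{j-1},\BI_i)\otimes(\text{weights of }\CO[x]),
\]
and summing the geometric series in $x$ telescopes. Extracting weight $0$ yields $\sum_i R\hom(\BI_i,\BI_i)-\sum_i R\hom(\BI_i,\BI_{i+1})$ after reindexing, up to boundary terms at $i_0$ and $i_0+\ell$. With the shift $[1]$ this flips sign, which matches the fixed formula. For weights $\Ft^{k}$ with $k\ge1$ one gets the terms $-\Ft^{k}R\hom(\BI_{i+k}-\BI_{i+k-1},\BI_i)$ directly. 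The negative weights are obtained by Serre duality on the CY3 $X$ with equivariant canonical bundle $K_X=\Ft^{-1}$, or equivalently on $S$ with $\omega_S=\CO$ combined with the $\BA^1$ normal weight. This produces the dual terms $\Ft^{-k}R\hom(\BI_{i+k+1}-\BI_{i+k},\BI_i)^\vee$; the extra shift by one index comes from the weight of $K_{\BA^1}$.

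The infinite sums over $i\ge i_0$ are finite because $\BI_{i+k}-\BI_{i+k-1}=0$ once $i+k>i_0+\ell$. The main obstacle is the bookkeeping of the noncompact direction: one has to check carefully that the contributions at $\infty$ (including the relative condition along $S_\infty$, which alters $T^{\vir}$ near $S_\infty$ by the term from $\ev_\infty$) exactly cancel the constant tail $I_z$ in both the fixed and moving parts. I would handle this by comparing with the same computation for the constant pair $[\CO\to p_S^*\CO_z]$, whose deformation theory relative to $S_\infty$ has a known weight decomposition. Subtracting that reference computation should isolate the stated finite formula.
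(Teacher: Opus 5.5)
Your strategy is the right one. Mayer--Vietoris for the cover of $X$ by $S\times\BA^1$ and $S\times\BA^1_\infty$, combined with $[\BI|_{S\times\BA^1}]=\sum_i(\BI_i-\BI_{i-1})\Ft^{-i}\otimes\CO_S[x]$, is the natural direct computation, and it is what the paper delegates to \cite[Proposition 4.1]{QuasiK3} rather than redoing. However, the part you flag as the main obstacle rests on two wrong premises.

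(1) The relative condition does not modify $T^{\vir}$ on $\Pmain_\Gamma$. The unexpanded locus is open, and there $T^{\vir}=R\hom_\pi(\BI,\BI)_0[1]$ exactly, as the paper states. The chart at $\infty$ just contributes $R\hom_\rho(\CI_\CZ,\CI_\CZ)\otimes\sum_{k\ge0}\Ft^k$, so no reference computation is needed.

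(2) $X=S\times\p^1$ is not Calabi--Yau, since $K_X$ is pulled back from $K_{\p^1}$. Serre duality on $X$ therefore relates $R\hom(\BI,\BI)$ to $R\hom(\BI,\BI\otimes K_X)$ and does not produce the negative weights. Nor can you dualize on $S\times\BA^1$, where $\BI$ is not compactly supported.

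There is also a bookkeeping slip. After adjunction, $R\hom_{S\times\BA^1}(A\otimes\CO_S[x],B)=R\hom_S(A,B)$ with $B$ the whole graded module, so there is no further factor of $\CO[x]$. Also, $R\hom(\Delta_j\Ft^{-j},\BI_i\Ft^{-i})$ has weight $\Ft^{j-i}$, not $\Ft^{i-j}$.

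The negative weights fall out of the same computation. Put $\Delta_j=\BI_j-\BI_{j-1}$, so that $\Delta_j=0$ unless $i_0<j\le i_0+\ell$, and $\sum_j\Delta_j=\CI_\CZ$. Each weight space of the three Mayer--Vietoris terms is a perfect complex on $\Pmain_\Gamma$, so you may work weight by weight. The $\Ft^m$-coefficient of $R\hom_\pi(\BI,\BI)$ is
\[
\sum_j R\hom_\rho(\Delta_j,\BI_{j-m}) \;-\; \epsilon_m\, R\hom_\rho(\CI_\CZ,\CI_\CZ),\qquad \epsilon_m=\begin{cases}1 & m<0,\\ 0 & m\ge 0.\end{cases}
\]
The second term is ($\infty$-chart) minus (overlap), and it is precisely the cancellation of the constant tail you anticipated.

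For $m=0$ and $m=k\ge1$, the sign from $T^{\vir}=R\Gamma(S,\CO_S)-R\hom_\pi(\BI,\BI)$ then gives the fixed part and the $\Ft^{k}$-terms directly. For $m=-k<0$ the coefficient is
\[
-\sum_j R\hom_\rho(\Delta_j,\CI_\CZ-\BI_{j+k}) = -\sum_{j}\sum_{m'\ge j+k+1}R\hom_\rho(\Delta_j,\Delta_{m'}) = -\sum_i R\hom_\rho(\BI_i,\Delta_{i+k+1}).
\]
Only here is duality needed. Relative Serre duality on $S$, with $\omega_S\cong\CO_S$, gives $R\hom_\rho(\BI_i,\Delta_{i+k+1})=R\hom_\rho(\Delta_{i+k+1},\BI_i)^\vee$ in $K$-theory, which is the stated $\Ft^{-k}$-term. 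The one-step index shift is produced by this reindexing. With these corrections your argument is complete.
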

\begin{proof}
The calculation is exactly as in \cite[Proposition 4.1]{QuasiK3}.
\end{proof}

By a direct check (by rewriting $\BI_i$ in terms of the universal quotients)
 the isomorphism \eqref{fixed locus quot iso} the $K$-theory class
of the fixed obstruction theory of $(T^{\vir}|_{P_{\Gamma}^{\mathrm{main}}})^{\mathrm{fixed}}$
agrees with the $K$-theory class of the virtual tangent bundle of the nested Quot scheme $\Quot_{\alpha}^{\sharp}(S)$. The latter was computed{\footnote{Theorem \ref{thm:pot on nested quot scheme} also holds for the perverse nested Quot scheme, see Section~\ref{subsec:perverse t structure}).}}
in Theorem~\ref{thm:pot on nested quot scheme}. 
By Siebert's formula the (reduced) virtual class of a perfect obstruction theory only depends on the $K$-theory class of the virtual tangent bundle,
see \cite[Appendix C1]{PTKKV}.
We therefore conclude that  isomorphism \eqref{fixed locus quot iso} gives an equality of (1-)reduced virtual classes:
\[ [ P_{\Gamma}^{\mathrm{main}} ]^{\red} = \sum_{\alpha=(\alpha_0,\ldots,\alpha_{\ell})} [ \Quot^{\sharp}_{\alpha}(\CI_z) ]^{\red}\, . \]
By Theorem~\ref{thm:cosections on nested quot},
the Quot scheme $\Quot^{\sharp}_{\alpha}(\CI_Z)$ has as many linearly independent cosections as jumps in the filtration (not counting the jump from the zero sheaf). Since we have reduced precisely $1$-cosection from the obstruction sheaf on both sides,
we find that the corresponding reduced virtual class vanishes unless there is precisely $1$-jump.
The corresponding stable pairs are called {\em uniformly thickened}.

\begin{defn} \label{defn:unif thickened}
A $\BC^*$-fixed stable pair $I^{\bullet}=[\CO_X \to F]$ in $P_{\Gamma}^{\mathrm{main}}$ is {\em uniformly thickened} if
\[ {\bf{x}} : F_j \to F_{j+1} \]
is an isomorphism for all $j \in \BZ$ except for two indices $j_0 < j_1$.
\end{defn}

\begin{prop}\label{jumpvan}
Let $P \subset \Pmain_{\Gamma}$ be a connected component which parameterizes $\BC^*$-fixed stable pairs which are not uniformly thickened, then the induced reduced virtual class vanishes: $[P]^{\red}=0$.
\end{prop}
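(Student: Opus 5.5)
The plan is to pass from $P$ to a perverse nested Quot scheme and show that the fixed obstruction theory there has two independent surjective cosections, while the reduction removed only one. By \eqref{fixed locus quot iso}, the component $P$ is identified with a union of components of $\Quot^{\sharp}_{\alpha}(\CI_{\CZ})$, where $\alpha=(\alpha_0,\ldots,\alpha_\ell)$ with $\alpha_0=(1,0,-n)$. Proposition~\ref{prop:fixed pot} and the subsequent direct check show that the $K$-theory class of $(T^{\vir}|_{P})^{\mathrm{fixed}}$ agrees with the virtual tangent bundle \eqref{Ebullet} of the nested Quot scheme. Hence, by Siebert's formula, $[P]^{\red}$ equals the $1$-reduced class of $\Quot^{\sharp}_\alpha(\CI_{\CZ})$ restricted to the corresponding components. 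It therefore suffices to prove that this $1$-reduced class vanishes whenever the flag has at least two genuine jumps.

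First I translate ``not uniformly thickened'' into a count of jumps. The flag \eqref{gv44} begins at $E_{i_0}=I_z$ and ends at $E_{i_0+\ell}$, which is the cone of $I_z\to I_z$ and hence $0$ in the limit. Its jumps correspond exactly to the indices $j$ at which ${\bf x}:F_j\to F_{j+1}$ is not an isomorphism, apart from the trivial jump from the zero sheaf at the bottom of the filtration \eqref{flag}. Definition~\ref{defn:unif thickened} allows exactly two non-isomorphism indices $j_0<j_1$. One of these is the transition from $F_j=0$ to $F_j\neq 0$, which is the jump out of the zero object and is not counted by Theorem~\ref{thm:cosections on nested quot}. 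I will check this bookkeeping carefully, including the edge case where the $F_j$ with $j<0$ are nonzero zero-dimensional sheaves. The conclusion is that a non-uniformly thickened pair gives a flag with $s\geq 2$ in Theorem~\ref{thm:cosections on nested quot}. The number $s$ is locally constant, since the Chern data $\alpha$ are fixed on the component.

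Next I use the perverse version of Theorem~\ref{thm:cosections on nested quot}, whose validity for $\Coh^{\sharp}(S)$ was noted in Section~\ref{subsec:perverse t structure}. It gives an everywhere surjective cosection $\mathrm{Obs}\twoheadrightarrow\CO^{\oplus s}$, built from the traces $\Ext^2_\pi(\CE_i,\CE_i)\to\CO$. This is where I expect the main obstacle. The proof of that theorem used that $E_0$ is simple. In the present setting $E_0=I_z$ is indeed simple, but the intermediate $E_i$ need not be; the Example above shows that the $I_i$ can fail to be simple. The injectivity argument, however, only uses simplicity of $E_0$ and surjectivity but non-invertibility of each $j_i$, and both hold in $\Coh^{\sharp}(S)$. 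I would re-run that argument verbatim in the perverse heart and confirm that no other property is needed.

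Finally I compare with the reduction. The cosection used to define $[P_\Gamma(X,S_\infty)]^{\red}$ restricts on the fixed locus to a cosection of the fixed obstruction theory. I would identify it with the sum of the trace cosections, that is, the total trace coming from $H^2(\CO_S)$, so that it lies in the span of the $s$ cosections. After reducing by this one cosection, there remains a surjective cosection of rank $s-1\geq 1$ on the reduced obstruction theory. Kiem--Li cosection localization \cite{KiemLiCosectionLoc} then shows that the reduced class localizes to the empty degeneracy locus, so $[P]^{\red}=0$.
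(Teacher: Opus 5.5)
Your proposal is correct and follows the paper's route. You identify $P$ with components of $\Quot^{\sharp}_{\alpha}(\CI_{\CZ})$, match the reduced classes by Siebert's formula using only the $K$-theory class of the fixed obstruction theory, and then use the extra trace cosections coming from a flag with at least two counted jumps. The bookkeeping works because $F_0$ is always $1$-dimensional, so the transition at $j=-1$ is a jump for both the $F$-flag and the $I$-flag.

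One simplification: your final identification of the restricted reduction cosection on $P$ with the total trace is unnecessary. It would also need more than the $K$-theoretic match the paper establishes. Siebert's formula already makes the $1$-reduced class independent of which surjective cosection is removed, so you can reduce by any single trace cosection on the Quot side and conclude there by Kiem--Li.
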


Proposition \ref{jumpvan} is precisely the analogue of the vanishing result of \cite{PTKKV} which was
used in the proof of the Katz-Klemm-Vafa formula in the imprimitive case.

\subsubsection{Uniformly thickened stable pairs}
If $(F,s)$ is uniformly thickened, then the indices $j_0,j_1 \in \BZ$ in Definition~\ref{defn:unif thickened} are characterized by $F_{j_0} = 0$, $F_{j_0+1} \neq 0$, and $F_{j_1} \neq \CO_z$, $F_{j_1+1} = \CO_z$.
Since $\beta$ is non-zero, there is a $k \geq 1$ such that
\[ F|_{S \times \BA^1} = \bigoplus_{i=0}^{k-1} F_0 \Ft^{-i} \oplus \bigoplus_{i=k}^{\infty} \CO_z \Ft^{-i}. \]
The stable pair $(F,s)$ is then called {$k$-times uniformly thickened} (since $F_0$ appears precisely $k$ times).
The corresponding flag \eqref{flag} is determined by the inclusion in $\Coh^{\sharp}(S)$,
\[ I_0 \subset I_z \]
where $\rk(I_0) = 1$. The stable pair therefore corresponds to a quotient $I_z \to G$ in $\Coh^{\sharp}(S)$ where
\[ G = \mathrm{Cone}(I_0 \to I_z) = \mathrm{Ker}(F_0 \twoheadrightarrow \CO_z). \]
By the discussion in Section~\ref{subsec:nested hilbert schemes},
$G$ is a pure $1$-dimensional sheaf and as a morphism in $\Coh(S)$, the map $I_z \to G$ has zero-dimensional cokernel.

\begin{lemma} \label{kthick}
If $(F,s)$ is $k$-times uniformly thickened, then
$\ch(G) = \frac{1}{k}(0,\beta,\ch_3)$.
\end{lemma}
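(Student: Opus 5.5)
We compute $\ch(G)$ directly from the explicit form of the uniformly thickened pair and compare with the global invariants $(\beta,n)$ and $\ch_3$ recorded in \eqref{chdata}. Since $G = \Ker(F_0 \twoheadrightarrow \CO_z)$, we have
\[ \ch(G) = \ch(F_0) - \ch(\CO_z) = (0,\beta_0, m_0) - (0,0,n) = (0,\beta_0,m_0-n)\, , \]
where $\beta_0=\ch_1(F_0)$ and $m_0=\ch_2(F_0)$ as in the notation following \eqref{chdata}. It therefore suffices to show $k\beta_0=\beta$ and $k(m_0-n)=\ch_3$.

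The first identity comes from the decomposition of $\ch_2(F)$. By the description $F|_{S\times\BA^1}=\bigoplus_{i=0}^{k-1}F_0\Ft^{-i}\oplus\bigoplus_{i\ge k}\CO_z\Ft^{-i}$, the only graded pieces with nonzero $\ch_1$ on $S$ are the $k$ copies of $F_0$; the pieces $\CO_z$ and the part of $F$ over $\BA^1_\infty$ contribute only to the fiber-direction component $n$ of $\ch_2(F)$. Hence the $S$-component of the curve class is $\sum_i\beta_i=k\beta_0=\beta$, using the relation $\sum_i\beta_i=\beta$ stated after \eqref{chdata}.

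For the second identity, I will use the formula $\ch_3=-(i_0+\ell)n+\sum_{i=i_0}^{i_0+\ell-1}m_i$ stated after \eqref{chdata}, with the indexing normalized so that the flag begins with $I_i=0$ for $i<0$, i.e.\ $F_i=\CO_z$ for $i<0$ in the weight range of the flag. In the uniformly thickened case the jump indices are $j_0=-1$ and $j_1=k-1$: $I_{-1}=0$, $I_0=\dots=I_{k-1}$, and $I_k=I_z$. The nonzero contributions $m_i-n$ come only from $i=0,\dots,k-1$, each equal to $m_0-n$, while every other term with $F_i=\CO_z$ contributes $n-n=0$. So $\ch_3=k(m_0-n)$.

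To avoid sign and normalization errors with $i_0$ and $\ell$, I would double-check this via a direct $\chi$ computation. Since $F$ is supported in the region where $F\otimes\CO_{S\times(\BA^1\setminus 0)}\cong\CO_z[x^{\pm1}]$, the holomorphic Euler characteristic of $F$ differs from that of $p_S^\ast\CO_z$ on $S\times\p^1$ (which is $n$) by the finite-length correction $\sum_{i=0}^{k-1}(\chi(F_0)-\chi(\CO_z))=k(\chi(F_0)-n)$. On $S$ we have $\chi(F_0)=m_0$, since $\td(S)$ has trivial degree-one part for a symplectic surface. Converting $\chi$ to $\ch_3$ via Riemann--Roch on $S\times\p^1$ requires care with the $\p^1$ direction, and this bookkeeping is the main obstacle. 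To keep it trustworthy I would rely on the paper's formula and reconcile the normalization with the $\chi$ computation. Both give $\ch(G)=\frac1k(0,\beta,\ch_3)$.
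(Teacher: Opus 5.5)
Your conclusion is right, but your route differs from the paper's. You specialize the Chern-character bookkeeping stated just after \eqref{chdata}, namely $\sum_i\beta_i=\beta$ and $\ch_3=-(i_0+\ell)n+\sum_{i=i_0}^{i_0+\ell-1}m_i$, to the uniformly thickened flag. The paper instead proves the lemma directly on $S\times\p^1$ from two short exact sequences:
\[ 0\to p_S^{\ast}\CO_z\otimes\CO_{\p^1}(-k)\to F\to j_{k\ast}p_k^{\ast}F_0\to 0,\qquad 0\to p_S^{\ast}\CO_z\otimes\CO_{\p^1}(-k)\to p_S^{\ast}\CO_z\to j_{k\ast}p_k^{\ast}\CO_z\to 0. \]
These give $F=k\,j_{1\ast}G+p_S^{\ast}\CO_z$ in $K$-theory. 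Comparing Chern characters then yields $k\,\ch_1(G)=\beta$ and $k\,\ch_2(G)=\ch_3$ in one step, because $S_0$ has trivial normal bundle and $\ch_3(p_S^{\ast}\CO_z)=0$.

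Your version is shorter once those bookkeeping formulas are granted. However, the paper only asserts them, pointing to a parallel construction, and the lemma is exactly their special case. The $K$-theory identity is what actually justifies the statement, and it treats $\beta$ and $\ch_3$ together. Your $\chi$ cross-check, by contrast, only sees $\ch_3$.

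There is one slip in your bookkeeping. For $i<0$ you have $F_i=0$, not $F_i=\CO_z$; this is what $I_{-1}=0$ means, since $I_i=F_i[-1]$ for $i<0$. The formula should be read as $\ch_3=\sum_{i<0}m_i+\sum_{i\ge 0}(m_i-n)$. The negative-index terms therefore vanish because $m_i=0$, not because $m_i-n=0$. Taking $i_0=-1$ and $\ell=k+1$ gives $-kn+m_{-1}+km_0=k(m_0-n)$, as you want.

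The step you flagged as the main obstacle takes one line. Since $c_1(S)=0$, we have $\td_1(S\times\p^1)=\omega$. This pairs with $\ch_2(F)=(\beta,n)$ to give $n$, and $\ch_1(F)=0$, so Hirzebruch--Riemann--Roch gives $\chi(F)=\ch_3(F)+n$. Your computation $\chi(F)=n+k(m_0-n)$ is just $\chi$ applied to the two sequences above. Combined with $\chi(F)=\ch_3(F)+n$, it gives $\ch_3=k(m_0-n)$ without relying on the unproved formula after \eqref{chdata}.
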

\begin{proof}
Let $j_k : Z_k = V(x^k) \to X$ be the inclusion of the $k$-times thickening of the zero section $S_0 \subset X$, and let $p_k : Z_k \to S$ be the projection.
There are short exact sequences
\begin{equation} \label{ses}
\begin{gathered}
0 \to p_S^{\ast}(\CO_z) \otimes \CO_{\p^1}(-k) \to F \to j_{k \ast}p_k^{\ast}(F_0) \to 0\, , \\
0 \to p_S^{\ast}(\CO_z) \otimes \CO_{\p^1}(-k) \to p_S^{\ast}(\CO_Z) \to j_{k \ast}p_k^{\ast}(\CO_z) \to 0\, .
\end{gathered}
\end{equation}
In $K$-theory, we therefore have
\begin{align*} 
F & = j_{k \ast}p_k^{\ast}(F_0) + p_S^{\ast}(\CO_z) \otimes \CO_{\p^1}(-k)  \\
& = j_{k \ast}p_k^{\ast}(F_0) + p_S^{\ast}(\CO_z) - j_{k \ast}p_k^{\ast}(\CO_z) \\
& = j_{k \ast}p_k^{\ast}(G) + p_S^{\ast}(\CO_z)\\
& = k\cdot j_{1 \ast}(G) + p_S^{\ast}(\CO_z)\, .
\end{align*}
By taking Chern characters on both sides and comparing coefficients, the claim follows.
\end{proof}

Let $\Pmain_{\Gamma,k}$ the union of connected components of $k$-times uniformly thickened stable pairs.
By Lemma \ref{kthick}, for every $k | (\beta,\ch_3)$, isomorphism \eqref{fixed locus quot iso} restricts{\footnote{We use here (and for the remainder of Section \ref{dtptred}) the abbreviated notation 
$$\Quot^{\sharp}_{n,\frac{1}{k}(0,\beta,\ch_3)} =\Quot^{\sharp}_{((1,0,-n),\frac{1}{k}(0,\beta,\ch_3))}\, .$$}} to
\begin{equation}\label{isompq}
    \Pmain_{\Gamma,k} \cong \Quot^{\sharp}_{n,\frac{1}{k}(0,\beta,\ch_3)}\, , \quad (F,s) \mapsto [(I_z \to I_z/I_0=G)]\, . 
    \end{equation}
Moreover, as seen above, the corresponding reduced virtual classes match,
\[ [ \Pmain_{\Gamma,k} ]^{\red} = [ \Quot^{\sharp}_{n,\frac{1}{k}(0,\beta,\ch_3)} ]^{\red}. \]

\subsubsection{Virtual normal bundle}
The $\BC^*$-equivariant Euler class of the virtual normal bundle of
 $\Pmain_{\Gamma,k}$ is required for
the localization formula.

By Proposition~\ref{prop:fixed pot}, the moving part of the virtual tangent bundle of $\Pmain_{\Gamma,k}$ is
\begin{align*}
N^{\vir}_{\Pmain_{\Gamma,k}} = \left( T^{\vir}|_{\Pmain_{\Gamma,k}} \right)^{\mathrm{mov}}
= & - R\hom_{\rho}(\BI_0, \BG)^{\vee} \otimes (\Ft \oplus \Ft^{2} \oplus \cdots \oplus \Ft^{k}) \\
& + R \hom_{\rho}(\BI_0,\BG) \otimes (\Ft^{-1} \oplus \Ft^{-2} \oplus \cdots \oplus \Ft^{-(k-1)} )\, ,
\end{align*}
where $\CI_Z \to \BG$ is the universal quotient over $\Quot^\sharp_{n,\frac{1}{k}(0,\beta,\ch_3)}$, and $\BI_0=[\CI_z \to \BG]$.
We have 
\begin{align*}
\mathrm{rank}( R \hom_{\rho}(\BI_0,\BG) )
& = \chi( R\Hom_S(I_0,G)) \\
& = \int_{S} \ch(I_0)^{\vee} \ch(G) \cdot \mathrm{td}_S \\
& = \int_{S} (1, -\frac{\beta}{k}, -n  - \frac{\ch_3}{k})^{\vee} \cdot (0,\frac{\beta}{k}, \frac{\ch_3}{k}) \cdot (1,0,2) \\
& = \frac{\beta}{k} \cdot \frac{\beta}{k} + \frac{\ch_3}{k} \\
& \equiv \frac{\ch_3}{k} \text{ modulo } 2\, .
\end{align*}
The equivariant Euler class of the virtual normal bundle is hence
\begin{align*}
e_{\BC^{\ast}}(N^{\vir}_{\Pmain_{\Gamma,k}}) & =
\frac{e_{\BC^*}( R\hom_{\rho}(\BI_0,\BG) \otimes (\Ft^{-1} \oplus \cdots \oplus \Ft^{-(k-1)}) )}
{e_{\BC^*}(R\hom_{\rho}(\BI_0,\BG)^{\vee} \otimes (\Ft^{1} \oplus \cdots \oplus \Ft^{(k-1)}) )}
\cdot
e_{\BC^{\ast}}( - R\hom_{\rho}(\BI_0,\BG)^{\vee} \otimes \Ft^k) \\
& =
(-1)^{(k-1) \cdot \frac{\ch_3}{k}} e_{\BC^{\ast}}( - R\hom_{\rho}(\BI_0,\BG)^{\vee} \otimes \Ft^k).
\end{align*}
where we used that for an equivariant $K$-theory class
\[ e(V) = e(V^{\ast}) \cdot (-1)^{\mathrm{rank}(V)}. \]
The contribution 
of the $k$-step locus
$\Pmain_{\Gamma,k}$
to the virtual localization formula \eqref{locform} for the reduced
virtual class of $P_\Gamma(X,S_\infty)$
is
\begin{equation} \label{fsdfds}
\frac{1}{e_{\BC^{\ast}}(N^{\vir}_{\Pmain_{\Gamma,k}})} [ \Pmain_{\Gamma,k} ]^{\red}
=
(-1)^{(k-1) \cdot \frac{\ch_3}{k}} e_{\BC^{\ast}}( R\hom_{\rho}(\BI_0,\BG)^{\vee} \otimes \Ft^k) \cap [ \Quot^\sharp_{n, \frac{1}{k} (0,\beta,\ch_3)} ]^{\red}.
\end{equation}

\subsubsection{Summary}
The fixed locus of the main component $\Pmain_{\Gamma}$
is isomorphic to a union of perverse nested Hilbert schemes 
$\Quot^{\sharp}_{\alpha}(\CI_Z)$.
The isomorphism respects the reduced virtual classes.
By the existence of additional cosections, only the single-nested perverse Hilbert schemes $\Quot^\sharp_{n,\alpha}$,
where $\alpha=\frac{1}{k}(0,\beta,\ch_3)$ and $k|(\beta,\ch_3)$, contribute. By \eqref{fsdfds},
the total contribution of
$\Pmain_\Gamma$
to the localization
formula 
\eqref{locform} for the reduced
virtual class of $P_\Gamma(X,S_\infty)$
is
\[
\sum_{k|(\beta,\ch_3)} (-1)^{(k-1) \cdot \frac{\ch_3}{k}} e_{\BC^{\ast}}( R\hom_{\rho}(\BI_0,\BG)^{\vee} \otimes \Ft^k) \cap [ \Quot^{\sharp}_{n, \frac{1}{k} (0,\beta,\ch_3)} ]^{\red}\, .
\]

\subsection{Localization formula}
\label{lform}
We have now analyzed all three types of fixed loci of the $\BC^*$-action on 
$P_{\ch_3,(\beta,n)}(X,S_\infty)$.
We arrive at the following result.
\begin{thm} \label{thm:PT fixed}
The localization formula for the reduced virtual class in the $\BC^*$-equivariant Chow theory of
$P_{\ch_3,(\beta,n)}(X,S_\infty)$ is
\begin{multline*}
[ P_{\ch_3,(\beta,n)}(X, S_{\infty}) ]^{\red}
= \frac{1}{-t - \Psi_0} [ P_{\ch_3,(\beta,n)}(X, S_{0,\infty} )^{\sim}]^{\red} \\
+ \sum_{k|(\beta,\ch_3)} (-1)^{(k-1) \frac{\ch_3}{k}} e_{T}\left( R \hom_{\rho}( [\CI_z \to \BG], \BG)^{\vee} \otimes \Ft^{k} \right) \cap \left[ \Quot^{\sharp}_{n,\frac{1}{k}(0,\beta,\ch_3)} \right]^{\mathrm{red}}.
\end{multline*}
where $\CI_z \to \BG$ is the universal quotient sequence on $\Quot^{\sharp}_{n,\frac{1}{k}(0,\beta,\ch_3)}$.
\end{thm}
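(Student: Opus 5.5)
The plan is to assemble the virtual localization formula \eqref{locform} for the $\BC^*$-equivariant reduced class of $P_{\ch_3,(\beta,n)}(X,S_\infty)$ from the three analyses of fixed loci already carried out. The fixed locus decomposes as $P^{\mathrm{main}}_\Gamma \sqcup P^{\mathrm{mixed}}_\Gamma \sqcup P^{\mathrm{rubber}}_\Gamma$. Since the reduced obstruction theory and its cosection are $\BC^*$-equivariant, \eqref{locform} expresses the class as a sum over these components of their fixed reduced classes divided by the Euler classes of their virtual normal bundles. I will treat each type in turn and then add the contributions.

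\textbf{Rubber type.} By Section~\ref{rubbertype}, $\Prubber_\Gamma \cong P_\Gamma(X,S_{0,\infty})^\sim$, and the virtual normal bundle is the line bundle $\CL_0\otimes T_{\p^1,\infty}$. Its equivariant Euler class is $-t-\Psi_0$. This gives exactly the first term of the statement. I will only need to check that the reduced class of the fixed theory agrees with the rubber reduced class, which holds because the same cosection is removed on both sides.

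\textbf{Mixed type.} By Section~\ref{mixedtype}, the fixed obstruction theory carries two independent surjective cosections, one from each side of the splitting, but the reduction removes only one. Kiem--Li cosection localization then kills the reduced class, so $[P^{\mathrm{mixed}}_\Gamma]^{\red}=0$ and this type contributes nothing.

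\textbf{Main type.} Here I use the identification \eqref{fixed locus quot iso} of $\Pmain_\Gamma$ with a union of perverse nested Quot schemes $\Quot^\sharp_\alpha(\CI_Z)$. By Proposition~\ref{prop:fixed pot}, the fixed part of $T^{\vir}$ agrees in $K$-theory with the virtual tangent bundle of Theorem~\ref{thm:pot on nested quot scheme}. By Siebert's formula, the reduced virtual classes of the two sides then coincide. Proposition~\ref{jumpvan}, which rests on the extra cosections of Theorem~\ref{thm:cosections on nested quot}, leaves only the uniformly thickened components $\Pmain_{\Gamma,k}$. Lemma~\ref{kthick} forces $k\mid(\beta,\ch_3)$ and identifies $\Pmain_{\Gamma,k}$ with $\Quot^\sharp_{n,\frac1k(0,\beta,\ch_3)}$ via \eqref{isompq}. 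The normal bundle computation \eqref{fsdfds} then supplies the factor $(-1)^{(k-1)\ch_3/k}\, e(R\hom_\rho(\BI_0,\BG)^\vee\otimes\Ft^k)$ with $\BI_0=[\CI_Z\to\BG]$.

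\textbf{Main obstacle.} The step needing the most care is the normal bundle calculation on $\Pmain_{\Gamma,k}$. I will specialize Proposition~\ref{prop:fixed pot} to a flag with a single jump. The weights $\Ft^{\pm j}$ for $1\le j\le k-1$ then appear once with the class and once with its dual, so their Euler classes cancel up to the sign $(-1)^{\rk}$. This rank is $\beta^2/k^2+\ch_3/k\equiv \ch_3/k \pmod 2$, and raising it to the $(k-1)$ such pairs gives the stated sign.

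The weight-$\Ft^k$ term survives with a minus sign in the moving part, so its inverse Euler class becomes the Euler class appearing in the statement. One must also confirm that the pieces $R\hom_\rho(\BI_{i+k}-\BI_{i+k-1},\BI_i)$ collapse to $R\hom_\rho(\BI_0,\BG)$. This follows because the only nonzero difference $\BI_{i+1}-\BI_i$ is $\BG$ (up to sign and the trivial pieces $\CI_Z$), and the terms involving $\CI_Z$ against itself cancel with the $R\Gamma(\CO_S)$ and trace corrections.

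Summing the rubber term and the main-type terms over $k$ yields the formula of Theorem~\ref{thm:PT fixed}.
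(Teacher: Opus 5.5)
Your proposal is correct and follows the paper's argument: the rubber term comes from the normal line bundle, the mixed locus vanishes by the second cosection, and the main locus reduces to the $k$-times uniformly thickened components $\Quot^{\sharp}_{n,\frac1k(0,\beta,\ch_3)}$, with the sign $(-1)^{(k-1)\ch_3/k}$ coming from the paired weights $\Ft^{\pm j}$. One correction to your justification of the moving part: only the single jump $\BI_k-\BI_{k-1}=\BG$ contributes, giving $R\hom_{\rho}(\BG,\BI_0)$, which becomes $R\hom_{\rho}(\BI_0,\BG)^{\vee}$ by Serre duality on $S$ (since $\omega_S\cong\CO_S$); the $R\Gamma(S,\CO_S)$ and trace terms belong to the fixed part and play no role there.
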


As a consequence of Theorem \ref{thm:PT fixed}, we will obtain a multiple cover formula for the PT invariants
of the cap geometry.

\begin{thm} \label{thm:PT MCF equivariant}
The $\BC^*$-equivariant PT invariants of $(S \times \p^1,S_{\infty})$ satisfy:
\[ \left\langle \, \lambda \, \middle| \, \prod_{i=1}^{r} \ch_{\ellt_i}([\mathbf{0}] \gamma_i) \right\rangle^{(X,S_{\infty}), \PT}_{\ch_3,\beta}
=
\sum_{k|(\ch_3,\beta)}
k^{\nu}(-1)^{(k-1) \frac{\ch_3}{k}}
\left\langle \, \varphi_k(\lambda) \, \middle| \, \prod_{i=1}^{r} \ch_{\ellt_i}([\mathbf{0}] \varphi_k(\gamma_i)) \right\rangle^{(X,S_{\infty}), \PT}_{\frac{\ch_3}{k},\varphi_k(\beta/k)}\, ,
\]
where the exponent $\nu$ is defined by
\begin{eqnarray*}
\nu &=& \deg_{\BC}(\lambda) + \sum_i \deg_{\BC}(\ch_{\ellt_i}([\mathbf{0}] \gamma_i)) - 2n - 1 \\
&=& \deg_{\BC}(\lambda) + \sum_{i} (\ellt_i - 2 + \deg_{\BC}(\gamma_i)) - 2n - 1 \,.
\end{eqnarray*}
\end{thm}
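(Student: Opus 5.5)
The plan is to compute both sides of Theorem~\ref{thm:PT MCF equivariant} with the localization formula of Theorem~\ref{thm:PT fixed}, and then match the two sides term by term in $k$ using universality (Theorem~\ref{thm:universality}). Both sides are $\BC^*$-equivariant integrals of equivariant classes over proper moduli spaces. For dimension reasons each is a homogeneous polynomial in $t$ of degree $\nu$, namely a rational number times $t^{\nu}$. It therefore suffices to compare coefficients of $t^\nu$, or equivalently to show that the $k$-th localization term on the left is obtained from the corresponding primitive term on the right by substituting $t\mapsto kt$.

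\emph{Step 1 (left side).} Apply Theorem~\ref{thm:PT fixed} to $P_{\ch_3,(\beta,n)}(X,S_\infty)$.
\begin{itemize}
\item The rubber term vanishes by Lemma~\ref{rubvan}. This uses the dimension constraint \eqref{dim constraint inequality}, which we may assume since otherwise both sides vanish; the constraint is the same on both sides because $\varphi_k$ preserves degrees.
\item What remains is a sum over $k\mid(\beta,\ch_3)$ of $(-1)^{(k-1)\ch_3/k}$ times an integral over $[\Quot^\sharp_{n,\frac1k(0,\beta,\ch_3)}]^{\red}$ of $e_{\BC^*}(R\hom_\rho(\BI_0,\BG)^\vee\otimes\Ft^k)$ times the restricted insertions.
\end{itemize}

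\emph{Step 2 (restricting the insertions to the $k$-thickened locus).} As in Lemma~\ref{lemma:restriction1}, $\ch_a([\mathbf 0]\gamma)$ is the pushforward of $\ch(L\iota_0^*\BF)\cdot\gamma$.
\begin{itemize}
\item For a graded module with pieces $F_i$, the derived restriction to $x=0$ has $K$-class $\sum_i (F_i-F_{i-1})\Ft^{-i}$.
\item On the $k$-thickened locus, $F_i=F_0$ for $0\le i<k$ and $F_i=\CO_z$ for $i\ge k$. This gives $L\iota_0^*\BF = F_0 - \BG\,\Ft^{-k}$ with $F_0=\BG+\CO_z$.
\item Hence $\ch_a([\mathbf 0]\gamma)$ restricts to $\ch^{\CO_\CZ}_a(\gamma)+\sum_{j}\ch^{\BG}_{a-j}(\gamma)\,c_j(kt)$. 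Here the $c_j(kt)$ are universal coefficients, the degree-$j$ parts of $1-e^{-kt}$ up to sign conventions; the point is that they depend on $t$ only through $kt$.
\item The Euler class $e(V\otimes\Ft^k)=\sum_i c_i(V)(kt)^{\mathrm{rk}V-i}$ also depends only on $kt$.
\item The class $\ev_\infty^*\lambda$ restricts to the pullback of $\lambda$ from $S^{[n]}$ along the structure map.
\end{itemize}
So the $k$-th term equals $(-1)^{(k-1)\ch_3/k}$ times $\Phi_{S,\beta/k,\ch_3/k}(kt)$, where $\Phi_{S,\beta',c}(t)$ denotes the analogous expression for the $1$-thickened locus in class $(c,\beta')$.

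\emph{Step 3 (universality and matching).} Apply Theorem~\ref{thm:universality} with $\varphi=\varphi_k$. This transports the integral over $\Quot^\sharp_{n,\frac1k(0,\beta,\ch_3)}$ on $S$ to the integral over $\Quot^\sharp_{n,(0,\varphi_k(\beta/k),\ch_3/k)}$ on $S_k$, with insertions $\varphi_k(\gamma_i)$ and $\varphi_k(\lambda)$.

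On the right side, the class $\varphi_k(\beta/k)$ is primitive, so Theorem~\ref{thm:PT fixed} produces only the $k'=1$ main term, plus a rubber term that vanishes by Lemma~\ref{rubvan}. That main term is exactly $\Phi_{S_k,\varphi_k(\beta/k),\ch_3/k}(t)$. Homogeneity of degree $\nu$ gives $\Phi(kt)=k^\nu\Phi(t)$, and summing over $k$ yields the claim.

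\emph{Main obstacle.} I expect the delicate point to be that Theorem~\ref{thm:universality} covers only descendents $\ch^{\BG}$ and $\ch^{\CO_\CZ}$. The insertion $\lambda$, a Nakajima class, and its $\varphi_k$-transform therefore have to be written as one universal polynomial in $\ch^{\CO_\CZ}_a(\gamma)$ on $S^{[n]}$ and $(S_k)^{[n]}$ simultaneously. I would do this by expressing Nakajima classes via tautological descendents with diagonal and $c_2$ operations, which commute with $\varphi_k$, exactly as in the universality proof of \cite{EGL}. A secondary check is that the equivariant Euler class $e(R\hom_\rho(\BI_0,\BG)^\vee\otimes\Ft^k)$ is itself a universal polynomial in these descendents via Grothendieck--Riemann--Roch, so that universality applies to the whole integrand.
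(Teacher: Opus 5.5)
Your proposal is correct and follows the paper's proof essentially step for step. Both arguments use the localization formula of Theorem~\ref{thm:PT fixed} and the vanishing of the rubber term. Both then observe that the $k$-th main contribution depends on $t$ only through $kt$; your Koszul computation reproduces Lemma~\ref{lemma:restriction2}. Both transfer to $S_k$ by universality (Theorem~\ref{thm:universality}) and identify the result with the single primitive-class main term on the right, where your homogeneity argument replaces the paper's bookkeeping $j+\ell=-\nu$.

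Your extra step of writing $\ev_{S_\infty}^*(\lambda)$ and $e_{\BC^*}(\CV\otimes\Ft^k)$ as universal descendent polynomials compatible with $\varphi_k$ (via diagonal K\"unneth decompositions and $c_2$) makes explicit a point the paper uses tacitly when it invokes Theorem~\ref{thm:universality}.
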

\begin{proof}
We assume the dimension constraint \eqref{dim constraint inequality} holds (otherwise both sides vanish). 
Then, by Lemma \ref{rubvan},
the rubber component does not contribute, so we need only study the contributions of the $k$-step loci
$\Pmain_{\Gamma,k}$ for each $k| (\ch_3,\beta)$.

To study the $k$-step loci, we will use  isomorphism \eqref{isompq},
$$\Pmain_{\Gamma,k} \cong \Quot^{\sharp}_{n,\frac{1}{k}(0,\beta,\ch_3)}\, .$$
We follow the descendent notation of Section \ref{subsec:nested hilbert schemes}:
\begin{enumerate}
\item[$\bullet$]
$\ch^{\BG}_a(\gamma)$ on
$\Quot^{\sharp}_{n,\frac{1}{k}(0,\beta,\ch_3)}$
with respect to the sheaf
$$ \mathbb{G} \rightarrow  S \times \Quot^{\sharp}_{n,\frac{1}{k}(0,\beta,\ch_3)}\, , $$
\item[$\bullet$] 
$\ch^{\mathcal{O}_{\mathcal{Z}}}_a(\gamma)$ 
on $S^{[n]}$
with respect to the sheaf
$$ {\mathcal{O}_{\mathcal{Z}}} \rightarrow S\times S^{[n]}\, .$$
\end{enumerate}
The descendents $\ch^{\mathcal{O}_Z}_a(\gamma)$ are
pulled-back to 
$\Quot^{\sharp}_{n,\frac{1}{k}(0,\beta,\ch_3)}$
via the structure map
$$
\Quot^{\sharp}_{n,\frac{1}{k}(0,\beta,\ch_3)}
\rightarrow S^{[n]}\, .
$$

\begin{lemma} \label{lemma:restriction2}
Let $\gamma \in H^{\ast}(S)$.
Under the isomorphism $\Pmain_{\Gamma,k} \cong \Quot^{\sharp}_{n,\frac{1}{k}(0,\beta,\ch_3)}$, we have
\[ \sum_{a \geq 0} \ch_{a}([\mathbf{0}]\gamma)|_{\Pmain_{\Gamma,k}}
= (1-e^{-k t}) \sum_{a \geq 0} \ch^{\BG}_a(\gamma) + \sum_{a \geq 0} \ch_a^{\mathcal{O}_{\mathcal{Z}}}(\gamma)\, . \]
\end{lemma}
\begin{proof}
By \eqref{ses}, we have
$\BF = j_{k \ast}p_k^{\ast}(\BG) + p_S^{\ast}({\mathcal{O}_{\mathcal{Z}}})$.
From the short exact sequence 
\[ 0 \to \BG \otimes \CO_{\p^1}(-k) \to \BG \to j_{k \ast}p_k^{\ast}(\BG) \to 0\, , \]
we obtain
$\ch(j_{k \ast}p_k^{\ast}(\BG)) |_{\Pmain_{\Gamma,k} \times S_0} = \ch(\BG) (1-e^{-kt})$ , 
which implies the claimed formula for the class
$\ch_a([{\mathbf{0}}]\gamma)|_{\Pmain_{\Gamma,k}}$.
\end{proof}

As an algebraic consequence of Lemma \ref{lemma:restriction2}, there exist universal polynomials,
$$P_{\ell}( c_{a,i}, c'_{a,i} )\in \mathbb{Q}[\{c_{a,i},c'_{a,i}\}]\, ,$$ homogeneous of degree
\[ \sum_{i} (\ellt_i+\deg_\BC(\gamma_i)-2) + \ell = \deg_\BC\left( \prod_{i=1}^{r} \ch_{\ellt_i}([\mathbf{0}]\gamma_i) \right) + \ell \]
with $c_{a,i}, c'_{a,i}$ of degree $a+\deg(\gamma_i)-2$, such that
\[ \left( \prod_{i=1}^{r} \ch_{\ellt_i}([\mathbf{0}]\gamma_i) \right)\Big|_{\Pmain_{\Gamma,k}} = 
\sum_{\ell \leq 0} (kt)^{-\ell} \cdot P_{\ell}( \ch_a^{\BG}(\gamma_i), \ch_a^{\mathcal{O}_{\mathcal{Z}}}(\gamma_i)) \, .
\]

By Theorem \ref{lform} and Lemma \ref{lemma:restriction2},
the contribution of $\Pmain_{\Gamma,k}$ to the localization formula \eqref{locform}  is 
\begin{equation} \label{Dfsd0-ir3rw}
\sum_{j,\ell \in \BZ}
(kt)^{-j-\ell}
(-1)^{(k-1) \frac{\ch_3}{k}}
\int_{[ \Quot^{\sharp}_{n,\frac{1}{k}(0,\beta,\ch_3)} ]^{\mathrm{red}}}
\ev_{S_{\infty}}^{\ast}(\lambda)\cdot c_{\rk(\CV)+j}(\CV) \cdot P_{\ell}( \ch_a^{\BG}(\gamma_i), \ch_a^{\mathcal{O}_{\mathcal{Z}}}(\gamma_i))\, ,
\end{equation}
where $\CV = R \hom_{\rho}( [\CI_z \to \BG], \BG)^{\vee}$ and{\footnote{For any $K$-theory class we have 
$e_{\BC^*}(V \otimes \Ft^k) = \sum_{j \in \BZ} c_1(\Ft^k)^{-j} c_{\rk(V)+j}(V)$ by the splitting principle.}}
$$
\rk(\CV) = (\beta/k)^2 + \ch_3/k\,, \ \ \ \ \
e_{\BC^*}(\CV \otimes \Ft^k) = \sum_{j \in \BZ} c_{\rk(\CV) + j}(V) (kt)^{-j}\, .$$
The reduced virtual dimension of $\Quot^{\sharp}_{n,\frac{1}{k}(0,\beta,\ch_3)}$ is
\[ (\beta/k)^2 + \ch_3/k + 2n+1 \, ,\]
and the degree of the integrand is
\begin{multline*}
\deg_{\BC}\left( \ev_{S_{\infty}}^{\ast}(\lambda)\cdot c_{\rk(V)+j}(\CV)\cdot P_{\ell}( \ch_a^{\BG}(\gamma_i), \ch_a^{\mathcal{O}_{\mathcal{Z}}}(\gamma_i)) \right)
= \\
\deg_{\BC}(\lambda) + \Big((\beta/k)^2 + \ch_3/k + j\Big) + 
\deg_{\BC}\left( \prod_{i=1}^{r} \ch_{\ellt_i}([\mathbf{0}]\gamma_i) \right) + \ell\, .
\end{multline*}
To obtain a non-zero contribution, we must have 
\[ j+\ell = 2n+1 - \deg_{\BC}(\lambda) - \sum_i \deg_{\BC}(\ch_{\ellt_i}([0]\gamma_i)) = - \nu\, . \]

By Theorem \ref{thm:universality}, the integrals in the summation of \eqref{Dfsd0-ir3rw} 
can be replaced by their $\varphi_k$-images.
As a result,
the sum \eqref{Dfsd0-ir3rw} equals
\begin{multline*}
(kt)^{\nu}
(-1)^{(k-1) \frac{\ch_3}{k}}
\sum_{\substack{j,\ell \in \BZ \\ j + \ell = -\nu}} 
\int_{[ \Quot^{\sharp}_{n,(0,\varphi_k(\beta/k),\ch_3/k)} ]^{\mathrm{red}}}
\ev_{S_{\infty}}^{\ast}(\varphi_k(\lambda))\cdot c_{\rk(\CV)+j}(\CV) \cdot P_{\ell}( \ch_a^{\BG}(\varphi_k(\gamma_i)), \ch_a^{\mathcal{O}_Z}(\varphi_k(\gamma_i)))\, .
\end{multline*}
Working backwards, the sum is just
\[
k^{\nu} (-1)^{(k-1) \frac{\ch_3}{k}}
\left\langle \, \varphi_k(\lambda) \, \middle| \, \prod_{i=1}^{r} \ch_{\ellt_i}([\mathbf{0}] \varphi_k(\gamma_i)) \right\rangle^{(X,S_{\infty}), \PT}_{\ch_3,\varphi_k(\beta/k)}.
\]

Finally, by summing up over all $k$-step contributions, for each $k|(\ch_3,\beta)$, we obtain
\begin{align*}
& \left\langle \, \lambda \, \middle| \, \prod_{i=1}^{r} \ch_{\ellt_i}([\mathbf{0}] \gamma_i) \right\rangle^{(X,S_{\infty}), \PT}_{\ch_3,\beta} 
& =
\sum_{k|(\ch_3,\beta)}  k^{\nu} (-1)^{(k-1) \frac{\ch_3}{k}}
\left\langle \, \varphi_k(\lambda) \, \middle| \, \prod_{i=1}^{r} \ch_{\ellt_i}([\mathbf{0}] \varphi_k(\gamma_i)) \right\rangle^{(X,S_{\infty}), \PT}_{\ch_3,\varphi_k(\beta/k)}\, ,
\end{align*}
which is the claimed multiple cover formula.
\end{proof}

\subsection{Proof of Theorem~\ref{thm: PT SxC MCF}}
There is an open equivariant embedding $$P_{\ch_3,\beta}(Y) \subset P_{\ch_3,(\beta,0)}(X,S_{\infty})\,. $$
The fixed locus of $P_{\ch_3,\beta}(Y)$ is precisely the main component of the fixed locus of $P_{\ch_3,(\beta,0)}(X,S_{\infty})$.
The claim follows from
the analysis of the main component as in the proof of Theorem~\ref{thm:PT MCF equivariant}.
\qed

\begin{rmk}
If we have at least one insertion, Theorem~\ref{thm: PT SxC MCF} is also a direct consequence of
Theorem~\ref{thm:PT MCF equivariant} in the special case where $n=0$ and $\lambda= (\emptyset)$.
Because of the insertion, the
rubber terms do not contribute to the integral of Theorem~\ref{thm:PT MCF equivariant}.
\end{rmk}

\subsection{Proof of Theorem~\ref{thm:PT MCF}}
We will deduce Theorem~\ref{thm:PT MCF}
from the case of the cap geometry by the degeneration formula.
The PT invariants of $(S \times \p^1,S_z)$ in curve classes $(0,n)$ defined through the standard (non-reduced) virtual class will be needed here. These invariants are denoted by
\begin{equation} \big\langle \, \lambda^1,\ldots,\lambda^{m} \, \big| \, \ch_{\ellt_1}(\omega \gamma_1) \cdots \ch_{\ellt_r}(\omega \gamma_r) \big\rangle^{(S \times \p^1,S_z), \PT}_{\ch_3,0}. 
\label{nonreduced invariants} \end{equation}
By a cosection argument, the invariants \eqref{nonreduced invariants} vanish if $\ch_3>0$, see \cite[4.3]{PPJap}. For vanishing $\ch_3$, we have the evaluation:
\[
\big\langle \, \lambda^1,\ldots,\lambda^{m} \, \big| \, \ch_{\ellt_1}(\omega \gamma_1) \cdots \ch_{\ellt_r}(\omega \gamma_r) \big\rangle^{(S \times \p^1,S_z), \PT}_{\ch_3=0,0}
=
\int_{S^{[n]}} \prod_{i} \lambda^i \cdot \prod_{i=1}^{r} \ch_{\ellt_i}^{S^{[n]}}(\gamma_i)\, .
\]
The above evaluation holds since $P_{0,(0,n)}(X,S_z) = S^{[n]}$, and the virtual class is just the fundamental class of $S^{[n]}$.


Assume $m > 1$, and let $\{ 1, \ldots, r \} = A_1 \sqcup A_2$ be any decomposition.
The degeneration formula for reduced invariants for the deformation to the normal cone of $S \times \p^1$ along the divisor $S_{z_{m}}$ is:
\begin{multline} \label{deg formula}
\big\langle \, \lambda^1,\ldots,\lambda^{m-1} \, \big| \, {\textstyle \prod_i} \ch_{\ellt_i}(\omega \gamma_i) \big\rangle^{(S \times \p^1,S_{z_1,\ldots,z_{m-1}}), \PT}_{\ch_3,\beta} \\
=
\sum_{j}
\big\langle \, \lambda^1,\ldots,\lambda^{m-1}, \nu_j \, \big| \, 
{\textstyle \prod_{i \in A_1}} \ch_{\ellt_i}(\omega \gamma_i)
 \big\rangle^{(S \times \p^1,S_{z_1,\ldots,z_{m}}), \PT}_{\ch_3,\beta}
\cdot 
\big\langle \, \nu_j^{\vee} \, \big| \, {\textstyle \prod_{i \in A_2}} \ch_{\ellt_i}(\omega \gamma_i)\big\rangle^{(S \times \p^1,S_{\infty}), \PT}_{0,0} \\
+ \sum_{j}
\big\langle \, \lambda^1,\ldots,\lambda^{m-1}, \nu_j \, \big| \, 
{\textstyle \prod_{i \in A_1}} \ch_{\ellt_i}(\omega \gamma_i)
 \big\rangle^{(S \times \p^1,S_{z_1,\ldots,z_{m}}), \PT}_{0,0}
\cdot 
\big\langle \, \nu_j^{\vee} \, \big| \, {\textstyle \prod_{i \in A_2}} \ch_{\ellt_i}(\omega \gamma_i)\big\rangle^{(S \times \p^1,S_{\infty}), \PT}_{\ch_3,\beta}
\end{multline}
where $\sum_i \nu_j \otimes \nu_j^{\vee} \in H^{\ast}(S^{[n]}) \otimes H^{\ast}(S^{[n]})$ is a K\"unneth decomposition of the diagonal of $S^{[n]}$.

The key point shown in \cite[Proposition 6]{PPJap} or \cite[Lemma 5.2]{QuasiK3} is that the above relation can be inverted in order to express arbitrary invariants 
\[ \big\langle \, \lambda^1,\ldots,\lambda^{m} \, \big| \, \ch_{\ellt_1}(\omega \gamma_1) \cdots \ch_{\ellt_r}(\omega \gamma_r) \big\rangle^{(S \times \p^1,S_z), \PT}_{\ch_3,\beta} \]
as $\BQ$-linear combinations of the same type of invariants (with same $(\ch_3,\beta)$) of $(X,S_{z_1,\ldots,z_{m-1}})$ and $(X,S_{\infty})$.

If Theorem~\ref{thm:PT MCF} holds for  invariants with $m-1$ and $1$ relative fibers, then 
Theorem~\ref{thm:PT MCF}
holds for invariants  with $m$ relative fibers.
The case of Theorem~\ref{thm:PT MCF} with $m=1$ fibers holds
by Theorem~\ref{thm:PT MCF equivariant} by passing to the non-equivariant limit (the non-equivariant limit is non-zero only if the dimension constraint $\nu=0$ holds).
\qed

\subsection{DT/PT correspondence}
The degeneration formula \eqref{deg formula} takes exactly the same form for DT invariants. Moreover, the non-reduced DT invariants for curve classes $(0,n)$ agree with the PT invariants:
\[
\big\langle \, \lambda^1,\ldots,\lambda^{m} \, \big| \, \ch_{\ellt_1}(\omega \gamma_1) \cdots \ch_{\ellt_r}(\omega \gamma_r) \big\rangle^{(S \times \p^1,S_z), \PT}_{\ch_3,0}
=
\big\langle \, \lambda^1,\ldots,\lambda^{m} \, \big| \, \ch_{\ellt_1}(\omega \gamma_1) \cdots \ch_{\ellt_r}(\omega \gamma_r) \big\rangle^{(S \times \p^1,S_z), \DT}_{\ch_3,0}\, .
\]
Indeed, the DT invariants vanish for $\ch_3>0$ by the same cosection argument,
and for $\ch_3=0$ the moduli spaces are the same.

By the same invertibility of the degeneration formula, it suffices to prove 
Theorem~\ref{thm:dtpt} for the cap geometry $(S \times \p^1,S_{\infty})$.
The non-equivariant case follows here from the following equivariant counterpart:

\begin{thm}[$\BC^*$-equivariant DT/PT for $(S \times \p^1,S_{\infty})$]
We have
\[ \left\langle \, \lambda \, \middle| \, \prod_{i=1}^{r} \ch_{\ellt_i}([\mathbf{0}] \gamma_i) \right\rangle^{(X,S_{\infty}), \PT}_{\ch_3,\beta}
=
\left\langle \, \lambda \, \middle| \, \prod_{i=1}^{r} \ch_{\ellt_i}([\mathbf{0}] \gamma_i) \right\rangle^{(X,S_{\infty}), \DT}_{\ch_3,\beta}\, .
\]
\end{thm}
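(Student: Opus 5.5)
We will prove the equivariant DT/PT correspondence for the cap by running the same $\BC^*$-localization analysis on the DT side and comparing the resulting fixed-locus contributions term by term. On the PT side, Theorem~\ref{thm:PT fixed} gives the decomposition into a rubber term and main terms indexed by $k|(\beta,\ch_3)$, each of which is an integral over $[\Quot^{\sharp}_{n,\frac{1}{k}(0,\beta,\ch_3)}]^{\red}$. For the relative Hilbert scheme $\Hilb_{\ch_3,(\beta,n)}(X,S_\infty)$ we will establish the analogous formula, following \cite{QuasiK3}. The fixed loci again split into main, mixed and rubber types.
\begin{itemize}
\item The mixed type vanishes by the same two-cosection argument as for stable pairs.
\item The rubber type contributes $\frac{1}{-t-\Psi_0}[\Hilb(X,S_{0,\infty})^\sim]^{\red}$. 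Under the dimension constraint \eqref{dim constraint inequality}, this contribution vanishes exactly as in Lemma~\ref{rubvan}, since by the analogue of Lemma~\ref{lemma:restriction1} the descendents $\ch_a([\mathbf{0}]\gamma)$ restrict to $\ev_{S_0}^*\ch_a^{\CO_\CZ}(\gamma)$.
\item The main type is a union of nested Quot schemes $\Quot_\alpha(\CI_Z)$ in $\Coh(S)$, as in \cite{QuasiK3}. Theorem~\ref{thm:cosections on nested quot} kills all components with more than one jump, leaving only the $k$-times uniformly thickened loci, which are isomorphic to $\Quot^{\flat}_{n,\frac1k(0,\beta,\ch_3)}$.
\end{itemize}
If the dimension constraint fails, both sides vanish by properness.

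Next we compute the main DT contributions in parallel with the PT computation. For a $k$-times thickened ideal sheaf, the same $K$-theoretic argument as in Lemma~\ref{kthick} gives $\CO_Z = k\, j_{1*}\BG + p_S^*\CO_{\CZ}$, where $\CI_Z\to\BG$ is the universal quotient, now in $\Coh(S)$. The weight decomposition of Proposition~\ref{prop:fixed pot} holds verbatim with $\BI_0=\Ker(\CI_Z\to\BG)$, so the virtual normal bundle has the same shape. Its contribution is therefore
\[(-1)^{(k-1)\frac{\ch_3}{k}}\, e_{\BC^*}\big(R\hom_\rho([\CI_Z\to\BG],\BG)^\vee\otimes\Ft^k\big)\cap[\Quot^\flat_{n,\frac1k(0,\beta,\ch_3)}]^{\red}.\]
The analogue of Lemma~\ref{lemma:restriction2} expresses the restricted descendents by the same universal formula in $\ch^{\BG}_a(\gamma)$ and $\ch^{\CO_\CZ}_a(\gamma)$.

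The key step is then to express the integrand, namely $\ev_\infty^*\lambda$ times the Chern classes $c_{\rk\CV+j}(\CV)$ times the polynomials $P_\ell$, as a polynomial in the descendents $\ch^{\BG}_a(\gamma)$ and $\ch^{\CO_\CZ}_a(\gamma)$. This expression must be universal, meaning identical on $\Quot^\flat$ and $\Quot^\sharp$. For $\ev_\infty^*\lambda$ this holds because it is pulled back from $S^{[n]}$ and classes there are polynomials in $\ch^{\CO_\CZ}_a$. For $\CV$ we apply Grothendieck–Riemann–Roch to $R\hom_\rho$ of the universal objects, using $\ch(\CI_Z)=1-\ch(\CO_\CZ)$; the Chern classes of $\CV$ become polynomials in $\rho_*$ of products of $\ch(\BG)$, $\ch(\CO_\CZ)$, $\td_S$ and Künneth components of the diagonal. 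These in turn can be rewritten as descendents with cohomology weights via the Künneth decomposition.

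Once the integrand is shown to be universal in this sense, Theorem~\ref{thm:wallcrossing} gives equality of each $k$-contribution between $[\Quot^\flat]^{\red}$ and $[\Quot^\sharp]^{\red}$. Summing over $k$ yields the equivariant DT/PT equality.

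The main obstacle is making the DT fixed-locus analysis rigorous. We need three things:
\begin{enumerate}
\item the identification of the reduced fixed obstruction theory with that of $\Quot^\flat$ at the level of $K$-theory classes, so that Siebert's formula applies;
\item the cosection vanishing for non-uniformly-thickened components;
\item the normal bundle computation in the DT setting, where $\BI_0$ is an honest sheaf rather than a perverse object.
\end{enumerate}
We expect all three to go through by quoting \cite{QuasiK3}, Proposition 4.1 and its surroundings, together with Theorem~\ref{thm:cosections on nested quot} for $\Coh(S)$.
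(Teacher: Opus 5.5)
Your proposal is correct and follows the paper's proof. The paper takes the DT analogue of Theorem~\ref{thm:PT fixed} from \cite[Sec.~4]{QuasiK3}: a rubber term plus $k$-thickened contributions over $\Quot^{\flat}_{n,\frac1k(0,\beta,\ch_3)}$ with the identical normal bundle factor. It then observes that the resulting integrands coincide with the PT ones and concludes by Theorem~\ref{thm:wallcrossing}. Your GRR/K\"unneth argument, that $\ev_\infty^*\lambda$ and the Chern classes of $\CV$ are universal polynomials in $\ch^{\BG}_a$ and $\ch^{\CO_\CZ}_a$, simply makes explicit what the paper leaves implicit in the phrase ``exactly the same integrand.''
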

\begin{proof}
By \cite[Sec. 4]{QuasiK3}
we have the exact analogue of Theorem~\ref{thm:PT fixed}
on the DT side: the $\BC^*$-equivariant reduced virtual classes of $\Hilb_{\ch_3,(\beta,n)}(X,S_{\infty})$ satisfies
\begin{multline*}
[ \Hilb_{\ch_3,(\beta,n)}(X, S_{\infty}) ]_T^{\red}
= \frac{1}{-t - \Psi_0} [ \Hilb_{\ch_3,(\beta,n)}(X, S_{0,\infty} )^{\sim}]^{\red} \\
+ \sum_{k|(\beta,\ch_3)} (-1)^{(k-1) \frac{\ch_3}{k}} e_{T}\left( R \hom_{\rho}( [\CI_z \to \BG], \BG)^{\vee} \otimes \Ft^{k} \right) \cap \left[ \Quot^{\flat}_{n,\frac{1}{k}(0,\beta,\ch_3)} \right]^{\mathrm{red}}\, ,
\end{multline*}
where $\CI_z \to \BG$ is the universal quotient sequence on $\Quot^{\flat}_{n,\frac{1}{k}(0,\beta,n)}$.

Following the discussion parallel to the proof of Theorem~\ref{thm:PT MCF equivariant} on the DT side, the $\BC^*$-equivariant DT invariants
can be expressed as integrals over $\Quot^{\flat}_{n,\frac{1}{k}(0,\beta,\ch_3)}$ 
with exactly the same integrand as for PT invariants.
The claimed DT/PT correspondence hence follows from
the equality of tautological integrals on the nested and the perverse nested Quot schemes established in Theorem~\ref{thm:wallcrossing}.
\end{proof}

\section{Proof of Theorem~\ref{mainn}}
\label{gwt}
\subsection{GW/PT correspondence}
Let $S$ be a symplectic surface, and let $\beta \in H_2(S,\BZ)$ be an effective (hence {nonzero}) curve class. 
Let $\lambda=(\lambda_i,\gamma_i)$ and $\mu=(\mu_j,\tilde{\gamma}_j)$ be cohomology-weighted partitions of a positive integer{\footnote{In Section \ref{dtptred}, $n=0$ was considered, but we  now require $n>0$.}} $n$,
$$ \lambda_i,\mu_j\in \mathbb{Z}_{>0}\, , \ \ \ 
n=\sum_{i=1}^{\ell(\lambda)} \lambda_i = \sum_{j=1}^{\ell(\mu)} \mu_j\, , \ \ \ \gamma_i,\tilde{\gamma}_j \in H^*(S,\BQ)\, .$$
Let $\underline{\lambda}$ and $\underline{\mu}$ denote the associated ordered partitions
$$\underline{\lambda}=(\lambda_1,\ldots, \lambda_{\ell(\lambda)})\, , \ \ \  
\underline{\mu}=(\mu_1,\ldots, \mu_{\ell(\mu)})\, .$$
Let $\zeta_1,\ldots,\zeta_s \in H^{\ast}(S\times \p^1)$ be cohomology classes.

The {\em relative Gromov-Witten invariant} with primary insertions is defined by
\[
\langle \lambda, \mu \, |\,  \tau_{0}(\zeta_1) \cdots \tau_0(\zeta_s) \rangle^{\bullet}_{g,\beta} =
\int_{[ \Mbar^{\bullet}_{g,s}(S \times \p^1,S_{0,\infty})_{{\lambda},{\mu}} ]^{\red}}  \prod_{i=1}^{\ell(\lambda)} \ev_{S_0,i}^{\ast}(\gamma_i) \cdot \prod_{j=1}^{\ell(\mu)} \ev_{S_{\infty},j}^{\ast}(\tilde{\gamma}_j) \cdot \prod_{k=1}^{s} \ev_k^{\ast}(\zeta_k)\, ,
\]
where 
$\Mbar_{g,s}^{\bullet}(S \times \p^1,S_{0,\infty})_{{\lambda},{\mu}}$
is the moduli space of $s$-marked relative stable maps  to $(S \times \p^1,S_{0,\infty})$ 
with possibly disconnected domains{\footnote{No connected component
of the domain is permitted to map to a point.}} 
with ramification profiles
$$      \underline{\lambda}\, ,\,  \underline{\mu} \ \  \text{over}  \ \ 0\, ,\infty \in\p^1\, .$$
The cohomology weights of $\lambda$ and $\mu$ do not play a role in the 
definition of $\Mbar_{g,s}^{\bullet}(S \times \p^1,S_{0,\infty})_{{\lambda},{\mu}}$
.

The associated partition function in relative Gromov-Witten theory is:
\begin{multline*} \label{defn:ZGW}
Z^{(S \times \p^1, S_{0,\infty})}_{\GW, (\beta,n)}\left( \lambda, \mu\,  \middle|\,  \tau_{0}(\zeta_1) \cdots \tau_{0}(\zeta_s) \right) 
= \\
(-1)^{-n + \ell(\lambda) + \ell(\mu)}
z^{\ell(\lambda) + \ell(\mu)} 
\sum_{g \in \BZ} (-1)^{g-1} z^{2g-2}
\left\langle \, \lambda,\mu \, \middle| \, \tau_{0}(\zeta_1) \cdots \tau_{0}(\zeta_s) \right\rangle^{\bullet}_{g, (\beta,n)}\, .
\end{multline*}
The corresponding partition function in PT theory is:
\[ 
Z^{(S \times \p^1,S_{0,\infty})}_{\PT, (\beta,n)}\left( \lambda,\mu \, \middle| \, \ch_{2}(\zeta_1) \cdots \ch_{2}(\zeta_s) \right)
=
\sum_{\ch_3 \in \BZ} (-p)^{\ch_3}
\big\langle \, \lambda,\mu \, \big| \, \ch_{2}(\zeta_1) \cdots \ch_{2}(\zeta_s) \big\rangle^{(S \times \p^1,S_{0,\infty}), \PT}_{\ch_3,\beta}.
\]
Our partition functions here take the nonstandard form used in \cite{Marked} (differing in variable conventions and prefactors
from \cite{MNOP1,MNOP2,PT1}).

The GW/PT correspondence is formulated for families of relative threefolds \cite{MNOP1,MNOP2,PT1,PP,P-Beijing}.
The GW/PT correspondence for primary insertions for
twistor families of symplectic surfaces $S$ yields the following statement.

\begin{conj} \label{conj:GWPT}
Let $S$ be a symplectic surface. 
The series $$Z^{(S \times \p^1,S_{0,\infty})}_{\PT, (\beta,n)}\left( \lambda, \mu \, \middle|\, \ch_{2}(\zeta_1) \cdots \ch_{2}(\zeta_s) \right)$$
is the Laurent expansion of a rational function in $p$.
After the variable change $p=e^{z}$, we have
\begin{equation*}
Z^{(S \times \p^1, S_{0,\infty})}_{\PT, (\beta,n)}\left( \lambda, \mu \, \middle|\, \ch_{2}(\zeta_1) \cdots \ch_{2}(\zeta_s) \right)
=
Z^{(S \times \p^1, S_{0,\infty})}_{\GW, (\beta,n)}\left( \lambda, \mu \, \middle|\, \tau_{0}(\zeta_1) \cdots \tau_{0}(\zeta_s) \right) 
\end{equation*}
\end{conj}

Since $S\times \mathbb{P}^1/ S_0 \cup S_\infty$
is log nef and we are only considering primary insertions, Conjecture \ref{conj:GWPT} is not far from the results of Pardon \cite{Pardon}, see \cite{P-Beijing} for a discussion. At the moment, however,
the statement remains a conjecture.

\subsection{MCF: from PT to GW rubber} \label{pttorub}
We start with the PT multiple cover formula of Theorem~\ref{thm:PT MCF}.
Let $D \in H^2(S)$ be a divisor class satisfying $D \cdot \beta \neq 0$.
As a direct consequence of Theorem~\ref{thm:PT MCF}, we obtain the following identity of partition functions:
\[
Z^{(S \times \p^1,S_{z})}_{\PT, (\beta,n)}\left( \lambda,\mu \, \middle|\,  \ch_2(\omega D) \right)
=
\sum_{k|\beta}
Z^{(S_k \times \p^1,(S_k)_{z})}_{\PT, (\varphi_k(\beta/k),n)}\left( \varphi_k(\lambda), \varphi_k(\mu) \, \middle|\,  \ch_{2}(\omega \varphi_k(D)) \middle) \right|_{p \mapsto p^k}\, .
\]
If Conjecture~\ref{conj:GWPT} holds, then the above equation implies
\begin{equation} \label{fds03rfs}
\left\langle \, \lambda,\mu \, \middle| \, \tau_0(\omega D) \right\rangle^{\GW, (S \times \p^1, S_{0,\infty}), \bullet}_{g, (\beta,n)}
=
\sum_{k | \beta} k^{2g-2+\ell(\lambda) + \ell(\mu)}
\left\langle \, \varphi_k(\lambda),\varphi_k(\mu) \, \middle| \, \tau_0(\omega \varphi_k(D)) \right\rangle^{\GW, (S \times \p^1, S_{0,\infty}), \bullet}_{g, (\varphi_k(\beta/k),n)} \, .
\end{equation}

Define the {\em Gromov-Witten rubber invariants} for the rubber target $(S\times \p^1,S_{0,\infty})^\sim$ by
\[
\langle \lambda, \mu \rangle^{\bullet,\sim}_{g,\beta} =
\int_{[ \Mbar_{g}(S \times \p^1,S_{0,\infty})^{\bullet,\sim}_{{\lambda},{\mu}} ]^{\red}} \prod_{i=1}^{\ell(\lambda)} \ev_{S_0,i}^{\ast}(\gamma_i) \cdot \prod_{j=1}^{\ell(\mu)} \ev_{S_{\infty},j}^{\ast}(\tilde{\gamma}_j)\, ,
\]
where the integral is over the moduli space of stable maps to rubber 
with possibly disconnected domains.\footnote{
As before,
no connected component of the domain is permitted to map to a point.}
By derigidification (in the rubber calculus) and the divisor equation, 
the rubber and relative invariants are related by
\[ 
\langle \lambda, \mu \, |\,  \tau_0(\omega D) \rangle^{(S \times \p^1,S_{0,\infty}),\bullet}_{g,\beta}
=
\langle \lambda, \mu \, |\,  \tau_0(D) \rangle^{\bullet,\sim}_{g,\beta}
=
(D \cdot \beta)\,  \langle \lambda, \mu \rangle^{\bullet,\sim}_{g,\beta}\, .
\]
Equation \eqref{fds03rfs} therefore implies the following formula for rubber invariants:
\begin{equation} \label{abcmcf}
\left\langle \, \lambda,\mu \right\rangle^{\bullet, \sim}_{g,\beta}
=
\sum_{k | \beta} k^{2g-3+\ell(\lambda) + \ell(\mu)}
\left\langle \, \varphi_k(\lambda),\varphi_k(\mu) \right\rangle^{\bullet,\sim}_{g, \varphi_k(\beta/k)}\, .
\end{equation}

The left side, or equivalently the right side, of \eqref{abcmcf} can only be non-zero if 
the dimension constraint
\[ 
2n = |\lambda| - \ell(\lambda) + |\mu| - \ell(\mu) + \sum_{i=1}^{\ell(\lambda)} \deg_{\mathbb{C}}(\gamma_i) + \sum_{j=1}^{\ell(\mu)} \deg_{\mathbb{C}}(\tilde{\gamma}_j) \]
is satisfied, which is equivalent to
 \begin{equation} \ell(\lambda) + \ell(\mu) =\sum_{i=1}^{\ell(\lambda)} \deg_{\mathbb{C}}(\gamma_i) + \sum_{j=1}^{\ell(\mu)} \deg_{\mathbb{C}}(\tilde{\gamma}_j)\, .
\label{dimcon}
\end{equation}
After imposing the dimension constraint \eqref{dimcon}, equality \eqref{abcmcf} becomes:
\begin{equation}\label{rubmcf}
\left\langle \, \lambda,\mu \right\rangle^{\bullet,\sim}_{g,\beta}
=
\sum_{k | \beta} k^{2g-3+\sum_i \deg_{\mathbb{C}}(\gamma_i) + \sum_j \deg_{\mathbb{C}}(\tilde{\gamma}_j)}
\left\langle \, \varphi_k(\lambda),\varphi_k(\mu) \right\rangle^{\bullet,\sim}_{g, \varphi_k(\beta/k)}\, .
\end{equation}
If equality \eqref{rubmcf} is satisfied, we say that {\em the MCF holds for rubber invariants}.
We have proven:

\vspace{6pt}
\noindent{\bf{Step A.}}
{\em Conjecture~\ref{conj:GWPT} 
implies the MCF for rubber invariants.}

\subsection{Connected/disconnect rubber invariants}

Define the {\em connected Gromov-Witten rubber invariants} for the rubber target $(S\times \p^1,S_{0,\infty})^\sim$ by
\[
\langle \lambda, \mu \rangle^{\sim}_{g,\beta} =
\int_{[ \Mbar_{g}(S \times \p^1,S_{0,\infty})^{\sim}_{{\lambda},{\mu}} ]^{\red}} \prod_{i=1}^{\ell(\lambda)} \ev_{S_0,i}^{\ast}(\gamma_i) \cdot \prod_{j=1}^{\ell(\mu)} \ev_{S_{\infty},j}^{\ast}(\tilde{\gamma}_j)\, ,
\]
where the integral is over the moduli space of stable maps to rubber 
with connected domains.

We say that {\em the MCF holds for connected rubber invariants} if
\begin{equation}\label{connrub}
\left\langle \, \lambda,\mu \right\rangle^{\sim}_{g,\beta}
=
\sum_{k | \beta} k^{2g-3+\sum_i \deg_{\mathbb{C}}(\gamma_i) + \sum_j \deg_{\mathbb{C}}(\tilde{\gamma}_j)}
\left\langle \, \varphi_k(\lambda),\varphi_k(\mu) \right\rangle^{\sim}_{g, \varphi_k(\beta/k)}\, .
\end{equation}

There is a simple relationship between
possibly disconnected 
and connected rubber
invariants based on two principles.
First, because of the reduction, {\em only
one connected component can carry a nonzero curve class on $S$}. Second,
by a derigidification
argument, all connected components which have
curve class 0 on $S$ must be genus 0 tubes
fully ramified over $0,\infty \in \p^1$ by the following result.

\begin{lemma} The connected invariant $\left\langle \lambda,\mu \right\rangle^{(S \times \p^1,S_{0,\infty})}_{g,0}$ vanishes 
unless $g=0$, $\lambda=(n,\gamma)$, $\mu=(n,\tilde{\gamma})$. If these conditions 
hold, then
\begin{equation}
\label{w3e}
\left\langle (n,\gamma), (n,\tilde{\gamma}) \right\rangle^{(S \times \p^1,S_{0,\infty})}_{0,0}= \frac{1}{n} \int_S \gamma \cdot \tilde{\gamma}\, . 
\end{equation}
\end{lemma}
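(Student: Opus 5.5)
The plan is to exploit the fact that a connected stable map to the rubber target $(S\times\p^1,S_{0,\infty})^\sim$ whose class on $S$ is $0$ has constant composition with the projection to $S$. Concretely, the moduli space should factor as
$$\Mbar_{g}(S \times \p^1,S_{0,\infty})^{\sim}_{\lambda,\mu} \cong S \times \Mbar_{g}(\p^1,0,\infty)^{\sim}_{\lambda,\mu}\, .$$
We also use $T_{S\times\p^1}=T_S\boxplus T_{\p^1}$. Together these show that the (standard, non-reduced) obstruction theory is the product of the relative rubber obstruction theory of $\p^1$ with that of constant maps to $S$. Since $T_S$ contributes no deformations beyond $S$ itself, the extra obstruction bundle is $\BE^\vee\boxtimes T_S$, of rank $2g$. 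This gives
$$[\Mbar_{g}(S\times\p^1,S_{0,\infty})^\sim_{\lambda,\mu}]^{\vir} = e(\BE^\vee\boxtimes T_S)\cap \big([S]\times[\Mbar_g(\p^1,0,\infty)^\sim_{\lambda,\mu}]^{\vir}\big)\, .$$
This mirrors the constant-map description used in Lemma~\ref{lemma:observation}. All relative insertions $\ev^*_{S_0,i}(\gamma_i)$ and $\ev^*_{S_\infty,j}(\tilde\gamma_j)$ pull back from the factor $S$. Hence the invariant factorizes as a sum of products of an integral over $S$ and a Hodge integral over the $\p^1$ rubber space. The main technical point is justifying this splitting of the obstruction theory compatibly with the rubber (expanded) targets. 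I expect it to follow exactly as in the absolute constant-map case of \cite{GetzlerP}, because expansions only occur in the $\p^1$ direction.

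Next we expand $e(\BE^\vee\boxtimes T_S)=\sum_j c_j(S)\cdot h_{2g-j}$, where $h_{2g-j}$ is a polynomial in $\lambda$-classes of degree $2g-j$. Since $c_1(S)=0$, only $j=0$ and $j=2$ survive. For $j=0$ the Hodge factor is $c_{2g}(\BE^\vee\otimes\BC^2)=\lambda_g^2$ up to sign, which vanishes for $g\ge1$ by Mumford's relation (and equals $1$ for $g=0$). For $j=2$ the factor is a class of degree $2g-2$ on the rubber space. The rubber space has virtual dimension $2g-3+\ell(\lambda)+\ell(\mu)$ and carries no other insertions, so nonvanishing forces $\ell(\lambda)+\ell(\mu)=1$. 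This is impossible since both lengths are at least $1$. Hence all $g\ge1$ contributions vanish.

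For $g=0$ there is no obstruction bundle. The $\p^1$ rubber factor carries no insertions, so it contributes only when its virtual dimension $\ell(\lambda)+\ell(\mu)-3$ is zero, or in the degenerate case $\ell(\lambda)=\ell(\mu)=1$. We treat $\ell(\lambda)+\ell(\mu)=3$ first. The $S$-integral needs total insertion degree $2$, whereas the dimension constraint forces $\sum\deg_\BC\gamma_i+\sum\deg_\BC\tilde\gamma_j=3$, so the invariant vanishes. The remaining case is $\lambda=(n,\gamma)$, $\mu=(n,\tilde\gamma)$. There the rubber space is the single fully ramified trivial tube $z\mapsto z^n$, counted with automorphism group $\BZ/n$ by the rubber conventions for trivial components. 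This yields $\frac1n\int_S\gamma\cdot\tilde\gamma$, which is \eqref{w3e}. The step I expect to need the most care is this last one: making the convention for the unstable tube precise, and checking that the ordered-partition and automorphism conventions give exactly the factor $1/n$.
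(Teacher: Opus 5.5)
There is a genuine gap. You set up the wrong moduli space, and your dimension counts mix two different geometries.

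The invariant $\langle\lambda,\mu\rangle^{(S\times\p^1,S_{0,\infty})}_{g,0}$ in the statement carries no $\sim$. It is the connected invariant of $S\times\p^1$ relative to $S_0\cup S_\infty$ with \emph{fixed} target, of virtual dimension $\ell(\lambda)+\ell(\mu)$. This is exactly what the derigidification in Step B produces: after trading the rubber for the fixed target via $\tau_0(\omega D)$, the components of class $0$ on $S$ are ordinary relative maps.

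You instead work on $\Mbar_g(S\times\p^1,S_{0,\infty})^\sim_{\lambda,\mu}$, whose virtual dimension for class $0$ on $S$ is $\ell(\lambda)+\ell(\mu)-1$. For that space the lemma is false. In your case $g=0$, $\ell(\lambda)+\ell(\mu)=3$, the required insertion degree is $2$, not $3$; the value $3$ is the count for the fixed target. Degree $2$ is compatible with the $S$-integral, so nothing forces vanishing. Concretely, the genus $0$ rubber space of $\p^1$ with profiles $(a,b)$ and $(a+b)$ is a single unobstructed point: the map $z\mapsto z^a(z-1)^b$ modulo scaling. Hence for $\lambda=((a,\gamma_1),(b,\gamma_2))$ and $\mu=((a+b,\tilde\gamma))$ the rubber invariant is a nonzero multiple of $\int_S\gamma_1\gamma_2\tilde\gamma$, for example with $\gamma_1=\gamma_2=1$, $\tilde\gamma=\pt$. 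Likewise, the fully ramified tube is unstable as rubber (virtual dimension $-1$), so your factor $1/n$ comes from an unspecified convention rather than from a moduli space.

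The fix is to run your argument on $S\times\Mbar_g(\p^1,0,\infty)_{\lambda,\mu}$, whose $\p^1$-factor has virtual dimension $2g-2+\ell(\lambda)+\ell(\mu)$. Then:
\begin{itemize}
\item The $c_2(S)$-free part of $e(\BE^\vee\otimes T_S)$ is $\lambda_g^2$. It vanishes for $g\geq1$. For $g=0$ it equals $1$, and integrating over the $\p^1$-factor forces $\ell(\lambda)+\ell(\mu)=2$.
\item The $c_2(S)$-part would need $2g-2=2g-2+\ell(\lambda)+\ell(\mu)$, which is impossible.
\item $\Mbar_0(\p^1,0,\infty)_{(n),(n)}$ is honestly the single map $z\mapsto z^n$ with automorphism group $\BZ/n$ (domain rescalings). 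There are no expanded configurations, since there is no further branching. This gives $\frac1n\int_S\gamma\tilde\gamma$.
\end{itemize}

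This is the paper's argument up to ordering. The paper first notes that all insertions pull back along one evaluation map to $S$, so they have total degree at most $2$, while the virtual dimension is $\ell(\lambda)+\ell(\mu)$. This forces $\ell(\lambda)=\ell(\mu)=1$ in every genus. Only then does it use the product decomposition, with $e(\BE^\vee\otimes T_S)$ equal to $1$, $c_2(S)$ and $0$ for $g=0$, $g=1$ and $g\geq2$ respectively.
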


\begin{proof} The virtual dimension of $\Mbar_{g}(S \times \p^1,S_{0,\infty})_{\lambda,\mu}$ is
$\ell(\lambda)+\ell(\mu)$. Since the domains are connected and the curve class on $S$ is 0, at most
two dimensions can be imposed by the cohomology weights. Therefore, unless $\ell(\lambda)=\ell(\mu)=1$ and
the corresponding cohomology weights satisfy
\begin{equation}\label{kee4}
\deg_{\BC}(\gamma)+ \deg_{\BC}(\tilde{\gamma})=2\, , 
\end{equation}
the connected invariant $\left\langle \lambda,\mu \right\rangle_{g,0}$ vanishes.

We assume now  $\lambda=(n,\gamma)$, $\mu=(n,\tilde{\gamma})$, and that the  condition \eqref{kee4} is satisfied.
Since we have connected domains and curve class 0 on $S$, there is an isomorphism of moduli spaces 
$$\Mbar_{g}(S \times \p^1,S_{0,\infty})_{(n),(n)} = S \times \Mbar_{g}(\p^1,{0,\infty})_{(n),(n)}$$
under which the virtual classes are related by
$$[\Mbar_{g}(S \times \p^1,S_{0,\infty})_{(n),(n)}]^{\vir}=
e(\BE^\vee \otimes T_S) \cap \left([S] \times [\Mbar_{g}(\p^1,S_{0,\infty})_{(n),(n)}]^{\vir}\right)\, ,$$
where $\BE \to \Mbar_{g}(\p^1,S_{0,\infty})_{(n),(n)}$ is the Hodge bundle with fiber $H^0(C,\omega_C)$ over a moduli point 
$$[f:C \to \p^1[\ell]]\in \Mbar_{g}(\p^1,S_{0,\infty})_{(n),(n)} \,.$$
If $g=0$, then $\BE^\vee=0$, and we obtain the claimed evaluation \eqref{w3e}.
If $g=1$, then $e(\BE^\vee \otimes T_S) = c_2(T_S)$ which forces the vanishing of
$\left\langle \lambda,\mu \right\rangle_{1,0}$ because of the additional constraints
placed by the cohomology weights.
If $g\geq 2$, the vanishing 
$$e(\BE^\vee \otimes T_S) = 0$$
is a consequence of \eqref{Dfcsdfre3r}.
\end{proof}

Straightforward accounting (a disconnected tube raises the genus of the remainder of the domain by 1 and
removes cohomology weights with degree sum 2) then yields:

\vspace{6pt}
\noindent{\bf{Step B.}}
{\em The MCF for rubber invariants implies
the MCF for connected rubber invariants.}
\vspace{6pt}

\subsection{MCF: from  rubber invariants to  vertex terms}
For a cohomology-weighted partition $$\lambda = ((\lambda_1,\gamma_1), \ldots, (\lambda_r,\gamma_r))$$ 
of a positive integer $n$, we define the vertex term
\[
I_{g,\beta}(\lambda) = \int_{[\Mbar_{g,r}(S,\beta) ]^{\red}} 
c(\BE^{\vee})
\prod_{i=1}^{r} \frac{\ev_i^{\ast}(\gamma_i)}{1-\lambda_i \psi_i}\, ,
\]
where $\BE \to \Mbar_{g,r}(S,\beta)$ is the Hodge bundle with fiber $H^0(C,\omega_C)$ over a moduli point 
$$[f:C \to S, p_1,\ldots,p_r]\in \Mbar_{g,r}(S,\beta) \,.$$
The vertex term is an integral over the moduli space of stable maps $\Mbar_{g,r}(S,\beta)$ with {\em connected domains}.

We say that {\em the MCF holds for vertex terms} if
\begin{equation} \label{frfr3}
I_{g,\beta}(\lambda)
=
\sum_{k|\beta} k^{2g-3+\sum_{i} \deg_{\BC}(\gamma_i)}
I_{g,\varphi_k(\beta/k)}(\varphi_k(\lambda))\, .
\end{equation}

\vspace{4pt}
\noindent{\bf{Step C.}}
{\em If the MCF holds for connected rubber invariants, then the MCF holds for vertex terms.}
\vspace{2pt}
\begin{proof}
Assume that the MCF holds for rubber invariants.
Let $\Mbar_{g}(S \times \p^1,S_{\infty})_{{\lambda}}$ be the moduli space of relative stable maps to the cap $(S \times \p^1,S_{\infty})$ with ramification profile $\underline{\lambda}$ over ${\infty}\in \p^1$.
Let $\BC^{\ast}$ act on $\p^1$ with tangent weight $t$ at $0 \in \p^1$.
The reduced virtual class of
$\Mbar_{g}(S \times \p^1,S_{\infty})_{{\lambda}}$
admits a natural equivariant lift. 

We will study the vanishing 
$\BC^*$-equivariant integral
\begin{equation} \label{vanishing integral}
t^{n+r+1-\sum_i\deg_{\BC}(\gamma_i)} \int_{[ \Mbar_{g}(S \times \p^1,S_{\infty})_{{\lambda}} ]^{\red}} \prod_{i=1}^{\ell(\lambda)} \ev_{S_{\infty},i}^{\ast}(\gamma_i) = 0\, .
\end{equation}
The vanishing follows for dimension reasons. Indeed,
$$\dim_{\mathbb{C}}[\Mbar_{g}(S \times \p^1,S_{\infty})_{{\lambda}}]^{\red} =
2n+1 - \sum_i (\lambda_i-1) = n+1+r\, ,$$ and the degree of the integrand is $$\sum_i \deg_{\BC}(\gamma_i) \leq 2 \ell(\lambda) \leq n+r < n+r+1\, .$$
Since the moduli space is proper and the degree of the integrand is less than the reduced virtual dimension,
the $\BC^*$-equivariant integral \eqref{vanishing integral} vanishes.
The exponent of the $t$-factor in
\eqref{vanishing integral} 
is chosen so that the integral is of cohomological weight $0$.
In particular, after applying the localization formula below, we can set $t=1$ without losing information.

We will apply the virtual localization formula for relative Gromov-Witten invariants \cite{GP,GV}  to the  vanishing integral \eqref{vanishing integral}.
The formula expresses the integral on the left side of \eqref{vanishing integral} in terms of integrals of the following three types:
\begin{enumerate}
\item[(i)] $I_{g,\beta}(\lambda)$,
\item[(ii)] $I_{g',\beta}((\lambda'_1,\gamma'_1), \ldots, (\lambda'_{r'},\gamma'_{r'}))$ where either
\begin{enumerate}
\item[(a)] $g'<g$, or
\item[(b)] $g'=g$ and $\sum_i \lambda'_i < \sum_i \lambda_i=n$, or
\item[(c)] $g'=g$ and $\sum_i \lambda'_i = n$ and $r'<r$,
\end{enumerate}
\item[(iii)] connected rubber invariants $\langle \mu,\nu \rangle^{\sim}_{g',\beta}$ for some $g',\mu,\nu$.
\end{enumerate}

The plan of the proof is as follows.
Each summand of the virtual localization formula for \eqref{vanishing integral}
will be proven
to satisfy the MCF with the same $k$ exponent.
By assumption, the MCF holds for the rubber invariants (iii).
By induction on $(g,\lambda)$ with ordering defined by (ii), we will show that
the MCF holds for all terms except for (i). The MCF will then also hold for (i) since
the integral \eqref{vanishing integral} vanishes.

The virtual localization formula \cite[Theorem 3.6]{GV} for \eqref{vanishing integral}
has a contribution from the $\BC^*$-fixed main component, where the generic element is a map to $X=S \times \p^1$ {\em without expansion}. 
By \cite[Section 3.7]{GV}, the main component
contribution is 
\begin{align} \label{withoutx}
\int_{[\Mbar_{g}(X,S_{\infty})^{\mathrm{main}}_{{\lambda}}]^{\red}} \frac{\prod_{i=1}^{r} \ev_i^{\ast}(\gamma_i)}{e(N^{\vir})}\, \Big|_{t=1}
& =
\prod_{i=1}^{r} \frac{\lambda_i^{\lambda_i}}{(\lambda_i-1)!} 
\int_{[\Mbar_{g,r}(S,\beta)]^{\red}} c(\BE^{\vee}) \prod_{i=1}^{r} \frac{\ev_i^{\ast}(\gamma_i)}{1-\lambda_i \psi_i} \\
& = \prod_{i=1}^{r} \frac{\lambda_i^{\lambda_i}}{(\lambda_i-1)!} I_{g,\beta}(\lambda)\, . \nonumber
\end{align}
Up to a multiplicative factor, the contribution is the integral (i).

The contribution of all of the $\BC^*$-fixed components {\em with expansions} is
\begin{equation} \label{mixed component} \sum^{\circ}_{\substack{\mu,g_0,g_1\\ g=g_0+g_1 + \ell(\mu)-1 \\ (\beta_0,\beta_1) \in \{ (\beta,0), (0,\beta)\} }}
\frac{\prod_i \mu_i}{|\Aut(\mu)|}\,
\Big\langle \mu^{\vee} \Big\rangle^{(X,S_{\infty}),\bullet,\mathrm{main}}_{g_0,\beta_0}\,
\left\langle \mu,\lambda\, \middle| \frac{1}{-1-\Psi_{\infty}} \right\rangle_{g_1,\beta_1}^{\bullet, \sim}\, ,
\end{equation}
with the following conventions:
\begin{itemize}
\item $\mu$ runs over all weighted partitions $\mu = ((\mu_1,\delta_{a_1}), \ldots, (\mu_{\ell(\mu)},\delta_{a_{\ell(\mu)}}))$, where $(\delta_a)_a$ is a fixed ordered basis of the cohomology $H^{\ast}(S)$, and $\mu^{\vee} = (\mu_i,\delta_{a_i}^{\vee})$ is the dual partition,
\item $\Psi_{\infty}$ is the relative psi class on the rubber moduli space at the boundary divisor $S_{\infty} \subset S \times \p^1$, inserted here as an integrand in the rubber invariant,
\item  the contribution from the part of the stable map to $X$  (not rubber) is
\[ \Big\langle \mu^{\vee} \Big\rangle^{(X,S_{\infty}),\bullet,\mathrm{main}}_{g_0,\beta_0} = 
\int_{[ \Mbar_{g_0}(X,S_{\infty})^{\bullet,\mathrm{main}}_{{\mu}}]^{\red}}
\frac{1}{e(N^{\vir})}
\prod_{i=1}^{\ell(\mu)} \ev_i^{\ast}(\delta_{a_i}^{\vee})\, \Big|_{t=1} \, ,
\]
\item $\Aut(\mu)$ is the automorphism group of the partition $\mu$,
\item $\stackrel{\circ}{\sum}$ stands for summing only over those terms for which the glued stable map is connected.
\end{itemize}

Consider the terms in the sum \eqref{mixed component} with $\beta_0=\beta$. 
Because of the reduction, terms of \eqref{mixed component} where  $\beta_0$ splits
into at least two nonzero summands of $\beta$ vanish, so 
\begin{equation} \big\langle \mu^{\vee} \big\rangle^{(X,S_{\infty}),\bullet,\mathrm{main}}_{g_0,\beta_0} 
\label{tttt}
\end{equation}
is a linear combination of integrals
$I_{g',\beta}(\mu')$.  
We will show that the $(g',\mu')$ which appear are lower than $(g,\lambda)$ in the ordering defined by (ii).


Consider a component $N$ {\em with expansion} in the $\BC^*$-fixed locus, 
$$N\subset \Mbar_{g}(X,S_{\infty})^{\BC^{\ast}}\, ,$$ 
for which the contribution $I_{g',\beta}(\mu')$ appears in the virtual localization formula for \eqref{vanishing integral}.
Let $$[f : C \to X[m]] \in N$$
be the moduli point of a stable relative map, where $X[m]$ is an expansion of $X$.
Let $\widetilde{C}$ be the partial normalization of $C$ along the preimage of the first singular divisor 
$$S \subset X \subset X[m]\, .$$ 
In other words,
we resolve only the nodes mapping to the singular divisor where the main component meets  the bubble.
Let $\Gamma$ be the decorated dual graph formed by the connected components of $\widetilde{C}$: 
\begin{enumerate}
\item[$\bullet$] The vertices of $\Gamma$ are in correspondence with the connected components of $\widetilde{C}$.
\item[$\bullet$] The edges correspond to the nodes which were resolved (connecting two adjacent connected components of $\widetilde{C}$).
\item[$\bullet$] The vertices of $\Gamma$ are decorated with the genus of the corresponding connected component, the curve class in $H^2(S,\mathbb{Z})$ of the restriction of $f$, and the legs of the relative markings of $C$. 
\end{enumerate}
We say that a vertex is a {\em rubber vertex}, if the corresponding component maps to the rubber bubble, otherwise we say that the vertex is a {\em main vertex}.
Each rubber vertex is connected to a main vertex{\footnote{$C$ is connected and $n>0$.}}, but no rubber or main vertices are connected to other vertices of the same type.
Each rubber vertex carries at least one leg.
Since $C$ is connected, $\Gamma$ is a connected graph.
There is a single main vertex of genus $g'$ and curve class $\beta$,
but there may be other main vertices of curve class $0$.

We now show that $(g',\mu')$ is lower in ordering (ii) than $(g,\lambda)$.
First, we have $g' \leq g$ since $C$ is connected.
If $g'=g$, then $\Gamma$ is a tree, and all connected components of $\widetilde{C}$, except for the main vertex carrying curve class $\beta$, have genus $0$. If moreover,
$|\mu'| = |\lambda|$,
then there is a unique main vertex, the rest are rubber vertices.
Since the graph is connected, each rubber vertex must be connected to the unique main vertex.
Hence, $\ell(\mu')$ is the number of rubber vertices, so $\ell(\mu') \leq \ell(\lambda)$. If moreover, 
$\ell(\mu') = \ell(\lambda)$,
there must be $\ell(\lambda)=r$ rubber vertices, so each must have precisely one leg.
But then each rubber vertex corresponds to a genus $0$ relative stable map to $(S \times \p^1,S_{0,\infty})$, of curve class $0$ on $S$, with total ramification over $0$ and $\infty$. In other words, every rubber vertex corresponds to a genus $0$ tube. By stability of the rubber, not every bubble vertex can correspond to a genus $0$ tube,
so such a component can not exist.
We conclude that either 
$$g'<g \ \ \text{or}\ \  |\mu'| < |\lambda| \ \ \text{or} \ \ \ell(\mu') < \ell(\lambda)\, ,$$
which is precisely the ordering defined by (ii).

Next,  we consider the terms in the sum \eqref{mixed component} with $\beta_1=\beta$ and rubber invariant
\begin{equation} \left\langle \mu,\lambda\, \middle| \frac{1}{1-\Psi_{\infty}} \right\rangle_{g_1,\beta}^{\bullet, \sim} \label{rubber with psi} \, .\end{equation}
By the rubber calculus \cite{MPrubber}, the relative psi class $\Psi_{\infty}$ can be 
removed by adding a bubble as follows.
Let $D\in H^2(S)$ be a divisor
class satisfying $D\cdot \beta\neq 0$. Then, we have
\begin{eqnarray*}
\left\langle \mu, \lambda\, \middle|\,  \Psi_{\infty} \right\rangle_{g_1,\beta}^{\bullet, \sim}
& = &
\frac{1}{D\cdot \beta} \left\langle \mu, \lambda\, \middle|\, \tau_0(D)\Psi_{\infty} \right\rangle_{g_1,\beta}^{\bullet, \sim}
\\
&= &
\frac{1}{D\cdot \beta} 
\sum_{h_1,\beta_1,h_2,\beta_2,\nu}
\frac{\prod_i \nu_i}{|\Aut(\nu)|} \left\langle\, \mu \, 
\middle| \, \tau_0(\omega D)\, \middle|\,  \nu \right\rangle_{h_1,\beta_1}^{\bullet, \sim}
\left\langle \nu^{\vee}, \lambda \right\rangle_{h_2,\beta_2}^{\bullet, \sim} \\
& = &
\sum_{h_1,h_2,\nu}
\frac{\prod_i \nu_i}{|\Aut(\nu)|} \left\langle \mu, \nu \right\rangle_{h_1,\beta}^{\bullet, \sim}
\left\langle \nu^{\vee}, \lambda \right\rangle_{h_2,0}^{\bullet, \sim}\, 
+
\frac{1}{D\cdot \beta}
\left\langle D\mu, \lambda \right\rangle_{g_1,\beta}^{\bullet, \sim}
\, ,
\end{eqnarray*}
where $D\mu$ denotes the boundary conditions obtained by summing the action of $D$ on the
cohomology-weights. For example, if $\mu=\{(\mu_1,\tilde{\gamma}_1),(\mu_2,\tilde{\gamma}_2)\}$, then
$$D\mu=\{(\mu_1,D\tilde{\gamma}_1),(\mu_2,\tilde{\gamma}_2)\} +
\{(\mu_1,\tilde{\gamma}_1),(\mu_2,D\tilde{\gamma}_2)\}\, .
$$ 
Parallel expansions hold for $\left\langle \mu, \lambda\, \middle|\,  \Psi^r_{\infty} \right\rangle_{h,\beta}^{\bullet, \sim}$.
The rubber invariant \eqref{rubber with psi} can therefore be expressed as a linear combination of standard rubber invariants
$\langle \lambda', \mu' \rangle^{\bullet,\sim}_{g',\beta}$
times invariants with curve class $0$ over $S$.
These are again linear combinations of the connected rubber invariants.
Thus, we arrive at terms of the form (iii).

We
will show that the MCF holds with $k$ exponent
\begin{equation}\label{kexp}
2g-3+\sum_{i=1}^{r} \deg_\BC(\gamma_i)
\end{equation}
for all of the contributions to \eqref{vanishing integral}.
The induction hypothesis will be used for all the summands of \eqref{mixed component}.
The vanishing of \eqref{vanishing integral} will then imply that the MCF holds for contribution \eqref{withoutx}
with $k$ exponent
\eqref{kexp}.

Consider a summand in \eqref{mixed component} with expansion:
\[ 
\Big\langle \mu^{\vee} \Big\rangle^{(X,S_{\infty}),\bullet,\mathrm{main}}_{g_0,\beta_0} \, 
\left\langle \mu, \lambda \, \middle| \frac{1}{1-\Psi_{\infty}} \right\rangle_{g_1,\beta_1}^{\bullet, \sim}\, \, ,
\]
where the glued map is required to be connected.
The underlying stable maps  in the two factors are obtained from the stable maps of $\Mbar_{g}(X,S_{\infty})^{\BC^*}$
by cutting $\ell(\mu)$ edges, thereby reducing the genus by $\ell(\mu)$, and inserting $\ell(\mu)$ diagonal insertions $\Delta_S$ along the connecting boundary.
The resulting stable map may be viewed  as part of a disconnected invariant
of $S$ 
with total genus $g-\ell(\mu)$ together with an insertion of cohomological degree 
$$\delta=\sum_{i=1}^r \deg_{\BC}(\gamma_i) + 2 \ell(\mu)\, .$$ After writing the results as a product of connected invariants 
we see that the connected invariant carrying the class $\beta$ has multiple cover exponent $$2(g-\ell(\mu)) - 3 + \delta = 2g-3+\sum_{i=1}^r \deg_\BC(\gamma_i)$$ using either the MCF for (ii) inductively or for (iii). The
terms of the form
$$
\frac{1}{D\cdot \beta}
\left\langle D\mu, \lambda \right\rangle_{h,\beta}^{\bullet, \sim}$$
which appear in the rubber calculus behave correctly with respect to the exponent.
\end{proof}

\subsection{MCF: from  vertex terms to  descendent invariants} \label{verttodesc}
The final step in our argument is essentially formal.

\vspace{4pt}
\noindent{\bf{Step D.}}
{\em If the MCF holds for  vertex terms,
then the MCF holds for  descendent invariants
$$\big\langle \tau_{\ellt_1}(\gamma_1) \cdots \tau_{\ellt_r}(\gamma_r) \big\rangle^{S}_{g,\beta}\,.$$}


\begin{proof}
Let $x_1,\ldots, x_r$ be variables.
The integral
\begin{equation}\label{polyyy}
\int_{[\Mbar_{g,r}(S,\beta) ]^{\red}} 
c(\BE^{\vee})
\prod_{i=1}^{r} \frac{\ev_i^{\ast}(\gamma_i)}{1-x_i \psi_i}
-
\sum_{k|\beta} k^{2g-3+\sum_i\deg_{\BC}(\gamma_i)}
\int_{[\Mbar_{g,r}(S,\varphi_k(\beta/k)) ]^{\red}} 
c(\BE^{\vee})
\prod_{i=1}^{r} \frac{\ev_i^{\ast}(\varphi_k(\gamma_i))}{1-x_i \psi_i}
\end{equation}
is a polynomial in the variables 
$x_1,\ldots,x_r$.
If MCF holds for vertex terms \eqref{frfr3}, 
then the polynomial \eqref{polyyy} vanishes on the Zariski dense subset $\BN^r \subset \BC^r$, so the polynomial is identically zero.
By taking the $x_1^{\ellt_1} \cdots x_r^{\ellt_r}$ coefficient of the polynomial \eqref{polyyy}
for $$\sum_{i=1}^r (\ellt_i + \deg_{\BC}(\gamma_i)) = g+r\,,$$ 
we find that the MCF 
holds for
the descendent invariants
$\big\langle \tau_{\ellt_1}(\gamma_1) \cdots \tau_{\ellt_r}(\gamma_r) \big\rangle^{S}_{g,\beta}$.
\end{proof}

\subsection{Theorem~\ref{mainn} and the converse}  \label{thmcon}
The proof of Theorem \ref{mainn} is now complete: Theorem \ref{mainn} is exactly obtained by  concatenating the chain of implications of {\bf{Steps A}}, {\bf{B}}, {\bf{C}} and {\bf{D}} of Sections \ref{pttorub}--\ref{verttodesc}. \qed

\subsection{Converse}
The statement of the converse is:
{\em the MCF for the Gromov-Witten descendent theory of $S$ implies the GW/PT  correspondence
for $(S \times \p^1,S_0\cup S_{\infty})$ with primary insertions for all curve classes on $S$}.
The proof proceeds by reduction to the case of primitive curve classes on $S$. For the GW side, we use the product formula together with the assumed MCF. For the PT side, we use Theorem \ref{thm:PT MCF}. A straightforward verification shows the multiple cover structures on the GW and PT sides are compatible with the GW/PT correspondence.
The case where the curve class on $S$ is primitive has been previously established for $K3$ surfaces in \cite{QuasiK3}, and for abelian surfaces the proof is parallel to \cite{QuasiK3}. \qed

\section{Proof of Theorem \ref{hodgemc}} \label{pr-hodgemc}
\subsection{Overview}
We present here the proof of Theorem~ \ref{hodgemc}. We will use the GW/PT correspondence for the $\BC^*$-equivariant theories of $S \times \BC$ with primary point insertions (which is a consequence of the results of Pardon \cite{Pardon}).

\subsection{GW/PT correspondence}
Let $S$ be a symplectic surface, and let
$\beta \in H_2(S,\BZ)$ be an effective curve class.
Let $\iota : S \to S \times \BC$ be the inclusion of the zero section.
Let $\pt \in H^4(S)$ be the class of a point, which we view as an equivariant cohomology class on $S\times \BC$ via pullback by projection.

Consider the reduced GW and PT invariants
\begin{eqnarray} \label{pu77}
\langle \tau_0(\pt)^m \rangle^{S\times \BC,\GW}_{g,\beta} &=&
\int_{[\Mbar_{g,m}(S \times \BC,\iota_{\ast} \beta)]^{\red}} \ev_1^{\ast}(\pt) \cdots \ev_m^{\ast}(\pt)  \,\in\, \BQ(t)\, , \\ \nonumber
\langle \ch_2(\pt)^m \rangle^{S\times \BC,\PT}_{\ch_3,\beta} &=&
\int_{[P_{\ch_3,\iota_{\ast} \beta}(S \times \BC)]^{\red}} \ch_2(\pt)^m  \,\in\, \BQ(t)\, .
\end{eqnarray}
Since the non-reduced Gromov-Witten invariants of $S\times \BC$ with primary point insertions always vanish, 
the Gromov-Witten invariants \eqref{pu77} of $S\times \BC$ with possibly disconnected and connected domains agree.

Define the partition functions:
\begin{align*}
Z^{S\times \BC}_{\GW, \beta}\left( \tau_0(\pt)^m \right)
& =
\sum_{g} (-1)^{g-1} z^{2g-2} \langle \tau_0(\pt)^m \rangle^{S\times \BC,\GW}_{g,\beta} \, ,\\
Z^{S\times \BC}_{\PT, \beta}\left( \ch_{2}(\pt)^m \right)
& =
\sum_{\ch_3 \in \BZ} (-p)^{\ch_3} \langle \ch_2(\pt)^m \rangle^{S\times \BC,\PT}_{\ch_3,\beta}\, .
\end{align*}

Pardon's proof of the GW/PT correspondence for Calabi-Yau threefolds \cite{Pardon} also holds in the family case, see \cite[Theorem 4]{P-Beijing}.
The correspondence for primary insertions for twistor families of symplectic surfaces $S$ times $\BC$ yields
the following statement.

\begin{thm} [Pardon]\label{thm:GWPT for SxC}
The series $Z^{S\times \BC}_{\PT, \beta}\left( \ch_{2}(\pt)^m \right)$
is the Laurent expansion of a rational function in $p$.
After the variable change $p=e^{z}$, we have
\begin{equation*}
Z^{S\times \BC}_{\PT, \beta}\left( \ch_{2}(\pt)^m \right)
=
Z^{S\times \BC}_{\GW, \beta}\left( \tau_{0}(\pt)^m \right)\, .
\end{equation*}
\end{thm}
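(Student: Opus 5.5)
The statement is Pardon's theorem (Theorem~\ref{thm:GWPT for SxC}), so the plan is to deduce it from the families GW/PT correspondence for Calabi-Yau threefolds \cite{Pardon} as formulated in \cite[Theorem 4]{P-Beijing}, rather than to reprove it.

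\emph{Step 1: the family.} We realize the reduced theory of $S\times\BC$ as the ordinary theory of a 1-parameter family. Take the twistor family $\mathcal{S}\to B$ of $S$ over a small disc (or $\mathbb{P}^1$), in which the class $\beta$ stays of Hodge type $(1,1)$ only over the central fiber, to first order. The total space $\mathcal{S}\times\BC$ is a family of Calabi-Yau threefolds: each fiber has $\omega\cong\CO$, with the $\BC^*$-action scaling the $\BC$ factor.

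\emph{Step 2: matching reduced classes with family classes.} The standard argument (as in \cite{MPT,PTKKV}) identifies, up to a universal sign and the base class, both reduced virtual classes of \eqref{pu77} with the family virtual classes.
\begin{itemize}
\item On the GW side, this is the identification of the reduced class on $\Mbar_{g,m}(S\times\BC,\iota_*\beta)$ with the family class.
\item On the PT side, it is the identification of the reduced class on $P_{\ch_3,\iota_*\beta}(S\times\BC)$ with the family class.
\end{itemize}
In both cases the identification comes from the trivial summand $H^{0,2}(S)$ of the obstruction theory. The point insertions $\tau_0(\pt)$ and $\ch_2(\pt)$ pull back along the family as primary insertions.

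\emph{Step 3: apply Pardon's theorem.} Equivariant integrals on the noncompact $S\times\BC$ are defined by localization. Since the stable pairs and stable maps have proper support, we apply Pardon's equivariant (localized) form of the correspondence, in which the $\BC^*$-fixed loci are proper. Pardon's families result then gives the following for primary insertions, with matching partition-function conventions $(-1)^{g-1}z^{2g-2}$ and $(-p)^{\ch_3}$:
\begin{itemize}
\item rationality in $p$ of $Z_{\PT}$;
\item the identity $Z_{\PT}=Z_{\GW}$ after the substitution $p=e^{z}$.
\end{itemize}

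\emph{Step 4: disconnected versus connected.} Pardon's correspondence compares disconnected GW invariants with PT invariants. The text has already observed that, since non-reduced invariants of $S\times\BC$ with point insertions vanish, the disconnected and connected reduced invariants coincide. So the statement follows once Step 3 is in place.

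\emph{Main obstacle.} The delicate step is Step 3. It requires checking that Pardon's families theorem applies in the equivariant, noncompact setting with the reduced (family) obstruction theory. It also requires that the sign and normalization conventions of \cite{Marked} agree with Pardon's. I would first verify this in the primitive case, against the known evaluations of \cite{MPT,BOPY}.
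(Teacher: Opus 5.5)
Your proposal is correct and matches the paper, which gives no independent argument: it invokes Pardon's families GW/PT correspondence \cite[Theorem 4]{P-Beijing} for twistor families of $S\times\BC$ with primary insertions, after noting that connected and disconnected reduced invariants of $S\times\BC$ with point insertions coincide. Your Steps 2--4 (identifying the reduced classes with family classes, the equivariant localized setup, and the connected/disconnected comparison) spell out details the paper leaves implicit in that citation.
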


\subsection{Proof of Theorem~\ref{hodgemc}}
Our starting point is the multiple cover formula for PT invariants of $S \times \BC$
established in Theorem~\ref{thm: PT SxC MCF}:
\[
\langle \ch_2(\pt)^m \rangle^{S\times \BC,\PT}_{\ch_3,\beta} =
\sum_{k|(\ch_3,\beta)} k^{2m-1} (-1)^{(k-1) \frac{\ch_3}{k}} 
\langle \ch_2(\pt)^m \rangle^{S_k\times \BC,\PT}_{\ch_3/k,\varphi_k(\beta/k)} \]
After rewriting in generating series form,
we find:
\[
Z^{S\times \BC}_{\PT, \beta}\left( \ch_{2}(\pt)^m \right)
=
\sum_{k|\beta} k^{2m-1} Z^{S_k\times \BC}_{\PT, \varphi_k(\beta/k)}\left( \ch_{2}(\pt)^m \right)(p^k)\, .
\]
The GW/PT correspondence of Theorem~\ref{thm:GWPT for SxC} then implies:
\begin{equation} \label{dfs-d0of-3w}
\langle \tau_0(\pt)^m \rangle^{S\times \BC,\GW}_{g,\beta}
=
\sum_{k|\beta} k^{2g-3+2m}
\langle \tau_0(\pt)^m \rangle^{S_k \times \BC,\GW}_{g,\varphi_k(\beta/k)}\, .
\end{equation}
Using the virtual localization formula, we have
\[
\langle \tau_0(\pt)^m \rangle^{S\times \BC,\GW}_{g,\beta}
=
\frac{1}{t} \left\langle \tau_0(\pt)^m e\left( \BE^{\vee} \otimes \Ft \right) \right\rangle^{S}_{g,\beta}
=
\frac{1}{t} \left\langle \tau_0(\pt)^m e\left( t^g - t^{g-1} \lambda_1 + \ldots + (-1)^g \lambda_g \right) \right\rangle^{S}_{g,\beta}\, .
\]
The claim is obtained by
extracting the $t^{m-1}$ coefficient of \eqref{dfs-d0of-3w}. \qed

\section{Future directions and open problems}

\subsection{Explicit evaluations}
Let $S$ be a symplectic surface.
The methods of \cite{BOPY,MPT} give effective algorithms to compute all reduced
Gromov-Witten  descendent invariants
of $S$ in primitive curve
classes. 
Descendent invariants in imprimitive curve classes can
then be calculated in terms of
descendent invariants 
in primitive classes  by the multiple cover formula.
However, the current algorithms for primitive curve classes  are  recursive and difficult in practice. 
The key step involves the 
calculus of tautological classes on the moduli spaces of curves \cite{Pan-Calculus}:  sufficiently high degree monomials of $\psi$-classes must be expressed in terms of boundary classes.
To find more efficient methods or, ideally,
explicit closed formulas for descendent series for primitive curve classes is an interesting direction.

For stationary descendent invariants of $K3$ surfaces, explicit formulas were conjectured for  primitive curve classes in \cite{O_K3},
but these conjectures do not treat descendents of $1\in H^*(S,\BQ)$.

An alternative way to encode all the invariants of symplectic surfaces $S$ is through rubber integrals for the geometry
$(S\times \p^1, S_{0,\infty})^\sim$ and the
double ramification cycle.
Another viewpoint on Section \ref{gwt} is that all descendent invariants of a symplectic surface $S$
can be efficiently and effectively determined from the 
rubber integrals 
\begin{equation}\label{drcint}
\langle \tau_0(\gamma_1)\cdots \tau_0(\gamma_n)\, \DR_g(a_1,\ldots,a_n) \rangle^{S}_{g,\beta}\, , \ \ \ 
\DR_g(a_1,\ldots,a_n)\in H^{2g}(\Mbar_{g,n})\, .
\end{equation}
The relationship 
can be made explicit through a Fock space formalism. 

For $K3$ surfaces, an explicit conjecture for the double ramification cycle integrals
\eqref{drcint} in disconnected form
was made in \cite{vIOP} in terms of quasi-Jacobi forms, based on an earlier formulation in \cite{ObPand} in which the universal functions were left undetermined. The formula essentially originates in the computations of the Gromov-Witten invariants of the Hilbert scheme of points of a $K3$ surface in \cite{MR3720346}.

Several 
functions in the variables
$z \in \BC$ and $q$ are required to
state the conjecture:
\begin{enumerate}
\item[$\bullet$]
The Jacobi theta function $\Theta(z,q)$ and the discriminant modular form $\Delta(q)$ were defined in Section~\ref{subsec:an example}.
\item[$\bullet$]For all~$m \in \BZ$, define the quasi-Jacobi form
\[ \varphi_m(z) = \mathrm{Res}_{x=0} \left(\frac{\Theta(x+z,q)}{\Theta(x,q)}\right)^m \, ,\]
where the residue stands for taking the coefficient of $1/x$.
\item[$\bullet$] 
For all~$m,n \in \BZ$,
define quasi-Jacobi forms~$\varphi_{m,n}(z)$ 
by the differential equation
\begin{equation*} D_{q} \varphi_{m,n}(z) = mn\varphi_m(z)\varphi_n(z) \frac{D_{q}^2\Theta(z,q)}{\Theta(z,q)} + (D_{q}\varphi_m(z))(D_{q}\varphi_n(z))\, , \ \ \ 
D_{q} = q \frac{d}{dq}\, ,
\label{defining_diff_eqn} \end{equation*}
together with the condition that the constant term vanishes: 
\[ \varphi_{m,n} = O(q)\, . \]
\end{enumerate}

\begin{conj}[ {\cite[Conjecture 1.5]{vIOP}} ] \label{conj:K3DR} Let $S$ be a K3 surface, and let $\beta \in H_2(S,\BZ)$ be a primitive effective curve class. For $\gamma_1,\ldots,\gamma_n \in H^{\ast}(S,\BQ)$ and $a_1,\ldots,a_n \in \BZ_{\neq 0}$ with $\sum_i a_i = 0$, we have
\begin{multline*}
\sum_{g = 0}^{\infty}
\left\langle \, (a_1, \gamma_1) \cdots (a_n,\gamma_n) \right\rangle^{\bullet, \sim}_{g,\beta}
(-1)^{g+n} z^{2g-2+n} \\
=
\frac{1}{\prod_i a_i^{\deg_{\BC}(\gamma_i)}}
\mathrm{Coeff}_{q^{\frac{1}{2} \beta^2}}\left( 
\sum_{\mathfrak{P}}
\frac{1}{\Theta^2 \Delta} \cdot \prod_{j} (\gamma_{x_j}, \beta) \varphi_{a_{x_j}} \cdot
\prod_{k} (\gamma_{y_k}, \gamma_{z_k}) \varphi_{a_{y_k} a_{z_k}}   \right),
\end{multline*}
where the sum on the right side is over all partitions $\mathfrak{P}$ of the multiset $\{ ({a}_i, \gamma_i) \}_{i=1}^{n}$
into parts of size~$\leq 2$:
\begin{enumerate}
\item[$\bullet$]
the parts of size~$1$ are labeled by~$\{ (a_{x_j}, \gamma_{x_j})\}$, 
\item[$\bullet$]
the parts of size~$2$ are labeled
$\{ (a_{y_k}, \gamma_{a_{y_k}}), (a_{z_k}, \gamma_{a_{z_k}}) \}$. 
\end{enumerate}
Moreover,
$( - , - )$ is the Mukai pairing on~$H^{\ast}(S,\BQ)$ defined by
\[ \big( (r_1, D_1, n_1) , (r_2, D_2, n_2) \big) = r_1 n_2 + r_2 n_1 - \int_{S} D_1 \cup D_2\, . \]
\end{conj}

The convention for the rubber notation of Conjecture \ref{conj:K3DR} is that the negative parts of 
$$(a_1, \gamma_1), \ldots, (a_n,\gamma_n)$$ determine the cohomology-weighted partition over $0\in \p^1$
and the positive parts determine the cohomology-weighted partition over $\infty\in \p^1$.

\subsection{Moduli of $K3$ surfaces} \label{modk3}
Let $\mathcal{M}^{K3}_{2\ell}$ be the moduli space of quasi-polarized $K3$ surfaces $(S,L)$ with $L^2=2\ell$.
We denote the universal $K3$ surface and the universal quasi-polarization{\footnote{For a discussion of the normalization of $\mathcal{L}$, see \cite{PY}.}} by 
$$ \pi: \mathcal{S} \rightarrow \mathcal{M}^{K3}_{2\ell}\, , \ \ \ \mathcal{L} \rightarrow \mathcal{S}\, .$$
Consider the $\pi$-relative moduli space of stable maps,
$$\epsilon^{K3}: \Mbar_{g,n}(\pi, kL)\rightarrow \mathcal{M}^{K3}_{2\ell}\, ,$$
with fiberwise curve class $kL\in H_2(S,\mathbb{Z})$  a multiple of the quasi-polarization.
The families reduced virtual class
can be used to define descendent classes on the moduli space $\mathcal{M}^{K3}_{2\ell}$:
\begin{equation}\label{redfam}
\big\langle \tau_{\ellt_1}(\gamma_1) \cdots \tau_{\ellt_n}(\gamma_n) \big\rangle^{\pi}_{g,kL}
=
\epsilon^{K3}_*\left( \prod_{i=1}^{n} \ev_i^{\ast}(\gamma_i) \psi_i^{\ellt_i} \cap
 [\Mbar_{g,n}(\pi, kL)]^{\red}
\right) \, \in H^*(\mathcal{M}^{K3}_{2\ell})\, ,
\end{equation}
where $\gamma_i \in H^*(\mathcal{S})$.

We can promote $\pi$ to a family of semipositive threefolds by product with $(\p^1, 0\cup \infty)$,
$$ \widehat{\pi}: (\mathcal{S}\times \p^1, \mathcal{S}_0\cup \mathcal{S}_{\infty}) \rightarrow \mathcal{M}^{K3}_{2\ell}\, .$$
In a sequel \cite{nextpaper}, we will study GW/PT for the family $\widehat{\pi}$ 
and the families PT fixed loci (as in proof
of Theorem \ref{mainn}). The wallcrossing argument in families is technically more difficult. We expect the
methods to yield the following result.

\vspace{6pt}
{\em \noindent If the families GW/PT correspondence holds for
the families reduced theory of
$$ \widehat{\pi}: (\mathcal{S}\times \p^1, \mathcal{S}_0\cup \mathcal{S}_{\infty}) \rightarrow \mathcal{M}^{K3}_{2\ell}\, $$
with primary insertions, then
$\big\langle \tau_{\ellt_1}(\gamma_1) \cdots \tau_{\ellt_n}(\gamma_n) \big\rangle^{\pi}_{g,\beta}\in RH^*(\mathcal{M}^{K3}_{2\ell})$
for $\gamma_i\in RH^*(\mathcal{S})$.}
\vspace{6pt}

Definitions, results, conjectures related to the tautological rings of $\mathcal{M}^{K3}_{2\ell}$ and  $\mathcal{S}$
can be found in \cite {MOP,PY,COP}. In particular, the above statement about $RH^*(\mathcal{M}^{K3}_{2\ell})$  was 
proven{\footnote{The genus 0 and 1 results of \cite{PY} concern
classes in $RH^*(\mathcal{S}^m)$ where
$\pi^m:\mathcal{S}^m \rightarrow \mathcal{M}_{2\ell}^{K3}$
is the fiber product of the universal $K3$ surface, see also \cite[Conjecture 1]{PY}. For simplicity, we have only discussed $\mathcal{M}_{2\ell}$ here, but the
lift to $\mathcal{S}^m$ will be studied in \cite{nextpaper}.}
for certain $g=0$ and $g=1$
cases in \cite{PY} and used there to prove the generation of the tautological ring of $\mathcal{M}^{K3}_{2\ell}$
by Noether-Lefschetz classes.


\subsection{Algebraic twistor families}
While Pardon's approach to GW/PT in families is best suited for our paper (especially for the
study of families of threefolds over $\mathcal{M}^{K3}_{2\ell}$), much less is required for
Theorem \ref{mainn}: {\em only the GW/PT correspondence for particular 1-parameter families of threefolds}.

Using the GW/NL correspondence \cite{GWNL,PTKKV}, the GW/PT correspondence
for the reduced theory of $(S\times \p^1, S_{0,\infty})$ with primary insertions can be obtained by proving
a families descendent GW/PT  correspondence for
$$\widehat{\pi}: \mathcal{S} \times \p^1 \rightarrow C\, ,$$
where $\pi:\mathcal{S} \rightarrow C$ is family of $K3$ surfaces over a curve. These 1-parameter families GW/PT correspondences can be approached
by the degeneration methods used for hypersurfaces in \cite{PP}.

As an example, consider the family of quartic $K3$ surfaces defined by a general nonsingular hypersurface
of type $(4,2)$ in $\p^3\times \p^1$,
$$\p^3\times \p^1 \supset \mathcal{S} \stackrel{\pi}\longrightarrow \p^1\,, $$
with equation $Q$. The family can be deformed by a parameter $s \rightarrow 0$,
\begin{equation} \label{deggg}
 \p^3\times \p^1  \supset \mathcal{S}_s \stackrel{\pi_s} \longrightarrow \p^1\, ,
 \end{equation}
defined by the equation $sQ-F_1F_2$, where $F_1,F_2$ are general hypersurfaces of type $(2,1)$
in $\p^3\times \p^1$ with vanishing loci
$$\p^3\times \p^1 \supset \mathcal{R}_1,\mathcal{R}_2 \, \stackrel{\tiny{\pi_1,\pi_2}}\longrightarrow\,  \p^1\,.$$
Semistable reduction is required to make the degeneration \eqref{deggg} log smooth 
as $s\rightarrow 0$. Once log smoothness is achieved (by modification over $s=0$), the
descendent GW/PT correspondence for the family
\begin{equation} \label{orig}
\widehat{\pi}: \mathcal{S}\times \p^1 \stackrel{\pi}\longrightarrow \p^1
\end{equation}
can be reduced to the descendent GW/PT correspondence for the simpler families 
$$ \widehat{\pi}_1: \mathcal{R}_1\times \p^1 \stackrel{\pi}\longrightarrow \p^1\, , \ \ \
\widehat{\pi}_2: \mathcal{R}_2\times \p^1 \stackrel{\pi}\longrightarrow \p^1\, ,$$
where $\mathcal{R}_1$ and $\mathcal{R}_2$
are the families of rational surfaces defined
by $F_1$ and $F_2$.
By further such degenerations, the descendent GW/PT correspondence for \eqref{orig} can be reduced
to the descendent GW/PT correspondence for degenerations of toric varieties. The latter
is a new direction which should be approachable via the torus equivariant
descendent correspondence for toric threefold together with further degeneration arguments.

The above sketch is {\em not} a proof, but an outline of a degeneration strategy to
prove the GW/PT correspondence for certain explicit 1-parameter families of $K3$ surfaces. For the
GW/PT correspondence of Theorem \ref{mainn}, we will require the more complicated
families discussed
in \cite{PTKKV}. We expect, with the additional log degeneration tools for the
descendent GW/PT correspondence which will be developed in upcoming work by Maulik and Ranganathan, 
the above outline should be possible to complete.

\appendix
\section{Wallcrossing}
\subsection{Overview} We present here the proof of Theorem~\ref{thm:wallcrossing}.
Wallcrossing formulas for moduli spaces of stable sheaves on surfaces
have been obtained in the seminal work of Mochizuki \cite{Mochizuki}, building on earlier ideas of Thaddeus in the curve case \cite{Thaddeus}.
More recently, there have been fundamental developments through  the work of Joyce and collaborators, where a general wallcrossing formula for moduli spaces of sheaves encoded in terms of vertex algebras \cite{Joyce1,Joyce2} is proven.
The latter work applies, in particular, to surfaces and also handles reduced obstruction theories. While
Theorem  \ref{thm:wallcrossing} should follow from an application of \cite{Joyce2},
we will use the method
of Mochizuki for a simpler treatment.

We follow here recent work of Kuhn-Liu-Thimm \cite{KLT2,KLT}, in particular  Sections 3 and 4 of \cite{KLT}. In \cite{KLT}, the authors prove a K-theoretic DT/PT correspondence for quasi-projective (toric) Calabi-Yau threefolds (and endow the moduli space with an equivariant symmetric perfect obstruction theory). We are tempted to just refer to their work, but we are proving a correspondence on the surface $S$ of 
DT/PT type 
(a shadow of the DT/PT correspondence for the non Calabi-Yau geometry $S \times \p^1$), which does not exactly fall into their framework. Our discussion is somewhat simpler than \cite{KLT}, since we do not have to deal with
almost-perfect obstruction theories.
Since our setup is extremely similar to \cite{KLT}, 
we will refer to their work for certain details on properties of moduli spaces.

\subsection{Stack of all $1$-dimensional quotients with $0$-dimensional cokernel}
{Let $S$ be a nonsingular projective surface.}
Consider the stack
parameterizing maps from ideal sheaves of colength $n$ to $1$-dimensional sheaves on $S$ with Chern characters $\alpha=((1,0,-n),(0,\beta,m))$,
\[ \mathfrak{N}_{\alpha} = 
\left\{ s: I_z \to F \middle| \begin{array}{c} z \in S^{[n]}\,,\ F \in \Coh(S)\,,\ \ch(F)=(0,\beta,m)\, , \\ 
 \mathrm{coker}(s)\, \text{is 0-dimensional}
\end{array} \right\}.
\]
Objects of $\FN_{\alpha}$ over a finite type scheme $T$ consist of morphisms of coherent sheaves $$s : \CI \to \CF\ \ \ \text{and}\ \ \ \iota : \CI \hookrightarrow \CO_{S \times T}\,,$$ where the subscheme defined by the ideal $\CI$ is flat and finite of length $n$ over $T$, $\CF$ is flat over $T$ with $\ch(\CF_t) = \alpha$ for all closed points $t$ (in particular, $\CF_{t}$ is $1$-dimensional), and $\mathrm{Coker}(s)$ is finite over $T$.
An isomorphism $$\varphi : (s:\CI \to \CF,\iota) \xrightarrow{\cong} (s':\CI \to \CF',\iota')$$ of two such families is given by isomorphisms $\varphi_0 : \CI_{\CZ} \to \CI_{\CZ}$ and $\varphi_1 : \CF \to \CF'$, such that $\iota' \circ \varphi_0 = \iota$ and $\varphi_1 \circ s = s' \circ \varphi_0$.
In particular, an automorphism $\varphi=(\varphi_0,\varphi_1)$ of $(s : \CI \to F,\iota)$ must satisfy
$\varphi_0=\id$ and $\varphi_1 \circ s=s$.
In what follows we will usually suppress the inclusion $\iota$ in the notation (as $\iota$ is unique up to scalar and only serves to rigidify the stack).

The stack $\mathfrak{Y}$ of $1$-dimensional sheaves on $S$ 
embeds as an open subset in the stack of all glueable objects in $D^b(S)$ constructed by \cite{Lieblich}, so $\mathfrak{Y}$ is an Artin stack locally of finite type.
The stack $\mathfrak{N}_{\alpha}$ therefor embeds  into a natural Hom stack over $S^{[n]} \times \mathfrak{Y}$, so $\mathfrak{N}_{\alpha}$ is an Artin stack locally of finite type.

\begin{lemma} \label{lembound}
 Fix an integer
$n\geq0$ and a class $\beta\in H^2(S,\mathbb{Z})$. 
The stack $\FN_{\alpha}$ for 
$$\alpha = ((1,0,-n),(0,\beta,m))$$
is empty if $m \ll 0$.
Furthermore, for fixed $m$, the set of sheaves $F$ which appear in $\mathfrak{N}_{\alpha}$ is bounded. 
\end{lemma}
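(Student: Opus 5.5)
The plan is to split $F$ along the image of $s$ and bound each piece. Given $s : I_z \to F$ in $\FN_{\alpha}$, consider the exact sequence
\[ 0 \to G \to F \to Q \to 0\, , \qquad G = \mathrm{Im}(s)\, , \quad Q = \mathrm{coker}(s)\, , \]
where $Q$ is $0$-dimensional. Then $G$ is a quotient of $I_z$, so it lies in the Grothendieck Quot scheme of the bounded family of ideal sheaves $\{I_z\}_{z \in S^{[n]}}$. Since $\ch(Q)=(0,0,q)$ with $q \geq 0$, we get $\ch(G) = (0,\beta,m-q)$.

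\textbf{Step 1: lower bound on $m$.} The kernel of $I_z \to G$ is a rank-one torsion-free sheaf $K$. By the double-dual argument used in the proof of Theorem~\ref{thm:universality}, $K = I_{z'}(-D)$, where $D$ is an effective divisor of class $\beta$ and $z'$ is $0$-dimensional of length $n' \geq 0$. Computing Chern characters gives
\[ \ch_2(G) = \ch_2(I_z) - \ch_2(I_{z'}(-D)) = -n + n' - \tfrac{\beta^2}{2}\, . \]
Hence $m = \ch_2(F) = \ch_2(G) + q \geq -n - \beta^2/2$. It follows that $\FN_{\alpha}$ is empty when $m < -n-\beta^2/2$, which is the first claim.

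\textbf{Step 2: boundedness for fixed $m$.} From Step 1 we have
\[ n' + q = m + n + \beta^2/2\, , \]
so both $n'$ and $q$ are bounded. The divisors $D$ range over the Hilbert scheme $H_\beta$ of curves in class $\beta$, which is projective and hence bounded. Together with the bound on $n'$, this shows that the kernels $I_{z'}(-D)$ form a bounded family. Therefore the sheaves $G$ form a bounded family: they are quotients of the bounded family $\{I_z\}$ with fixed Chern character $(0,\beta,m-q)$, and $q$ takes only finitely many values. This uses boundedness of Quot schemes.

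\textbf{Step 3: bounding the extensions.} It remains to bound the extensions $0 \to G \to F \to Q \to 0$, where $Q$ is $0$-dimensional of bounded length. The $0$-dimensional sheaves $Q$ of bounded length form a bounded family. Since $G$ and $Q$ both vary in bounded families, the groups $\Ext^1(Q,G)$ have uniformly bounded dimension by semicontinuity over a finite-type parameter space. Their total space parameterizes all extensions by a finite-type scheme, so the sheaves $F$ are bounded.

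I expect Step 3 to be the only point needing care, because $F$ need not be pure: $\Ext^1(Q,G)$ genuinely contributes, and $F$ may carry $0$-dimensional torsion. Boundedness still holds, since extensions of two bounded families are bounded; this can be phrased via a relative Ext sheaf over the product of the two finite-type parameter spaces. If one prefers a more concrete route, use the Castelnuovo–Mumford regularity of $G$ and $Q$ to obtain a uniform regularity bound for $F$. This yields a uniform surjection $\CO(-N)^{\oplus M} \to F$, placing every $F$ in a single Quot scheme.
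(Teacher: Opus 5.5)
Your proof is correct and follows essentially the same route as the paper. Both arguments split $F$ into the image and the $0$-dimensional cokernel of $s$, bound $\ch_2$ of the image from below, deduce that the cokernel length is bounded, bound the image via Quot-scheme boundedness, and then bound the extensions. The paper gets the lower bound by pushing out $I_z \to \mathrm{Im}(s)$ along $I_z \subset \CO_S$ and citing the standard bound for quotients of $\CO_S$; you instead compute directly with the kernel $I_{z'}(-D)$. That kernel is exactly the kernel of the paper's pushout quotient, so the two arguments coincide, and yours gives the explicit bound $m \geq -n-\beta^2/2$ and a concrete regularity argument for the extension step.
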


\begin{proof}
Consider a map 
$\varphi : I_z \to F$
with $0$-dimensional cokernel
defining an object in $\FN_{\alpha}$.
Let $$F' = \mathrm{Im}(\varphi)\,, \ \ \ \ch(F') = (0,\beta,m')\, .$$
We first prove that $m' \geq m_0(n,\beta)$ for some $m_0(n,\beta) \in \BZ$ only depending
on $n$ and $\beta$. To the quotient $\varphi:I_z \to F'$, we associate the pushout $G$ along $j:I_z \to \CO_S$, which is defined by
$$G = \mathrm{Coker}(I_z \xrightarrow{(-\phi,j)} F \oplus \CO_S)\, .$$
We obtain  a sequence
$\CO_S \to G \to \CO_z$,
where both maps are surjections and $F' = \mathrm{Ker}(G \to \CO_z)$.
It is well-known that the quotients $\CO_S \to G$ can only exist if $\ch_3(G) \geq m_0'(\beta)$ for some $m'_0(\beta)$. Hence, 
$$m' \geq m_0(\beta)-n\, .$$

For fixed $m$,
the length of $Q = \mathrm{Coker}(\varphi)$ is bounded since
$m' \geq m_0(\beta,n)$, 
The family of sheaves of possible cokernels $Q$ is therefore bounded. 
The possible  sheaves  $F'$ are bounded by the boundedness of the Quot scheme. Hence, their extensions $F$ form a bounded family.
\end{proof}

Fix a polarization $\CO_S(1)$ on $S$. Recall that $F$ is {\em $\kappa$-regular} if $H^i(F(\kappa-i))=0$ for all $i>0$.  
By a standard argument, using relative Serre vanishing plus cohomology and base change,
a uniform regularity bound can be found for any bounded family of coherent sheaves. By the boundedness claim of 
Lemma \ref{lembound}, we obtain
the following result.

\begin{lemma} 
There exists a $\kappa$ such that $F(\kappa)$ is $\kappa$-regular for all $F$ which occur in $\FN_{\alpha}$.
\end{lemma}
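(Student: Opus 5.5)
The plan is a standard Castelnuovo--Mumford regularity argument applied to the bounded family supplied by Lemma~\ref{lembound}. By that lemma, for fixed $\alpha$ the sheaves $F$ appearing in $\FN_{\alpha}$ form a bounded family. Concretely, there is a finite type scheme $B$ and a $B$-flat coherent sheaf $\CF$ on $S\times B$ whose closed fibers $\CF_b$ include, up to isomorphism, every $F$ occurring in $\FN_{\alpha}$. Since $B$ is of finite type, it has finitely many irreducible components, and I may replace $B$ by their reduced disjoint union; so it suffices to treat a single irreducible $B$.

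First, by relative Serre vanishing for $p:S\times B\to B$ with the relatively ample $\CO_S(1)$, there is $N_0$ with $R^i p_*\CF(N)=0$ for all $i>0$ and $N\ge N_0$. Since $S$ is a surface, only $i=1,2$ matter. Next I pass from vanishing of higher direct images to vanishing of fiberwise cohomology. Generic flatness for the finitely many relevant twists, together with noetherian induction on $B$, lets me stratify $B$ into finitely many locally closed pieces on which cohomology and base change applies. Starting from the top degree $i=2$, where base change always holds and descends inductively, this gives $H^i(\CF_b(N))=0$ for $i>0$, $N\ge N_0$, and all $b$ in each stratum. Taking the maximum of the resulting thresholds over the finitely many strata gives a uniform $N_1$ with $H^i(F(N))=0$ for all $i>0$, all $N\ge N_1$, and all $F$ in the family.

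Finally I choose $\kappa$. The condition that $F(\kappa)$ be $\kappa$-regular reads
\[ H^i\big(F(2\kappa-i)\big)=0 \quad \text{for } i=1,2. \]
This holds as soon as $2\kappa-2\ge N_1$, so any $\kappa\ge (N_1+2)/2$ works uniformly for all $F$ in $\FN_{\alpha}$.

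The only step that needs care is the base change step, since $\CF$ need not have locally free higher direct images globally. The stratification handles this. An alternative is to note that vanishing of $H^2(\CF_b(N))$ is an open condition by upper semicontinuity, then bound $H^1$ on the complement by noetherian induction. Since $B$ is noetherian, either route terminates, so I expect no genuine obstacle.
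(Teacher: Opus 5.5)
Your proposal is correct and is essentially the paper's argument, which in one line applies relative Serre vanishing and cohomology and base change to the bounded family supplied by Lemma~\ref{lembound}. The stratification step is not needed: once $R^1p_*\CF(N)=R^2p_*\CF(N)=0$ on all of $B$, the cohomology and base change theorem applied by descending induction from $i=2$ already gives $H^i(\CF_b(N))=0$ at every point $b\in B$, so $N_1=N_0$ works.
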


As in \cite[Lemma 3.2.5]{KLT}, we have a partial properness result for $\FN_{\alpha}$.

\begin{lemma} \label{lemma:existence for FN}
The stack $\FN_{\alpha}$ satisfies the existence part of the valuative criterion of properness.
\end{lemma}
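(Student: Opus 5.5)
We need to prove that $\FN_{\alpha}$ satisfies the existence part of the valuative criterion. Let $R$ be a DVR with fraction field $K$, uniformizer $\varpi$ and residue field $k$. We are given a $K$-point $(s_K : \CI_K \to F_K)$ of $\FN_\alpha$, and we must extend it to an $R$-point, possibly after a finite extension of $R$. We follow the strategy of \cite[Lemma 3.2.5]{KLT}, treating the ideal sheaf and the target sheaf in turn.

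\textbf{Step 1: extend the ideal sheaf.} Since $S^{[n]}$ is proper, the point $z_K$ extends uniquely to an $R$-point $z_R$. This gives an $R$-flat ideal sheaf $\CI_R \subset \CO_{S_R}$ whose generic fiber is $\CI_K$.

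\textbf{Step 2: choose a lattice in $F_K$.} Pick any $R$-flat coherent extension $F'_R$ of $F_K$ on $S_R$. Such an extension exists: take any coherent extension and quotient by its $\varpi$-torsion. After multiplying $s_K$ by a power of $\varpi$, which is harmless since it is an isomorphism over $K$, the map $s_K$ extends to a morphism $s' : \CI_R \to F'_R$. Flatness of $F'_R$ together with the fact that $\CI_R$ is a flat lattice lets us clear denominators.

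The special fiber of $\Coker(s')$ may be $1$-dimensional. The cokernel of $s_K$ is $0$-dimensional, but a careless lattice can introduce extra $1$-dimensional cokernel on the central fiber.

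\textbf{Step 3: modify the lattice.} The natural repair is to replace $F'_R$ by a lattice for which the cokernel is small. Let $Q_K = \Coker(s_K)$, which is $0$-dimensional, and let $F^{\mathrm{im}}_K = \mathrm{Im}(s_K)$. Take $F^{\mathrm{im}}_R = \mathrm{Im}(s')$. This is $R$-flat, being a subsheaf of a flat sheaf over a DVR, and it is a quotient of $\CI_R$. Its special fiber is therefore a quotient of $I_{z_k}$ up to the non-surjectivity of $s'_k$. More precisely, $(\mathrm{Im}\, s')_k$ receives a map from $I_{z_k}$ whose cokernel is the $\varpi$-torsion contribution, supported where $\Coker(s')$ has torsion.

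Separately, close $Q_K$ in $F'_R/F^{\mathrm{im}}_R$ and take its flat closure $Q_R$. This is finite over $R$ because $Q_K$ is $0$-dimensional. Define
\[
F_R = \text{preimage of } Q_R \text{ in } F'_R .
\]
Then $F_R$ is $R$-flat, it agrees with $F_K$ generically, and $\Coker(\CI_R \to F_R)$ is an extension of $Q_R$ by $\Coker(\CI_R \to F^{\mathrm{im}}_R)$. It remains to show the latter is $R$-finite.

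For this we apply elementary modifications in the Langton style. Whenever the special-fiber map $I_{z_k} \to F_k$ has a $1$-dimensional cokernel $C$, replace $F_R$ by
\[
\Ker\bigl(F_R \to F_k \to C^{\mathrm{pure}}\bigr),
\]
where $C^{\mathrm{pure}}$ is the pure $1$-dimensional quotient of $C$. The modified sheaf is again flat and has the same generic fiber.

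\textbf{Step 4: termination.} The main obstacle is showing that the modifications of Step 3 terminate. For this I would use the bound on $m'$ from Lemma~\ref{lembound}. Each modification drops $\ch_2$ of the image part in the special fiber by a positive amount. Since the image part is a quotient of $I_{z_k}$, Lemma~\ref{lembound} bounds this quantity below, so only finitely many steps can occur. Once the process stops, the cokernel on the special fiber is $0$-dimensional, so it is finite over $R$, and the resulting family $(s_R : \CI_R \to F_R)$ defines the required $R$-point of $\FN_\alpha$.

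This argument parallels \cite[Lemma 3.2.5]{KLT}. The only change is that the structure sheaf is replaced by $\CI_R$, and this is harmless because $\CI_R$ is $R$-flat.
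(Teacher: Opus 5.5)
Your Step 3 already contains the paper's proof, but you misread what it gives, and the steps you add afterwards contain errors.

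\textbf{Step 3 finishes the proof.} The paper takes $Q=\Coker(s)$ and replaces $F$ by $\widetilde F=\Ker(F\to Q\to Q'_\zeta)$. Here $Q'_\zeta$ is supported over the closed point, and $\widetilde Q=\Ker(Q\to Q'_\zeta)$ has no $1$-dimensional support there. Your $F_R$, the preimage of $Q_R$ in $F'_R$, is the same construction with $Q'_\zeta=Q/Q_R$ and $\widetilde Q=Q_R$. Since $F^{\mathrm{im}}_R$ is by definition the image of $s'$, the sheaf $\Coker(\CI_R\to F^{\mathrm{im}}_R)$ whose finiteness you say ``remains to show'' is zero. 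Hence $\Coker(\CI_R\to F_R)=F_R/F^{\mathrm{im}}_R=Q_R$ is finite over $R$. By right exactness of restriction, its special fibre is $\Coker(s_k)$, which is therefore $0$-dimensional. So you already have an $R$-point of $\FN_\alpha$, and the Langton iteration is never triggered. All you need of $Q_R$ is a subsheaf of $Q$, finite over $R$, with generic fibre $Q_K$, for example the sections of $Q$ supported on the closure of $\mathrm{Supp}(Q_K)$. Flatness of $Q_R$ plays no role; $F_R$ is flat automatically as a subsheaf of $F'_R$.

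\textbf{The extra material is wrong and should be deleted.}
\begin{enumerate}
\item The map $I_{z_k}\to(\mathrm{Im}\,s')_k$ is surjective, since restriction is right exact. The $\varpi$-torsion of $Q$ appears as the kernel of $(\mathrm{Im}\,s')_k\to F'_k$, not as a cokernel.
\item The termination argument in Step 4 is false: $\ch_2$ of the special-fibre image need not drop. Take $n=0$, $F_R=\CO_{C\times\Spec R}$, and $s_R=\varpi^2$ times the canonical surjection. The special-fibre image is $0$ both before and after the first modification.
\item That image has $\ch_1=\beta-\ch_1(C)\neq\beta$, so Lemma~\ref{lembound} does not apply to it as stated.
\end{enumerate}
If you did want to iterate, the quantity that strictly decreases is the length of $\Coker(s_R)$ at the generic points of curves in the special fibre. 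Each modification replaces $\Coker(s_R)$ by $\Ker(\Coker(s_R)\to C^{\mathrm{pure}})$, which lowers that length.
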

\begin{proof}
Let $R$ be a curve with generic point $\eta$. Let $$s_{\eta} : \CI_{\eta} \to F_{\eta}$$ be an element of $\FN_{\alpha}$ over $\eta$. By the properness of the Hilbert scheme of points, we can find a unique extension of $\CI_{\eta}$ to an ideal sheaf $\CI$ over $R$. 
We can find an extension $s_{\eta}$ to $s : \CI \to F$ 
for {\em some} flat extension $F$ of $F_\eta$ over $R$.
If the cokernel $Q$ of $s$ is not 0-dimensional over the special point $\zeta$, then 
let $Q'_\zeta$ be a positive dimensional quotient of $Q_\zeta$ over the special point $\zeta$ such that the kernel 
$$\widetilde{Q} = \mathrm{Ker}(Q \to Q_\zeta')$$ does not have $1$-dimension support over the special point.
Let
$$\widetilde{F} = \mathrm{Ker}(F \to Q \to Q'_\zeta)\, .$$ Then, $s:\CI \to \widetilde{F}$ is the desired extension.
\end{proof}

\subsection{Framing}

We fix $\kappa$ throughout such that for all $(s : I_z \to F)\in\FN_{\alpha}$, the sheaf $F$ is $\kappa$-regular.
Let $\FQ$ be the Artin stack of 0-dimensional sheaves on $S$,
$$\FQ=\bigsqcup_{l\geq 0} \FQ_l\,,$$
where $l$ is the length.
All sheaves of $\FQ$ are automatically $\kappa$-regular.
Consider the functor
\[ \Fr : \Coh(S) \to \mathrm{Vect}_{\BC}\, ,\ \ 
\quad
E \mapsto \Fr(E) = H^0(S, E(\kappa))\, .
\]
Certainly, $\Fr$ is exact on short exact sequences which involve only sheaves $F$ where $(s:I_z \to F) \in \FN_{\alpha}$ and sheaves of $\FQ$.
If $I = (s:I_z \to F) \in \FN_{\alpha}$, we also write $$\Fr(I) = \Fr(F) = H^0(S,F(\kappa))\, .$$

By regularity, for any object $I = (s : I_z \to F) \in \FN_{\alpha}$, 
the dimension of $H^0(S,F(\kappa))$ is 
given by
\[ d_{\Fr} = \dim_{\BC} H^0(S,F(\kappa))
= \chi( F(\kappa)) = \int_{S} (0,\beta,m)\cdot (1, \kappa H,\frac{\kappa^2 H^2}{2})\cdot \td(S)\, . \]

\begin{defn} \label{defn:flag stack}
For $N \geq 1$, let $\mathbf{d}=(d_1,\ldots,d_N,d_{N+1})$ be a vector of non-negative integers satisfying:
$$d_1\in \{0,1\}\, , \ \ \ 
d_i \leq d_{i+1} \leq d_{i}+1\, , \ \ \ d_{N+1} = d_{\Fr}\, .$$
Let $\FNfl_{\alpha, \mathbf{d}}$ be the moduli stack parameterizing triples $(I,V,\rho)$ where
\begin{itemize}
\item[(i)] $I$ is an object of $\FN_{\alpha}$,
\item[(ii)] $V = (V_i)_{i=1}^{N}$ are $\BC$-vector spaces with $\dim V_i = d_i$,
\item[(iii)] $\rho = (\rho_{i})_{i=1}^{N}$ are injective linear maps $$\rho_i : V_i \to V_{i+1}\, ,$$
with $V_0=0$, $\rho_0=0$, and $V_{N+1} = \Fr(I)$\, .
\end{itemize}
We view $(V,\rho)$ as defining a flag of subspaces
\[ 0=V_0 \subset V_1 \subset \ldots \subset V_N \subset V_{N+1} = \Fr(I). \]
An isomorphism $\varphi : (I,V,\rho) \to (I',V',\rho')$ is an isomorphism $\varphi : I \to I'$ in $\FN_{\alpha}$ such that $$\Fr(\varphi_1) : \Fr(I) \to \Fr(I')$$ sends the subspace $V_i$ to $V_i'$.
\end{defn}


\begin{defn}
For $N \geq 1$, let $\mathbf{d}=(d_1,\ldots,d_N,d_{N+1})$ be a vector of non-negative integers satisfying:
$$d_1\in \{0,1\}\, , \ \ \ 
d_i \leq d_{i+1} \leq d_{i}+1\, , \ \ \ d_{N+1} = n\, .$$
Let $\FQ_{n,\mathbf{d}}^{\mathrm{fl}}$ be the stack parameterizing triples $(Q,V,\rho)$ where:
\begin{enumerate}
\item[(i)] $Q$ is an object of $\FQ_n$,
\item[(ii)] $V = (V_i)_{i=1}^{N}$ are $\BC$-vector spaces with $\dim V_i = d_i$,
\item[(iii)] $\rho = (\rho_{i})_{i=1}^{N}$ are injective linear maps $$\rho_i : V_i \to V_{i+1}\, ,$$
with $V_0=0$, $\rho_0=0$, and $V_{N+1} = \Fr(Q)$\, .
\end{enumerate}
The isomorphisms of $\FQ_{n,\mathbf{d}}^{\mathrm{fl}}$ are defined following the definition of the isomorphisms for
$\FNfl_{\alpha, \mathbf{d}}$.
\end{defn}

The morphisms which forget the flag $(V,\rho)$,
\[ \Pi : \FNfl_{\alpha, \mathbf{d}} \to \FN_{\alpha}, \quad \Pi_{Q} : \FQfl_{n,\mathbf{d}} \to \FQ_n \]
are flag bundles for the vector bundles on $\FN_{\alpha}$ and $\FQ_n$ with fibers $\Fr(I)$ and $\Fr(Q)$ respectively. In particular,
$\FNfl_{\alpha,\mathbf{d}}$ and $\FQfl_{\mathbf{d}}$ are Artin stacks locally of finite type.\footnote{Strictly speaking, Definition~\ref{defn:flag stack} is only for $\BC$-valued points. The flag bundle perspective extends the definition to families. We leave the details to the reader, see \cite[Section 2.4.3]{KLT2}.}
Let $\CV_i$ denote the universal bundles on $\FNfl_{\alpha, \mathbf{d}}$
and $\FQfl_{n,\mathbf{d}}$
with fibers $V_i$ at closed points.

\begin{rmk}
If $d_i = d_{i+1}$ for some $i$, then there is an isomorphism
\[ \FNfl_{\alpha, \mathbf{d}} \to \FNfl_{\alpha, (d_1,\ldots,d_i, d_{i+2},\ldots,d_{N+1})} \]
defined by omitting $V_{i+1}$ from the flag, and similar for $\FQfl_n$. In particular, we have an isomorphism
\begin{equation} \label{30fsdfsdF} \FNfl_{\alpha, \mathbf{d}} \cong \FNfl_{\alpha,(1,2, 3, \ldots,d_{\Fr})}\, , \end{equation}
where the elements of $(1,2,3,\ldots, d_{\Fr})$ are distinct.
We will define open subsets of $\FNfl_{\alpha, (1,2,3,\ldots,d_{\Fr})}$ of stable and semistable objects.
Stable objects for all $\FNfl_{\alpha, \mathbf{d}}$ will then be defined by 
isomorphism \eqref{30fsdfsdF}.
\end{rmk}

\subsection{Stability condition}
We define stability conditions for the objects of $\FNfl_{\alpha,\mathbf{d}}$.

\begin{defn}[$\ell$-stability]
Let $\FN^{\mathrm{fl},\ell-\mathrm{st}}_{\alpha,\mathbf{d}} \subset \FNfl_{\alpha,(1,2,3,\ldots,d_{\Fr})}$ be the open substack of objects 
$$(I=(s:I_z \to F),V,\rho)$$ 
satisfying the conditions:
\begin{enumerate}
\item[(i)] for every injection $Z \subset F$ with $Z$ a zero-dimensional sheaf, 
\[ V_{\ell} \cap \Fr(Z) = \{ 0 \} \subset \Fr(I)\,  \]
\item[(ii)] for every non-zero surjection $F/\mathrm{Im}(s) \to Z$, with $Z$ a $0$-dimensional sheaf,
the composition
\[ V_{\ell} \to \Fr(Z) \]
is non-zero.
\end{enumerate}
If conditions (i) and (ii) hold,  then $(I,V,\rho)$ is {\em $\ell$-stable}.
\end{defn}

\begin{rmk}
Let $(I,V,\rho) \in \FN_{\alpha}$ with $I=(s:I_z \to F)$.
If $(I,V,\rho)$ is $0$-stable, then 
condition (i) is always satisfied, and
$s$ is a surjection by condition (ii). Therefore, $I \in \Quot^{\flat}_{\alpha}$.
If $(I,V,\rho)$ is $d_{\Fr}$-stable, then $F$ is pure by condition (i) and condition (ii) is always satisfied.
Therefore, $I \in \Quot^{\sharp}_{\alpha}$.
Thus, the restrictions of $\Pi$ to these loci,
\[ \FN^{\mathrm{fl},0-\mathrm{st}}_{\alpha,(1,2,3,\ldots,d_{\Fr})} \to \Quot^{\flat}_{\alpha}\, ,
\quad
\FN^{\mathrm{fl},d_{\Fr}-\mathrm{st}}_{\alpha,(1,2,3,\ldots,d_{\Fr})} \to \Quot^{\sharp}_{\alpha}.
\]
are simply relative flag bundles over the proper moduli spaces $\Quot^{\star}_{\alpha}$.
\end{rmk}

\begin{prop}
The stack $\FN^{\mathrm{fl},\mathrm{\ell}-st}_{\alpha,\mathbf{d}}$ is represented by an algebraic space.
\end{prop}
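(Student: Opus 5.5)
The plan is to show that every object of $\FN^{\mathrm{fl},\ell-\mathrm{st}}_{\alpha,\mathbf{d}}$ has trivial automorphism group, and that the stack is of finite type (quasi-compact) with separated diagonal. An Artin stack locally of finite type whose objects have trivial stabilizers, and hence trivial inertia, is an algebraic space by the standard criterion. Openness of the $\ell$-stable locus inside $\FNfl_{\alpha,(1,2,\ldots,d_{\Fr})}$ is part of the definition, so algebraicity of the ambient flag-bundle stack over $\FN_{\alpha}$ is already available.

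\textbf{Step 1: trivial automorphisms.} By the description of $\FN_{\alpha}$, an automorphism of $(I,V,\rho)$ with $I=(s:I_z\to F)$ is a map $\varphi_1:F\to F$ satisfying $\varphi_1\circ s=s$ and preserving the flag $V_\bullet$ under $\Fr(\varphi_1)$. Set $\psi=\varphi_1-\id$. Then $\psi\circ s=0$, so $\psi$ factors through the $0$-dimensional sheaf $Q=F/\mathrm{Im}(s)$. Its image $Z=\mathrm{Im}(\psi)\subset F$ is therefore $0$-dimensional and is a quotient of $Q$. Since $\Fr$ is exact on the relevant sequences, $\Fr(\psi)$ factors as
\[
\Fr(F)\to \Fr(Q)\to \Fr(Z)\hookrightarrow \Fr(F).
\]

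\textbf{Step 2: use the two stability conditions.} Preservation of the flag, together with the fact that $\Fr(\varphi_1)$ is unipotent, should give $\Fr(\psi)(V_\ell)\subset V_\ell$. Combined with the factorization above, this yields $\Fr(\psi)(V_\ell)\subset V_\ell\cap \Fr(Z)$, which vanishes by condition (i). So $V_\ell$ maps to zero in $\Fr(Z)$ under the surjection $Q\to Z$. Condition (ii) then forces $Z=0$, that is $\psi=0$. The point needing care is why $\Fr(\psi)$ preserves $V_\ell$ itself and not only the full flag. I would argue that both $\varphi_1$ and $\id$ preserve each $V_i$, hence so does their difference. Strictly, an automorphism must send $V_i$ onto $V_i$, and $\Fr(\psi)=\Fr(\varphi_1)-\id$ then maps $V_i$ into $V_i$, so this step is fine.

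\textbf{Step 3: finite type and separatedness of the diagonal.} Lemma~\ref{lembound} and the uniform $\kappa$-regularity give a bounded family of $F$, and the $z$ lie in the projective $S^{[n]}$. After choosing frames, $\FN_{\alpha}$ is a quotient of a finite type scheme (a Quot-type scheme of quotients of $\CO(-\kappa)^{d_{\Fr}}$ together with a Hom-scheme from $I_z$) by $GL_{d_{\Fr}}$. The flag bundle over this is again of finite type, and the stable locus is open in it. The diagonal is affine, being given by Isom-schemes of coherent sheaves, hence separated. With trivial stabilizers, the quotient presentation has free action on the stable locus, and the resulting stack is an algebraic space.

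I expect the main obstacle to be making Step 1 work in families. Trivial stabilizers at closed points give trivial inertia only if the inertia is unramified. I would handle this by repeating the argument for $T=\Spec$ of Artinian rings, using flatness of $Q$ and exactness of $\Fr$ in families, following \cite[Section 3]{KLT}, where the analogous statement for DT/PT flags is proved.
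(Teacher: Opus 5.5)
Your proof is correct, and Steps 1--2 are exactly the paper's argument: the paper shows the stabilizer is trivial by factoring $\varphi_1-\id$ through $\mathrm{Coker}(s)$ onto a $0$-dimensional $Z\subset F$, and then playing condition (ii) against condition (i), using that $\Fr(\varphi_1)$ preserves $V_\ell$. Step 3 and the check over Artinian bases are extra care the paper omits, since trivial inertia alone suffices and, in characteristic zero, trivial stabilizers at closed points of a stack locally of finite type already give it. Your passing claim that $\Fr(\varphi_1)$ is unipotent is false in general (e.g.\ $\varphi_1=\id_{F'}\oplus\lambda\,\id_Z$ on a split $F=F'\oplus Z$), but the argument never actually uses it.
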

\begin{proof}
We check that if $(I,V,\rho)$ is $\ell$-stable, then the stabilizer group is trivial.
Let $$\varphi : (I,V,\rho) \to (I,V,\rho)$$ be an automorphism where $I=(s : I_z \to F)$.
Then $\varphi_0=\id$. Let $\delta = \varphi_1 - \id_F$.
Since $\delta|_{\mathrm{Im}(s)} = 0$, $\delta$ factors through the cokernel of $s$,
\[ \delta : F \to \mathrm{Coker}(s) \overset{\overline{\delta}}{\twoheadrightarrow} Z = \mathrm{Im}(\delta) \subset F\, , \]
where $Z$ is $0$-dimensional.
If $Z$ is non-trivial, then $\overline{\delta} : F/\mathrm{Im}(s) \to Z$ is non-zero, so by condition (ii)  of the definition of $\ell$-stability 
\begin{equation} \label{latmor}
V_{\ell} \to \Fr(Z) \subset \Fr(I)
\end{equation}
must be non-zero. But, by assumption, $\Fr(\varphi_1)$ preserves the filtration, so the image of 
\eqref{latmor}
is contained in $V_{\ell} \cap \Fr(Z)$ which is zero by condition (i) of $\ell$-stability.
Therefore,  $Z$ is trivial, so $\delta=0$.
\end{proof}


\begin{defn} [Stability for $0$-dimensional sheaves]
Let $\FQ^{\mathrm{fl},\mathrm{st}}_{n,\mathbf{d}} \subset \FQfl_{n,\mathbf{d}}$ be the open substack of objects $(Q,V,\rho)$, such that if $i_0$ is the smallest integer satisfying $d_{i_0} > 0$, then the composition $$V_{i_0} \otimes \CO_S \to \Fr(Q) \otimes \CO_S \to Q(\kappa)$$ is surjective.
The objects $(Q,V,\rho)$ of $\FQ^{\mathrm{fl},\mathrm{st}}_{n,\mathbf{d}}$ are {\em stable}.
\end{defn}

Let $\FQ^{\mathrm{pl},\mathrm{st}}_{n,\mathbf{d}}$ be the $\BG_m$-rigidification{\footnote{The superscript 'pl' stands for projective linear.}} of $\FQ^{\mathrm{fl},\mathrm{st}}_{n,\mathbf{d}}$ obtained by quotienting  the stabilizer group by the natural $\BC^{\ast}$-scaling action.
Alternatively, $\FQ^{\mathrm{pl},\mathrm{st}}_{n,\mathbf{d}}$ is the stack of tuples $(Q,V,\rho,\sigma)$ where $(Q,V,\rho) \in \FQ^{\mathrm{fl},\mathrm{st}}_{n,\mathbf{d}}$ and $\sigma: \BC \xrightarrow{\cong} V_{i_0}$ is an isomorphism (where $i_0$ is again the smallest integer such that $d_{i_0}>0$).

\begin{lemma}
The stack $\FQ^{\mathrm{pl},\mathrm{st}}_{n,\mathbf{d}}$ is a relative flag bundle over $S^{[n]}$ and, therefore,
is a nonsingular projective variety.
\end{lemma}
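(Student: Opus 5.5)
The plan is to identify $\FQ^{\mathrm{pl},\mathrm{st}}_{n,\mathbf{d}}$ explicitly with a tower of projective bundles over $S^{[n]}$. The first step is to use the isomorphism \eqref{30fsdfsdF} for $\FQfl$ to remove repeated entries of $\mathbf{d}$. We may then assume $\mathbf{d}=(1,2,\ldots,n)$, so $i_0=1$ and $V_1$ is a line. Here we use that $\Fr(Q)=H^0(Q(\kappa))$ has dimension $n$ for $Q\in\FQ_n$, and that $Q(\kappa)\cong Q$ non-canonically (twisting by a line bundle on a $0$-dimensional sheaf). Stability says that a generator $v\in V_1$ gives a surjection $\CO_S\to Q(\kappa)$. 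Such a surjection is the same as a point $z\in S^{[n]}$ together with an identification $Q(\kappa)\cong\CO_z$ sending $v\mapsto 1$.

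The second step is to check that the rigidified data $(Q,V_1,\sigma)$ is exactly a point of $S^{[n]}$, and that this holds in families. Consider the morphism $\FQ^{\mathrm{pl},\mathrm{st}}_{n,(1)}\to S^{[n]}$ sending $(Q,\sigma)$ to the kernel of $\CO_S\xrightarrow{\sigma(1)}Q(\kappa)$. The inverse sends $z$ to $(\CO_z(-\kappa),\BC\cdot 1)$. Automorphisms of $Q$ fixing $\sigma(1)$ up to the rigidified scaling are trivial: an automorphism of $\CO_z$ fixing $1$ is the identity, since $\Hom(\CO_z,\CO_z)=H^0(\CO_z)$ acts by multiplication. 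So the stabilizers are trivial after rigidification, and the $\BG_m$ coming from scaling $V_1$ is exactly what is removed. Over a base $T$ we use the universal subscheme $\CZ\subset S\times S^{[n]}$. The vector bundle $\CW=\pi_*\CO_{\CZ}(\kappa)$ of rank $n$ then carries a canonical section $1$, which defines the sub-line-bundle $\CV_1$.

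The third step is to add the rest of the flag. Given $V_1\subset\Fr(Q)$, the remaining data $V_1\subset V_2\subset\cdots\subset V_n=\Fr(Q)$ with $\dim V_i=i$ is a point of the partial flag variety of $\Fr(Q)/V_1$. Hence $\FQ^{\mathrm{pl},\mathrm{st}}_{n,\mathbf{d}}$ is the relative flag bundle $\mathrm{Fl}(\CW/\CO)\to S^{[n]}$, and the universal family is represented. Since $S^{[n]}$ is a nonsingular projective variety (Fogarty) and flag bundles of vector bundles are smooth and projective over the base, the claim follows.

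The main obstacle is the family-level identification in the second step. Objects of the rigidified stack over $T$ only give $\sigma$ up to the $\BG_m$-gerbe, so we must check that the morphism to $S^{[n]}$ is well defined and an isomorphism of stacks, not just a bijection on points. I would handle this by working with the presentation $(Q,V,\rho,\sigma)$, where $\sigma$ trivializes $\CV_1$. This gives an honest surjection $\CO_{S\times T}\to \CQ(\kappa)$ with $T$-flat quotient, hence a map $T\to S^{[n]}$ by the Hilbert functor. Uniqueness of isomorphisms then follows from the triviality of stabilizers above.
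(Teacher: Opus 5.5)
Your proposal is correct and follows the paper's proof. Both define the map to $S^{[n]}$ by the surjection $\CO_S \to Q(\kappa)$ determined by $\sigma$, identify the fiber over $z$ with the flag variety of $H^0(\CO_z)/\BC\cdot 1$, and conclude that the stack is the flag bundle of $p_*(\CO_{\CZ})/\CO_{S^{[n]}}$.

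There is one small slip. The universal sheaf is $Q=\CO_{\CZ}(-\kappa)$ and $\Fr(Q)=H^0(Q(\kappa))$, so the relevant rank-$n$ bundle is $\pi_*\CO_{\CZ}$, which carries the canonical section $1$. It is not $\pi_*\CO_{\CZ}(\kappa)$: that is a twisted tautological bundle with no canonical nowhere-vanishing section.
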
 
\begin{proof}
We define a forgetful morphism
\[ \Pi : \FQ^{\mathrm{pl},\mathrm{st}}_{n,\mathbf{d}} \to S^{[n]} \]
be sending a tuple
$(Q,V,\rho,\sigma)$ to the composition $$\CO_S \xrightarrow{\sigma} V_{i_0} \to \Fr(Q) \otimes \CO_S \twoheadrightarrow Q(\kappa)\, .$$
Therefore, $Q(\kappa) = \CO_z$ where $z \in S^{[n]}$ is the image point, and $\Fr(Q) = H^0(\CO_z)$.
The fiber of $\Pi$ over  $z \in S^{[n]}$ is a flag variety of $H^0(S,\CO_Z)/\BC \id$.
So
$\FQ^{\mathrm{pl},\mathrm{st}}_{n,\mathbf{d}}$ is a flag bundle associated to the vector bundle 
$$p_{\ast}(\CO_{\CZ})/\CO_{S^{[n]}} \rightarrow S^{[n]}\, ,$$
where $p : \CZ \to S^{[n]}$ is the projection from the universal subscheme $\CZ \subset S^{[n]} \times S$.
\end{proof}

\begin{defn}[$(\ell,\ell+1)$-semistability] \label{defn l l+1 stability}
Let $\FN^{\mathrm{fl},(\ell,\ell+1)-\mathrm{ss}}_{\alpha,(1,2,3,\ldots,d_{\Fr})} \subset \FNfl_{\alpha,(1,2,3,\ldots,d_{\Fr})}$ be the open substack of objects
$$(I=(s:I_z \to F),V,\rho)$$ 
satisfying the conditions:
\begin{enumerate}
\item[(i)] for every injection $Z \subset F$ with $Z$ a zero-dimensional sheaf, 
\[ V_{\ell} \cap \Fr(Z) = \{ 0 \} \subset \Fr(I)\, , \]
\item[(ii)] for every non-zero surjection $F/\mathrm{Im}(s) \to Z$, with $Z$ a $0$-dimensional sheaf,
the composition
\[ V_{\ell+1} \to \Fr(Z) \]
is non-zero.
\end{enumerate}
If conditions (i) and (ii) hold, then $(I,V,\rho)$ is {\em $(\ell,\ell+1)$-semistable}.
\end{defn}

\begin{lemma} \label{lemma:splitting of l l+1 stability}
Let $(I=(s:I_z \to F),V,\rho) \in \FN^{\mathrm{fl},(\ell,\ell+1)-\mathrm{ss}}_{\alpha,(1,2,3,\ldots,d_{\Fr})}$
have a non-trivial automorphism. Then,
\begin{itemize}
    \item [$\bullet$]
there exists a canonical decomposition
$F = F' \oplus Z$, where the map $s$ factors as $s: I_z \to F' \subset F$.
\end{itemize}
Let $\ch(F')=(0,\beta,m')$, $\alpha'=((1,0,-n),\ch(F'))$, and $\ch_2(Z)=m''$. Let
 $(V',\rho')$ and $(V'',\rho'')$ be the induced filtrations of $\Fr(F')$ and $\Fr(Z)$ with dimension vectors $\mathbf{d}'$ and $\mathbf{d}'' = \mathbf{d}-\mathbf{d}'$. 
Then, 
\begin{itemize}
\item $((s:I_z \to F'), V',\rho') \in \FN_{\alpha',\mathbf{d}'}^{\mathrm{fl},\ell-\mathrm{ss}}$,
\item $(Z, V'',\rho'') \in \FQ_{m'',\mathbf{d}''}^{\mathrm{fl},\mathrm{st}}$,
and $d''_i=0$ for $i<\ell+1$ and $d''_{\ell+1} = 1$.
\end{itemize}
Moreover, the stabilizer group of $(I,V,\rho)$ is $\BC^{\ast}$ and given by scaling the sheaf $Z$.

Conversely, every pair $((s:I_z \to F'), V',\rho')$ and $(Z, V'',\rho'')$ as above define
a $(\ell,\ell+1)$-semistable object in $\FN^{\mathrm{fl},(\ell,\ell+1)-\mathrm{ss}}_{\alpha,(1,2,3,\ldots,d_{\Fr})}$.
\end{lemma}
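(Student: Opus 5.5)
Our starting point is the automorphism argument from the proof that $\ell$-stable objects have trivial stabilizers. Let $\varphi=(\varphi_0,\varphi_1)$ be a nontrivial automorphism of $(I,V,\rho)$. As in that proof, $\varphi_0=\id$ and $\delta=\varphi_1-\id_F$ vanishes on $\mathrm{Im}(s)$. Hence $\delta$ factors as
\[ F\to \mathrm{Coker}(s)\overset{\overline{\delta}}{\twoheadrightarrow} Z=\mathrm{Im}(\delta)\subset F\,, \]
with $Z\neq 0$ zero-dimensional. Since $\Fr(\varphi_1)$ preserves the flag, $\Fr(\delta)$ maps $V_{\ell+1}$ into $V_{\ell+1}\cap \Fr(Z)$. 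Condition (ii) of $(\ell,\ell+1)$-semistability forces $V_{\ell+1}\to\Fr(Z)$ to be nonzero. Condition (i) gives $V_\ell\cap\Fr(Z)=0$. Since $\dim V_{\ell+1}/V_\ell=1$, the intersection $V_{\ell+1}\cap\Fr(Z)$ is exactly one-dimensional, spanned by a vector $v\in V_{\ell+1}\setminus V_\ell$.

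Next we produce the splitting. Replace $\delta$ by the nilpotent/semisimple analysis of $\delta|_Z:Z\to Z$. Since $Z\subset F$ and $\delta(Z)\subset Z$, iterating $\delta$ gives a decreasing chain $\mathrm{Im}(\delta^k)$. If $\delta|_Z$ were nilpotent, then the last nonzero image $Z'=\mathrm{Im}(\delta^k)$ is a zero-dimensional subsheaf. It is also a quotient of $\mathrm{Coker}(s)$, via $\delta^k$ factoring through the cokernel. Condition (ii) applied to $Z'$, together with $\Fr(\delta^k)(V_{\ell+1})\subset V_{\ell+1}\cap \Fr(Z')\subset \BC v$ and nilpotence ($\delta^{k+1}=0$ kills $v$'s image), should give a contradiction. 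So $\delta|_Z$ has a nonzero eigenvalue. After replacing $\delta$ by the idempotent projector onto the generalized eigenspace (a polynomial in $\delta$, still vanishing on $\mathrm{Im}(s)$), we get an idempotent $e:F\to F$ with $e\circ s=0$ and image $Z$. This yields $F=\Ker(e)\oplus Z$. Set $F'=\Ker(e)$; then $s$ factors through $F'$. Canonicity follows by taking the maximal such $Z$, or by uniqueness forced by the one-dimensionality of $V_{\ell+1}\cap \Fr(Z)$.

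Then we verify the two factors.
\begin{itemize}
\item The flags induced on $\Fr(F)=\Fr(F')\oplus\Fr(Z)$ have $V_i\cap\Fr(Z)=0$ for $i\le \ell$ and a one-dimensional intersection at $i=\ell+1$. This gives the stated $d''$.
\item Stability of $(Z,V'',\rho'')$ means $v$ generates $Z(\kappa)$. If not, the quotient of $Z$ by the subsheaf generated by $v$ gives a zero-dimensional quotient of $\mathrm{Coker}(s)$ killing $V_{\ell+1}$, violating (ii).
\item The $\ell$-semistability (i.e. $\ell$-stability) of the $F'$ part follows because subsheaves and quotients of $F'$ are subsheaves and quotients of $F$. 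Any quotient $F'/\mathrm{Im}(s)\to Z_1$ on which $V'_\ell$ vanishes extends to a quotient of $F$ on which $V_{\ell+1}$ vanishes modulo $v$. One has to check this carefully, using that $v$ has nonzero $Z$-component.
\end{itemize}

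The stabilizer computation reuses the first step applied to the split object: any automorphism is $\id_{F'}\oplus \lambda\id_Z$. This uses the $\ell$-stability of the first factor, via the earlier proposition, and the simplicity of a stable framed zero-dimensional sheaf (an endomorphism fixing the generator line up to scale is a scalar). The converse is a direct check of conditions (i) and (ii) on $F'\oplus Z$, splitting subsheaves and quotients into components.

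The main obstacle is the second step: extracting a genuine idempotent splitting from $\delta$ and excluding the nilpotent case cleanly. I would first try the Fitting decomposition of $\delta|_Z$ combined with the one-dimensionality of $V_{\ell+1}\cap \Fr(Z)$.
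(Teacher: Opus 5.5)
Your first step is the paper's: $\delta=\varphi_1-\id$ factors through $\Coker(s)$ onto $Z=\mathrm{Im}(\delta)$. Condition (i) gives $\Fr(\delta)(V_\ell)\subset V_\ell\cap\Fr(Z)=0$, condition (ii) gives $\Fr(\delta)(V_{\ell+1})\neq 0$, and so $W=V_{\ell+1}\cap\Fr(Z)=\BC v$.

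The gap is in your second step. A Fitting or generalized-eigenspace idempotent $e=e(\delta)$ has image $\mathrm{Im}(\delta^N)$ for $N\gg 0$, or a single generalized eigenspace. Either is a priori only a subsheaf of $Z=\mathrm{Im}(\delta)$. At that point you have not shown that $\delta|_Z$ is invertible, so ``an idempotent with image $Z$'' does not follow. Your exclusion of the nilpotent case also goes the long way round. Since $\Fr(\delta)$ kills $V_\ell$, is nonzero on $V_{\ell+1}=V_\ell+\BC v$, and maps $v$ into $V_{\ell+1}\cap\Fr(Z)=\BC v$, you get $\Fr(\delta)v=\lambda v$ with $\lambda\neq 0$ immediately.

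The missing idea is the paper's argument: use your generation argument before splitting, directly on $Z=\mathrm{Im}(\delta)$ with the surjection $\overline{\delta}:\Coker(s)\to Z$.
\begin{itemize}
\item The image of $V_{\ell+1}$ in $\Fr(Z)$ is $\BC\lambda v$. If the subsheaf $\langle v\rangle$ generated by $v$ were proper, the nonzero quotient $\Coker(s)\to Z\to Z/\langle v\rangle$ would have $V_{\ell+1}\to\Fr(Z/\langle v\rangle)$ equal to zero, contradicting (ii). Hence $W\otimes\CO_S\to Z(\kappa)$ is surjective.
\item Since $\delta(\kappa)$ multiplies the generator $v$ by $\lambda$, you get $\delta|_Z=\lambda\,\id_Z$.
\item So $\delta/\lambda$ is an idempotent with image exactly $Z$ and kernel $F'=\Ker(\delta)\supset\mathrm{Im}(s)$. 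This gives $F=F'\oplus Z$ with no Fitting decomposition, and your ``main obstacle'' disappears.
\end{itemize}

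Two smaller corrections:
\begin{itemize}
\item For $\ell$-stability of $(I_z\to F',V',\rho')$, the relevant fact is that $v$ lies in $\Fr(Z)$, not merely that it has nonzero $Z$-component. Hence $V'_{\ell+1}=V'_\ell$. Applying (ii) for $F$ to $F\to F'\to Y$ then gives $V'_\ell\to\Fr(Y)$ nonzero.
\item Canonicity should come from the stabilizer computation, not from a ``maximal $Z$''. Every nontrivial automorphism is $\id_{F'}\oplus\mu\,\id_Z$, so $\Ker(\varphi_1-\id)$ and $\mathrm{Im}(\varphi_1-\id)$ do not depend on $\varphi$. To prove this you must also kill the off-diagonal components $F'\to Z$ and $Z\to F'$; the same (i)/(ii) argument does it, using that $v$ generates $Z(\kappa)$.
\end{itemize}
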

\begin{proof}
Let $\varphi : (I,V,\rho) \to (I,V,\rho)$ be a non-trivial automorphism.
Let $\delta = \varphi_1 - \id$.
Then $\delta : F \to F$ factors as before as
$$\overline{\delta} : F/\mathrm{Im}(s) \twoheadrightarrow Z = \mathrm{Im}(\delta) \subset F\, .$$
We have $\Fr(\delta)(V_{\ell}) = 0$ by stability.
Moreover, $\Fr(\delta)(V_{\ell+1}) \to \Fr(Z)$ is non-zero, so injective (since $V_{\ell+1}/V_{\ell}$ is $1$-dimensional).
Let $W = V_{\ell+1} \cap \Fr(Z)$. Since $V_{\ell} \cap \Fr(Z) = 0$ by stability, and 
$$\Fr(\delta)(V_{\ell+1}) \to W \subset \Fr(Z)$$ is non-zero, we see that $W$ is $1$-dimensional.
The map $\Fr(\delta)$ sends $W$ to itself and hence acts by scaling by some $\lambda \in \BC^{\ast}$.
By condition (ii) of Definition~\ref{defn l l+1 stability}, the map $W \otimes \CO_S \to Z$ is surjective (the map $W \otimes \CO_S \to Z'$ must be non-zero for any quotient $Z \to Z'$). 
Hence, we see that $\delta/\lambda$ acts by the identity on $Z$,
so $\delta/\lambda$ is a projector onto the subsheaf $Z \subset F$.
In particular, if $F' = \mathrm{Ker}(\delta)$, then we have $F = F' \oplus Z$.

Since $\Fr(\delta)$ respects the filtration, we obtain a decomposition
$V'_i = \Fr(F') \cap V_i$, $V''_i = \Fr(Z) \cap V_i$ with corresponding morphisms $\rho'_i = \rho_i|_{V_i'}$ and $\rho''_i = \rho_i|_{V_i''}$.
Since $V'_\ell = V'_{\ell+1}$, we see that $(I_z \to F', V',\rho')$ is $\ell$-stable.
By construction $V''_{\ell+1} = W$ and $W \otimes \CO_S \to Z$ is surjective, so $Z$ is stable.
By $\ell$-stability of $(I_z \to F', V',\rho')$ every automorphism restricts to the identity on $F'$.
On $Z$, an automorphism must act faithfully on the $1$-dimensional vector space $W=V''_{\ell+1}$, so must act by scaling.
\end{proof}

\begin{prop} The following two properties hold:
\begin{enumerate}
\item[(i)] $\FN^{\mathrm{fl},\ell-\mathrm{st}}_{\alpha,(1,2,3,\ldots,d_{\Fr})}$ is a proper algebraic space,
\item[(ii)] $\FN^{\mathrm{fl},(\ell,\ell+1)-\mathrm{ss}}_{\alpha,(1,2,3,\ldots,d_{\Fr})}$ satisfies the 
existence part of the 
valuative criterion of properness.
\end{enumerate}
\end{prop}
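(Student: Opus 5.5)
The plan is to follow the proof of \cite[Lemma 3.2.5 and Proposition 3.2.7]{KLT}. Both statements reduce to the valuative criterion, together with finite type and separatedness in case (i). Two inputs are already available: representability of the $\ell$-stable locus by an algebraic space (shown above), and the existence part of the valuative criterion for $\FN_{\alpha}$ (Lemma~\ref{lemma:existence for FN}). Finite type should follow from the boundedness Lemma~\ref{lembound}, since $\FNfl_{\alpha,\mathbf{d}}$ is a flag bundle over $\FN_{\alpha}$ and $\FN_{\alpha}$ is of finite type once $m$ is fixed. What remains is to show that, over a DVR $R$ with generic point $\eta$ and closed point $\zeta$, a family over $\eta$ extends in an $\ell$-stable (respectively $(\ell,\ell+1)$-semistable) way, and that the extension is unique in the stable case.

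\textbf{Existence.} Start from a generic object $(s_\eta:\CI_\eta\to F_\eta, V_\eta)$. By Lemma~\ref{lemma:existence for FN} there is some extension $s:\CI\to F$ over $R$. The flag then extends uniquely by properness of flag varieties, because $\Fr(F)=\pi_*F(\kappa)$ is a vector bundle by $\kappa$-regularity. Next I modify $F$ at the special fiber by elementary transformations, Langton style, in two ways.

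If condition (i) fails, there is a $0$-dimensional $Z\subset F_\zeta$ with $V_{\ell}\cap\Fr(Z)\neq 0$. Replace $F$ by the kernel of $F\to F_\zeta\to F_\zeta/Z_{\max}$, where $Z_{\max}$ is the maximal destabilizing $0$-dimensional subsheaf.

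If condition (ii) fails, there is a quotient $\mathrm{Coker}(s_\zeta)\to Z$ such that $V_\ell$ (respectively $V_{\ell+1}$) maps to zero in $\Fr(Z)$. Take the kernel of $F\to F_\zeta\to Z$.

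Each modification keeps $F$ flat with the same Chern character and $s$ still factors through the new sheaf, because $V_\ell$ lies in the image of $s$. I then re-extend the flag and repeat. Termination should follow as in Langton's argument: I would attach an invariant, for instance the length of the $0$-dimensional torsion of $F_\zeta$ meeting $V_\ell$, or the length of the bad cokernel, and show it strictly decreases. If it could stall indefinitely, one would obtain an infinite descending chain of subsheaves of $F$, all agreeing with $F_\eta$ generically and differing by $0$-dimensional sheaves. Their union in the limit would produce a $0$-dimensional subsheaf of the generic fiber violating stability at $\eta$, which is a contradiction. The $(\ell,\ell+1)$ case uses the same procedure with $V_{\ell+1}$ in condition (ii).

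\textbf{Uniqueness in the stable case.} Given two $\ell$-stable extensions $(F,V)$ and $(F',V')$ that agree over $\eta$, consider the graph of the generic isomorphism. Clearing denominators gives a map $F\to F'$, normalized by a power of the uniformizer, whose special fiber is nonzero. Its kernel and cokernel at $\zeta$ are controlled by $0$-dimensional sheaves: the image is a quotient of $F_\zeta$ that is generically compatible with $s$. Condition (i) applied to $F'_\zeta$ and condition (ii) applied to $F_\zeta$, together with compatibility of the flags, should force the map at $\zeta$ to be an isomorphism, in the same way that the trivial-stabilizer argument above works. This is the usual statement that $\ell$-stable objects have no nontrivial maps factoring through $0$-dimensional sheaves.

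\textbf{Main obstacle.} I expect the hardest step to be termination of the elementary modifications in the existence argument. Conditions (i) and (ii) pull in opposite directions: fixing torsion can create cokernel and vice versa. I would handle this exactly as in \cite{KLT}, using a combined lexicographic invariant given by the pair (torsion length, cokernel length). For the semistable case (ii), only existence is claimed, so I would not need uniqueness, and objects with strictly semistable special fiber, as described in Lemma~\ref{lemma:splitting of l l+1 stability}, are allowed.
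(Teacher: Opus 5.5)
Your overall route is the paper's: extend via Lemma~\ref{lemma:existence for FN}, extend the flag by properness of the flag bundle, apply Langton-type modifications, and get finite type from Lemma~\ref{lembound}. Your uniqueness argument is also sound in substance. Take $k$ minimal such that $\pi^k\varphi_\eta$ extends. If $k>0$, the special fiber of the extension kills $\mathrm{Im}(s_\zeta)$, so its image is a $0$-dimensional quotient $Z$ of $\mathrm{Coker}(s_\zeta)$ sitting inside $F'_\zeta$; condition (ii) for $F_\zeta$ together with (i) for $F'_\zeta$ then gives a contradiction. If $k<0$, you would get $s'_\zeta=0$. This replaces the paper's appeal to ``$\tau_\mu$-semistable $=$ $\tau_\mu$-stable for $\mu\notin\BZ$''.

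The existence step, however, contains a step that fails. Your modification for a failure of (i) keeps $s$. But $s$ factors through $\ker(F\to F_\zeta\to F_\zeta/Z_{\max})$ only if $\mathrm{Im}(s_\zeta)\subset Z_{\max}$, and that is impossible because $\mathrm{Im}(s_\zeta)$ has $\ch_1=\beta\neq 0$. Your justification (``$V_\ell$ lies in the image of $s$'') has no meaning, since $V_\ell$ is a subspace of $\Fr(F_\zeta)$. The correct move is to use $\pi s$, which is isomorphic to $s$ over $\eta$ via $\varphi_1=\pi$. Equivalently, enlarge $F$ to some $F''\supset F$ with $F''/F\cong Z$. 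This transfers $Z$ from the torsion of the special fiber into $\mathrm{Coker}(s_\zeta)$.

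With this correction your termination argument no longer works. The (i)-moves enlarge $F$ by $0$-dimensional sheaves, while the (ii)-moves shrink it. So there is no monotone chain of subsheaves to pass to the limit in. Your invariant (torsion length, cokernel length) is also not shown to decrease: a (ii)-move embeds $Z$ into the torsion of the new special fiber, so the first coordinate can increase (for instance when $F_\zeta$ was pure). Termination is the entire content of both existence claims, so this is the gap.

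The idea you are missing is the paper's reformulation. Use the weak stability function $\tau_\mu(v,\mathbf d)=\mu-i_{\min}(\mathbf d)$ for $\rk v=0$ and $\tau_\mu(v,\mathbf d)=0$ for $\rk v=1$. Conditions (i) and (ii) say precisely that no rank-$0$ subobject is $\tau_\mu$-destabilizing (a $0$-dimensional $Z\subset F$ with its induced flag), and no rank-$0$ quotient is either (a quotient of $\mathrm{Coker}(s)$ with its induced flag). Hence $\ell$-stability is $\tau_\mu$-(semi)stability for $\ell<\mu<\ell+1$, and $(\ell,\ell+1)$-semistability is $\tau_{\ell+1}$-semistability. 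Both kinds of bad objects then become destabilizing data for a single weak stability condition. The Langton argument of \cite[Section 4.3]{KLT} then supplies the termination you were trying to build by hand.
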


\begin{proof}[Proof (Sketch)]
We closely follow the arguments of \cite[Sections 5.1.8 and 5.1.9]{KLT}.
First we define  a weak stability condition on pairs 
$({v},\mathbf{d}) \in H^{\ast}(S,\BQ) \times \BZ^{N}$ by
\[ \tau_{\mu}(v,\mathbf{d}) =
\begin{cases}
\mu - i_{\mathrm{min}}(\mathbf{d}) & \text{ if } \rk(v) = 0 \\
0 & \text{ if } \rk(v) = 1
\end{cases}
\]
where $\mu \in \BR$ is a stability parameter and $i_{\mathrm{min}}(\mathbf{d})$ is the smallest integer $i$ such that $d_i > 0$. An object $(E,V,\rho)$ in either $\FNfl$ or $\FQfl$ is $\tau_{\mu}$-semistable (respectively $\tau_{\mu}$-stable) if, for every short exact sequence $$F \to E \to G$$ of objects from these stacks (defined in the natural way), we have $\tau_{\mu}(F) \leq \tau_{\mu}(G)$ (respectively $\tau_{\mu}(F) < \tau_{\mu}(G)$).
We then check:
\begin{enumerate}
\item[$\bullet$]
for  $(Z,V,\rho) \in \FQfl$,  $\tau_{\mu}$-stability $\Leftrightarrow$ stability,
\item[$\bullet$] for $(I,V,\rho)\in\FN^{\mathrm{fl},\ell-\mathrm{st}}_{\alpha,(1,2,3,\ldots,d_{\Fr})}$, $\ell$-stability $\Leftrightarrow$
$\tau_{\mu}$-semistability $\Leftrightarrow$ $\tau_{\mu}$-stability for any $\ell < \mu < \ell+1$,
\item[$\bullet$] for  $(I,V,\rho)\in\FN^{\mathrm{fl},\ell-\mathrm{st}}_{\alpha,(1,2,3,\ldots,d_{\Fr})}$,
$(\ell,\ell+1)$-semistability  $\Leftrightarrow$ $\tau_{\mu}$ semistability for $\mu=\ell+1$.
\end{enumerate}

Consider now the existence part of the valuative criterion of properness for either 
$\FN^{\mathrm{fl},\ell-\mathrm{st}}_{\alpha,\mathbf{d}}$ or $\FN^{\mathrm{fl},(\ell,\ell+1)-\mathrm{st}}_{\alpha,\mathbf{d}}$. Both are open substacks of 
$\FNfl_{\alpha,\mathbf{d}}$.
Given a family of such objects over the generic point over a DVR,
by Lemma~\ref{lemma:existence for FN} the object can be extended to $\FNfl_{\alpha,\mathbf{d}}$ (extend the object of $\FN_{\alpha}$ and then use the properness of the relative flag space to extend the flag).
If the special fiber is not $\tau_{\mu}$-semistable, one can repeatedly apply elementary transformation in the style of Langton to finally end up with a $\tau_{\mu}$-semistable model.
This standard argument is discussed and explained in \cite[Section 4.3]{KLT}.

The separatedness of $\FN^{\mathrm{fl},\ell-\mathrm{st}}_{\alpha,\mathbf{d}}$ follows since for $\mu \in \BR \setminus \BZ$ semistability for objects in $\FNfl$ is equivalent to stability (see \cite[Lemma 4.3.2]{KLT}).
\end{proof}


\subsection{Master space}
The stack $\FN^{\mathrm{fl},(\ell,\ell+1)-\mathrm{ss}}_{\alpha,\mathbf{d}}$ 
interpolates between $\ell$ and $(\ell+1)$-stability, with the advantage of having only $\BC^{\ast}$-automorphisms. We construct a proper algebraic space that sits on top of
$\FN^{\mathrm{fl},(\ell,\ell+1)-\mathrm{ss}}_{\alpha,\mathbf{d}}$ and resolves the non-trivial stabilizers.
\begin{defn}
The master space $\BM_{\ell,\ell+1}$ is defined as the stack parameterizing tuples $$(I,(V_{-1},V), (\sigma,\rho_{-1},\rho))\, ,$$
\begin{itemize}
\item $(I,V,\rho)$ is an element of $\FNfl_{\alpha,(1,2,3,\ldots,d_{\Fr})}$ satisfying $(\ell,\ell+1)$-semistability,
\item $V_{-1}$ is a $1$-dimensional vector space,
\item $\rho_{-1}$ and $\sigma$ are linear maps,
\[ \rho_{-1} : V_{\ell+1}/V_{\ell} \to V_{-1}, \quad \sigma : \BC \to V_{-1}\, , \]
\end{itemize}
satisfying the following conditions:
\begin{enumerate}
\item[(i)] if $\rho_{-1} = 0$, then $(I,V,\rho)$ is $(\ell+1)$-stable,
\item[(ii)] if $\sigma = 0$, then $(I,V,\rho)$ is $\ell$-stable,
\item[(iii)] at least one of $\rho_{-1},\sigma$ is non-zero.
\end{enumerate}
An automorphism of $(I,(V_{-1},V), (\sigma,\rho_{-1},\rho))$ is given by an automorphism $\varphi$ of $I$ together with
an isomorphism $\psi : V_{-1} \to V_{-1}$ such that all maps commute
$$\rho_{-1} \circ \Fr(\varphi_1) = \psi \circ \rho_{-1}\, , \ \ \  \psi \circ \sigma = \sigma\, .$$
\end{defn}

\begin{prop}
$\BM_{\ell,\ell+1}$ is a proper algebraic space.
\end{prop}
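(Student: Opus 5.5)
I will prove that $\BM_{\ell,\ell+1}$ is a proper algebraic space in three steps: first that it is an algebraic stack of finite type, then that all its stabilizers are trivial, and finally that it satisfies both parts of the valuative criterion. This follows the template of Mochizuki's master space as implemented in \cite[Section 4]{KLT}.

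\emph{Algebraicity and finite type.} The data $(V_{-1},\sigma,\rho_{-1})$ over $\FN^{\mathrm{fl},(\ell,\ell+1)-\mathrm{ss}}_{\alpha,(1,2,\ldots,d_{\Fr})}$ is a point of the projective bundle $\p(\CO \oplus (\CV_{\ell+1}/\CV_\ell)^{\vee})$. The point corresponds to the line spanned by $(\sigma,\rho_{-1})$, where the scaling by $\psi$ is used to identify $V_{-1}$ with $\BC$; condition (iii) says exactly that the pair is a point of this projective bundle. Conditions (i) and (ii) cut out an open substack, because $\ell$-stability and $(\ell+1)$-stability are open. Hence $\BM_{\ell,\ell+1}$ is an open substack of a projective bundle over an algebraic stack. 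Boundedness of the underlying $I$ follows from Lemma~\ref{lembound}, so $\BM_{\ell,\ell+1}$ is of finite type.

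\emph{Trivial stabilizers.} Let $(\varphi,\psi)$ be an automorphism. The condition $\psi\circ\sigma=\sigma$ forces $\psi=\id$ when $\sigma\neq 0$. In that case $\rho_{-1}\circ\Fr(\varphi_1)=\rho_{-1}$. If $(I,V,\rho)$ is $\ell$-stable, $\varphi$ is trivial by the Proposition showing $\ell$-stable loci are algebraic spaces, so we may assume $I$ has the nontrivial $\BC^*$ stabilizer described in Lemma~\ref{lemma:splitting of l l+1 stability}. There $\varphi_1$ scales $Z$ by some $\lambda$, and therefore acts on $V_{\ell+1}/V_\ell\cong W$ by $\lambda$. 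Condition (ii) forces $\rho_{-1}\neq 0$ when $I$ is not $\ell$-stable, so $\lambda=1$. If instead $\sigma=0$, then $I$ is $\ell$-stable, so $\varphi=\id$; then $\rho_{-1}\neq 0$ forces $\psi=\id$. An algebraic stack of finite type with trivial stabilizers is an algebraic space.

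\emph{Valuative criterion, the main obstacle.} Separatedness and existence are the real issue. For existence, take a DVR family over the generic point. Using the existence part of the valuative criterion for $\FN^{\mathrm{fl},(\ell,\ell+1)-\mathrm{ss}}$ (the Proposition above), I first extend $(I,V,\rho)$ after a finite base change. I then extend $[\sigma:\rho_{-1}]$ using properness of $\p^1$-bundles. If the limit violates (i) or (ii), I will use the $\BC^*$-action on $I$ via $Z$: by Lemma~\ref{lemma:splitting of l l+1 stability} the limit splits as $F'\oplus Z$, and rescaling the $Z$-summand by a power of the uniformizer changes the limit of $[\sigma:\rho_{-1}]$. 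This is the standard argument that the master space is the proper $\BC^*$-quotient interpolating the two stability conditions. Choosing the power appropriately lands in the allowed locus, exactly as in \cite[Section 4.4]{KLT}.

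For uniqueness, I will compare two extensions. Where $\sigma\neq0$ generically, the $\ell$ side is separated by the separatedness of $\ell$-stable moduli. Otherwise I argue on the graded objects $F'\oplus Z$, using that the automorphism group is exactly the $\BC^*$ scaling $Z$, and that this $\BC^*$ acts with weight one on $\rho_{-1}$. This is the step I expect to need the most care: one has to check that the rescaling ambiguity is exactly absorbed by $\psi$. For that check I will follow \cite[Proposition 4.4.3]{KLT} nearly verbatim.
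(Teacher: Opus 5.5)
Your overall structure is the paper's: trivial automorphism groups give an algebraic space, and properness comes from a Langton-type argument as in \cite{KLT}, which the paper locates in Sections 5.2.3--5.2.4 there. However, the existence step as you describe it rests on a false claim. Suppose the extended limit $(I_0,V_0,\rho_0)$ has $\rho_{-1}=0$ at the special point but is not $(\ell+1)$-stable. Then all you know is that some $0$-dimensional $Z\subset F_0$ has $V_{\ell+1}|_0\cap\Fr(Z)$ one-dimensional. Lemma~\ref{lemma:splitting of l l+1 stability} gives no splitting here, because it only applies to objects with a non-trivial automorphism. Such a limit can be a non-split extension $0\to Z\to F_0\to F_0'\to 0$ with trivial stabilizer (for instance an $\ell$-stable one). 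The same happens for condition (ii) with a destabilizing quotient $F_0/\mathrm{Im}(s_0)\to Z$. Moreover, even a split special fiber does not make the family over the DVR split, so ``rescaling the $Z$-summand by a power of the uniformizer'' is not defined in general.

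What replaces your rescaling is the elementary modification along the destabilizing object. Let $\varpi$ be a uniformizer.
\begin{enumerate}
\item[(a)] For a bad sub $Z\subset F_0$, pick $v\in V_{\ell+1}$ whose restriction spans $V_{\ell+1}|_0\cap\Fr(Z)$, and replace $F_R$ by $\varpi^{-1}\ker(F_R\to F_0/Z)$. This still receives $s$ and leaves $V_\ell$ unchanged, using $V_\ell|_0\cap\Fr(Z)=0$. It replaces the generator $v$ of $V_{\ell+1}/V_\ell$ by $\varpi^{-1}v$, so the order of vanishing of $\rho_{-1}/\sigma$ drops by one.
\item[(b)] For a bad quotient $F_0/\mathrm{Im}(s_0)\to Z$, replace $F_R$ by $\ker(F_R\to Z)$. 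This replaces $v$ by $\varpi v$.
\end{enumerate}
You must then still show that the modified special fibers stay $(\ell,\ell+1)$-semistable, or otherwise that the process terminates in the allowed locus. This fails without care when $F_0$ has both a bad sub and a bad quotient. That control is exactly what the weak stability function and Langton algorithm invoked in the paper supply. Your rescaling is the special case of a globally split family.

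There are also two smaller slips.
\begin{enumerate}
\item[(1)] In the stabilizer argument, it is condition (i), not (ii), that forces $\rho_{-1}\neq 0$. A split object has $V_{\ell+1}\cap\Fr(Z)=W\neq 0$ and is therefore not $(\ell+1)$-stable. Condition (ii) only forces $\sigma\neq 0$.
\item[(2)] In the uniqueness step, the case split is inverted. It is $\sigma\equiv 0$ on the generic point, hence on the whole family, that puts both extensions in the separated locus $\{\sigma=0\}\cong\FN^{\mathrm{fl},\ell-\mathrm{st}}_{\alpha}$; similarly $\rho_{-1}\equiv 0$ puts them in the $(\ell+1)$-stable locus. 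The genuinely new case, where $\sigma$ and $\rho_{-1}$ are both generically non-zero, you defer entirely to \cite{KLT}. That is no less detailed than the paper, but it is where the real argument lives.
\end{enumerate}
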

\begin{proof} The objects are easily seen to have only trivial automorphisms groups, hence $\BM_{\ell,\ell+1}$ is an algebraic space.
For properness, we again define a weak stability function
and use a Langton style algorithm, see \cite[Sections 5.2.3,5.2.4]{KLT}.
\end{proof}

\subsection{Perfect obstruction theory}
We start with general remarks about the relative tangent bundles of flag bundles which are well-known.

\begin{lemma} \label{lemma:relative tangent flag bundle}
Let $E$ be a locally free sheaf on a scheme/stack/derived stack $X$.
Let $0 \leq \ell \leq N$ be fixed.
\begin{enumerate}
\item[(a)] Let $\Pi : \mathrm{Fl}(E) \to X$ be a flag bundle parameterizing flags of subspaces $$V_1 \hookrightarrow \ldots \hookrightarrow V_{N} \hookrightarrow V_{N+1} = E_x$$ for points $x \in X$. Then, $\Pi$ is smooth with relative tangent bundle 
\[ T_{\Pi} = \mathrm{Cone}\left( \bigoplus_{i=1}^{N} \hom(\CV_i,\CV_i) \to \bigoplus_{i=1}^{N} \hom(\CV_i,\CV_{i+1}) \right)\,, \]
where $\CV_i$ are the universal bundles which fibers $V_i$ (in particular, $\CV_{N+1} = \Pi^{\ast}(E)$, and the complex of which we take the cone is in degree $[-1,0]$).
\item[(b)] Let $\widetilde{\Pi} : \widetilde{\mathrm{Fl}}(E) \to X$ be the bundle parameterizing triples $(V, V_{-1}, \rho_{-1},\sigma)$, where 
\begin{itemize}
\item $V=(V_1 \subset \ldots \subset V_{N} \subset E_x)$ is an object of $\mathrm{Fl}(E)$,
\item $V_{-1}$ is a $1$-dimensional vector space,
\item $\rho_{-1}$ and $\sigma$ are linear maps $\rho_{-1} : V_{\ell+1} / V_{\ell} \to V_{-1}$ and $\sigma : \BC \to V_{-1}$.
\end{itemize}
Then, $\widetilde{\Pi}$ is smooth with relative tangent bundle
\[ T_{\widetilde{\Pi}} = \mathrm{Cone}
\left( \hom(\CV_{-1},\CV_{-1}) \bigoplus_{i=1}^{N} \hom(\CV_i,\CV_i) \to \hom(\CV_{\ell+1}/\CV_{\ell},\CV_{-1}) \oplus \hom(\CO,\CV_{-1}) \bigoplus_{i=1}^{N} \hom(\CV_i,\CV_{i+1}) \right)\, .
\]
\end{enumerate}
\end{lemma}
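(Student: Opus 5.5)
Both parts are standard facts about flag bundles. I would prove them by describing the bundles as quotient stacks, or iterated Grassmannian bundles, and computing tangent complexes through the deformation theory of the linear-algebra data. Smoothness is local on $X$, so I may assume $E \cong \CO_X^{\oplus r}$ is trivial.

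\emph{Part (a).} The flag bundle $\mathrm{Fl}(E)$ is the quotient of an open substack of the total space
\[ A = \bigoplus_{i=1}^{N} \hom(\CV_i,\CV_{i+1}) \]
by the group $\prod_{i=1}^N GL(V_i)$. The open condition is that each $\rho_i$ is injective. The group acts by $g\cdot(\rho_i) = (g_{i+1}\rho_i g_i^{-1})$, with $g_{N+1}=\id$. The tangent complex of such a quotient $[U/G] \to X$ is the two-term complex
\[ \mathfrak{g} \to T_{U/X}, \]
placed in degrees $[-1,0]$. The map is the infinitesimal action: it sends $(\xi_i)$ to $(\xi_{i+1}\rho_i - \rho_i\xi_i)$. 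This is exactly the cone in the statement. Since the $\rho_i$ are injective, the differential is injective on fibers: if $\xi_{N+1}=0$, then $\xi_{N}\rho_{N-1}$ and so on vanish inductively. Hence the stabilizers are trivial and $\mathrm{Fl}(E)$ is an algebraic space. Moreover, $U \to X$ is smooth, being an open subset of a vector bundle, and $G$ is smooth, so $\Pi$ is smooth. The only bookkeeping is to check that the sign convention for the action map matches the $\delta$ used in Theorem~\ref{thm:pot on nested quot scheme}; either sign gives an isomorphic cone.

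\emph{Part (b).} I would factor $\widetilde\Pi$ as
\[ \widetilde{\mathrm{Fl}}(E) \to \mathrm{Fl}(E) \to X. \]
The first map is a quotient
\[ \big[\big(\hom(\CV_{\ell+1}/\CV_\ell,\CV_{-1}) \oplus \hom(\CO,\CV_{-1})\big)/GL(V_{-1})\big], \]
taken relative to $\mathrm{Fl}(E)$. Here $\psi \in GL(V_{-1})$ acts by $(\rho_{-1},\sigma) \mapsto (\psi\rho_{-1},\psi\sigma)$. No open condition is imposed in this general lemma, so this is a vector bundle stack quotient. It is smooth, with relative tangent complex
\[ \hom(\CV_{-1},\CV_{-1}) \to \hom(\CV_{\ell+1}/\CV_\ell,\CV_{-1}) \oplus \hom(\CO,\CV_{-1}). \]
The differential is $\xi_{-1}\mapsto(\xi_{-1}\rho_{-1},\xi_{-1}\sigma)$.

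Combining the exact triangle of tangent complexes for the composition with part (a) gives the stated cone. The combined differential is block upper-triangular: the $\xi_i$ with $i \in \{\ell,\ell+1\}$ also act on $\rho_{-1}$ through the induced map on $V_{\ell+1}/V_\ell$. The cone of this differential is quasi-isomorphic to the iterated extension, so it has the asserted form. One could equally present $\widetilde{\mathrm{Fl}}(E)$ directly as a single quotient of an open subset of the total bundle by $GL(V_{-1})\times\prod_i GL(V_i)$ and read off the same complex.

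The main obstacle is only bookkeeping. First, $\CV_{\ell+1}/\CV_\ell$ is a bundle, so the term $\hom(\CV_{\ell+1}/\CV_\ell,\CV_{-1})$ deforms correctly with the flag. Second, the degree convention $[-1,0]$ must be fixed consistently. Smoothness is immediate in both parts, since every piece is an open subset of a vector bundle quotiented by a smooth group.
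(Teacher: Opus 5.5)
The paper states this lemma as well known and gives no proof, so your argument has to stand on its own.

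\textbf{Part (a).} This is fine. One small repair: trivial stabilizers follow from running the same induction at the group level ($\rho_N g_N^{-1}=\rho_N$ forces $g_N=\id$, and so on). Injectivity of the infinitesimal action alone does not give this.

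\textbf{Part (b), what works.} Your factorization $\widetilde{\mathrm{Fl}}(E)\xrightarrow{q}\mathrm{Fl}(E)\xrightarrow{\Pi}X$ correctly gives smoothness, the complex $T_q$, and the triangle $T_q\to T_{\widetilde\Pi}\to q^*T_\Pi$. Hence it also gives the $K$-theory class.

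\textbf{Part (b), the gap.} The gap is the last step, ``combining the triangle gives the stated cone.'' A triangle only exhibits $T_{\widetilde\Pi}$ as an extension. Here that extension is genuinely non-split, so identifying it with the cone of a map between the displayed direct sums needs a specific off-diagonal entry and an argument for it.

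Here is an example. Take $X=\mathrm{pt}$, $E=\BC^2$, $N=\ell=1$, $\dim V_1=1$. On the locus $\{\sigma\neq 0\}$ one has $\widetilde{\mathrm{Fl}}(E)\cong \mathrm{Tot}(\CO_{\p^1}(-1))=\mathrm{Bl}_0\BA^2$. There the block-diagonal cone $T_q\oplus q^*T_\Pi$ becomes $p^*\CO(-1)\oplus p^*\CO(2)$.
\begin{itemize}
\item $H^0(\Omega_{\mathrm{Bl}_0\BA^2})=\BC[x,y]\,dx\oplus \BC[x,y]\,dy$ is free over $\BC[x,y]$.
\item The dual of $p^*\CO(-1)\oplus p^*\CO(2)$ has global sections $\cong \BC[x,y]\oplus (x,y)$, which is not free.
\end{itemize}
So the block-diagonal cone is not the tangent complex.

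\textbf{Your justifications for the off-diagonal term.} Neither works as written.
\begin{itemize}
\item An arbitrary $\xi_{\ell+1}\in\mathrm{End}(V_{\ell+1})$ need not preserve $V_\ell$, so there is no ``induced map on $V_{\ell+1}/V_\ell$.''
\item The single quotient is not of an open subset of a total space. Upstairs, $\rho_{-1}$ ranges over $\{\rho_{-1}\in\Hom(V_{\ell+1},V_{-1}) : \rho_{-1}\circ\rho_\ell=0\}$, a closed condition that moves with $\rho_\ell$. That $\CV_{\ell+1}/\CV_\ell$ moves with the flag is exactly the source of the twist, not harmless bookkeeping.
\end{itemize}

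\textbf{The fix.} Use $\dim V_{\ell+1}/V_\ell=1$, which holds for the dimension vectors $(1,2,\ldots,d_{\Fr})$ used in the appendix. Then $\rho_\ell$ induces a $GL(V_\ell)\times GL(V_{\ell+1})$-equivariant isomorphism $V_{\ell+1}/\mathrm{im}\,\rho_\ell\cong \det V_{\ell+1}\otimes(\det V_\ell)^{-1}$. Locally on $X$, $\widetilde{\mathrm{Fl}}(E)$ is then the quotient of the open subset $U\times \Hom(\det V_{\ell+1}\otimes(\det V_\ell)^{-1},V_{-1})\times V_{-1}$ of a linear space by $GL(V_{-1})\times\prod_i GL(V_i)$. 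Its relative tangent complex is the cone of
\[
(\xi_{-1},(\xi_i)_i)\longmapsto \bigl((\xi_{-1}-\tr\,\xi_{\ell+1}+\tr\,\xi_\ell)\rho_{-1},\ \xi_{-1}\sigma,\ (\xi_{i+1}\rho_i-\rho_i\xi_i)_i\bigr)\qquad(\xi_{N+1}=0),
\]
which has exactly the terms in the statement. The trace term is the correct version of your ``$\xi_\ell,\xi_{\ell+1}$ act through the induced map.'' In the example above it is precisely what produces the non-split extension.
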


By the arguments of Theorem~\ref{thm:pot on nested quot scheme}, there is a perfect obstruction theory $E_{\FN_{\alpha}}^{\bullet} \to \BL_{\FN_{\alpha}}$ with virtual tangent bundle
\[
(E^{\bullet}_{\FN_{\alpha}})^{\vee} \cong \mathrm{Cone}
\left( R\hom_{\pi}(\BI^{\bullet},\BI^{\bullet}) \oplus s^{\ast} T_{S^{[n]}}[-1] \to R \hom_{\pi}( \BI^{\bullet}, \CI_z ) \right)\, ,
\]
where $s : \FN_{\alpha} \to S^{[n]}$ is the map sending $[\varphi : I_z \to F]$ to $I_z$,
and $\BI^{\bullet} = [\CI_z \xrightarrow{\Phi} \CF]$ is the universal quotient.
The proof in Theorem~\ref{thm:pot on nested quot scheme}
shows that $(E^{\bullet}_{\FN_{\alpha}})^{\vee}$
is of amplitude $[-1,1]$.

We have the following consequences:

\begin{prop} 
The moduli space $M = \FN^{\mathrm{fl},\ell-\mathrm{st}}_{\alpha,\mathbf{d}}$ has a perfect obstruction theory $E^{\bullet} \to \BL_M$ which fits into the morphism of distinguished triangles
\begin{equation} \label{compatibility diagram}
\begin{tikzcd}
\Pi^{\ast} E^{\bullet}_{\FN_\alpha} \ar{r} \ar{d} & E^{\bullet} \ar{d} \ar{r} & \BL_{\Pi} \ar{d} \ar{r}{{[1]}} & \ldots \\
\Pi^{\ast} \BL_{\FN_{\alpha}} \ar{r} & \BL_{M} \ar{r} & \BL_{\Pi} \ar{r}{{[1]}} & \ldots
\end{tikzcd}
\end{equation}
In particular, the associated virtual classes in case $\ell=0$ and $\ell=N+1$ are:
\begin{equation} \label{virtual class pullback}
[ \FN^{\mathrm{fl},0-\mathrm{st}}_{\alpha,\mathbf{d}} ]^{\vir}
= \Pi^{\ast} [ \Quot^{\flat}_{\alpha} ]^{\vir},
\quad
[ \FN^{\mathrm{fl},(N+1)-\mathrm{st}}_{\alpha,\mathbf{d}} ]^{\vir}
= \Pi^{\ast} [ \Quot^{\sharp}_{\alpha} ]^{\vir}\, .
\end{equation}
\end{prop}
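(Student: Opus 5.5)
The plan is to build the obstruction theory on $M=\FN^{\mathrm{fl},\ell-\mathrm{st}}_{\alpha,\mathbf{d}}$ by lifting $E^\bullet_{\FN_\alpha}$ along the smooth flag bundle morphism $\Pi : M \to \FN_\alpha$. The resulting obstruction theory is $E^\bullet := \Cone(\BL_\Pi[-1]\to \Pi^\ast E^\bullet_{\FN_\alpha})$, taken with respect to the composition $\BL_\Pi[-1]\to \Pi^*\BL_{\FN_\alpha}\to \Pi^* E^\bullet_{\FN_\alpha}$. To make this canonical rather than ad hoc, I would work at the derived level, following the proof of Theorem~\ref{thm:pot on nested quot scheme}. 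The stack $\FN_\alpha$ is the classical truncation of an open substack of $\Perf^{[1]}(S)_{/S^{[n]}}$, and the flag bundle of Lemma~\ref{lemma:relative tangent flag bundle}(a) for the bundle $\Fr$ makes sense over this derived stack, because $\Fr(\CF)=\pi_\ast(\CF(\kappa))$ is locally free there by $\kappa$-regularity. I would therefore define $\BR M$ as the open locus of $\ell$-stable objects in the derived flag bundle over $\BR\FN_\alpha$.

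Since $\BR M \to \BR\FN_\alpha$ is smooth, the transitivity triangle of cotangent complexes gives
$$\Pi^\ast\BL_{\BR\FN_\alpha}\to \BL_{\BR M}\to \BL_{\Pi}\xrightarrow{[1]}.$$
Restricting along the truncation $M\hookrightarrow \BR M$ gives the top row of \eqref{compatibility diagram}, with $E^\bullet=\BL_{\BR M}|_M$. The bottom row is the classical transitivity triangle, and functoriality of truncation supplies the vertical maps; the right vertical map is the identity, because $\BL_\Pi$ is the same whether $\Pi$ is viewed derived or classically, as the flag bundle is a base change.

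Next I would verify that $E^\bullet\to \BL_M$ is an obstruction theory. This follows from the five lemma on $h^0$ and $h^{-1}$ applied to \eqref{compatibility diagram}, using that $\Pi^*E^\bullet_{\FN_\alpha}\to\Pi^*\BL_{\FN_\alpha}$ is an obstruction theory and $\BL_\Pi$ is locally free in degree $0$. Perfectness is the step I expect to be the main obstacle. The complex $E^\bullet_{\FN_\alpha}$ has amplitude $[-1,1]$ in dual terms, so a possible $h^{-1}((E^\bullet)^\vee)$ coming from infinitesimal automorphisms must be shown to vanish on the $\ell$-stable locus.

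To handle this, I would take the long exact sequence of $T_\Pi\to (E^\bullet)^\vee\to \Pi^*(E^\bullet_{\FN_\alpha})^\vee$. The group $h^{-1}$ of the last term consists of endomorphisms $\delta$ of $I$ with $\delta\circ s=0$, i.e.\ morphisms $F\to F$ factoring through $\Coker(s)$. Its image under the connecting map to $h^0(T_\Pi)$ is the induced infinitesimal deformation of the flag. Running the automorphism argument of the preceding Proposition at first order, with $\ell$-stability conditions (i) and (ii), shows this map is injective, so $h^{-1}((E^\bullet)^\vee)=0$.

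For the top degree, $h^{2}$ of $(E^\bullet)^\vee$ vanishes because the complex is built from terms of amplitude at most $[0,1]$ after this cancellation; this is where the Serre duality argument of the perfectness theorem for Quot schemes enters. That argument requires $\omega_S\cong\CO_S$, but it is the same argument already given there, now applied to $R\hom_\pi(\BI^\bullet,\BI^\bullet)$.

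Finally, I would derive \eqref{virtual class pullback}. For $\ell=0$ and $\ell=N+1$ the preceding Remark identifies $M$ with a flag bundle over $\Quot^{\flat}_\alpha$, respectively $\Quot^{\sharp}_\alpha$. On these loci the map $\Coker(s)\to F$ vanishes on the relevant objects, so $E^\bullet_{\FN_\alpha}$ restricts to the perfect obstruction theory of Corollary~\ref{lemma:pot reduction on Quot}. The diagram \eqref{compatibility diagram} is then a compatibility datum for the smooth morphism $\Pi$ in the sense of Behrend–Fantechi, whose functoriality gives $[M]^{\vir}=\Pi^\ast[\Quot^\star_\alpha]^{\vir}$ by flat pullback.
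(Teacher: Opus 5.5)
Your proposal is correct and follows essentially the same route as the paper. Both take the flag bundle over the derived enhancement of $\FN_\alpha$, restrict its transitivity triangle to the classical truncation, and use the Behrend--Fantechi compatibility datum for the smooth morphism $\Pi$ to pull back virtual classes. Your explicit check that $h^{-1}((E^\bullet)^\vee)=0$, via the infinitesimal version of the trivial-stabilizer argument, spells out what the paper leaves implicit. Note, though, that your initial ``ad hoc'' cone is ill-defined, since there is no map $\Pi^*\BL_{\FN_\alpha}\to\Pi^*E^\bullet_{\FN_\alpha}$; this is exactly why the derived construction you then adopt is needed.
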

\begin{proof}
The perfect obstruction theory on 
$\FN_{\alpha}$
is induced by a natural derived enhancement of the stack, as constructed in
the proof in Theorem~\ref{thm:pot on nested quot scheme}.
The relative flag bundle over the derived enhancement of  $\FN_{\alpha}$
hence is a derived enhancement of
$\FN^{\mathrm{fl},\ell-\mathrm{st}}_{\alpha,\mathbf{d}}$.
The distinguished triangle relating the cotangent complexes of the flag bundle, the base, and the fibers of these derived enhancements then yields the first row in \eqref{compatibility diagram}.
Restricting to the classical truncation yields the desired perfect obstruction theory and diagram \eqref{compatibility diagram}.
For the second part, \eqref{compatibility diagram} defines a compatibility datum in the sense of \cite[Definition 5.8]{BF}. Hence the claim follows by \cite[Proposition 5.10]{BF}. 
\end{proof}

The formulas of  \eqref{virtual class pullback} are actually an empty statements since 
both sides of the equalities vanish.
The virtual classes of $\Quot^{\flat}, \Quot^{\sharp}$ both vanish due to everywhere surjective cosections $(E^{\bullet}_{\FN_\alpha})^{\vee} \to \CO[-1]$, see Lemma~\ref{lemma:pot reduction on Quot}. These cosections pull back to natural cosections on the perfect obstruction theory of $\FN^{\mathrm{fl},\ell-\mathrm{st}}_{\alpha,\mathbf{d}}$. 
However, after reduction, we obtain the non-trivial identities:
\begin{equation} \label{reduced virtual class pullback}
[ \FN^{\mathrm{fl},0-\mathrm{st}}_{\alpha,\mathbf{d}} ]^{\red}
= \Pi^{\ast} [ \Quot^{\flat}_{\alpha} ]^{\red},
\quad
[ \FN^{\mathrm{fl},(N+1)-\mathrm{st}}_{\alpha,\mathbf{d}} ]^{\red}
= \Pi^{\ast} [ \Quot^{\sharp}_{\alpha} ]^{\red}.
\end{equation}

\begin{prop} \label{label:prop obst thy}
The master space $\BM_{\ell,\ell+1}$ has a perfect obstruction theory $E^{\bullet} \to \BL_{\BM_{\ell,\ell+1}}$ with virtual tangent bundle fitting into the distinguished triangle
\[ 
\begin{tikzcd}
T_{\widetilde{\Pi}} \ar{r} & (E^{\bullet})^{\vee} \ar{r} & 
\Pi^{\ast} (E^{\bullet}_{\FN_\alpha})^{\vee}
\ar{r}{{[1]}} & \ldots\, ,
\end{tikzcd}
\]
where $\widetilde{\Pi} : \BM_{\ell,\ell+1} \to \FN_{\alpha}$ is the associated smooth projection.
\end{prop}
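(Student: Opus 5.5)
The plan is to mimic the construction of the obstruction theory on $\FN^{\mathrm{fl},\ell-\mathrm{st}}_{\alpha,\mathbf{d}}$, replacing the flag bundle $\Pi$ by the bundle $\widetilde{\Pi}$ of Lemma~\ref{lemma:relative tangent flag bundle}(b). Let $\BR\FN_\alpha$ be the derived enhancement of $\FN_\alpha$ obtained, as in the proof of Theorem~\ref{thm:pot on nested quot scheme}, as an open substack of the fiber product $\Perf^{[1]}(S)_{/S^{[n]}}$. The universal sheaf $\CF$ has vanishing higher cohomology after twisting by $\CO_S(\kappa)$ on the relevant locus. Hence $\Fr = \pi_\ast(\CF(\kappa))$ is a vector bundle on $\BR\FN_\alpha$ (derived pushforward concentrated in degree zero, compatible with base change).

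We then form the derived bundle $\widetilde{\mathrm{Fl}}(\Fr) \to \BR\FN_\alpha$ parameterizing $(V,V_{-1},\rho_{-1},\sigma)$. This is a quotient of an affine bundle by $\prod_i GL(V_i)\times \BG_m$, and is smooth over the base. Conditions (i)--(iii) defining $\BM_{\ell,\ell+1}$ are open: (iii) is nonvanishing of a section, while (i) and (ii) are the $(\ell+1)$-stability and $\ell$-stability conditions on the closed loci $\rho_{-1}=0$ and $\sigma=0$, whose complements in the semistable locus are closed. So $\BM_{\ell,\ell+1}$ is the classical truncation of an open derived substack $\BR\BM$.

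For this derived stack, the cotangent complexes satisfy the triangle $\widetilde{\Pi}^\ast\BL_{\BR\FN_\alpha} \to \BL_{\BR\BM} \to \BL_{\widetilde{\Pi}}$. Lemma~\ref{lemma:relative tangent flag bundle}(b) identifies $\BL_{\widetilde\Pi}^\vee = T_{\widetilde\Pi}$. Restricting to the classical truncation gives $E^\bullet := \BL_{\BR\BM}|_{\BM_{\ell,\ell+1}} \to \BL_{\BM_{\ell,\ell+1}}$. This is an obstruction theory, because restriction of the cotangent complex of a quasi-smooth derived enhancement always is. Dualizing the triangle yields the stated triangle $T_{\widetilde\Pi}\to (E^\bullet)^\vee \to \Pi^\ast(E^\bullet_{\FN_\alpha})^\vee$.

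It remains to show perfectness, i.e.\ that $(E^\bullet)^\vee$ has amplitude $[0,1]$. This is the main obstacle, since $(E^\bullet_{\FN_\alpha})^\vee$ has amplitude $[-1,1]$: its $h^{-1}$ consists of infinitesimal automorphisms of $I=(s:I_z\to F)$, namely maps $\delta:F\to F$ killing $\mathrm{Im}(s)$, which factor through the $0$-dimensional sheaf $\Coker(s)$. The term $T_{\widetilde\Pi}$ has amplitude $[-1,0]$, so $h^1$ causes no trouble, and we must show $h^{-1}((E^\bullet)^\vee)=0$ pointwise. From the long exact sequence, $h^{-1}$ is the kernel of $h^{-1}(\Pi^\ast E^\vee_{\FN_\alpha}) \oplus h^{-1}(T_{\widetilde{\Pi}}) \to (\text{degree-}0\text{ flag data})$. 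Concretely, it is the space of infinitesimal automorphisms $(\delta,\psi)$ of the tuple in $\BM_{\ell,\ell+1}$: endomorphisms $\delta$ of $I$ and $\psi$ of $V_{-1}$ preserving the flags and commuting with $\rho_{-1},\sigma$. This is the Lie algebra of the (trivial) automorphism group. We rerun the argument of Lemma~\ref{lemma:splitting of l l+1 stability} at the Lie algebra level. Any $\delta$ preserving the flag is zero unless the object splits as $F'\oplus Z$, in which case $\delta=\lambda\,\mathrm{pr}_Z$. The compatibilities $\psi\sigma=0$ and $\rho_{-1}\Fr(\delta)=\psi\rho_{-1}$, together with condition (iii), then force $\psi=0$. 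Conditions (i)/(ii) force $\lambda=0$: if $\sigma\ne0$ then $\psi=0$ and $\rho_{-1}$ must vanish on the nonzero image of $\lambda$, so $\rho_{-1}=0$, contradicting the splitting by (i); if $\sigma=0$ we use (ii). Hence $h^{-1}=0$ and $E^\bullet$ is perfect.
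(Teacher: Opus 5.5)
Your proposal is correct and is essentially the paper's proof: you realize $\BM_{\ell,\ell+1}$ as an open substack of the bundle $\widetilde{\mathrm{Fl}}(\CW)$, with $\CW=\pi_{\ast}(\CF(\kappa))$, over the derived enhancement of $\FN_\alpha$, and the stated triangle is the dual of the resulting triangle of cotangent complexes.

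The only difference is perfectness. The paper leaves it implicit: it comes for free from $\BM_{\ell,\ell+1}$ being an algebraic space, since $h^{-1}$ of the virtual tangent bundle is the Lie algebra of the trivial stabilizer. You instead check it by hand via the infinitesimal version of Lemma~\ref{lemma:splitting of l l+1 stability}, and that check works. The one point you should add is that the scalar $\lambda$ is nonzero whenever $\mathrm{Im}(\delta)\neq 0$: the map $\Fr(\delta)|_{V_{\ell+1}}$ has kernel exactly $V_\ell$, while $V_{\ell+1}\cap\Fr(\mathrm{Im}\,\delta)$ meets $V_\ell$ trivially.
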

\begin{proof}
Let $\CW = \pi_{\ast}(F(\kappa))$ be the vector bundle on $\FN_{\alpha}$ with fiber $H^0(F(k))$ over a moduli point $$(s : I \to F)\in \FN_\alpha\, .$$ Then, $\BM_{\ell,\ell+1}\subset \widetilde{\mathrm{Fl}}(\CW)$ is an open subset.
The derived enhancement of $\FN_{\alpha}$ hence gives a natural derived enhancement of $\BM_{\ell,\ell+1}$ by
restriction from $\widetilde{\mathrm{Fl}}(\CW)$.
\end{proof}

As before, the cosection on the obstruction theory $E^{\bullet}_{\FN_\alpha} \to \BL_{\FN_{\alpha}}$ pulls back to a cosection of the perfect obstruction theory of $\BM_{\ell,\ell+1}$ and  defines a reduced virtual class.

\begin{prop}
Let $M=\FQ^{\mathrm{pl},\mathrm{st}}_{n,\mathbf{d}}$ and let
$\Pi : M \to S^{[n]}$ be the forgetful morphism.
There is a perfect obstruction theory $E^{\bullet} \to \BL_{M}$ with virtual tangent bundle fitting into the distinguished triangle
\[
\begin{tikzcd}
T_{\widetilde{\Pi}} \ar{r} & (E^{\bullet})^{\vee} \ar{r} & R\Hom_{\rho}(\CI_{\CZ},\CO_{\CZ})^{\vee} \ar{r}{{[1]}} & \ldots\, ,
\end{tikzcd}
\]
where $\rho : S^{[n]} \times S \to S^{[n]}$ is the projection
and $\CI_{\CZ}$ is the ideal sheaf of the universal subscheme $\CZ \subset S^{[n]} \times S$.
\end{prop}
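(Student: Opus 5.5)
The plan is to build the obstruction theory on $M=\FQ^{\mathrm{pl},\mathrm{st}}_{n,\mathbf{d}}$ from a derived enhancement, exactly as in the proofs of Theorem~\ref{thm:pot on nested quot scheme} and Proposition~\ref{label:prop obst thy}. The difference from those cases is that here the base will be the Hilbert scheme $S^{[n]}$, with its natural derived structure, rather than $\FN_{\alpha}$.

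\textbf{Step 1: realize $M$ as a flag bundle over $S^{[n]}$.} By the preceding lemma, $M$ is a relative flag bundle over $S^{[n]}$. Let $\CW=p_{\ast}\CO_{\CZ}$ be the rank $n$ bundle with fiber $\Fr(Q)=H^0(\CO_z)$. The rigidifying datum $\sigma:\BC\xrightarrow{\cong}V_{i_0}$ identifies $V_{i_0}$ with the line spanned by $1\in H^0(\CO_z)$. So $M$ is an open piece of a bundle $\widetilde{\mathrm{Fl}}(\CW)\to S^{[n]}$ of the type in Lemma~\ref{lemma:relative tangent flag bundle}, where the extra linear datum encodes $\sigma$. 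Its relative tangent bundle $T_{\widetilde{\Pi}}$ is the cone computed in that lemma. The key observation here is that the map $\Pi:M\to S^{[n]}$ is precisely the Quot map $\CO_S\to Q(\kappa)$, and this point of view is what lets the whole construction work in families.

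\textbf{Step 2: choose the derived enhancement of the base.} I would use the derived Quot scheme $\BR\Quot(\CO_S,n)$ of length $n$ quotients $\CO_S\twoheadrightarrow Q$. Its classical truncation is $S^{[n]}$. Its tangent complex is obtained exactly as in the sketch of Theorem~\ref{thm:pot on nested quot scheme}, namely as
\[
\Cone\big(R\hom_{\rho}(\CO_{\CZ},\CO_{\CZ})\oplus R\hom_\rho(\CO,\CO)_{\mathrm{fixed}}\to R\hom_{\rho}(\CO_S,\CO_{\CZ})\big),
\]
with the $\CO_S$ factor held fixed. This reduces to $R\hom_{\rho}(\CI_{\CZ},\CO_{\CZ})$ via the triangle $\CI_{\CZ}\to\CO\to\CO_{\CZ}$. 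Taking the flag bundle of the pullback of $\CW$ over this derived scheme gives a derived enhancement of $M$. The triangle of tangent complexes for the smooth projection $\widetilde{\Pi}$, restricted to the classical truncation, then gives the stated distinguished triangle and an obstruction theory $E^{\bullet}\to\BL_M$ (the dual in the statement reflects the paper's convention for $R\Hom_\rho(\CI_{\CZ},\CO_{\CZ})^\vee$).

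\textbf{Step 3: check perfectness.} Since $T_{\widetilde{\Pi}}$ is a vector bundle in degree $0$, it suffices to show that $R\Hom(I_z,\CO_z)$ has amplitude $[0,1]$. The group $\Hom(I_z,\CO_z)=T_{S^{[n]}}$ sits in degree $0$. For the top degree, $\Ext^2(I_z,\CO_z)\cong\Ext^3(\CO_z,\CO_z)=0$ on a surface. Hence $E^\bullet$ is perfect of amplitude $[-1,0]$.

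I expect the main obstacle to be Step 2. The point needing care is that the $\BG_m$-rigidification, i.e.\ the choice of $\sigma$, exactly kills the $\Hom(Q,Q)$ direction coming from automorphisms. Consequently the stack of $0$-dimensional sheaves with its framing must be traded for the derived Quot scheme of $\CO_S$, rather than taking a naive flag bundle over the derived stack $\FQ_n$. I would verify this compatibility pointwise by matching the cone of Lemma~\ref{lemma:relative tangent flag bundle}(b) over $\FQ_n$, which has tangent $R\hom(Q,Q)[1]$, with the cone over $\BR\Quot$.
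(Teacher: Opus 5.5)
Your proposal is correct and follows the paper's argument: give $S^{[n]}$ its derived structure (tangent complex $R\hom_{\rho}(\CI_{\CZ},\CO_{\CZ})$), form the flag bundle over it, and restrict the triangle of tangent complexes to the classical truncation; your amplitude check in Step~3 only makes explicit what the paper leaves implicit. One correction: over $S^{[n]}$ the data $V_{i_0}=\BC\cdot 1$ and $\sigma(1)=1$ are forced, so $M$ is exactly the flag bundle of $\CW/\CO$ and $T_{\widetilde{\Pi}}$ is its relative tangent bundle as in Lemma~\ref{lemma:relative tangent flag bundle}(a), not an open piece of a larger Lemma~(b)-type bundle; also, the comparison with the framed stack over $\FQ_n$ that you single out as the main obstacle is not needed for this statement.
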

\begin{proof}
The Hilbert scheme
has a well-known perfect obstruction theory $$R\Hom_{\rho}(\CI_{\CZ},\CO_{\CZ})^{\vee} \to \BL_{S^{[n]}}\, ,$$
which is induced from an underlying derived structure on the Hilbert scheme \cite{MPR}.
So arguing as above yields the desired perfect obstruction theory on the flag bundle.
\end{proof}

The standard cosection of the perfect obstruction theory on $S^{[n]}$ pulls back to a cosection
for the perfect obstruction theory of the flag bundle and defines a reduced virtual class.

\subsection{Torus action and fixed locus}
There is a $\BC^{\ast}$-action on $\BM_{\ell,\ell+1}$ given by
scaling $\rho_{-1}$:
\[ \lambda \cdot (I,(V_{-1},V), (\sigma,\rho_{-1},\rho))
= (I,(V_{-1},V), (\sigma,\lambda \rho_{-1},\rho))\, . \]
The perfect obstruction theory on $\BM_{\ell,\ell+1}$ admits a canonical $\BC^{\ast}$-equivariant lift.
We have the following determination of $\BC^*$-fixed loci including the virtual normal bundles:
\begin{thm}(See \cite[Proposition 5.2.6]{KLT}) \label{fixed locus description}
The $\BC^{\ast}$-fixed locus of $\BM_{\ell,\ell+1}$ is the disjoint union of the following spaces:
\begin{enumerate}
\item[(a)] Let $Z_{\rho_{-1}=0} = \{ \rho_{-1} = 0 \} \subset \BM_{\ell,\ell+1}$. There is a natural isomorphism
\[ Z_{\rho_{-1}=0} \cong \FN^{\mathrm{fl}, (\ell+1)-\mathrm{st}}_{\alpha}. \]
The fixed obstruction theory of $\BM_{\ell,\ell+1}$ is the perfect obstruction theory of
$\FN^{\mathrm{fl}, (\ell+1)-\mathrm{st}}_{\alpha}$.
The virtual normal bundle is $(V_{\ell+1}/V_{\ell})^{\vee} \otimes \Ft$.
\item[(b)] Let $Z_{\sigma=0} = \{ \sigma = 0 \} \subset \BM_{\ell,\ell+1}$. There is a natural isomorphism
\[ Z_{\sigma=0} \cong \FN^{\mathrm{fl}, \ell-\mathrm{st}}_{\alpha}. \]
The fixed obstruction theory of $\BM_{\ell,\ell+1}$ is the perfect obstruction theory of
$\FN^{\mathrm{fl}, \ell-\mathrm{st}}_{\alpha}$.
The virtual normal bundle is $V_{\ell+1}/V_{\ell} \otimes \Ft^{\ast}$.
\item[(c)] For every splitting type appearing in Lemma~\ref{lemma:splitting of l l+1 stability}, there is a component of the $\BC^*$-fixed locus of the form
\[ Z_{\alpha',\mathbf{d'}} \cong \FN_{\alpha',\mathbf{d'}}^{\mathrm{fl},\ell-\mathrm{st}} \times \FQ^{\mathrm{pl},\mathrm{st}}_{m'',\mathbf{d}''} \, .\]
Moreover, the fixed obstruction theory is the induced perfect obstruction theory on the product via the factors.
\end{enumerate}
\end{thm}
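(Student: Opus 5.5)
The plan is to argue pointwise on fixed points of $\BM_{\ell,\ell+1}$ and then identify the fixed obstruction theories and normal bundles via the triangle of Proposition~\ref{label:prop obst thy}. The $\BC^*$-action only rescales $\rho_{-1}$, so a point $(I,(V_{-1},V),(\sigma,\rho_{-1},\rho))$ is fixed if and only if, for every $\lambda\in\BC^*$, there is an automorphism $(\varphi,\psi)$ of the underlying data carrying $\rho_{-1}$ to $\lambda\rho_{-1}$ and fixing $\sigma$. I will split into three cases.

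\emph{Case $\rho_{-1}=0$ or $\sigma=0$.} If $\rho_{-1}=0$, the point is fixed, and condition (i) of the master space forces $(I,V,\rho)$ to be $(\ell+1)$-stable. Then $\sigma\neq 0$ rigidifies $V_{-1}\cong\BC$, which gives $Z_{\rho_{-1}=0}\cong\FN^{\mathrm{fl},(\ell+1)-\mathrm{st}}_{\alpha}$. The case $\sigma=0$ is symmetric: $\rho_{-1}$ is an isomorphism $V_{\ell+1}/V_\ell\cong V_{-1}$ that can be absorbed by $\psi$, and condition (ii) gives $\ell$-stability.

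\emph{Case $\sigma\neq0$ and $\rho_{-1}\neq0$.} Here $\psi=\id$, since $\psi\circ\sigma=\sigma$ and $V_{-1}$ is one-dimensional. So fixedness requires an automorphism $\varphi$ of $(I,V,\rho)$ acting on $V_{\ell+1}/V_\ell$ by $\lambda^{-1}$. Since $\rho_{-1}\neq 0$, such a $\varphi$ is non-trivial, and Lemma~\ref{lemma:splitting of l l+1 stability} gives the canonical splitting $F=F'\oplus Z$ with stabilizer $\BC^*$ scaling $Z$, which acts with weight one on $W=V''_{\ell+1}\cong V_{\ell+1}/V_\ell$. The remaining data $(\sigma,\rho_{-1})$ rigidify that $\BC^*$: fixing $\rho_{-1}$ amounts to fixing a generator of $W$, which is exactly the $\BG_m$-rigidification. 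This identifies the component with $\FN^{\mathrm{fl},\ell-\mathrm{st}}_{\alpha',\mathbf d'}\times\FQ^{\mathrm{pl},\mathrm{st}}_{m'',\mathbf d''}$, using the converse part of Lemma~\ref{lemma:splitting of l l+1 stability} for the inverse map. I will work these identifications out scheme-theoretically over families, following \cite[Prop.~5.2.6]{KLT}.

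\emph{Obstruction theories and normal bundles.} In cases (a) and (b), the triangle of Proposition~\ref{label:prop obst thy} restricted to the fixed locus has moving part coming only from $T_{\widetilde\Pi}$. By Lemma~\ref{lemma:relative tangent flag bundle}(b), the extra terms relative to $T_\Pi$ are
\[
\hom(\CV_{\ell+1}/\CV_\ell,\CV_{-1})\oplus\hom(\CO,\CV_{-1})-\hom(\CV_{-1},\CV_{-1}).
\]
Using the trivialization given by $\sigma$ (resp.\ $\rho_{-1}$), one summand cancels against $\hom(\CV_{-1},\CV_{-1})$. The remaining summand carries weight $\Ft$, giving $(V_{\ell+1}/V_\ell)^\vee\otimes\Ft$, resp.\ its dual twist. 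The fixed part is then $\Pi^*(E^\bullet_{\FN_\alpha})^\vee$ plus $T_\Pi$, which is the obstruction theory of the corresponding stable flag space.

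In case (c), I decompose $R\hom_\pi(\BI^\bullet,\BI^\bullet)$ and $R\hom_\pi(\BI^\bullet,\CI_z)$ according to $\BI=\BI'\oplus Z[-1]$ and take weight-zero parts. The fixed part consists of the $\BI'$-diagonal terms and the $Z$-self-terms. The latter should recover $R\Hom_\rho(\CI_\CZ,\CO_\CZ)^\vee$ on $\FQ^{\mathrm{pl},\mathrm{st}}$ through the identification $Z(\kappa)\cong\CO_z$ made by the framing, together with the flag terms. This matching is the step I expect to be the main obstacle: verifying that the weight-zero piece of the deformation complex equals the product obstruction theory, not just agrees in $K$-theory, and with compatible cosections. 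I will first try to deduce it at the level of derived enhancements, by exhibiting the fixed derived stack as a product, and otherwise settle for a $K$-theory identity, which suffices for virtual classes via Siebert's formula.
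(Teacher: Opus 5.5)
Your proposal is correct and follows essentially the same route as the paper. For (a) and (b) you rigidify $V_{-1}$ by $\sigma$ or by $\rho_{-1}$; the paper packages this as the charts $D(\sigma)$, $D(\rho_{-1})$ being open in the total spaces of $(\CV_{\ell+1}/\CV_{\ell})^{\vee}$, resp.\ $\CV_{\ell+1}/\CV_{\ell}$, with the fixed loci as zero sections. For (c) you use Lemma~\ref{lemma:splitting of l l+1 stability}, the fact that $(\sigma,\rho_{-1})$ trivializes the weight-one $\BG_m$-stabilizer, and the splitting of the fixed obstruction theory into pieces quadratic in $\BI'$ or in $Z$, exactly as the paper does.

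One bookkeeping point in (c). Keep the master-space summands $\hom(\CV_{\ell+1}/\CV_{\ell},\CV_{-1})\oplus\hom(\CO,\CV_{-1})-\hom(\CV_{-1},\CV_{-1})$ of $T_{\widetilde\Pi}$. At a type (c) point these are all fixed and leave one trivial summand, the paper's ``1-dimensional piece from the fiber direction''. That summand is exactly what compensates the scalar automorphisms of $Z$ when you match the $Z$-terms with the obstruction theory of $\FQ^{\mathrm{pl},\mathrm{st}}$ rather than $\FQ^{\mathrm{fl},\mathrm{st}}$.
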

\begin{proof}[Sketch of Proof]
Consider first the open subset $D(\sigma) \subset \BM_{\ell,\ell+1}$ of objects where $\sigma$ is not zero. Since $\sigma$ is then an isomorphism, after an isomorphism of the objects we may assume that $V_{-1} = \BC$ and $\sigma = \id$.
Picking $\rho_{-1}$ is hence just an element of the dual of $V_{\ell+1}/V_{\ell}$ and we obtain an isomorphism
\[ D(\sigma) \cong \mathrm{Tot}(\CV_{\ell+1}/\CV_{\ell})^{\ast} \setminus \iota(\FN^{\mathrm{fl},(\ell,\ell+1)-\mathrm{ss}}_{\alpha,\mathbf{d}} \setminus \FN^{\mathrm{fl},(\ell+1)-\mathrm{st}}_{\alpha,\mathbf{d}}) \]
where $\iota$ is the zero section of the total space of $(\CV_{\ell+1}/\CV_{\ell})^{\ast}$ over $\FN^{\mathrm{fl},(\ell,\ell+1)-\mathrm{ss}}_{\alpha,\mathbf{d}}$.
The $\BG_m$-action acts just by scaling the fibers.
The first fixed locus is just the zero-section, so the statements in (a) are clear.

In (b) we can use the open subset where $\rho_{-1}$ is non-vanishing:
\[ D(\rho_{-1}) \cong 
\mathrm{Tot}(\CV_{\ell+1}/\CV_{\ell}) 
\setminus \hat{\iota}(\FN^{\mathrm{fl},(\ell,\ell+1)-\mathrm{ss}}_{\alpha,\mathbf{d}} \setminus \FN^{\mathrm{fl},\ell-\mathrm{st}}_{\alpha,\mathbf{d}}) \]
where $j$ is the zero section of the total space of $\CV_{\ell+1}/\CV_{\ell}$ over $\FN^{\mathrm{fl},(\ell,\ell+1)-\mathrm{ss}}_{\alpha,\mathbf{d}}$; the $\BG_m$-acts by inverse of the scaling action.
Thus (b) follows.

Assume that $(I,(V_{-1},V), (\sigma,\rho_{-1},\rho))$ is $\BC^{\ast}$-fixed and $\rho_{-1}$ and $\sigma$ are both non-zero. After an isomorphism we can assume that $\sigma=\id$ and $V_{-1} = \BC$.
By assumption for every $\lambda \in \BC^{\ast}$, there is an isomorphism
\[ (\varphi,\psi) : (I,(V_{-1},V), (\sigma,\rho_{-1},\rho)) \to (I,(V_{-1},V), (\sigma,\lambda \rho_{-1},\rho)). \]
Since $\sigma=\id$, we must have $\psi=\id$, so $\Fr(\varphi_1)$ must act on $V_{\ell+1}/V_{\ell}$ by multiplication by $\lambda^{-1}$. Thus $(I,V,\rho)$ admits an extra isomorphism, and we are in the situation studied in Lemma~\ref{lemma:splitting of l l+1 stability}.
More precisely Lemma~\ref{lemma:splitting of l l+1 stability} and the family version of it, see \cite[Lemma 4.28]{KT}, describes the closed substack in $\FN^{\mathrm{fl},(\ell,\ell+1)-\mathrm{st}}$ with $\BG_m$-stabilizer as a union of products 
\[ \mathfrak{Z} = \FN_{\alpha',\mathbf{d}'}^{\mathrm{fl},\ell-\mathrm{st}} \times \FQ_{m,\mathbf{d}''}^{\mathrm{fl},\mathrm{st}}. \]
The restrictions of the line bundle $\CL := \CV_{\ell+1}/\CV_{\ell}$ to $\mathfrak{Z}$ has $\BG_m$-weight $1$ under the stabilizer group. This implies that $\mathfrak{Z}$ is a trivial $\BG_m$-gerbe over its $\BG_m$-rigidification, which is given by
\[ Z = \FN_{\alpha',\mathbf{d}'}^{\mathrm{fl},\ell-\mathrm{st}} \times \FQ_{m,\mathbf{d}''}^{\mathrm{pl},\mathrm{st}}. \]
Consider the $\BG_m$-torsor defined by $\CL$,
\[ p : D(\sigma) \cap D(\rho_{-1}) = \mathrm{Tot}(\CL)^{\times} \to \FN_{\alpha,\mathbf{d}}^{\mathrm{fl},(\ell,\ell+1)-\mathrm{ss}}. \]
Then the section $Z \to \mathfrak{Z}$ which defines the trivialization of the $\BG_m$-gerbe can be identified with the map
\[ (\mathrm{Tot}(\CL)^{\times})^{\BG_m} \to \mathfrak{Z} \subset \FN^{\mathrm{fl},(\ell,\ell+1)-\mathrm{ss}}. \]
Moreover, if $\CV_i$ is a vector bundle on $\FN^{\mathrm{fl},(\ell,\ell+1)-\mathrm{st}}$ then the restriction of $p^{\ast}(\CV_i)$ to the fixed locus as an equivariant $\BG_m$-bundle is identified with the restriction to $\mathfrak{Z}$. The restriction of the perfect obstruction theory on $\BM_{\ell,\ell+1}$ to the fixed locus hence decomposes naturally according to the restriction of the perfect obstruction theory $E^{\bullet}$ of $\FN^{\mathrm{fl},(\ell,\ell+1)-\mathrm{ss}}$ to $\mathfrak{Z}$ plus a $1$-dimensional piece coming from the fiber direction. When restricted to $\mathfrak{Z}$, the fixed part of $E^{\bullet}$ over a point with $(s:\CI \to F = F' \oplus Z)$ must be a quadratic in either $(\CI \to F')$ or $Z$. It therefore splits exactly according to the product as a direct sum from each factor and yields the claim.
\end{proof}

\subsection{Conclusion of the proof of Theorem \ref{thm:wallcrossing}}
Consider an arbitrary monomial in descendent classes
\[ \mathsf{P} = 
\prod_{i=1}^{\ell} \ch_{a_i}^{b(i)}(\gamma_i)\,.
\]
We may assume that the weighted (complex cohomological) degree satisfies
\[ \deg \mathsf{P} = \text{reduced vdim} \Quot^{\flat}_{\alpha} = \text{reduced vdim} \Quot^{\sharp}_{\alpha}. \]

Consider the flag bundle
$\Pi : \FNfl_{\alpha,\mathbf{d}} \to \FN_{\alpha}$,
with $\mathbf{d}=(d_1,\ldots,d_{\Fr})$.
 Let $\BT_{\Pi} = \BL_{\Pi}^{\vee}$
 be the relative tangent complex with top Chern class
\[ \mathsf{Q} = \mathsf{Q}(c_j(\CV_i)) = c_{r}( \BT_{\Pi} )\, , \quad r= \mathrm{rk}(\BT_{\Pi})\, . \]
It restricts to the locus of $0$-stable and $d_{\Fr}$-stable objects as the 
top Chern classes of the relative tangent bundles of the forgetful morphisms
\[ \Pi_0 : \FN^{\mathrm{fl},0-\mathrm{st}}_{\alpha,\mathbf{d}} \to \Quot^{\flat}_{\alpha}\, ,
\quad
\Pi_{d_{\Fr}}: \FN^{\mathrm{fl},d_{\Fr}-\mathrm{st}}_{\alpha,\mathbf{d}} \to 
= \Quot^{\sharp}_{\alpha}\, .
\]
Since $\Pi_0$ and $\Pi_{d_{\Fr}}$ are smooth morphisms with isomorphic fibers, we have
\[ (\Pi_0)_{\ast}\left( \mathsf{Q}|_{\FN^{\mathrm{fl},0-\mathrm{st}}_{\alpha,\mathbf{d}}} \right) 
= \mathsf{c}, \quad
(\Pi_{d_{\Fr}})_{\ast}\left( \mathsf{Q}|_{\FN^{\mathrm{fl},d_{\Fr}-\mathrm{st}}_{\alpha,\mathbf{d}}} \right) 
= \mathsf{c}
\]
for a positive constant $\mathsf{c} \in \BZ$.
Using \eqref{reduced virtual class pullback}, we obtain the following relations:
\begin{eqnarray*}
\mathsf{c}\int_{ \left[ \Quot^{\flat}_{\alpha} \right]^{\red} }
\mathsf{P}
&=&
\int_{[\FN^{\mathrm{fl},0-\mathrm{st}}_{\alpha,\mathbf{d}}]^{\red}}
\mathsf{P} \cdot \mathsf{Q} \, .\\
\mathsf{c}\int_{ \left[ \Quot^{\sharp}_{\alpha} \right]^{\red} }
\mathsf{P}
&=&
\int_{[\FN^{\mathrm{fl},d_{\Fr}-\mathrm{st}}_{\alpha,\mathbf{d}}]^{\red}}
\mathsf{P} \cdot \mathsf{Q} \, ,
\end{eqnarray*}
where we have suppressed the pullback by $\Pi$ of $\mathsf{P}$ and the restriction of $\mathsf{Q}$ to the stable loci in the notation.
To relate the above integrals over the $0$- and $d_{\Fr}$-stable loci,
we relate the integrals over $\ell$- and $(\ell+1)$-stable loci for every $\ell$.

To do so, consider the equivariant integral
\[ I = \int_{[ \BM_{\ell,\ell+1} ]^{\red}} \mathsf{P} \cdot \mathsf{Q}\, , \]
where we endow the integrands with the natural equivariant structure.
We have
\[ \deg( \mathsf{P} \cdot \mathsf{Q} ) = \text{reduced vdim} (\FN^{\mathrm{fl},\ell-\mathrm{st}})
=
\text{reduced vdim}( \BM_{\ell,\ell+1} ) - 1\, . \]
Since $\BM_{\ell,\ell+1}$ is proper, the integral  $I$ vanishes.
We apply the virtual localization formula.
By Theorem~\eqref{fixed locus description},
there are contributions from three types of components.
In the third type (c), the fixed perfect obstruction theory
splits as a direct sum of perfect obstruction theories on the factors 
$\FN_{\alpha',\mathbf{d'}}^{\mathrm{fl},\ell-\mathrm{st}}$ and $\FQ^{\mathrm{pl},\mathrm{st}}_{m'',\mathbf{d}''}$.
Each factor has a non-trivial cosection.
However, for the reduced virtual class of $\BM_{\ell,\ell+1}$, we have removed only one cosection.
Hence, the reduced fixed virtual class of the third component vanishes
and  does {\em not} contribute.
The localization formula therefore yields:
\[
0 = I = \int_{[ \FN^{\mathrm{fl}, (\ell+1)-\mathrm{st}}_{\alpha} ]^{\red}} \frac{\mathsf{P} \cdot \mathsf{Q}}{
t - c_1(\CV_{\ell+1}/\CV_{\ell})}
+
\int_{[ \FN^{\mathrm{fl}, (\ell)-\mathrm{st}}_{\alpha} ]^{\red}} \frac{\mathsf{P} \cdot \mathsf{Q}}{
-t + c_1(\CV_{\ell+1}/\CV_{\ell})}\, ,
\]
where $t = c_1(\Ft)$. After
taking the $1/t$-coefficient, we obtain
\[
\int_{[ \FN^{\mathrm{fl}, (\ell+1)-\mathrm{st}}_{\alpha} ]^{\red}} \mathsf{P} \cdot \mathsf{Q}
=
\int_{[ \FN^{\mathrm{fl}, \ell-\mathrm{st}}_{\alpha} ]^{\red}} \mathsf{P} \cdot \mathsf{Q}
\]
as desired. The proof of Theorem~\ref{thm:wallcrossing} is complete. \qed

\section{Hilbert schemes of curves}

\subsection{Conventions}
Let $S$ be a nonsingular projective surface, and let $\beta \in H_2(S,\BZ) \cong H^2(S,\BZ)$ be an effective curve class.\footnote{We identify here homology with cohomology using Poincar\'e duality.}
Let $H_{\beta}$ be the Hilbert scheme of curves parameterizing Cartier divisors with class $\beta$. 
The moduli space $H_{\beta}$ carries a perfect obstruction theory with virtual tangent bundle
\[ T^{\mathrm{vir}} = R \pi_{\ast}( \CO_{\CD_{\beta}}(\CD_{\beta}) )\, , \]
where $\pi : S \times H_{\beta} \to H_{\beta}$ is the projection and
$\CD_{\beta} \subset S \times H_{\beta}$ is the universal Cartier divisor, see \cite{DKO,KT1}. 

We will assume\footnote{In \cite[Appendix]{KoolSeiberg}, a reduced obstruction theory is constructed under the weaker assumption that $H^2(L)=0$ for every effective $L$ with $c_1(L)=\beta$. However, universality then does not hold, so we assume here the stronger condition ($\dag$).}:
\begin{equation} \tag{$\dagger$} H^2(L) = 0 \text{ for every line bundle } L \text{ with } c_1(L) = \beta. \end{equation}
Then, the cohomology sequence
associated to 
$$0 \to \CO \to \CO(\CD_{\beta}) \to \CO(\CD_{\beta}|_{\CD_{\beta}}) \to 0$$
yields an everywhere surjective cosection of the obstruction sheaf of rank $h^2(S,\CO_S)$
which gives rise to a reduced perfect obstruction theory \cite{DKO}
and an associated reduced virtual class $[H_{\beta}]^{\red}$ of dimension $$\frac{1}{2}(\beta^2 + c_1(S)\cdot\beta ) + h^2(S,\CO_S)\, .$$

For $\gamma \in H^{\ast}(S,\BQ)$ and $k \geq 0$, descendent classes on the Hilbert scheme are defined by
\[ \ch_k(\gamma) = \pi_{\ast}( \rho^{\ast}(\gamma) \cdot \ch_k(\CO(\CD_{\beta})) )\, , \]
where $\rho : S \times H_{\beta} \to S$ is the projection.

\subsection{Universality: symplectic surfaces}
For effective classes $\beta$ on symplectic surfaces, the condition ($\dag$) is always satisfied.
We then have the following universality principle for descendent integrals over $H_{\beta}$.

\begin{thm} \label{thm:universality Hilbert scheme symplectic}
Let $S$ be a symplectic surface. Let $\beta \in H^2(S,\BZ)$ be effective,
$\gamma_1,\ldots,\gamma_n \in H^{\ast}(S,\BQ)$, and $k_1,\ldots,k_n \geq 0$. 
Then,  for every symplectic surface $S'$ and degree-preserving $\mathbb{Q}$-algebra isomorphism $$\varphi : H^{\ast}(S, \BQ) \to H^{\ast}(S',\BQ)$$ for which $\varphi(\pt)=\pt$ and $\varphi(\beta)$ is effective, we have
\[
\int_{[H_{\beta}]^{\red}}
\ch_{k_1}(\gamma_1)\cdots \ch_{k_n}(\gamma_n)
=
\int_{[H_{\varphi(\beta)}]^{\red}}
\ch_{k_1}(\varphi(\gamma_1))\cdots \ch_{k_n}(\varphi(\gamma_n))\, .
\]
\end{thm}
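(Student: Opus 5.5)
Since $\omega_S\cong\CO_S$, the plan is to use the structure of $H_\beta$ for symplectic surfaces: the curves in class $\beta$ form a projective bundle over the component of $\Pic(S)$ of line bundles $L$ with $c_1(L)=\beta$. Condition ($\dagger$) holds, and for effective $\beta\neq 0$ we have $H^1(L)=0$ as well. For $K3$ surfaces $H^1(\CO_S)=0$, so $\Pic^\beta(S)$ is a point. For abelian surfaces, $H^1(L)=0$ holds for effective $L$ with $\beta^2>0$; the degenerate case $\beta^2=0$ needs a separate treatment. So $h^0(L)=\chi(L)=\beta^2/2+\chi(\CO_S)$ is locally constant, and $H_\beta=\p(\pi_*\CP)$ is a projective bundle over $\Pic^\beta(S)$, where $\CP$ is a Poincar\'e bundle.

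The first step is to identify the reduced virtual class explicitly. The obstruction sheaf is $R^1\pi_*\CO_{\CD}(\CD)$. From the sequence $0\to\CO\to\CO(\CD)\to\CO_{\CD}(\CD)\to0$ and the vanishing of $H^1(L)$ and $H^2(L)$, we get
\[
R^1\pi_*\CO_{\CD}(\CD)\cong H^2(\CO_S)\otimes\CO.
\]
The reduction removes exactly this summand. The reduced obstruction sheaf therefore vanishes and $H_\beta$ is smooth of the expected reduced dimension: $h^0(L)-1+h^1(\CO_S)$. This gives $\beta^2/2+1$ for $K3$ and $\beta^2/2+2$ for abelian surfaces, matching $\tfrac12\beta^2+h^2(\CO_S)$. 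Hence $[H_\beta]^{\red}=[H_\beta]$ is the fundamental class. For abelian surfaces with $\beta^2=0$, I would check smoothness separately: $\beta$ is then a multiple of an elliptic fiber class, and one needs a direct description of $H_\beta$ via the quotient elliptic curve. I expect this degenerate abelian case to be the main obstacle, since $h^0(L)$ jumps there and one has to verify that $h^1(\CO_{\CD}(\CD))$ still equals $h^2(\CO_S)$ at every point.

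Next I would compute the descendents. On $S\times H_\beta$ we have $\CO(\CD)=\CP\boxtimes\CO_{\p}(1)$. So $\ch(\CO(\CD))=\exp(c_1(\CP)+h)$, with $h$ the hyperplane class of the projective bundle and $c_1(\CP)=\beta+\theta$, where $\theta\in H^1(S)\otimes H^1(\Pic)$ is the universal class. The descendents are therefore
\[
\ch_k(\gamma)=\sum_{a+b=k}\frac{h^a}{a!}\,\pi_*\!\left(\frac{(\beta+\theta)^b}{b!}\gamma\right),
\]
and integrating over the projective bundle reduces, via the Segre classes of $\pi_*\CP$, to integrals over $\Pic^\beta(S)\cong\widehat S$.

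By Grothendieck--Riemann--Roch, $\ch(\pi_*\CP)=\pi_*(e^{\beta+\theta}\td_S)$. Every quantity is then an integral over $\widehat S$ of polynomials in the classes $\pi_*(\beta^i\theta^j\gamma)$ and $\int_S$ pairings. For $K3$ surfaces there is no $\theta$, and the answer depends only on the numbers $\int\gamma_i\cup\gamma_j\cup\cdots$, $\int\beta\gamma_i$, $\beta^2$ and $c_2=24\pt$. All of these are preserved by $\varphi$, since $\varphi$ is an algebra isomorphism with $\varphi(\pt)=\pt$; it therefore preserves $\int_S$ and $c_2(S)=\chi(S)\pt$.

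For abelian surfaces, $\theta$ is the Poincar\'e class with $H^1(\widehat S)=H^1(S)^\vee$. The integrals become contractions of the tensors $\gamma_i$ and $\beta$ under the pairings induced by $\int_S$ on $\Lambda^*H^1$. An algebra isomorphism of $\Lambda^*H^1(S)$ preserving $\pt$ is induced by a linear map $g$ on $H^1$ with $\det g=1$. The dual action on $H^1(\widehat S)$ is then $g^{-\vee}$, which preserves $\int_{\widehat S}$ because $\det g=1$. Consequently the whole expression is invariant, and $\varphi(\beta)$ effective guarantees that the target $H_{\varphi(\beta)}$ has the same structure.
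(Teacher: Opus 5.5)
The proposal has a genuine gap. It breaks at the claim that $H^1(L)=0$ for every $L$ with $c_1(L)=\beta$ effective, and with it at $[H_\beta]^{\red}=[H_\beta]$ for a projective bundle over $\Pic_\beta$.

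\textbf{Where it fails.} The vanishing is false for $K3$ surfaces as well, not only in the abelian $\beta^2=0$ case you flagged. Take $\beta=r\beta_0$ with $\beta_0^2=0$ and $r\ge 2$. Riemann--Roch gives $h^0(\beta_0)\ge 2$, so $h^0(L)\ge r+1>\chi(L)=2$. This happens on every $K3$ surface, so no deformation argument avoids it.

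Concretely, for $\beta=r\mathsf f$ on an elliptic $K3$ we have $H_\beta=|r\mathsf f|\cong\p^r$, while the reduced virtual dimension is $1$. Your own exact sequence shows the reduced obstruction bundle is $H^1(L)\otimes\CO_{\p^r}(1)$, of rank $r-1$. Hence $[H_\beta]^{\red}=h^{r-1}\cap[\p^r]$ is the class of a line, not the fundamental class.

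Your planned check in the abelian case also fails. For $\beta=r\mathsf f_1$ the paper's Example gives $H_\beta\cong\Sym^r(E_1)$ with $R^1\pi_*\CO_{\CD}(\CD)\cong T_{\Sym^r(E_1)}$ of rank $r$, not $h^2(\CO_S)=1$. The reduced obstruction bundle is $T_p$, and $[H_\beta]^{\red}=c_{r-1}(T_p)\cap[\Sym^r(E_1)]$.

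These are exactly the situations the theorem is needed for: $\varphi_r$ sends $r\mathsf f_1$ (excess) to $\mathsf f_1$, where $H_{\mathsf f_1}\cong E_1$ is unobstructed. So your closing claim that $\varphi(\beta)$ effective gives $H_{\varphi(\beta)}$ ``the same structure'' is wrong. The two sides cannot be matched by describing their geometry separately. (Minor: for abelian surfaces $h^0(L)-1+h^1(\CO_S)=\beta^2/2+1$, not $\beta^2/2+2$.)

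\textbf{What is missing.} You need a formula for $p_*\bigl(z^i\cap[H_\beta]^{\red}\bigr)$ that depends only on the $K$-theory class $R\pi_*\mathcal P$, not on the sheaf $\pi_*\mathcal P$. The paper obtains it by twisting.
\begin{itemize}
\item Choose $A$ ample with $H^{>0}(L(A))=0$ for all $L\in\Pic_\beta$.
\item Put $E_0=\pi_*\mathcal P(A)$ and $E_1=\pi_*(\mathcal P(A)|_A)$; the latter is a bundle by $(\dagger)$, and $[E_0\to E_1]\simeq R\pi_*\mathcal P$.
\item Then $H_{\beta+A}=\p(E_0)$, and $H_\beta\hookrightarrow\p(E_0)$, $C\mapsto C+A$, is the zero locus of the induced section of $p^*E_1(1)$.
\item By Kool--Thomas, the refined Euler class of this section is $[H_\beta]^{\red}$.
\end{itemize}
This yields $p_*\bigl(z^i\cap[H_\beta]^{\red}\bigr)=s_{i-\rk E_\bullet+1}(E_\bullet)$. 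Grothendieck--Riemann--Roch then gives the same universal expression you derive in the unobstructed case, with $\pi_*\mathcal P$ replaced by $R\pi_*\mathcal P$.

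With that in place, your invariance arguments go through and amount to the paper's $\xi$/$T$-degree argument. For $K3$ surfaces these are the intersection pairings. For abelian surfaces it is a linear map $g$ of $H^1(S)$ with $\det g=1$ acting dually on $H^1(\widehat S)$. Where $H^1(L)=0$ throughout, your proof coincides with the paper's first case.
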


\begin{example}
Let $S = E_1 \times E_2$ and $\beta = r \mathsf{f}_1$ be a multiple of the fiber class $\mathsf{f}_1$.
Then $H_{\beta} \cong \Sym^r(E_1)$, where the isomorphism is given by sending $z \in \Sym^r(E_1)$ to the subscheme $\pr_1^{-1}(z)$.
If $\CZ \subset E_1 \times \Sym^r(E_1)$ is the universal subscheme and $\tilde{\pi} : S \times \Sym^r(E_1) \to \Sym^r(E_1)$ is the projection, then 
\[ R^1 \pi_{\ast}(\CO_{\CD_{\beta}}(\CD_{\beta})) = \tilde{\pi}_{\ast}\CO_{\CZ}(\CZ) \cong T_{\Sym^r(E_1)}\, . \]
The obstruction sheaf of the reduced obstruction theory then can be identified with $\tilde{\pi}_{\ast} \CO(\CZ) \cong T_{p}$, where $p : \Sym^{r}(E_1) \to E_1$ is the sum map.
Moreover, we have
\[
\ch_{1}(dx_1 \mathsf{f}_2) = p^{\ast}(dx_1), \quad \ch_1(dy_1 \mathsf{f}_2) = p^{\ast}(dy_1),
\quad \ch_1(\pt) = D_r\, ,
\]
where $D_r \in H^2(\Sym^{r}(E_1))$ is the class of the locus of subschemes containing a fixed point. So
$$D_r = j_{\ast} [\Sym^{r-1}(E_1)]\, ,$$ where, for a point $x \in E_1$,  $$j : \Sym^{r-1}(E_1) \to \Sym^r(E_1)\, ,\ \ \ z' \mapsto z'+x$$ is the inclusion.
We obtain
\begin{gather*}
\int_{[H_{\beta}]^{\red}} \ch_{1}(dx_1 \mathsf{f}_2)\cdot  \ch_{1}(dy_1 \mathsf{f}_2)
= \int_{E_1} dx_1 \cdot dy_1 \cdot p_{\ast}(c_{r-1}(T_p)) = r
\end{gather*}
and
\begin{gather*}
\int_{[H_{\beta}]^{\red}} \ch_{1}(\pt) = \int_{\Sym^r(E_1)} c_{r-1}(T_p)\cdot D_r
= \int_{\Sym^r(E_1)} c(T_{\Sym^r(E_1)}) \cdot D_r \\
= \int_{\Sym^{r-1}(E_1)} c(T_{\Sym^{r-1}(E_1)}) \cdot D_{r-1} = \ldots = \int_{\Sym^1(E_1)} D_1 = 1\, ,
\end{gather*}
where we use that the normal bundle to $j$ satisfies $c_1(N_j) = D_{r-1}$.

The calculation matches  Theorem~\ref{thm:universality Hilbert scheme symplectic}
for the $\BQ$-algebra isomorphism $$\varphi : H^{\ast}(E_1 \times E_2) \to H^{\ast}(E_1 \times E_2)$$ given by
$\varphi(dx_1) = dx_1$, $\varphi(dy_1) = 1/r \cdot dy_1$, $\varphi(d x_2)=dx_2$, $\varphi(dy_2) = r \cdot dy_2$ :
\begin{align*}
\int_{[H_{r \mathsf{f}_1}]^{\red}} \ch_{1}(dx_1 \mathsf{f}_2)\cdot \ch_{1}(dy_1 \mathsf{f}_2)
& =
\int_{[H_{\mathsf{f}_1}]^{\red}} \ch_{1}(\varphi(dx_1 \mathsf{f}_2))\cdot \ch_{1}(\varphi(dy_1 \mathsf{f}_2)) \, ,\\
\int_{[H_{r \mathsf{f}_1}]^{\red}} \ch_{1}(\pt)
& = 
\int_{[H_{\mathsf{f}_1}]^{\red}} \ch_{1}(\pt)\, .
\end{align*}
\end{example}

\subsection{General case}
The universality statement for symplectic surfaces 
follows from a more general universality result.
For the statement, we first record several structures on the cohomology ring $H^{\ast}(S,\BQ)$.

Consider the morphism
\[ \xi : H^{\ast}(S,\BQ) \to \wedge^{\ast} H^1(S,\BQ)^{\ast} \]
which is defined on $H^{4-k}(S,\BQ)$ as the composition
\[ \xi|_{H^{4-k}(S,\BQ)} : H^{4-k}(S,\BQ) \xrightarrow{\mathrm{PD}} H^{k}(S,\BQ)^{\ast} \xrightarrow{} \wedge^{k} H^1(S,\BQ)^{\ast} \xrightarrow{(-1)^{\frac{k (k-1)}{2}}} \wedge^{k} H^1(S,\BQ)^{\ast}, \]
where the first map is Poincar\'e duality,
the second is the dual to the natural inclusion 
$$\wedge^{k} H^1(S,\BQ) \to H^{k}(S,\BQ)\, ,$$ and the third is multiplication by $(-1)^{\frac{k(k-1)}{2}}$.
Moreover, there is a canonical isomorphism\footnote{We work here with integral cohomology modulo torsion.}
\[ \wedge^{2 q(S)} H^1(S,\BZ)^{\ast} \cong \BZ \]
where the sign is fixed by choosing a basis of $H^1(S,\BZ) \subset H^1(S,\CO_S)$ compatible with the complex structure. After
tensoring with $\BQ$, we obtain a degree map
\[ T : \wedge^{\ast} H^1(S,\BQ)^{\ast} \to \wedge^{2q(S)} H^{1}(S,\BQ)^{\ast} \to \BQ\, ,\]
where the first map is projection onto the factor of degree $2q(S)$.
The {\em $T$-degree} of $\alpha \in \wedge^{\ast} H^1(S,\BQ)^{\ast}$   is $T(\alpha)$.

\begin{thm} \label{thm:univ general}
Let $S$ be a nonsingular projective
surface. Let $\beta \in H^2(S,\BZ)$ be an effective class  satisfying condition ($\dagger$), $\gamma_1,\ldots,\gamma_n \in H^{\ast}(S,\BQ)$ be homogeneous, and $k_1,\ldots,k_n \geq 0$.
Then, the integral
\[
\int_{[H_{\beta}]^{\red}}
\ch_{k_1}(\gamma_1)\cdots \ch_{k_n}(\gamma_n)
\]
depends upon $(S,\beta,\gamma_1,\ldots,\gamma_n)$ only through $p_g(S)$, $q(S)$, the degrees of $\gamma_i$, and the $T$-degrees of arbitrary
products of the list of classes
\begin{equation} \label{wfe3434}
\xi\big(\beta^a c_1(S)^b c_2(S)^c\big)\, , \
\xi\big(\gamma_i \beta^a c_1(S)^b c_2(S)^c\big)\, ,\end{equation}
for all $i,a,b,c$
such that 
$\deg_{\BC}(\gamma_i) + a + b + 2c \leq 2$ . \end{thm}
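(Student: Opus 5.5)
We plan to prove Theorem~\ref{thm:univ general} by reducing the reduced virtual class of $H_{\beta}$ to an explicit cohomology class on a product of a Picard torus and a projective bundle, following the approach of D\"urr--Kabanov--Okonek \cite{DKO} and Kool--Thomas \cite{KT1}. Under ($\dagger$), the Abel--Jacobi map $AJ : H_{\beta} \to \Pic_{\beta}(S)$ realizes $H_{\beta}$ as the zero locus of a section. Choose a sufficiently ample divisor $A$ with $H^{i}(L(A))=0$ for $i>0$ and all $L$ with $c_1(L)=\beta$. Then $H_{\beta}$ embeds into the projective bundle $\p(\pi_{\ast}\CP(A)) \to \Pic_{\beta}(S)$, where $\CP$ is a Poincar\'e bundle, as the vanishing locus of the tautological section of $\CO(1) \otimes \pi_{\ast}(\CP(A)|_{A})$. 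The main structural input is the identification of the standard virtual class with the localized top Chern class of the complex $R\pi_{\ast}\CP - $ (its twisted version), i.e. $[H_{\beta}]^{\vir} = c_{\mathrm{top}}\big(R\pi_{\ast}(\CP(A)|_A)\otimes \CO(1)\big)\cap[\p]$. For the reduced class, the trivial summand $H^2(\CO_S)$ of the obstruction sheaf is removed. Concretely, we would verify that the cosection corresponds to the factor $H^2(\CO_S)\otimes\CO$ appearing in $R\pi_\ast \CO(\CD_\beta)$, so that $[H_\beta]^{\red}$ equals the same formula with $p_g$ trivial factors dropped. This is the analogue of \cite[Appendix]{KoolSeiberg}, and ($\dagger$) makes the relevant $R^2$ vanish uniformly.

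Next, we express the descendents $\ch_k(\gamma)$ in terms of $h=c_1(\CO(1))$ and classes pulled back from $\Pic_{\beta}(S)$. Since $\CO(\CD_{\beta}) = \CP \otimes \CO(1)$, we get $\ch(\CO(\CD_\beta)) = \ch(\CP)e^{h}$. By Grothendieck--Riemann--Roch, all Chern classes in the integrand become polynomials in $h$ and in slant products $\pi_\ast(\ch(\CP)\cdot \rho^\ast(\eta))$ for $\eta$ among $\gamma_i$, $\td(S)$, and $\ch(\CO(A))$. Normalize $\CP$ so that $c_1(\CP) = \beta\otimes 1 + \theta$, where $\theta \in H^1(S)\otimes H^1(\Pic)$ is the canonical class. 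Then $\ch(\CP)=e^{\beta}e^{\theta}$, and slant products become classes $\pi_\ast(e^{\theta}\cdot \eta\,\beta^a)$ in $\wedge^\ast H^1(\Pic) = \wedge^\ast H^1(S)^\ast$. We will check that, up to the sign normalizations built into $\xi$, these are exactly $\xi(\eta\beta^a)$. Integrating over the fibers of the projective bundle removes $h$ through Segre classes of $\pi_\ast\CP(A)$, which are of the same form. The remaining integral over the torus $\Pic_\beta(S)$ of a product of such classes is, by definition, a $T$-degree.

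Finally, we remove the dependence on $A$. The final expression is a priori a polynomial in $A$-dependent terms, but the left side is independent of $A$. Following \cite{KT1}, we can either treat $A$ formally or replace $A$ by a multiple and compare polynomial identities. This leaves only classes in \eqref{wfe3434}, together with $p_g$, $q$, and the degrees (the rank of $\pi_\ast\CP(A)$ and dimensions come from Riemann--Roch numbers, themselves $T$-degrees of $\xi(\beta^a c_1^b c_2^c)$ with $q$). The bound $\deg_\BC(\gamma_i)+a+b+2c\le 2$ comes from degree reasons on $S$.

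We expect the main obstacle to be the middle step: proving that the reduced virtual class is given by the modified top Chern class, compatibly with the cosection. This requires comparing the DKO obstruction theory with the zero-locus description and tracking the trivial $H^2(\CO_S)$ piece. The elimination of $A$ is also delicate, but it can be handled by the same polynomiality argument as in \cite{KT1}.
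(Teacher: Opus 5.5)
Your overall strategy is the paper's. You cut out $H_\beta$ inside $\p(E_0)$, with $E_0=\pi_*(\CL_\beta(A))$, as the zero locus of the section $s$ of $p^*E_1(1)$, with $E_1=\pi_*(\CL_\beta(A)|_A)$. You then push forward to $\Pic_\beta$, write $\ch(\CO(\CD_\beta))=e^{\beta+\eta+z}$, identify the slant products with $\xi$-classes, and read off $T$-degrees.

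The first problem is that the middle step is misidentified. The refined Euler class of $s$ has dimension
\[
\rk E_0-1+q(S)-\rk E_1=\chi(L)-1+q(S)=p_g(S)+\tfrac12(\beta^2+c_1(S)\cdot\beta),
\]
which is the \emph{reduced} virtual dimension. So this class \emph{is} $[H_\beta]^{\red}$; the paper simply cites \cite[Appendix]{KT1} for this. It is not the standard class. For $p_g>0$ the standard class vanishes, by the surjective cosection $R^1\pi_*\CO_{\CD_\beta}(\CD_\beta)\to R^2\pi_*\CO=H^2(\CO_S)\otimes\CO$. By contrast, $c_{\mathrm{top}}(p^*E_1(1))\cap[\p(E_0)]$ is in general nonzero: for $L$ ample on a $K3$ surface it is $[\,|L|\,]$. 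Your description of where the trivial piece sits is also off. $H^2(\CO_S)$ lives in $R\pi_*\CO$, not in $R\pi_*\CO(\CD_\beta)$, whose $R^2$ vanishes by ($\dagger$). Moreover, $E_1(1)$ has no trivial summands to drop. The reduction is already built in, because the ambient space is fibered over the smooth variety $\Pic_\beta$, whose tangent space is only $H^1(\CO_S)$. Carried out literally, ``the same formula with $p_g$ trivial factors dropped'' would produce a class of the wrong dimension.

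The second gap is the elimination of $A$. Scaling $A\mapsto mA$ and comparing polynomials is not justified as stated. The ranks $\rk E_0$, $\rk E_1$ and the Segre indices in $p_*(z^i)=s_{i-\rk E_0+1}(E_0)$ all depend on $m$, so the expression is not visibly polynomial in $m$. Also, $A$ cannot be sent to $0$, since it must be sufficiently ample. The missing idea, which is how the paper proceeds, is the exact identity
\[
p_*\big(z^i\,c_{\rk E_1}(p^*E_1(1))\big)=\sum_j c_{\rk E_1-j}(E_1)\,s_{i+j-\rk E_0+1}(E_0)=s_{i-\rk E_\bullet+1}(E_\bullet),
\]
where $E_\bullet=[E_0\to E_1]\cong R\pi_*\CL_\beta$ by ($\dagger$) and the choice of $A$. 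By Grothendieck--Riemann--Roch these Segre classes are universal polynomials in $\xi(1),\xi(c_1(S)),\xi(c_2(S)),\xi(\beta),\xi(\beta^2),\xi(\beta c_1(S))$. Together with $\rk E_\bullet=\chi(L)$, this makes $A$ disappear identically, with no interpolation needed. Combining these with the factors $\xi_{x_i}(e^{x_i\beta}\gamma_i)$ coming from the descendents gives exactly the classes listed in the theorem.
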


A parallel universality result for stable pair invariants of surfaces with point insertions was obtained by Kool-Thomas  \cite[Theorem 1.2]{KT2}. We apply similar methods in the proof below.
As  explained in \cite[Appendix]{KoolSeiberg}, descendent integrals over $[S_{\beta}]^{\vir}$ 
are not universal in the sense of Theorem~\ref{thm:univ general} if ($\dag$) is not satisfied.

\begin{proof}[Proof of Theorem~\ref{thm:universality Hilbert scheme symplectic} from Theorem~\ref{thm:univ general}]
If $S$ is a $K3$ surface, the map $\xi$ is zero unless $k=4$. Then, $\xi$ is a degree map, and
$$T : \wedge^{\ast} H^1(S)^{\ast} = \wedge^0 H^1(S)^{\ast} = \BQ \to \BQ$$ is the identity.
The statement says that the descendent integrals just depend on the intersection pairings
of $$\int_S \beta^2\, ,\ \ \int_S c_2(S)\, ,\ \  \int_S \beta \gamma_i\, , \ \ \int_S \pt \gamma_i\, ,\ \ \int_{S} \gamma_i\, .$$
Since the $\BQ$-algebra isomorphism $\varphi$ sends the point class to the point class, $\varphi$ preserves intersection pairings.
We obtain Theorem~\ref{thm:universality Hilbert scheme symplectic} in the $K3$ case.

If $S$ is an abelian surface, then $H^{\ast}(S,\BQ) = \wedge^{\ast} H^1(S,\BQ)$, so the map $\xi$ is, up to a sign,  Poincar\'e duality. 
Let $\varphi : H^{\ast}(S,\BQ) \to H^{\ast}(S',\BQ)$ be a degree-preserving $\BQ$-algebra isomorphism such that $\varphi(\pt) = \pt$. 
Then, there is a unique $\BQ$-algebra isomorphism 
\[ \widetilde{\varphi} : H^{\ast}(S,\BQ)^{\ast} \to H^{\ast}(S',\BQ)^{\ast} \]
such that $\widetilde{\varphi}(\lambda)(\varphi(v)) = \lambda(v)$ for all $\lambda \in H^{\ast}(S,\BQ)^{\ast}$ and $v \in H^{\ast}(S,\BQ)$.
Indeed, $\widetilde{\varphi}$ is the map dual to $\varphi^{-1}$.
Since $\varphi$ preserve the point class, $\varphi$ preserves the intersection pairing on $H^{\ast}(S,\BQ)$, so $$\widetilde{\varphi} \circ \mathrm{PD} = \mathrm{PD} \circ \varphi \ \ \text{and}\ \ \ 
 \widetilde{\varphi} \circ \xi = \xi \circ \varphi\, .$$
Moreover, the map $T$ is just evaluation of the point class: 
\[ \forall \alpha \in \wedge^4 H^1(S,\BQ)^{\ast}, \quad T(\alpha) = \alpha(\pt)\, . \]
Hence, 
$ T(\tilde{\varphi}(\alpha)) = \tilde{\varphi}(\alpha)(\pt)  = \alpha(\varphi(\pt)) = \alpha(\pt) = T(\alpha)$.
For all homogeneous classes $\gamma_i \in H^{\ast}(S,\BQ)$ and integers $a_i \geq 0$, we have
\[ T\left( \prod_i \xi\big(\varphi(\gamma_i) \varphi(\beta)^{a_i}\big) \right)
= T\left( \prod_i \widetilde{\varphi} \xi\big(\gamma_i \beta^{a_i}\big) \right)
= T \circ \widetilde{\varphi}\left( \prod_i \xi\big(\gamma_i \beta^{a_i}\big) \right)
= T \left( \prod_i \xi\big(\gamma_i \beta^{a_i}\big)\right). \]
The $T$-degrees of the products of classes \eqref{wfe3434}
are therefore the same as their images under $\varphi$. We obtain Theorem \ref{thm:universality Hilbert scheme symplectic}  in the  abelian surface case.
\end{proof}
\subsection{Picard variety}
We start with explaining the geometric meaning of the $T$-degrees.
Let $\Pic_{\beta}$ be the Picard variety parameterizing line bundles with first Chern class $\beta$.
Let $\CL_{\beta}$ be the Poincar\'e bundle on $S \times \Pic_{\beta}$ normalized by
$$\CL_{\beta}|_{\Pic_{\beta} \times \{ x \}} \cong \CO_{\Pic_{\beta}}$$ for a fixed point $x \in S$.
Consider the decomposition of $c_1(\CL_{\beta})$ in terms of bi-degree:
\[ c_1(\CL_{\beta}) = (\beta, \eta, 0) \in H^2(S,\BZ) \oplus (H^1(S,\BZ) \otimes H^1(\Pic_{\beta}),\BZ) \otimes H^2(\Pic_{\beta},\BZ)\, . \]
The middle class $\eta$ can be identified with the identity morphism under the canonical isomorphism
\[ H^1(\Pic_{\beta},\BZ) = H^1(S,\BZ)^{\ast}\,, \]
see \cite[Section 4]{KT2}. Moreover, we have
\[ H^{\ast}(\Pic_{\beta}) = \wedge^{\ast} H^{\ast}(S,\BQ)^{\ast}\, . \]
The map $\xi$ can be written as
\begin{equation}\label{gr4}
    \xi(\gamma) = \pi_{\ast}( \rho^{\ast}(\gamma) \exp(\eta))\, , 
    \end{equation}
where $\rho : S \times \Pic_{\beta} \to S$ and $\pi : S \times \Pic_{\beta} \to \Pic_{\beta}$ are the projections\footnote{Let $e_1,\ldots,e_n$ be a basis of $H^1(S,\BQ)$, and let $f_1,\ldots,f_n$ be a dual basis of $H^1(S,\BQ)$. Then, $\eta = \sum_i e_i \boxtimes f_i$, and \eqref{gr4} follows by a direct computation. The sign $(-1)^{\frac{k (k-1)}{2}}$ arises from reordering the $e_i$ and $f_i$ in the power $\eta^k$.}.
Finally, the morphism $$T: \wedge H^{\ast}(S,\BQ)^{\ast} \to \BQ$$ is just the degree map on $\Pic_{\beta}$.

\subsection{Proof of Theorem~\ref{thm:univ general}}
For simplicity, we first assume  $H^1(L) = 0$ for every line bundle $L \in \Pic_{\beta}$. The condition holds, for example, for ample $\beta$ on symplectic surfaces $S$ by Kodaira vanishing. 
Then, $E = \pi_{\ast}(\CL_{\beta})$ is a vector bundle,  we have an isomorphism
\[ H_{\beta} = \p(E)\, , \]
and the bundle map $p : \p(E) \to \Pic_{\beta}$ is the Abel-Jacobi map.
The canonical inclusion $$\CO_{\p(E)}(-1) \hookrightarrow p^{\ast}(E)$$ yields a section $\CO_{S\times H_\beta} \to \CL_{\beta}(1)$ on $S \times H_{\beta}$,
where we have omitted the pullback by $\id_S \times p$, whose zero locus defines the universal Cartier divisor $\CD_{\beta}$.
Since
\[ \dim H_{\beta} = \rk(E) - 1 + \dim \Pic_{\beta} = 
h^2(S,\CO_S)+
\frac{1}{2} (\beta^2 + c_1(S)\cdot \beta) \, ,\]
we see
$[H_{\beta}] = [H_{\beta}]^{\red}$.
For $z = c_1(\CO_{\p(E)}(1))$, we have
\[ p_{\ast}(z^i) = s_{i - (\rk(E)-1)}(E) \, ,\]
where $s_j(E)$ are the Segre classes. 
By Grothendieck-Riemann-Roch, there exist  universal polynomials
$P_{j}$ such that
\[ s_j(E) = P_j(\xi(1), \xi(c_1(S)), \xi(c_2(S)), \xi(\beta), \xi(\beta^2), \xi(\beta c_1(S)))\, . \]

For a formal variable $x$, we write
\[ \ch_x(E) = \sum_{k \geq 0} \ch_k(E) x^k\]
and correspondingly for the descendent classes,
\[ \ch_x(\gamma) = \sum_{k \geq 0} \ch_k(\gamma) x^k = \pi_{\ast}( \rho^{\ast}(\gamma) \ch_x(\CO(\CD_{\beta})))\, . \]
We also write
$$\xi_x(\gamma) = \sum_{k \geq 0}  \pi_{\ast}\left(  \rho^{\ast}(\gamma)
\frac{\eta^k}{k!}
\right)x^k\, . $$
Then, we obtain
\begin{align*}
\int_{H_{\beta}} \ch_{x_1}(\gamma_1) \cdots \ch_{x_n}(\gamma_n)
& = 
\int_{H_{\beta} \times S^n} \pr_{01}^{\ast}(\ch_{x_1}(\CO(\CD_{\beta})))
\cdots 
\pr_{0n}^{\ast}( \ch_{x_n}(\CO(\CD_{\beta})))\,
\pr_1^{\ast}(\gamma_1) \cdots \pr_{n}^{\ast}(\gamma_n) \\
& = 
\int_{H_{\beta} \times S^n} 
\pr_0^{\ast}( e^{(x_1+\ldots+x_n) z})
\pr_{01}^{\ast}( e^{x_1 \eta} ) \cdots \pr_{0n}^{\ast}(e^{x_n \eta})
\, \pr_1^{\ast}( e^{x_1 \beta} \gamma_1) \cdots \pr_n^{\ast}( e^{x_n \beta} \gamma_n) \\
& = 
\int_{\Pic_{\beta} \times S^n}
\pr_0^{\ast}(p_{\ast}(e^{(x_1+\ldots+x_n) z}))
\pr_{01}^{\ast}( e^{x_1 \eta} ) \cdots \pr_{0n}^{\ast}(e^{x_n \eta})
\, \pr_1^{\ast}( e^{x_1 \beta} \gamma_1) \cdots \pr_n^{\ast}( e^{x_n \beta} \gamma_n) \\
& = \int_{\Pic_{\beta}}
\left( \sum_{j \geq 0} P_{j-\mu}
\frac{(x_1+\ldots+x_n)^{j}}{j!} \right)
\xi_{x_1}(e^{x_1 \beta} \gamma_1) \cdots \xi_{x_n}(e^{x_n \beta} \gamma_n),
\end{align*}
where the factors of $H_{\beta} \times S^n$ are indexed by $(0,\ldots,n)$
and $$\mu = \rk(E) - 1 = p_g(S)-q(S) + \frac{\beta^2 + \beta \cdot c_1(S)}{2}\, .$$


In case $L \in \Pic_{\beta}$ may have non-vanishing
$H^1(L)$,
we pick an ample divisor $A \subset S$ such that $$H^i(L(A)) = 0$$ for all $L \in \Pic_{\beta}$ and $i>0$. We define $$E_0 = \pi_{\ast}(\CL_{\beta}(A))\, , \ \ \ E_1 = \pi_{\ast}(\CL_{\beta}(A)|_{A})\, $$
on $\Pic_\beta$.
By applying $\pi_{\ast}$ to the exact sequence $$0 \to \CL_{\beta} \to \CL_{\beta}(A) \to \CL_{\beta}(A)|_{A} \to 0$$ and considering the associated long exact sequence, we find
\begin{enumerate}
\item[(i)] $E_1$ is a vector bundle since $R^2 \pi_{\ast} \CL_{\beta} = 0$ by condition ($\dag$),
\item[(ii)] $E_\bullet=[E_0 \to E_1] \cong R \pi_{\ast}(\CL_{\beta})$ in $D^b(\Pic_{\beta})$.
\end{enumerate}
Consider the effective curve class $\gamma = \beta + c_1(A)$. Then $H_{\gamma} = \p(E_0)$ where 
the universal Cartier divisor $$\CD_{\gamma}\subset 
S \times H_\gamma
$$ is given by the natural section of $\CL_{\beta}(A)(1)$.
There exists a natural section
\[ s: \CO_{\p(E_0)} \xrightarrow{t} p^{\ast}(E_0)(1) = \pi_{\ast}( \CO(\CD_{\gamma})) \to
p^{\ast}(E_1)(1) = \pi_{\ast}( \CO(\CD_{\gamma})|_{A})\, , \]
where the second map is the restriction to $A$. The section $s$ vanishes exactly on the locus of Cartier divisors containing $A$, so there is an isomorphism 
\begin{equation} \label{fvvf3} H_{\beta} \cong Z(s)\subset H_\gamma
\end{equation}
given by $C \mapsto C+A$. As shown in \cite[Appendix]{KT1}, the refined Euler class of $s$ is exactly the reduced virtual class $[H_{\beta}]^{\red}$.

Let $j : S\times H_{\beta} \to S\times H_{\gamma}$ be the inclusion
\eqref{fvvf3}, then
\[ \CO(\CD_{\beta}) = j^{\ast} \CO(\CD_{\gamma}) \otimes \rho^{\ast}\CO_S(-A)
= j^{\ast}( \CL_{\beta}(A)(1) \otimes \rho^{\ast}(\CO_S(-A)) ) = j^{\ast}( \CL_{\beta}(1))\, , \]
so we obtain
\[ \ch_x(\CO(\CD_{\beta})) = e^{x(\rho^{\ast}(\beta) + p^{\ast}(\eta) + z)}|_{S\times H_{\beta}}. \]
For $z=c_1(\CO_{\p(E_0)(1)})$, 
we have
\begin{align*}
p_{\ast}(z^{i} \cap [H_{\beta}]^{\red}) & = 
p_{\ast}( z^{i} c_{\rk(E_1)}(p^*(E_1)(1))) \\
& = \sum_{j} p_{\ast}(z^{i+j}) c_{\rk(E_1)-j}(p^*(E_1)) \\
& = \sum_{j} c_{\rk(E_1)-j}(E_1) s_{i+j-(\rk(E_0)-1)}(E_0) \\
& = s_{i-\rk(E_{\bullet})+1}(E_{\bullet})\, .
\end{align*}
Moreover, with the same polynomial $P_j$ as before, we have
\[ s_j(E_\bullet) = P_j(\xi(1), \xi(c_1(S)), \xi(c_2(S)), \xi(\beta), \xi(\beta^2), \xi(\beta c_1(S)))\, . \]

By the same argument as before,   
we conclude
\[
\int_{[H_{\beta}]^{\red}} \ch_{x_1}(\gamma_1) \cdots \ch_{x_n}(\gamma_n)
= \int_{\Pic_{\beta}}
\left( \sum_{j \geq 0} P_{j-\mu}
\frac{(x_1+\ldots+x_n)^{j}}{j!} \right)
\xi_{x_1}(e^{x_1 \beta} \gamma_1) \cdots \xi_{x_n}(e^{x_n \beta} \gamma_n),
\]
where $\mu = \rk(E_{\bullet}) - 1 = p_g(S)-q(S) + \frac{\beta^2 + \beta \cdot c_1(S)}{2}$.
After taking the $x_1^{k_1} \cdots x_n^{k_n}$-coefficient, the claim follows. \qed

\printbibliography

\end{document}